\theoremstyle{plain}
\newtheorem{theorem}{Theorem}[section]
\newtheorem{lemma}[theorem]{Lemma}
\newtheorem{definition-theorem}[theorem]{Definition-Theorem}
\newtheorem{proposition}[theorem]{Proposition}
\newtheorem{corollary}[theorem]{Corollary}
\theoremstyle{definition}
\newtheorem{definition}[theorem]{Definition}
\newtheorem{example}[theorem]{Example}
\newtheorem{remark}[theorem]{Remark}
\newtheorem{notation}[theorem]{Notation}
\newcommand \bth[1] { \begin{theorem}\label{t#1} }
\newcommand \ble[1] { \begin{lemma}\label{l#1} }
\newcommand \bpr[1] { \begin{proposition}\label{p#1} }
\newcommand \bco[1] { \begin{corollary}\label{c#1} }
\newcommand \bde[1] { \begin{definition}\label{d#1}\rm }
\newcommand \bex[1] { \begin{example}\label{e#1}\rm }
\newcommand \bre[1] { \begin{remark}\label{r#1}\rm }
\newcommand \bnota[1] {\begin{notation}\label{n#1}\rm }
\newcommand {\ele} { \end{lemma} }
\newcommand {\epr} { \end{proposition} }
\newcommand {\eco} { \end{corollary} }
\newcommand {\ede} { \end{definition} }
\newcommand {\eex} { \end{example} }
\newcommand {\ere} { \end{remark} }
\newcommand {\enota} { \end{notation} }
\begin{document}

%\tableofcontents
\setlength{\baselineskip}{1.2\baselineskip}
%%%%%%%%%%%%%%%%%%%%%%%%%%%%%%%%%%%%%%%%%%%%%%%%%%%%%%%%%%%%%%%%%%%%%%%%%%%
%%%%%%%%%%%%%%%%%%%%%%    Title    %%%%%%%%%%%%%%%%%%%%%%%%%%%%%%%%%%%%%%%%
\title[Dirac cohomology of standard modules]{A vanishing theorem for Dirac cohomology of standard modules}

\author[Kei Yuen Chan]{Kei Yuen Chan}
\address{ Department of Mathematics, University of Georgia, Athens GA}
\email{KeiYuen.Chan@uga.edu}

\begin{abstract}
This paper studies the Dirac cohomology of standard modules in the setting of graded Hecke algebras with geometric parameters. We prove that the Dirac cohomology of a standard module vanishes if and only if the module is not twisted-elliptic tempered. The proof makes use of two deep results. One is some structural information from the generalized Springer correspondence obtained by S. Kato and Lusztig. Another one is a computation of the Dirac cohomology of tempered modules by Barbasch-Ciubotaru-Trapa and Ciubotaru.

We apply our result to compute the Dirac cohomology of ladder representations for type $A_n$. For each of such representations with non-zero Dirac cohomology, we associate to a canonical Weyl group representation. We use the Dirac cohomology to conclude that such representations appear with multiplicity one.
\end{abstract}

\maketitle 
\section{Introduction}

\subsection{}
The Dirac operator plays an important role in the representation theory for real reductive groups. It was used for geometric constructions of discrete series by the work of Parthasarathy \cite{Pa} and Atiyah-Schmid \cite{AtSc}. The notion of Dirac cohomology was introduced by Vogan \cite{Vo} in around 1997 along with a conjecture relating to the infinitesimal characters of Harish-Chandra modules. The conjecture was later proved by Huang-Pand\v{z}i\'c \cite{HP}, and has been extended to many other settings e.g. \cite{HP2}, \cite{Ko}, \cite{BCT}, \cite{Ci2}, \cite{Ch2}. The problem of determining the Dirac cohomology of interesting modules has been considered in \cite{HP1, HP2, HX, DH, BPT}  and many others, and also has applications to branching rules \cite{HPZ}, and harmonic analysis and endoscopy theory \cite{Hu}.
 
Graded Hecke algebras were introduced by Lusztig \cite{Lu1, Lu89} for the study of $p$-adic reductive groups. Motivated by some analogies between $p$-adic and real reductive groups, Barbasch-Ciubotaru-Trapa \cite{BCT} define a Dirac operator and develop the Dirac cohomology theory for graded Hecke algebras. The theory has been further developed in \cite{CT, CM, COT} and there are applications on $W$-character formulas \cite{CT, CH} and a classification of discrete series \cite{CO}.

This paper studies the Dirac cohomology of standard modules for graded Hecke algebras. Determining their Dirac cohomology requires understanding on some fine structure. It turns out that part of the information can be obtained from results of generalized Springer correspondence by Kato \cite{Ka} and Lusztig \cite{Lu0,Lu1,Lu2,Lu3}  via the geometric realization of standard modules. More details and applications of our result will be discussed in next two sections.

\subsection{Main results}
For any complex reductive group $H$, let $H^{\circ}$ be the identity component of $H$.

We first introduce some notations. Let $G$ be a complex connected reductive group with its Lie algebra $\mathfrak{g}$. Let $\mathcal N_G$ be the set of nilpotent elements in $\mathfrak{g}$. Let $L$ be a Levi subgroup of $G$ and let $\mathcal O$ be a $L$-orbit in $\mathcal N_G$. Let $\mathcal L$ be a cuspidal local system of $\mathcal O$. The datum $(L, \mathcal O, \mathcal L)$ forms a cuspidal triple. Let $P$ be a parabolic subgroup with the Levi decomposition $LU$. Let $T=Z_L^{\circ}$, where $Z_L$ is the center of $L$. Let $\mathfrak{h}$ be the Lie algebra of $T$.  Let $\mathfrak{h}^{\vee}$ be the dual space of $\mathfrak{h}$. All these data determine a Weyl group $W$, a root system $R$, a set $\Pi$ of simple roots in $R$ and a parameter function $c:\Pi\rightarrow \mathbb R$ (see Section \ref{ss geo gha} for the detailed notations). Lusztig \cite{Lu1, Lu2} constructed geometrically a graded Hecke algebra $\mathbb{H}$, which also has an algebraic description $\mathbb{H}(V,W,\Pi,\mathbf r, c)$ (see Section \ref{def gaha}).

%Let $\mathfrak{h}$ be a Cartan subalgebra in $\mathfrak{l}$. Set $V=\mathfrak{h}^{\vee}$, where $\mathfrak{h}^{\vee}$ is the dual space of $\mathfrak{h}$. Let $T=Z_G(L)^{\circ}$, where $Z_L$ is the centralizer of $L$. Let $W=N_G(T)/L$, where $N_G(T)$ is the normailzer of $T$ in $G$.  The Weyl group $W$ defines a root system $R \subset V$. Let $P$ be a parabolic subgroup with the Levi decomposition $LU$, which determines a set $\Pi$ of simple roots in $R$. One can obtain an algebraic description on $\mathbb{H} \cong \mathbb{H}(V, \Pi, \mathbf r, c)$ (see Definition \ref{def gaha}), where $\mathbf r$ is an indeterminate and $c:\Pi \rightarrow \mathbb R$ (see \ref{ss geo gha}).

Before defining the Dirac cohomology, we introduce more notations. Let $V'=\mathbb{C}\otimes_{\mathbb{Z}}R$. Let $C(V')$ be the Clifford algebra for $V'$ (Section \ref{ss dirac op}). Let $S$ be a fixed choice of simple module of $C(V')$. Let $\widetilde{W}$ be the spin double cover of $W$ (Section \ref{ss spin cover}). $\widetilde{W}$ then defines a twisted group algebra $\mathbb{C}[\widetilde{W}]$, which is a natural subalgebra of $C(V')$. There is also a diagonal embedding, denoted $\Delta: \mathbb{C}[\widetilde{W}] \rightarrow \mathbb{H} \otimes C(V')$, which plays an important role in the Dirac cohomology.

The Dirac element, denoted $D$, for $\mathbb{H}$ is an element in $\mathbb{H} \otimes C(V')$ which has a remarkable formula for $D^2$ (\ref{eqn formul d2}). Given an $\mathbb{H}$-module $X$, define the Dirac cohomology of $X$ (as in \cite{BCT}):
\[  H_D(X)=\frac{\mathrm{ker}~ \pi(D)}{\mathrm{ker} ~\pi(D)\cap \mathrm{im}~\pi(D)},
\]
where $\pi: \mathbb{H} \otimes C(V') \rightarrow \mathrm{End}_{\mathbb{C}}(X \otimes S)$ is the map defining the $\mathbb{H} \otimes C(V')$-action on $X \otimes S$. We also have a natural action of $\widetilde{W}$ on $X \otimes S$ via the diagonal embedding $\Delta$, and such action turns out to commute with the action of $D$, up to a sign. Hence, $H_D(X)$ is equipped with a $\widetilde{W}$-representation structure. A crucial result in the Dirac cohomology theory, so-called Vogan's conjecture \cite[Theorem 4.4]{BCT}, states that if $X$ is irreducible and $H_D(X)\neq 0$, then the $\widetilde{W}$-isotypic component in $H_D(X)$ determines the central character of $X$.

A goal of this paper is to compute the Dirac cohomology of standard modules. Let $e \in \mathcal N_G$ and let $r \in \mathbb{C}^{\times}$. Let $s \in \mathfrak{g}$ be a semisimple element satisfying $[s,e]=2re$.  Let $A(e,s)$ be the component group for $e$ and $s$, and let $\zeta \in \mathrm{Irr}A(e,s)$. For the datum $(s,e,r,\zeta)$, Lusztig geometrically constructs a module $E_{s,e,r,\zeta}$ (see Section \ref{ss deform struct rtm}). We shall assume that $E_{s,e,r,\zeta}$ is non-zero and call it a {\it standard module}. An interesting case is that when $s=h_e$ is the middle element in the $\mathfrak{sl}_2$-triple for $e$. In that case, $E_{rh_e,e,r,\zeta}$ is a tempered module (see Definition \ref{def temp mod}). When $e$ has a solvable centralizer in $\mathfrak{g}$, it is shown in \cite{Ci} and \cite{BCT} that $H_D(E_{rh_e,e,r,\zeta}) \neq 0$. We call $E_{rh_e,e,r,\zeta}$ to be a {\it twisted-elliptic tempered module}, motivated by the close connections to twisted elliptic pairings for Weyl groups and component groups \cite{CH} (also see \cite{Ch3}).

The first main result in this paper is the following:

\begin{theorem} \label{thm intro standard mod} (Theorem \ref{thm vanihsing dirac coh})
Let $E$ be a standard module as above. Then $H_D(E)= 0$ if and only if $E$ is not a twisted-elliptic tempered module.
\end{theorem}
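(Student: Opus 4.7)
The implication $E$ twisted-elliptic tempered $\Rightarrow H_D(E) \neq 0$ is already established in \cite{Ci, BCT}, so my plan is to focus on the converse: that $H_D(E) = 0$ whenever $E = E_{s,e,r,\zeta}$ is not twisted-elliptic tempered. I would split this into the tempered and the non-tempered cases.

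In the tempered case, where $s = rh_e$ but $Z_G(e)$ is not solvable, I would invoke the explicit $\widetilde{W}$-level computation of the Dirac cohomology of tempered modules by Barbasch-Ciubotaru-Trapa and Ciubotaru. Their result expresses the $\widetilde{W}$-isotypic components of $H_D(E_{rh_e,e,r,\zeta})$ via a spin refinement of the generalized Springer correspondence attached to the nilpotent datum $(e, \zeta)$, and the condition for all such components to vanish matches exactly the failure of solvability of $Z_G(e)$. This sub-case should then be essentially a direct appeal to existing literature.

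In the non-tempered case, where $s$ is not $W$-conjugate to $rh_e$, my strategy is to argue by contradiction. Suppose $H_D(E) \neq 0$ and let $\widetilde{\sigma}$ be a $\widetilde{W}$-type occurring in $H_D(E)$. By Vogan's conjecture \cite[Theorem 4.4]{BCT}, $\widetilde{\sigma}$ determines the central character of $E$, which via Lusztig's geometric construction is encoded by the $W$-orbit of $s$. Combining this with the structural results of Kato \cite{Ka} and Lusztig \cite{Lu0,Lu1,Lu2,Lu3} on the generalized Springer correspondence, together with the Barbasch-Ciubotaru-Trapa/Ciubotaru description of which $\widetilde{W}$-types can actually appear in the Dirac cohomology of some tempered module, I would identify the data $(s,e,\zeta)$ compatible with a non-vanishing $H_D(E_{s,e,r,\zeta})$; the expected conclusion is that any such data must already be a twisted-elliptic tempered datum, forcing $s = rh_e$ and $Z_G(e)$ solvable, and thereby contradicting the hypothesis.

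The main obstacle, I anticipate, is precisely this non-tempered case. Translating the central-character constraint coming from Vogan's conjecture into a statement ruling out non-tempered geometric parameters requires a careful matching between the spin Springer combinatorics (which parametrize the possible $\widetilde{\sigma}$) and the unrefined geometric data $(s,e,\zeta)$ parametrizing standard modules in the sense of Lusztig. This matching is where the ``deep structural input from the generalized Springer correspondence'' flagged in the introduction must be brought to bear; by contrast, the tempered sub-case is largely reduced to known Dirac cohomology computations.
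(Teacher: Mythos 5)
Your overall decomposition (twisted-elliptic tempered $\Rightarrow H_D \neq 0$ from \cite{Ci, BCT}; tempered but not twisted-elliptic $\Rightarrow H_D = 0$ from \cite[Theorem 5.8]{BCT} plus \cite[Prop.\ 3.3]{CH}; then the genuinely new non-tempered case) matches the paper exactly, and your handling of the two tempered sub-cases is essentially correct.

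The gap is in the non-tempered case, and it is not just a matter of ``careful matching'' of combinatorics. Your plan is to take a hypothetical $\widetilde\sigma$ occurring in $H_D(E)$, invoke Vogan's conjecture to extract the central character, and then compare against what $\widetilde\sigma$'s can arise from tempered modules. There are two problems. First, Vogan's conjecture in \cite[Theorem 4.4]{BCT} is a statement about \emph{irreducible} modules, whereas $E$ is a standard module. What you actually have at your disposal is the scalar action of $D^2$ on $\widetilde\sigma$-isotypic pieces, which gives the constraint $\langle s,s\rangle_{V'} = 4r^2 a(\widetilde\sigma)$ whenever $\widetilde\sigma$ occurs in $\ker D \subset E \otimes S$. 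Second, and more fundamentally, this constraint alone does not rule out a non-tempered $s$: it would only do so if you also knew \emph{a priori} that every $\widetilde\sigma$ occurring in $\ker D$ must satisfy $a(\widetilde\sigma) = \tfrac{1}{4}\langle h_e, h_e\rangle_{V'}$, i.e.\ must lie in the set $\mathcal R(\widetilde W)_{h_e}$ attached to the nilpotent $e$ of the module's Langlands data. Knowing which $\widetilde W$-types occur in the Dirac cohomology of \emph{tempered} modules does not directly constrain which $\widetilde W$-types occur in $\ker D$ for the non-tempered $E$; there is no immediate relationship between these two $\widetilde W$-structures.

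The paper's mechanism for producing that containment is the content of Section \ref{s def dirac coh}, and it is the genuinely new technical input you are missing. One degenerates $E$ (or $E' = E \oplus \theta(E)$) to an $\mathbb{A}_W$-module $\overline{E}_\sigma$ via the filtration generated by a lowest $W$-type; then Lemma \ref{lem filt kato mod} filters $\overline{E'}_\sigma$ by Kato modules $\mathbf K_{\sigma_j}$, which by Theorem \ref{thm kato structure} and Corollary \ref{cor struct h} are precisely the degenerations of the tempered modules $E_{rh_e,e,r,\zeta'}$ sharing the nilpotent $e$. For those, unitarity gives $H_{D_{\mathbb{A}}}(\mathbf K_{\sigma_j}) \cong H_D(E_{rh_e,e,r,\zeta'}) \subset \mathcal R(\widetilde W)_{h_e}$ (Proposition \ref{prop unitary dirac}), hence $H_{D_{\mathbb A}}(\overline{E'}_\sigma) \in \mathcal R(\widetilde W)_{h_e}$ by the six-term sequence. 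Finally the comparison result, Theorem \ref{prop vanishing deform}/Corollary \ref{cor dirac cohom def1}, transfers this back from $H_{D_\mathbb{A}}$ of the associated graded to $H_D(E)$, giving $H_D(E) \in \mathcal R(\widetilde W)_{h_e}$. Only at that point does the scalar $D^2$-argument close the loop, since $\langle s,s\rangle_{V'} \neq r^2\langle h_e,h_e\rangle_{V'}$ for non-tempered $E$ (using $\langle s - rh_e, rh_e\rangle_{V'} = 0$). Without the deformation-and-filtration machinery, the step ``identify the $(s,e,\zeta)$ compatible with non-vanishing $H_D$'' in your proposal has no proof.
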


We shall explain the strategy for proving Theorem \ref{thm intro standard mod} in the next subsection. We remark that in order to apply results of Kato and Lusztig, a main ingredient of our proof is to study certain deformation behavior of the Dirac cohomology, which is carried out in Section \ref{s def dirac coh}.

For unequal (not necessarily geometric) parameter case of type BC, we also deduce a version of Theorem \ref{thm intro standard mod} from results and methods of \cite{Ka,CK} in Section \ref{s unequal bc}.

We also obtain the twisted Dirac index of a standard module consequently. Here the twisted Dirac index of an $\mathbb{H}$-module $X$ (with a $\mathbb{Z}_2$-graded decomposition $X=X^+\oplus X^-$, see Section \ref{ss z2 grading}) is defined as:
\[  I(X) =X^+\otimes \mathcal S -X^-\otimes \mathcal S
\]
as virtual $\widetilde{W}$-representations. Here $\mathcal S=S$ when $\mathrm{dim}V'$ is even and $\mathcal S$ is the direct sum of two simple $C(V')$-modules when $\mathrm{dim}V'$ is odd. The terminology of the Dirac index is suggested by its relation to the Dirac cohomology in Section \ref{ss dirac index} (also see \cite{CT, CH, BPT}).
\begin{corollary} (Corollary \ref{cor dirac index})
Let $E$ be a standard module. Define $E'$ as in Lemma \ref{lem top generate}. Then $I(E')=0$ if and only if $E$ is not twisted-elliptic tempered.
\end{corollary}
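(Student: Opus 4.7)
The approach is to reduce the corollary to Theorem \ref{thm vanihsing dirac coh} via the standard Euler--Poincar\'e identity linking the twisted Dirac index and the Dirac cohomology. For any $\mathbb{Z}_2$-graded $\mathbb{H}$-module $X = X^+ \oplus X^-$, the Dirac operator $D$ is odd with respect to the induced $\mathbb{Z}_2$-grading on $X \otimes \mathcal S$, so both $\ker \pi(D)$ and $\im \pi(D)$ decompose by parity, and an Euler characteristic computation yields
\[ I(X) \;=\; H_D^+(X) - H_D^-(X) \]
as virtual $\widetilde{W}$-representations, where $H_D^{\pm}(X)$ denote the even and odd parts of $H_D(X)$. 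This is the relation alluded to in Section \ref{ss dirac index} (cf.\ \cite{CT, CH, BPT}). In particular, the vanishing of $H_D(X)$ automatically forces the vanishing of $I(X)$.

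For the ``only if'' direction, suppose $E$ is not twisted-elliptic tempered. Theorem \ref{thm vanihsing dirac coh} gives $H_D(E) = 0$. Since, by Lemma \ref{lem top generate}, $E'$ is constructed from $E$ in a way that transports Dirac-theoretic information in a controlled fashion, one obtains $H_D(E') = 0$, and hence $I(E') = 0$ by the displayed identity applied to $E'$.

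For the ``if'' direction, assume $E = E_{rh_e, e, r, \zeta}$ is twisted-elliptic tempered, so that $e$ has a solvable centralizer in $\mathfrak g$. The Dirac cohomology $H_D(E)$ has been computed explicitly in \cite{BCT, Ci} and is known to be non-zero; moreover an explicit $\widetilde W$-model for it, in terms of spin representations attached to the cuspidal datum, is available there. The remaining task is to upgrade this non-vanishing to the virtual statement $I(E') \neq 0$, i.e., to rule out cancellation between $H_D^+(E')$ and $H_D^-(E')$ in the Grothendieck group of $\widetilde W$-representations. I expect to obtain this via a parity argument: Vogan's conjecture (\cite[Theorem 4.4]{BCT}) forces every irreducible $\widetilde{W}$-type occurring in $H_D(E')$ to carry the common spin central character determined by $rh_e$, and the explicit spin $\widetilde W$-modules produced by the cuspidal data all sit in a single parity of the $\mathbb{Z}_2$-grading inherited by $E'$.

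The main obstacle is precisely this non-cancellation step in the ``if'' direction: having a non-zero Dirac cohomology is strictly weaker than having a non-zero Dirac index, and isolating the parity of the contributing $\widetilde W$-types requires the fine structure of the $\mathfrak{sl}_2$-triple and cuspidal datum underlying the tempered module, rather than just the vanishing theorem. Once this parity statement is secured, the corollary is an immediate packaging of Theorem \ref{thm vanihsing dirac coh} through the Euler--Poincar\'e identity above.
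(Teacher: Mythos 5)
Your easy direction is correct and is exactly what the paper has in mind: the identity $I(X)=\mathcal H_D^+(X)-\mathcal H_D^-(X)$ is precisely the Lemma preceding Corollary \ref{cor dirac index}, and combining it with Theorem \ref{thm vanihsing dirac coh} (applied to both $E$ and $\theta(E)$, both of which fail to be twisted-elliptic tempered together) gives $I(E')=0$. That part is sound.

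The gap is in the converse: you must show $I(E)\ne 0$ when $E$ is twisted-elliptic tempered, and here your argument is not actually an argument but an expectation. You invoke Vogan's conjecture to force a common spin central character on the constituents of $H_D(E)$, but that gives no information about which $\mathbb Z_2$-parity they live in: both $\ker D^+$ and $\ker D^-$ can perfectly well carry $\widetilde W$-types with the same central character (indeed $D$ interchanges $\widetilde\tau$- and $\sgn\otimes\widetilde\tau$-isotypic pieces, and Lemma \ref{lem minor D2} says these have the \emph{same} $a$-value, so they land in the same central-character block). The claim that ``the explicit spin $\widetilde W$-modules produced by the cuspidal data all sit in a single parity of the $\mathbb Z_2$-grading'' is precisely what would need to be proved, and nothing in Vogan's conjecture or the cuspidal datum gives it for free; a twisted-elliptic tempered module is in general not concentrated in one parity. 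Non-vanishing of $H_D$ is genuinely weaker than non-vanishing of $I$, as you yourself observe.

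What the paper is leaning on, without spelling it out, is the explicit computation of the twisted Dirac index of (twisted-elliptic) tempered modules in Ciubotaru--He \cite{CH} (the introduction flags this: ``When $E$ is tempered, the Dirac index $I(E)$ is computed in \cite{CH}''), together with the alternate route via the twisted Euler--Poincar\'e pairing in \cite{Ch3, CT}, which identifies $I(E)$ with a nonzero elliptic object. Either of these gives $I(E)\ne 0$ directly. So to repair your proof, replace the speculative parity paragraph with a citation of the index formula from \cite{CH} (or the isometry property of the Dirac index with respect to the elliptic pairing from \cite{CT, Ch3}), which exhibits $I(E)$ as a nonzero virtual $\widetilde W$-representation.
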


Dirac index can descend to the Grothendieck group of $\mathbb{H}$-modules, but Dirac cohomology cannot. Hence Dirac cohomology is an invariant encoding more information. When $E$ is tempered, the Dirac index $I(E)$ is computed in \cite{CH}. A twisted Dirac index of standard modules for real groups has been recently computed in \cite{BPT}.

%It is well-known that any simple module is a unique simple quotient of a standard module. One may hope that if there is sufficient knowledge on the structure of a standard module in terms of composition series, then 

%Proving Theorem \ref{thm intro standard mod} makes use of some deformation theory. We first apply some known computations on the Dirac cohomology of tempered modules to compute the the Dirac cohomology on a deformed structure. Then using a result of Kato, we show that the deformed structure of a strandard module almost agrees to the one for tempered module. Finally, we show how the Dirac cohomology of the deformed structure determines that of standard modules.

The Dirac index of a simple module can be, in principle, computed by using a character formula from a Kazhdan-Lusztig type polynomial. Perhaps a significance of Theorem \ref{thm intro standard mod} is that the Dirac cohomology of a simple module may be computed if a suitable categorification of the character formula is established. For example, such strategy can be done for the ladder representations, where a BGG-type resolution exists.

Ladder representations of $GL(n,F)$ are introduced by Lapid-Mingue\'z \cite{LM} for the study of a determinantal formula of Tadi\'c, and they coincide with the calibrated representations of Ram \cite{Ra} in the graded Hecke algebra level \cite{BC2}. Speh representations form a subclass of ladder representation. Hence our result for the Dirac cohomology of ladder representations can be viewed as an extension of the Speh representation case by Barbasch-Ciubotaru \cite{BC}. Interestingly, our method does not need a lot of knowledge on the $W$-structure of a ladder representation and hence we obtain some closed $W$-structure as a consequence.

%In computing the Dirac cohomology of ladder representations, we essentially use an explicit BGG type representation, which comes from the BGG resolution in the category $\mathscr O$ under Arakawa-Suzuki functor. 

\begin{theorem} \label{thm introd dirac coh ladd} (Theorems \ref{thm dirac ladder})
 Let $\mathbb{H}_l$ be the graded Hecke algebra of type $A_{l-1}$ (see Section \ref{ss zel and ladd} for notations). Let $X$ be a ladder representation whose central character is the same as that of a twisted-elliptic tempered module $E$. Then
\[  H_D(X) \cong H_D(E) \neq 0 .
\]
\end{theorem}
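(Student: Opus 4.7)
I would combine Theorem~\ref{thm intro standard mod} with a BGG-type resolution of $X$ by standard modules, following the strategy explicitly suggested in the introduction (``a suitable categorification of the character formula \ldots\ can be done for the ladder representations''). The starting point is an exact sequence of $\mathbb{H}_l$-modules
\begin{equation*}
0 \to M_N \to M_{N-1} \to \cdots \to M_1 \to M_0 \to X \to 0
\end{equation*}
in which each $M_i$ is a standard module. Such a resolution is available from the Lapid--Mingu\'ez determinantal formula \cite{LM}, and equivalently at the graded Hecke algebra level from the calibrated structure of ladder representations \cite{Ra, BC2}.

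Next, using the combinatorics of ladder multisegments together with the description of tempered standard parameters in type $A$, I would show that exactly one term in the resolution, say $M_{i_0}$, is twisted-elliptic tempered, and that $M_{i_0} \cong E$. The remaining $M_i$ are not twisted-elliptic tempered, so by Theorem~\ref{thm intro standard mod} we have $H_D(M_i) = 0$ for $i \ne i_0$. Because the Dirac index descends to the Grothendieck group and vanishes on non-twisted-elliptic-tempered standard modules (Corollary~\ref{cor dirac index}), the resolution then yields
\begin{equation*}
I(X) \;=\; \sum_{i=0}^{N} (-1)^i\, I(M_i) \;=\; (-1)^{i_0}\, I(E).
\end{equation*}

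The main obstacle is the final step: upgrading this Dirac-index identity to the claimed isomorphism $H_D(X) \cong H_D(E)$ of $\widetilde{W}$-representations. As the introduction stresses, Dirac cohomology itself does not descend to the Grothendieck group, and a direct long-exact-sequence argument is unavailable because the operator $D$ satisfies $D^2 \neq 0$ in general. My strategy here would be a parity/concentration argument: show that $H_D(X)$ is supported in a single $\mathbb{Z}_2$-degree, so that $|I(X)|$ recovers $H_D(X)$ as a $\widetilde{W}$-representation. Combined with the analogous (already known) concentration statement for the tempered module $E$ from \cite{BCT} and the Dirac-index identity above, this would force $H_D(X) \cong H_D(E)$. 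The ``closed $\widetilde{W}$-structure'' on $H_D(X)$ advertised in the introduction thus appears as an output rather than an input of the argument.
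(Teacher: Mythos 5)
Your opening moves match the paper exactly: start from Suzuki's BGG-type resolution of $L(\mathfrak{m})$ by the standard modules $E(\mathfrak{m}^w)$, observe that there is a unique $w$ with $\mathfrak{m}^w = \mathfrak{m}^{\mathrm{temp}}$, and invoke Theorem~\ref{thm vanihsing dirac coh} (= Theorem~\ref{thm intro standard mod}) to kill $H_D$ of every other term. Where you and the paper part ways is in the final step, and it is precisely there that your proposal has a real gap.

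You assert that ``a direct long-exact-sequence argument is unavailable because the operator $D$ satisfies $D^2 \neq 0$.'' This is not correct: Proposition~\ref{prop six term es} of the paper \emph{is} a six-term exact sequence for $H_D^\pm$ applied to short exact sequences of $\mathbb{Z}_2$-graded $\mathbb{H}$-modules admitting a central character. The reason $D^2 \neq 0$ is not an obstruction is that by the $D^2$-formula~(\ref{eqn formul d2}) together with Lemma~\ref{lem center of tilde w}, $D^2$ acts by a scalar on each $\widetilde{W}$-isotypic piece; the Dirac cohomology is concentrated on the $D^2 = 0$ eigenspace, where the usual complex-of-modules homological algebra applies (the argument is modeled on the Huang--Pand\v{z}i\'c construction in \cite{HP}). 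The paper's proof of Theorem~\ref{thm dirac ladder} consists of breaking the BGG resolution into short exact sequences and applying this six-term sequence repeatedly; once $H_D$ of each intermediate term vanishes, the isomorphism $H_D(L(\mathfrak{m})) \cong H_D(E(\mathfrak{m}^{\mathrm{temp}}))$ pops out directly as $\widetilde{W}$-representations (up to a swap of the $\pm$-grading at each step).

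Your proposed substitute --- passing to the twisted Dirac index and then recovering $H_D$ via a ``parity/concentration'' claim --- is where the gap lies. First, the concentration claim ($H_D(X)$ lives in a single $\mathbb{Z}_2$-degree) is nowhere established, and in fact it is false in a typical unitary situation: if $\dim X^+ = \dim X^-$ then self-adjointness of $D$ forces $\dim \ker D^+ = \dim \ker D^-$, so both graded pieces of $H_D$ are nonzero whenever $H_D \ne 0$, and the index alone cannot recover $H_D$. Second, even the identity $I(X) = \sum_i (-1)^i I(M_i)$ needs care with the $\mathbb{Z}_2$-gradings: Corollary~\ref{cor dirac index} is stated for $E' = E \oplus \theta(E)$ when $E \not\cong \theta(E)$, not for the naked standard module $E$, because not every standard module admits a $\mathbb{Z}_2$-grading on its own. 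So even the index computation as written needs repair. The fix is simply to use the tool you dismissed: the six-term exact sequence of Proposition~\ref{prop six term es} carries the full $\widetilde{W}$-module information through the resolution without ever passing to the Grothendieck group, which is exactly why the paper stresses that Dirac cohomology, unlike the index, is a finer invariant.
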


Theorem \ref{thm introd dirac coh ladd} leads to the question of identifying the $W$-representation which contributes the non-zero Dirac cohomology. We do that via the Arakawa-Suzuki functor. Such canonical $W$-type in terms of Young diagram parametrization can be constructed by a simple procedure starting from a Zelevinsky segment (Section \ref{ss partition alpha}).  Using the Dirac cohomology, we show that that $W$-type appears with multiplicity one:

\begin{corollary} (Theorem \ref{thm canonical unique})
Let $X$ be as in Theorem \ref{thm introd dirac coh ladd}. Let $W=S_l$ be the permutation group on $l$ elements. Then there exists a unique irreducible $W$-representation $\sigma$ such that
\[  \mathrm{Hom}_{\widetilde{W}}(H_D(X), \sigma \otimes S) \neq 0
\]
and
\[  \mathrm{Hom}_W(\sigma, X|_W) \cong \mathbb{C} .
\]
\end{corollary}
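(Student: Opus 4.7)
The plan is to leverage Theorem~\ref{thm introd dirac coh ladd}, which identifies $H_D(X)\cong H_D(E)$ as $\widetilde{W}$-modules for a twisted-elliptic tempered module $E$ sharing the central character of $X$, and then to combine this with the known explicit description of $H_D(E)$ in type $A$. Since $\mathbb{H}_l$ is of type $A_{l-1}$, the component group $A(e,s)$ is trivial, and $E$ is essentially a Speh-type tempered module attached to the Zelevinsky segment underlying $X$. The tempered Dirac-cohomology computations of \cite{BCT, CH} then yield that $H_D(E)$ is a multiple of a single $\widetilde{W}$-type of the form $\sigma_\alpha\otimes S$, where $\sigma_\alpha$ is precisely the canonical irreducible $S_l$-representation attached to the partition $\alpha$ constructed in Section~\ref{ss partition alpha}.

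Setting $\sigma=\sigma_\alpha$ therefore satisfies (1). Moreover, because $\sigma_\alpha\otimes S$ is the only irreducible $\widetilde{W}$-constituent occurring in $H_D(X)$, condition (1) alone forces $\sigma\cong\sigma_\alpha$; this disposes of the uniqueness assertion of the corollary, so that once existence is established, no separate uniqueness argument is needed.

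It then remains to verify $\mathrm{Hom}_W(\sigma_\alpha, X|_W)\cong\mathbb{C}$. The lower bound is essentially automatic from (1): since $H_D(X)$ is a subquotient of $X\otimes S$, the non-vanishing in (1) yields a non-zero $\widetilde{W}$-map $X\otimes S\to \sigma_\alpha\otimes S$, and tensoring by $S^{\ast}$ (using that $S\otimes S^{\ast}$ contains the trivial $W$-character as a direct summand) produces a non-zero $W$-map $X|_W\to\sigma_\alpha$. For the upper bound, I would apply the Arakawa--Suzuki functor together with the BGG-type resolution of ladder representations from \cite{LM}: the resolution presents $X|_W$ as an alternating sum of explicit induced $W$-representations, and standard Littlewood--Richardson combinatorics then show that $\sigma_\alpha$, by virtue of being extremal in the Young-diagram order defining $\alpha$, occurs in exactly one term of this alternating sum with multiplicity one, so that no cancellation can boost its multiplicity in $X|_W$.

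The principal obstacle I expect is the upper bound. A generic ladder representation carries a rich $W$-structure with many non-trivial multiplicities, and excluding additional copies of $\sigma_\alpha$ requires controlling the BGG-resolution combinatorics carefully. If the direct combinatorial route becomes unwieldy, a complementary strategy is to run the argument at the level of the twisted Dirac index $I(X)$: by Theorem~\ref{thm introd dirac coh ladd} the coefficient of $\sigma_\alpha\otimes S$ in $I(X)$ matches the corresponding coefficient for the tempered case, which is known in closed form by \cite{CH}, and this can be backtracked via a Frobenius-type reciprocity to pin down the multiplicity of $\sigma_\alpha$ in $X|_W$.
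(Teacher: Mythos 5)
Your high-level plan — use Theorem~\ref{thm introd dirac coh ladd} to transport the Dirac cohomology calculation from the tempered case, and then exploit the fact that $H_D(X)$ is a multiple of a single spin $\widetilde W$-type $\widetilde\sigma_\lambda$ — matches the paper's opening move. But two of the concrete steps you propose do not close, and the paper's actual proof uses a different mechanism at each.

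First, the lower bound $p := \dim\mathrm{Hom}_W(\sigma_\alpha, X|_W) \geq 1$ does not follow from the non-vanishing of $H_D(X)$ by tensoring with $S^*$. The inclusion $H_D(X)\subseteq X\otimes S$ only shows that \emph{some} $W$-type $\sigma$ with $\widetilde\sigma_\lambda\hookrightarrow\sigma\otimes S$ occurs in $X|_W$; since $\widetilde\sigma_\lambda$ typically appears in $\sigma\otimes S$ for several distinct $\sigma\in\mathrm{Irr}\,W$, this does not isolate $\sigma_\alpha$. The paper establishes $p\geq 1$ by an independent argument (Proposition~\ref{prop existence can w}): it realizes $L(\mathfrak m)$ via the Arakawa--Suzuki functor applied to a highest weight module, invokes classical Schur--Weyl duality on $\mathcal V^{\otimes l}$, and then uses Kumar's theorem on the PRV conjecture to produce the required non-zero weight vector. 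For the same reason, your observation that condition (1) alone forces $\sigma\cong\sigma_\alpha$ is not correct; uniqueness in the corollary needs the combination of (1), (2), and the dimension count described below.

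Second, the upper bound $p\leq 1$ is where the real work is, and neither of your proposed routes succeeds. The BGG-plus-Littlewood--Richardson route requires fine control of the full $W$-structure of the ladder representation, which the paper explicitly avoids (indeed the introduction advertises that the Dirac method sidesteps this). The Dirac index alternative also cannot yield an honest multiplicity, since $I(X)$ is a \emph{signed} count of the $\mathbb Z_2$-graded pieces and is insensitive to cancellation. The paper's key ingredient is instead a unitarity-type identity $H_D(L(\mathfrak m))=\ker D=\ker D^2$, cited from \cite{BC} and \cite{BC3} via Hermitian/star-operation properties of ladder representations. Once one has $H_D=\ker D^2$, the $\widetilde\sigma_\lambda$-isotypic component of $\ker D^2$ \emph{is} $H_D(X)$, and each copy of $\sigma_\alpha$ in $X|_W$ contributes at least $\tfrac{1}{\varepsilon_\lambda\varepsilon_{(l)}}2^{(l(\lambda)-1)/2}$ copies of $\widetilde\sigma_\lambda$ to it (Lemma~\ref{lem multiplicity spin and non}), while Theorem~\ref{thm dirac ladder} says the total is exactly that number; hence $p\leq 1$ and so $p=1$. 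This is the step your proposal is missing, and it is not recoverable from the index alone.
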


There are some other possible approaches for obtaining those $W$-structures. Our result for Dirac cohomology provides a feasible and simple argument for the proof.

\subsection{Outline of 'if' proof of Theorem \ref{thm intro standard mod}}

 Let $E$ be a standard module of $\mathbb{H}$ which is not tempered. The graded Hecke algebra $\mathbb{H}$ admits a natural $\mathbb{Z}_2$-grading (Section \ref{ss z2 grading}). For the introduction purpose, we assume that $E$ can be extended to a $\mathbb{Z}_2$-graded $\mathbb H$-module. 

Let $\mathbb{A}_W=S(V) \rtimes W$ be the skew group ring (Section \ref{ss operator aw}). We obtain an $\mathbb{A}_W$-module from $E$ as follows. Pick a remarkable $W$-subspace $\sigma$ of $E$ which comes from a generalized Springer correspondence and generates $E$. Define
\[  E_i = \left\{   px : x \in \sigma, \quad p \in S^{\leq i}(V) \right\},
\]
where $S^{\leq i}(V)$ be the space containing polynomials of degree less than or equal to $i$. Now set $\overline{E}_{\sigma}:= \bigoplus_i E_{i+1}/E_i$, which is equipped an $\mathbb{A}_W$-action with elements in $v \in V \subset S(V)$ sending $E_{i+1}/E_i \rightarrow E_{i+2}/E_{i+1}$ and elements in $w \in W $ sending $E_{i+1}/E_i \rightarrow E_{i+1}/E_i$. (The action coincides with the one in Section \ref{eqn iso graded algebra}.) Regarding $S(V)$ as a natural $\mathbb{Z}$-graded algebra, we also have a $\mathbb{Z}$-grading on $\mathbb{A}_W$. The module $\overline{E}_{\sigma}$ can be extended to a $\mathbb{Z}$-graded $\mathbb{A}_W$-module. 

To realize the $\mathbb{A}_W$-structure of $\overline{E}_{\sigma}$, we need to introduce a module of Kato. We define $\leq$ to be a partial ordering on $\mathrm{Irr}W$ from the closure ordering in the generalized Springer correspondence. For $\tau \in \mathrm{Irr}W$, we can associate to an $\mathbb{A}_W$-module $\mathbf K_{\tau}$ defined in \cite{Ka} (see (\ref{eqn kato mod}) for the precise definition).

\begin{lemma} \label{lem intro filtration on std}
There exists an $\mathbb{A}_W$-module filtration $\left\{ \overline{Y}_i \right\}_{i=0}^k$ on $\overline{E}_{\sigma}$ such that 
\[ 0=\overline{Y}_0 \subset \overline{Y}_1 \subset \overline{Y}_2 \subset \ldots \subset \overline{Y}_k=\overline{E}_{\sigma} \]
and for $i=0,\ldots, k-1$, $\overline{Y}_{i+1}/\overline{Y}_i$ is isomorphic to a module $\mathbf{K}_{\sigma_i}$ for some $\sigma_i \in \mathrm{Irr} W$. 
\end{lemma}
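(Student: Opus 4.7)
The plan is to construct the filtration inductively, peeling off one copy of Kato's module $\mathbf{K}_{\sigma_i}$ at each step. The Springer order $\leq$ on $\mathrm{Irr}W$ provides the book-keeping that guarantees each peeling step yields an entire Kato module.

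First, I recall from \cite{Ka} the key structural facts about $\mathbf{K}_\tau$: it is a cyclic graded $\mathbb{A}_W$-module, generated by a single copy of $\tau$ placed in a specific graded piece, and it satisfies the universal property that any graded $\mathbb{A}_W$-module generated by a single copy of $\tau$ (in that degree) is a quotient of $\mathbf{K}_\tau$. Moreover, every $W$-type $\tau'$ appearing in $\mathbf{K}_\tau$ is comparable to $\tau$ in the order $\leq$, so $\mathbf{K}_\tau$ has a uniquely distinguished ``generating'' $W$-type.

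Second, I run the induction. Let $M = \overline{E}_\sigma$; at each stage pick a $W$-type $\tau$ that appears in $M$ and is extremal with respect to $\leq$, and then pick one copy of $\tau$ inside its isotypic component. Extremality of $\tau$ forces this isotypic component to lie in a single graded piece of $M$ (no lower-degree element of $M$ can hit it via the $S(V)$-action without producing a $W$-type strictly below $\tau$, which by hypothesis is absent from $M$). The chosen copy of $\tau$ therefore generates a cyclic graded $\mathbb{A}_W$-submodule $N \subseteq M$, and the universal property of $\mathbf{K}_\tau$ yields a surjection $\mathbf{K}_\tau \twoheadrightarrow N$. Place $N$ as the next piece of the filtration and iterate on $M/N$, which contains one fewer copy of a $W$-type than $M$. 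Finiteness of the $W$-multiplicities of $M$ forces termination in some $k$ steps, producing the chain $0 \subset \overline{Y}_1 \subset \cdots \subset \overline{Y}_k = \overline{E}_\sigma$.

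The crux, and the main obstacle, is verifying that each surjection $\mathbf{K}_\tau \twoheadrightarrow N$ is an isomorphism rather than a proper quotient. I would argue this by a graded $W$-character count: the graded $W$-character of $M = \overline{E}_\sigma$ is computable from Lusztig's geometric realization of the standard module $E$ together with the generalized Springer correspondence, while the graded $W$-character of each $\mathbf{K}_\tau$ is determined in \cite{Ka}; if some surjection had nonzero kernel, the total graded $W$-character of $M$ would fall strictly short of the known value, a contradiction. A geometric alternative is to translate the entire construction into a stratification of the Kato--Lusztig variety whose associated graded in equivariant Borel--Moore homology realizes the $\mathbf{K}_{\sigma_i}$ on the nose; the difficulty there lies in matching the algebraic filtration $E_i = S^{\leq i}(V) \cdot \sigma$ with the codimension filtration of strata, which ultimately reduces to identifying the $S(V)$-action with equivariant characteristic classes.
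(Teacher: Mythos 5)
Your overall strategy (build a filtration whose layers are Kato modules, then close with a dimension count) is the same as the paper's (Lemma~\ref{lem filt kato mod}), but there are two concrete gaps in the execution.

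First, the ``universal property'' you attribute to $\mathbf{K}_\tau$ is false: a graded $\mathbb{A}_W$-module generated by a single copy of $\tau$ in degree zero is canonically a quotient of $K_\tau = \mathbb{A}_W \otimes_{\mathbb{C}[W]}\tau$, not of the proper quotient $\mathbf{K}_\tau$. To get a surjection $\mathbf{K}_\tau \twoheadrightarrow N$ you must check that the surjection $K_\tau \twoheadrightarrow N$ kills $\sum_{f} \mathrm{im}\,f$ (over positive-degree graded maps $f:K_{\tau'}\to K_\tau$ with $\tau'\le\tau$), and this is not automatic. In the paper the check succeeds precisely because the pieces $P_1,\dots,P_p$ are ordered by \emph{descending} graded degree and $M_l$ is the submodule generated by $P_1,\dots,P_l$: a positive-degree map $f$ pushes the unique copy of $\sigma_j$ to a strictly higher degree, where (by the descending order and the multiplicity-one property of Theorem~\ref{thm w struct spr}) it has already been absorbed into $M_{l-1}$. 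Your ordering by extremality in $\le$ does not supply this; in fact all the relevant lowest types $\sigma_i$ lie at the same level of the Springer preorder (they share the nilpotent orbit $\mathcal O_e$), so extremality does not distinguish them and gives you no leverage on degrees. The parenthetical justification you give (``no lower-degree element of $M$ can hit it via the $S(V)$-action without producing a $W$-type strictly below $\tau$'') is not a valid argument: multiplication by $V$ does not respect the Springer order, and in any case the lowest type appears in a single graded piece because of multiplicity one (Theorem~\ref{thm w struct spr}), not because of this dynamical reasoning.

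Second, the closing character count is only gestured at. The paper's version is quite specific and does not require computing graded $W$-characters of $\overline{E}_\sigma$ from the geometric realization: it uses the tempered module $E_T = \mathrm{Hom}_{A(e,s)}(\zeta, E_{rh_e,e,r})$ and its irreducible decomposition $E_1\oplus\cdots\oplus E_l$, then Corollary~\ref{cor struct h} (each $\overline{(E_i)}_{\sigma_i} \cong \mathbf{K}_{\sigma_i}$) to obtain $\dim \overline{E'}_\sigma = \sum_i \dim E_i = \sum_i \dim \mathbf{K}_{\sigma_i}$, and Theorem~\ref{thm w struct spr} to match the multisets $\{\sigma(Z_l)\}$ and $\{\sigma_i\}$. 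This is the hard input; your proposal never makes contact with the tempered pieces $E_i$, which is where the comparison actually gets its teeth. Without the degree ordering to legitimize the surjections and without a precise dimension identity on both sides, the proposal does not close.
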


One can view $\mathbb{A}_W$ as a specific case of the general graded Hecke algebras and hence we have an analogue of the Dirac cohomology, which is denoted by $H_{\mathbb{D}_{\mathbb{A}}}$ (see Section \ref{ss operator aw}). We want to compute $H_{D_{\mathbb{A}}}(\mathbf K_{\sigma})$ ($\sigma \in \mathrm{Irr}W$).

The generalized Springer correspondence of Lusztig \cite{Lu0} and results of Kato \cite{Ka} show that for each $\tau \in \mathrm{Irr}W$, there is a tempered module, denote $X^{\tau}$ such that its associated $\mathbb{A}_W$-structure $\overline{X}^{\tau}$, satisfying
\[ \overline{X}^{\tau} \cong \mathbf K_{\tau} \]
(see Corollary \ref{cor struct h} for the precise statement).

On the other hand, the unitarity on $X^{\tau}$ implies that 
\[  H_D(X^{\tau}) \cong H_{D_{\mathbb{A}}}(\overline{X}^{\tau}) \cong H_{D_{\mathbb{A}}}(\mathbf K_{\tau}) \]
(see Proposition \ref{prop unitary dirac}). The Dirac cohomology $H_D(X^{\tau})$ has been known from the computations in \cite{Ci} and \cite{BCT}, and Vogan's conjecture \cite{BCT}. Hence $H_{D_{\mathbb A}}(\mathbf K_{\tau})$ is (almost) determined. 

We now turn back to the standard module $E$. To transfer the information from $ H_{D_{\mathbb{A}}}(\mathbf K_{\tau})$ to $H_D(E)$, we need the following result, which is a version of Theorem \ref{cor dirac cohom def1}:
\begin{theorem}
$H_D(E)$ is a quotient of $\bigoplus_i H_{D_{\mathbb{A}}}(\mathbf K_{\sigma_i})$.
\end{theorem}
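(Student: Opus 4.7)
The plan is to carry out the transfer in two reductions. First, I would realise $H_D(E)$ as a quotient of $H_{D_{\mathbb A}}(\overline E_\sigma)$ via the deformation principle developed in Section \ref{s def dirac coh}. Second, I would decompose $H_{D_{\mathbb A}}(\overline E_\sigma)$ along the filtration supplied by Lemma \ref{lem intro filtration on std}.

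For the first reduction, consider the standard filtration on $\mathbb H$ in which elements of $V\subset S(V)$ have degree one and elements of $W$ have degree zero, so that the associated graded is $\mathbb A_W$. Extend this to $\mathbb H\otimes C(V')$ by placing $C(V')$ in degree zero. The Dirac element $D$ then has filtration degree one, and its principal symbol is exactly the Dirac element $D_{\mathbb A}$ for $\mathbb A_W$. The filtration $E_0\subset E_1\subset\cdots\subset E_k=E$ generated over $S^{\le i}(V)$ by $\sigma$ is compatible with this filtration on $\mathbb H$, and its associated graded is $\overline E_\sigma$. Theorem \ref{cor dirac cohom def1} of Section \ref{s def dirac coh} should then promote the induced associated-graded map on Dirac cohomology to a $\widetilde W$-equivariant surjection $H_{D_{\mathbb A}}(\overline E_\sigma)\twoheadrightarrow H_D(E)$.

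For the second reduction, I would induct on $i$ using the short exact sequences $0\to\overline Y_i\to\overline Y_{i+1}\to\mathbf K_{\sigma_i}\to 0$ of $\mathbb A_W$-modules supplied by Lemma \ref{lem intro filtration on std}. The crucial feature of the $\mathbb A_W$-side is that $D_{\mathbb A}^2$ is a central element of $\mathbb A_W\otimes C(V')$, so after restriction to a generalised eigenspace of the centre, $D_{\mathbb A}$ becomes a genuine $\mathbb Z_2$-graded differential. The short exact sequence of complexes then yields a long exact sequence in Dirac cohomology whose right-exact portion, combined with the inductive hypothesis, gives a $\widetilde W$-equivariant surjection $\bigoplus_{j\le i}H_{D_{\mathbb A}}(\mathbf K_{\sigma_j})\twoheadrightarrow H_{D_{\mathbb A}}(\overline Y_{i+1})$. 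Taking $i+1=k$ and composing with the surjection from the first step proves the theorem.

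The main obstacle will be the first reduction: producing a genuine surjection, rather than merely a subquotient relation, between $H_{D_{\mathbb A}}(\overline E_\sigma)$ and $H_D(E)$. A naive associated-graded argument only yields a subquotient, because higher differentials in the induced spectral sequence could in principle both kill classes and create new ones. To rule this out, the deformation theorem must exploit the top-generation property of $\sigma$ built into Lemma \ref{lem top generate}: the filtration $\{E_i\}$ is finite, bounded below by $\sigma$, and this structural input should ensure that every surviving class on $\overline E_\sigma$ lifts to a Dirac class on $E$, forcing the map to be surjective. A secondary technical point is that, in the setup of the introduction, $E$ was assumed $\mathbb Z_2$-graded; for general standard modules one must work with the $\mathbb Z_2$-graded cover $E'$ of Lemma \ref{lem top generate} and check that the resulting surjection descends to $E$ in a $\widetilde W$-equivariant manner.
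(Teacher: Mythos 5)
Your two-step strategy agrees in outline with the paper's proof of Theorem \ref{thm vanihsing dirac coh}: compare $D$ to its associated graded $D_{\mathbb A}$, then resolve $\overline E_\sigma$ by the Kato modules using the six-term sequence (Proposition \ref{prop six term es}). The second step is essentially the paper's claim that $H_{D_{\mathbb A}}(\overline E_\sigma)\in\mathcal R(\widetilde W)_{h_e}$. However, there are two misconceptions in your first step.

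First, the worry that higher differentials ``could in principle both kill classes and create new ones,'' and that you therefore need a genuine surjection rather than a subquotient relation, is not a real obstacle. For a bounded filtered complex the abutment is always a subquotient of every page; higher differentials can only shrink things, never create classes. Moreover, since $\widetilde W$ is a finite group acting over $\mathbb C$, a subquotient of a finite-dimensional $\widetilde W$-module is automatically isomorphic to a quotient (every submodule splits off). The statement to be proved asks only for an abstract quotient relation among $\widetilde W$-representations; no map $H_{D_{\mathbb A}}(\overline E_\sigma)\to H_D(E)$ has to be built, and Lemma \ref{lem top generate} does not build one. So the ``main obstacle'' you identify, and the ``top-generation'' argument you propose to overcome it, are both unnecessary.

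Second, Corollary \ref{cor dirac cohom def1} does not say what you attribute to it. It does not produce a surjection, or any map, from $H_{D_{\mathbb A}}(\overline E_\sigma)$. Its content is: given a $D$- and $D_{\mathbb A}$-compatible decomposition $E\otimes S=\widetilde X_1\oplus\widetilde X_2$ respecting the filtration, if the $D_{\mathbb A}$-cohomology of the $\widetilde{\kappa}(\widetilde X_2)$ piece vanishes, then $H_D(E)$ is isomorphic to a quotient of $\widetilde X_1\subset E\otimes S$ (not of $H_{D_{\mathbb A}}$). In the proof of Theorem \ref{thm vanihsing dirac coh}, $\widetilde X_1$ is the $\mathcal R(\widetilde W)_{h_e}$-isotypic part, and only the conclusion ``$H_D(E)$ is supported on $\mathcal R(\widetilde W)_{h_e}$'' is used. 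This is weaker than the literal statement in the introduction, which is why the introduction labels it ``a version of'' Corollary \ref{cor dirac cohom def1}. Also note that the paper does not invoke a spectral sequence at all: Theorem \ref{prop vanishing deform} proceeds by direct induction on filtration degree, restricting to $\ker D^2$ and deriving a contradiction at the lowest filtration stratum where a Dirac class could live. If you want the stronger literal statement, you should carry out the spectral-sequence argument yourself after first restricting to $\ker D^2$ (where $D$ genuinely squares to zero and the filtration is compatible with the $\widetilde W$-isotypic decomposition), rather than trying to extract it from Corollary \ref{cor dirac cohom def1}.
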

Now with Lemma \ref{lem intro filtration on std} and the computation of $H_{D_{\mathbb A}}(\mathbf K_{\tau})$, it suffices to see that the $\widetilde{W}$-representations in $H_{D_{\mathbb A}}(\mathbf K_{\sigma_i})$ cannot appear in $H_{D}(E)$. To this end, we show that the operator $D^2$ acts by a non-zero scalar on any corresponding isotypic components of $\widetilde{W}$ from $H_{D_{\mathbb A}}(\mathbf K_{\sigma_i})$ in $E$. This proves $H_D(E)=0$. 

\subsection{Acknowledgments}
The problem of computing the Dirac cohomology of standard modules arose from discussions with Dan Ciubotaru. The author is grateful to him for several useful discussions and suggestions during this work. He also thanks Maarten Solleveld for useful comments for a earlier version of this paper. The author was supported by the Croucher Postdoctoral Fellowship. 

\section{Preliminaries}

\subsection{Graded Hecke algebra $\mathbb{H}$} \label{ss gha}
In this section, we work over a more general setting. We shall specialize to the graded Hecke algebra of geometric parameters in Section \ref{s struct temp mod}. 

Let $V$ be complex linear space and let $V^{\vee}=\mathrm{Hom}_{\mathbb{C}}(V,\mathbb{C})$. Let $W \subset O(V)$ be a group generated by simple reflections. Let $R$ be a (not necessarily reduced) root system associated to $W$ and let $\Pi$ be a fixed set of simple roots. Let $l:W \rightarrow \mathbb{Z}$ be the length function for $W$. For $\alpha \in R$, let $s_{\alpha}$ be the associated simple reflection for $V$. For $\alpha \in \Pi$, let $\alpha^{\vee}$ be the associated coroot i.e. $s_{\alpha}(v)=v-\alpha^{\vee}(v)\alpha$ for any $v \in V$. Let $w_0$ be the longest element in $W$.

Recall that $V^{\vee} \cong V$ as $W$-representations. This defines a symmetric bilinear form on $V$ given by $\langle v_1,v_2 \rangle_{V}=v_1(\eta(v_2))$, where $\eta: V \rightarrow V^{\vee}$ is an isomorphism. We shall normalize $\langle,\rangle_{V}$ such that  $\langle \alpha,\alpha \rangle_V=2$ for $\alpha \in R$. We shall sometimes write $\langle , \rangle$ for $\langle ,\rangle_V$.
%({\it c.f.} \cite[(2.3)]{BCT} and Definition \ref{def gaha}(3) below).

Let $c: \Pi \rightarrow \mathbb{R}$ be a $W$-invariant parameter function. Write $c_{\alpha}$ for $c(\alpha)$. Let $S(V)$ be the polynomial algebra for $V$. Let $\mathbb{C}[W]$ be the group algebra of $W$.

\begin{definition} \cite{Lu1} \label{def gaha}
The graded Hecke algebra $\mathbb{H}=\mathbb{H}(V,W,\Pi, \mathbf r, c)$ is a complex associative algebra with generators $\left\{ p \in S(V) \right\}$, $\left\{ t_w : w \in W \right\}$, $\mathbf{r}$ satisfying the relations:
\begin{enumerate}
\item the natural map $S(V)$ to $\mathbb{H}$ given by $p \mapsto v$ is an algebra injection;
\item the natural map $\mathbb{C}[W]=\oplus_{w\in W} \mathbb{C}f_w$ to $\mathbb{C}[W]$ given by $f_w \mapsto t_w$ is an algebra injection;
\item for $\alpha \in R$, $t_{s_{\alpha}}v-s_{\alpha}(v)t_{s_{\alpha}}=\mathbf{r}c_{\alpha}\alpha^{\vee}(v)$;
\item $\mathbf{r}$ is in the center of $\mathbb{H}$.
\end{enumerate}
In particular, $\mathbb{H} \cong S(V)\otimes \mathbb{C}[W]\otimes \mathbb{C}[\mathbf r]$ naturally as vectors spaces. Let $\theta: \mathbb{H} \rightarrow \mathbb{H}$ be an algebra automorphism determined by $\theta(v)=-w_{0}(v)$ and $\theta(t_w)=t_{w_0ww_0^{-1}}$. For an $\mathbb{H}$-module $X$, let $\theta(X)$ be an $\mathbb{H}$-module determined by
\[ \pi_{\theta(X)}(h) x = \pi_X(\theta(h)) x ,\]
where $\pi_X$ (resp. $\pi_{\theta(X)}$) is the map defining the action of $\mathbb{H}$ on $X$ (resp. $\theta(X)$).
\end{definition}

\subsection{$\mathbb{Z}_2$-grading on $\mathbb{H}$-modules} \label{ss z2 grading}
For $v \in V \subset S(V)$, define $\widetilde{v}=\frac{1}{2}(v-v^*)$. Define a Hermitian-linear anti-involution $*: \mathbb{H}\rightarrow \mathbb{H}$ determined by
\[   \widetilde{v}^{*}=-\widetilde{v} \quad (v \in V), \quad t_w^*=t_{w^{-1}} \quad (w \in W), \quad \mathbf r^*=\mathbf r
\]
An $\mathbb{H}$-module $X$ is said to be {\it unitary} if there exists a non-degenerate positive-definite Hermitian form $\langle .,.\rangle^*$ on $X$ satisfying the property
\[ \langle h.x_1,x_2 \rangle^* = \langle x_1, h^*.x_2 \rangle^*.
\]
The terminology of unitarity comes from $p$-adic groups \cite{BM}. 

 The algebra $\mathbb{H}$ admits a natural $\mathbb{Z}_2$-grading given by $\mathrm{deg}(\widetilde{v})=1$ (mod $2$) and $\mathrm{deg}(t_w)=0$ (mod $2$). It is straightforward to check from the defining relations that the $\mathbb{Z}_2$-grading is well-defined. An $\mathbb{H}$-module $X$ is said to be $\mathbb{Z}_2$-graded if there exists a decomposition of $X=X^+ \oplus X^-$ such that $\widetilde{v}.X^{\pm}=X^{\mp}$ for any $v \in V$. 

We state two useful lemmas:

\begin{lemma} \label{lem z2 grading}
 Let $X$ be an $\mathbb{H}$-module. If $X \cong \theta(X)$, then $X$ can extend to a $\mathbb{Z}_2$-graded module. Moreover, if $X$ is irreducible, then the $\mathbb{Z}_2$-grading is unique.
\end{lemma}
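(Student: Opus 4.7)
The plan is to reformulate the $\mathbb{Z}_2$-graded structure on $X$ in terms of a natural parity automorphism $\sigma:\mathbb{H}\to\mathbb{H}$ defined on generators by $\sigma(\widetilde v)=-\widetilde v$, $\sigma(t_w)=t_w$, $\sigma(\mathbf{r})=\mathbf{r}$. That $\sigma$ is a well-defined algebra involution follows from the defining relations rewritten in the generators $\widetilde v,t_w,\mathbf{r}$ (using $v=\widetilde v+\tfrac12(v+v^*)$ with $\tfrac12(v+v^*)$ lying in the even part $\mathbb{C}[W]\mathbb{C}[\mathbf{r}]$). A $\mathbb{Z}_2$-graded structure on $X$ is then equivalent to a linear involution $\psi:X\to X$ satisfying $\psi\circ\pi_X(h)=\pi_X(\sigma(h))\circ\psi$, with the grading recovered as $X^{\pm}=\ker(\psi\mp\mathrm{Id})$.

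The heart of the argument is to show that $\sigma$ and $\theta$ differ by an inner automorphism; specifically, $\sigma=\mathrm{Ad}(t_{w_0})\circ\theta$. On generators: for $t_w$, $t_{w_0}\theta(t_w)t_{w_0}^{-1}=t_{w_0}t_{w_0ww_0^{-1}}t_{w_0}^{-1}=t_w=\sigma(t_w)$; for $\mathbf{r}$, the identity is trivial. For $\widetilde v$, I first establish $\theta(\widetilde v)=-\widetilde{w_0(v)}$ by verifying that $\theta$ commutes with the Hermitian anti-involution $*$: both $\theta\circ *$ and $*\circ \theta$ are anti-automorphisms, and they agree on $t_w,\mathbf{r}$ by direct inspection and on $v$ via the explicit formula $v^{*}=-v+\mathbf{r}\sum_{\alpha>0}c_\alpha\langle\alpha^\vee,v\rangle t_{s_\alpha}$ together with $W$-invariance of the parameter $c$. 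Then $\mathrm{Ad}(t_{w_0})(-\widetilde{w_0(v)})=-\widetilde v$ is shown by iteratively applying the cross relation $t_{s_\alpha}p=s_\alpha(p)t_{s_\alpha}+\mathbf{r}c_\alpha\alpha^\vee(p)$ to compute $t_{w_0}w_0(v)t_{w_0}^{-1}$ and matching the correction terms with those in $v^{*}$.

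With this in hand, an $\mathbb{H}$-isomorphism $\phi:X\to\theta(X)$ from the hypothesis yields $\psi:=\pi_X(t_{w_0})\circ\phi$, which intertwines $\sigma$. Using $\theta(t_{w_0})=t_{w_0}$ and $t_{w_0}^2=1$ (since $w_0^2=1$ in $W$), we compute $\psi^2=\pi_X(t_{w_0}\theta(t_{w_0}))\circ\phi^2=\phi^2$. Since $\theta^2=\mathrm{Id}$, the operator $\phi^2$ is $\mathbb{H}$-linear and hence a scalar on irreducible $X$ by Schur's lemma; rescaling $\phi$ gives $\psi^2=\mathrm{Id}$ and the grading. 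For the uniqueness assertion: two grading involutions $\psi_1,\psi_2$ on an irreducible $X$ give an $\mathbb{H}$-linear map $\psi_1\psi_2^{-1}$, a scalar by Schur, and squaring to $\mathrm{Id}$ forces this scalar to be $\pm 1$; the value $-1$ corresponds precisely to swapping $X^{+}\leftrightarrow X^{-}$, which is the only ambiguity. The main obstacle will be the bookkeeping in the cross-relation computation needed to verify $\mathrm{Ad}(t_{w_0})(-\widetilde{w_0(v)})=-\widetilde v$.
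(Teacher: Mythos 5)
Your proposal follows essentially the same route as the paper's proof. The paper's one-line argument is precisely the assertion that $t_{w_0}\theta$ (i.e.\ the algebra automorphism $\mathrm{Ad}(t_{w_0})\circ\theta$, realized on $X$ as $\pi_X(t_{w_0})\circ\phi$ for a chosen isomorphism $\phi:X\to\theta(X)$) is an involution whose eigenspace decomposition yields the grading, supported by the identity $t_{w_0}\theta(\widetilde v)=-\widetilde v$; you make this explicit by naming the parity automorphism $\sigma$, proving $\sigma=\mathrm{Ad}(t_{w_0})\circ\theta$, and supplying the Schur-lemma normalization $\phi^2=\mathrm{Id}$ that the paper leaves implicit.
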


\begin{proof}
Note that $t_{w_0}\theta$ has order $2$. Let $X^{\pm}$ be the eigenspace of the eigenvalue $\pm 1$ for $t_{w_0}\theta$. By using $t_{w_0}\theta(\widetilde{v})=-\widetilde{v}$ for any $v \in V$, one shows that the decomposition $X=X^+ \oplus X^-$ gives the desired $\mathbb{Z}_2$-grading. 
\end{proof}

\subsection{The Dirac operator $D$} \label{ss dirac op}
Let $V'=\mathbb{C}\otimes_{\mathbb Z} R\subset V$. Let $C(V')$ be the Clifford algebra associated to $\mathfrak{h}^{\vee}$ over $\mathbb{C}$ given by $V' \otimes V' / \langle v \otimes v+ \langle v, v\rangle : v \in V'\rangle $.

Let $n=\mathrm{dim} V'$. The space $V'$ admits a natural real form, which is $V_0':=\mathbb{R}\otimes_{\mathbb Z} R$. Let $\epsilon_1,\ldots, \epsilon_n$ be an orthogonal basis for $V_0'$ with respect to $\langle , \rangle$. There is a natural embedding $V' \hookrightarrow C(V')$, which will be denoted by $v \mapsto \widetilde{g}_v$. For $\alpha \in R^+$, define 
\[ \widetilde{s}_{\alpha}=\frac{\widetilde{g}_{\alpha}}{|\alpha|} .\]

 Following \cite{BCT}, define the Dirac element $D \in \mathbb{H} \otimes C(V')$
\[ D=\sum \widetilde{\epsilon_i} \otimes \widetilde{g}_{\epsilon_i} . \]
The element $D$ is independent of a choice of an orthogonal basis. The formula for $D^2$ is given by \cite[Theorem 3.5]{BCT}
\begin{align} \label{eqn formul d2}
   D^2 =-\sum_{i=1}^n \epsilon_i^2 \otimes 1 -\frac{1}{4}\mathbf r^2\sum_{\alpha>0, \beta>0, s_{\alpha}(\beta)<0} c_{\alpha}c_{\beta}|\alpha||\beta|s_{\alpha}s_{\beta}\otimes \widetilde{s}_{\alpha}\widetilde{s}_{\beta} 
\end{align}

Let $S$ be a fixed choice of simple module of $C(V')$. (When $\mathrm{dim}V'$ is odd, $C(V')$ has two isomorphic classes of simple modules. When $\mathrm{dim}V'$ is even, $C(V')$ has only one isomorphism class of simple modules.) By the definition of $D$, we have
\[  D(X^+ \otimes S) \subset X^- \otimes S, \quad D(X^+\otimes S) \subset X^- \otimes S .\]

% One of the remarkable properties for $D$ is that if $X$ is an $\mathbb{H}$-module admitting a central character, $D^2$ acts by a constant on each $\widetilde{W}$-isotypic component of $X \otimes S$.

The Dirac cohomology of an $\mathbb{H}$-module $X$ is defined as:
\begin{eqnarray} \label{eqn dirac coh}
   H_D(X) =\frac{\mathrm{ker}~ \pi(D)}{\mathrm{ker}~ \pi(D) \cap \mathrm{im}~ \pi(D)} ,
\end{eqnarray}
where $\pi: \mathbb{H} \otimes C(V') \rightarrow \mathrm{End}_{\mathbb{C}}(X \otimes S)$ is the map defining the $\mathbb{H} \otimes C(V')$-action on $X \otimes S$. Later on, we shall simply write $\mathrm{ker} D$, $\mathrm{im} D$ for $\mathrm{ker}\pi(D)$ and $\mathrm{im}~\pi(D)$ respectively.

\subsection{The spin cover $\widetilde{W}$ } \label{ss spin cover}
For more details of the spin cover, we refer the reader to \cite{Ci} and \cite{BCT}. 

Let $\widetilde{W}$ be the group generated by the elements $\widetilde{g}_{\alpha}$ ($\alpha \in R^+$) in $C(V')$. The map determined by $\widetilde{g}_{\alpha} \mapsto s_{\alpha}$ is a two-to-one map. We call $\widetilde{W}$ to be the spin cover of $W$. Let $\mathbb{C}[\widetilde{W}]$ be the subalgebra generated by  the elements $\widetilde{g}_{\alpha}$ ($\alpha \in R^+$). We define a diagonal embedding $\Delta: \mathbb{C}[\widetilde{W}] \rightarrow \mathbb{H} \otimes C(V')$ given by
\begin{align} \label{eqn diagonal embed}
  \widetilde{s}_{\alpha} \mapsto t_{s_{\alpha}} \otimes \widetilde{s}_{\alpha} .
\end{align}

\begin{lemma} \cite{Ci} (also see \cite[Lemma 4.1]{Ch1}) \label{lem center of tilde w}
The element 
\[\sum_{\alpha>0, \beta>0, s_{\alpha}(\beta)<0} c_{\alpha}c_{\beta}|\alpha||\beta|s_{\alpha}s_{\beta}\otimes \widetilde{s}_{\alpha}\widetilde{s}_{\beta} 
\]
lies in the center of $\Delta(\mathbb{C}[\widetilde{W}])$.
\end{lemma}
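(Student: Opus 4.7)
The plan is to recognize the displayed element, call it $\Omega$, as the non-polynomial piece of $D^{2}$ and then to exploit the well-known $\widetilde{W}$-equivariance of the Dirac element.

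First I would observe that $\Omega$ actually sits inside $\Delta(\mathbb{C}[\widetilde{W}])$. Indeed, since $\Delta$ is an algebra homomorphism, the defining formula $\Delta(\widetilde{s}_{\alpha})=t_{s_{\alpha}}\otimes \widetilde{s}_{\alpha}$ gives
\[
\Delta(\widetilde{s}_{\alpha}\widetilde{s}_{\beta}) = t_{s_{\alpha}s_{\beta}}\otimes \widetilde{s}_{\alpha}\widetilde{s}_{\beta},
\]
so each summand of $\Omega$ equals $c_{\alpha}c_{\beta}|\alpha||\beta|\,\Delta(\widetilde{s}_{\alpha}\widetilde{s}_{\beta})$. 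Hence $\Omega \in \Delta(\mathbb{C}[\widetilde{W}])$ automatically, and the issue is just to check that it commutes with every $\Delta(\widetilde{s}_{\gamma})$.

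The key idea is to read $\Omega$ off from the formula (\ref{eqn formul d2}):
\[
\Omega \;=\; -\,\tfrac{4}{\mathbf r^{2}}\Bigl(D^{2} + \sum_{i=1}^{n}\epsilon_{i}^{2}\otimes 1\Bigr).
\]
So it suffices to show that both $D^{2}$ and $\sum_{i}\epsilon_{i}^{2}\otimes 1$ commute with $\Delta(\widetilde{w})$ for every $\widetilde{w}\in\widetilde{W}$. The element $\sum_{i}\epsilon_{i}^{2}\in S(V)$ is $W$-invariant with respect to the normalized form $\langle\,,\,\rangle$, and $W$-invariants in $S(V)$ are central in $\mathbb{H}$ (a standard fact for graded Hecke algebras, following directly from Definition \ref{def gaha}(iii)); therefore $\sum_{i}\epsilon_{i}^{2}\otimes 1$ commutes with every $t_{w}\otimes 1$, in particular with each $\Delta(\widetilde{s}_{\gamma})$. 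For $D^{2}$, I would invoke the standard $\widetilde{W}$-equivariance of the Dirac element,
\[
\Delta(\widetilde{w})\,D\,\Delta(\widetilde{w})^{-1} \;=\; \sgn(w)\,D,
\]
which follows by a short direct check using Definition \ref{def gaha}(iii) together with the Clifford relations $\widetilde{g}_{\alpha}\widetilde{g}_{v}+\widetilde{g}_{v}\widetilde{g}_{\alpha}=-2\langle \alpha,v\rangle$ (this is essentially the computation in \cite{BCT} that underlies the definition of $D$). Squaring this identity yields $\Delta(\widetilde{w})\,D^{2}\,\Delta(\widetilde{w})^{-1}=D^{2}$.

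Combining these two centralities gives $\Delta(\widetilde{w})\,\Omega\,\Delta(\widetilde{w})^{-1}=\Omega$ for all $\widetilde{w}\in\widetilde{W}$, which together with $\Omega\in\Delta(\mathbb{C}[\widetilde{W}])$ is precisely the assertion that $\Omega$ lies in the center of $\Delta(\mathbb{C}[\widetilde{W}])$. The only step requiring genuine work is the $\sgn$-equivariance of $D$; everything else is bookkeeping. A more direct approach would check the commutation $[\Delta(\widetilde{s}_{\gamma}),\Omega]=0$ by cancellation in the double sum, but this is considerably messier and the $D^{2}$-route is essentially free once the formula (\ref{eqn formul d2}) is available.
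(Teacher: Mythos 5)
The paper does not prove this lemma; it cites \cite{Ci} and \cite[Lemma 4.1]{Ch1}, and those references establish the statement by a direct combinatorial verification that the double sum is fixed under conjugation by each $\Delta(\widetilde{s}_\gamma)$, with cancellation among terms. Your route is genuinely different and, I think, cleaner given what is already available in Section~\ref{ss dirac op}: rather than wrestling with the double sum, you read the element $\Omega$ off the $D^2$ formula~(\ref{eqn formul d2}) and deduce $\widetilde{W}$-invariance from the two facts $\Delta(\widetilde{s}_\alpha)D=-D\Delta(\widetilde{s}_\alpha)$ (Lemma~\ref{lem commu rel D}) and the centrality of $\sum_i\epsilon_i^2\in S(V)^W$ in $\mathbb{H}$. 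Combined with the easy observation that each summand is $c_\alpha c_\beta|\alpha||\beta|\,\Delta(\widetilde{s}_\alpha\widetilde{s}_\beta)$, so $\Omega\in\Delta(\mathbb{C}[\widetilde{W}])$, this gives exactly the asserted centrality. I checked for circularity: both~(\ref{eqn formul d2}) and Lemma~\ref{lem commu rel D} are proved in \cite{BCT} by direct computation without appealing to this lemma, so your derivation is logically sound. What your approach buys is brevity and conceptual clarity (centrality of $\Omega$ is seen as a shadow of the $\sgn$-equivariance of $D$); what the direct approach buys is independence from the $D^2$ formula and from $\mathbb{H}$ altogether, since the statement is purely about $\mathbb{C}[\widetilde{W}]$ and can be proved without ever introducing the Hecke algebra.

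One small bookkeeping point you should tighten: the expression $\Omega=-\tfrac{4}{\mathbf r^2}\bigl(D^2+\sum_i\epsilon_i^2\otimes 1\bigr)$ involves dividing by $\mathbf r^2$, which is not an invertible element of $\mathbb{H}$. The honest formulation is $\mathbf r^2\Omega=-4\bigl(D^2+\sum_i\epsilon_i^2\otimes 1\bigr)$, and then: since $\mathbf r^2$ is central and a non-zero-divisor in $\mathbb{H}\otimes C(V')$ (because $\mathbb{H}\cong S(V)\otimes\mathbb{C}[W]\otimes\mathbb{C}[\mathbf r]$ as a vector space), the identity $\Delta(\widetilde w)\bigl(\mathbf r^2\Omega\bigr)\Delta(\widetilde w)^{-1}=\mathbf r^2\Omega$ forces $\Delta(\widetilde w)\Omega\Delta(\widetilde w)^{-1}=\Omega$. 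With that adjustment the argument is complete and correct.
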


\begin{lemma} \cite[Lemma 3.4]{BCT} \label{lem commu rel D}
For any $\alpha \in R^+$, $\Delta(\widetilde{s}_{\alpha})D=-D\Delta(\widetilde{s}_{\alpha})$.
\end{lemma}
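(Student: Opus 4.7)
The plan is to verify $\Delta(\widetilde{s}_\alpha)D+D\,\Delta(\widetilde{s}_\alpha)=0$ by a direct calculation in $\mathbb{H}\otimes C(V')$. Expanding $D=\sum_i \widetilde{\epsilon_i}\otimes \widetilde{g}_{\epsilon_i}$ over an orthonormal basis $\{\epsilon_i\}$ of the real form $V_0'$ and writing $\Delta(\widetilde{s}_\alpha)=t_{s_\alpha}\otimes\widetilde{s}_\alpha$, the anticommutation should factor through two auxiliary identities, one on each tensor component.

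On the Clifford factor I would first establish
\begin{equation*}
\widetilde{s}_\alpha\,\widetilde{g}_v \;=\; -\widetilde{g}_{s_\alpha(v)}\,\widetilde{s}_\alpha \qquad (v\in V').
\end{equation*}
This is a short calculation using the Clifford anticommutator $\widetilde{g}_v\widetilde{g}_w+\widetilde{g}_w\widetilde{g}_v=-2\langle v,w\rangle$, the relation $\widetilde{s}_\alpha^2=-1$ (which follows from the normalization $|\alpha|^2=2$), and the reflection formula $s_\alpha(v)=v-\alpha^\vee(v)\alpha$; the cross term $-2\langle v,\alpha\rangle/|\alpha|$ appearing when one slides $\widetilde{g}_\alpha$ past $\widetilde{g}_v$ reassembles exactly into the coroot correction in $\widetilde{g}_{s_\alpha(v)}\,\widetilde{s}_\alpha$.

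On the Hecke factor I would establish
\begin{equation*}
t_{s_\alpha}\,\widetilde{v} \;=\; \widetilde{s_\alpha(v)}\, t_{s_\alpha} \qquad (v\in V_0')
\end{equation*}
by writing $\widetilde{v}=\tfrac{1}{2}(v-v^*)$ and applying the defining cross relation $t_{s_\alpha}v=s_\alpha(v)t_{s_\alpha}+\mathbf{r}c_\alpha\alpha^\vee(v)$ together with its $*$-adjoint (using $t_{s_\alpha}^*=t_{s_\alpha}$, $\mathbf{r}^*=\mathbf{r}$, and the reality of $c_\alpha\alpha^\vee(v)$ for $v\in V_0'$) to commute $t_{s_\alpha}$ past $v^*$. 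The two scalar corrections $\mathbf{r}c_\alpha\alpha^\vee(v)$ come with opposite signs in $\tfrac{1}{2}(v-v^*)$ and cancel; this cancellation is the precise point of the symmetrization built into $\widetilde{v}$. I expect this step to be the main subtlety, as it requires tracking how $v^*$ commutes with $t_{s_\alpha}$ modulo group-algebra corrections and using the conjugate-linearity of $*$ carefully — the restriction to the real form $V_0'$ is essential, since the identity fails for purely imaginary vectors in $V'$.

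Once both identities are available, the conclusion is immediate:
\begin{align*}
\Delta(\widetilde{s}_\alpha)\,D
&= \sum_i (t_{s_\alpha}\widetilde{\epsilon_i})\otimes(\widetilde{s}_\alpha\widetilde{g}_{\epsilon_i})
  \;=\; -\sum_i \widetilde{s_\alpha(\epsilon_i)}\,t_{s_\alpha}\otimes \widetilde{g}_{s_\alpha(\epsilon_i)}\,\widetilde{s}_\alpha \\
&= -\Bigl(\sum_i \widetilde{s_\alpha(\epsilon_i)}\otimes \widetilde{g}_{s_\alpha(\epsilon_i)}\Bigr)\,\Delta(\widetilde{s}_\alpha)
  \;=\; -D\,\Delta(\widetilde{s}_\alpha),
\end{align*}
where in the last step I use that $s_\alpha$ sends the orthonormal basis $\{\epsilon_i\}$ to another orthonormal basis of $V_0'$ together with the basis-independence of $D$ noted in Section \ref{ss dirac op}.
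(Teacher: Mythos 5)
The paper itself does not prove this lemma; it is cited verbatim from \cite{BCT}, Lemma~3.4. Your reconstruction is correct and matches the standard argument there: the two intermediate identities $\widetilde{s}_\alpha\widetilde{g}_v=-\widetilde{g}_{s_\alpha(v)}\widetilde{s}_\alpha$ in $C(V')$ and $t_{s_\alpha}\widetilde{v}=\widetilde{s_\alpha(v)}\,t_{s_\alpha}$ in $\mathbb{H}$ are exactly the two pieces \cite{BCT} uses, and you combine them with the basis-independence of $D$ in the same way. Your Clifford calculation checks out (with $\langle\alpha,\alpha\rangle=2$, $\widetilde{s}_\alpha^2=-1$, and $\alpha^\vee(v)=\langle v,\alpha\rangle$, the correction terms indeed reassemble into $\widetilde{g}_{s_\alpha(v)}\widetilde{s}_\alpha$), and your Hecke-side calculation via the $*$-adjoint of the cross relation is a legitimate route to $t_{s_\alpha}\widetilde{v}=\widetilde{s_\alpha(v)}\,t_{s_\alpha}$: taking $*$ of $t_{s_\alpha}v=s_\alpha(v)t_{s_\alpha}+\mathbf{r}c_\alpha\alpha^\vee(v)$, substituting $v\mapsto s_\alpha(v)$, and using $\alpha^\vee(s_\alpha(v))=-\alpha^\vee(v)$ gives $t_{s_\alpha}v^*=(s_\alpha(v))^*t_{s_\alpha}+\mathbf{r}c_\alpha\alpha^\vee(v)$, and the two $\mathbf{r}c_\alpha\alpha^\vee(v)$ terms cancel in $\frac{1}{2}(v-v^*)$, just as you say. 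Your remark that the restriction to the real form $V_0'$ (so that $c_\alpha\alpha^\vee(v)$ is fixed by complex conjugation under the Hermitian-linear $*$) is needed is the right subtlety to flag. One small point: you could equally well derive $t_{s_\alpha}\widetilde{v}=\widetilde{s_\alpha(v)}\,t_{s_\alpha}$ directly from the explicit formula $\widetilde{v}=v-\tfrac12\sum_{\beta>0}c_\beta\beta^\vee(v)t_{s_\beta}$ (the form used in \cite{BCT}) rather than via the $*$-adjoint; the two are equivalent, but the direct route avoids having to keep track of the conjugate-linearity of $*$.
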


Lemma \ref{lem commu rel D} implies that $H_D(X)$ is equipped with a natural $\widetilde{W}$-action via the map $\Delta$.

\subsection{The operator $D_{\mathbb{A}}$} \label{ss operator aw}
Let $\mathbb{A}_W:=\mathbb A_W(V)=S(V)  \rtimes W$ be the skew group ring, which is equipped with a $\mathbb{Z}$-graded structure given by $\mathrm{deg}(v)=1$ for $v \in V \subset S(V)$ and $\mathrm{deg}(w)=0$ for $w\in W$. Define $D_{\mathbb{A}} \in \mathbb{A}_W \otimes C(V)$ as:
\[   D_{\mathbb{A}} = \sum_{i=1}^n \epsilon_i \otimes c_{\epsilon_i} .
\]

It is straightforward to compute that $D_{\mathbb{A}}^2=-\sum_{i=1}^n \epsilon^2 \otimes 1$. If $\overline{X}$ is a graded $\mathbb{A}_W$-module, then $D_{\mathbb{A}}^2$ acts identically by zero on $\overline{X}$. For an $\mathbb{A}_W$-module $\overline{X}$, we define similarly
\begin{eqnarray} \label{eqn dirac coh grad}
   H_{D_{\mathbb{A}}}(\overline{X}) =\frac{\mathrm{ker}~ D_{\mathbb{A}}}{\mathrm{ker}~ D_{\mathbb{A}} \cap \mathrm{im}~ D_{\mathbb{A}}} ,
\end{eqnarray}
where we realize $D_{\mathbb{A}}$ to be the operator on $\overline X$ from its $\mathbb{A}_W$-action on $\overline{X}$.

We define $\Delta_{\mathbb{A}}:\mathbb{C}[\widetilde{W}] \rightarrow \mathbb{A}_W \otimes S$ given by $\widetilde{s}_{\alpha} \mapsto t_{s_{\alpha}} \otimes \widetilde{s}_{\alpha}$. We still have a version of Lemma \ref{lem commu rel D}.
\subsection{Six-term exact sequence} \label{ss six term}

Let $X$ be an $\mathbb{H}$-module admitting a $\mathbb{Z}_2$-grading. Let $\widetilde{X}^{\pm}=X^{\pm}\otimes S$. Note that $D(X^{\pm}\otimes S) \subset X^{\mp} \otimes S$. Define $D^{\pm}=D|_{X^{\pm}\otimes S}$. 

We define
\[ H^{\pm}_D(X) =\mathrm{ker} D^{\pm}/(\mathrm{ker} D^{\pm} \cap \mathrm{im} D^{\mp}) .
\]
We remark that the notion here does not coincide with the one in \cite{CT} since we use a different $\mathbb{Z}_2$-grading on $X\otimes S$. 

\begin{definition} \label{def admit cc}
An $\mathbb{H}$-module $X$ is said to {\it admit a central character} if $X$ is finite-dimensional and any element $z$ in $Z(\mathbb{H})$ acts by a scalar on $X$. By Schur's lemma, any irreducible module always admits a central character. It is known that $Z(\mathbb{H})=S(V)^W \times \mathbb{C}[\mathbf r]$ \cite[6.5]{Lu1}. We shall identify naturally the set of central characters $\chi_{(W\gamma^{\vee},r)}$ with $(W\gamma^{\vee},r) \in (V^{\vee}/W )\times \mathbb{C}$ such that $\chi_{(W\gamma^{\vee},r)}(z,r')=\gamma^{\vee}(z)rr'$.
\end{definition}

\begin{proposition} \label{prop six term es}
Let $X$ be a $\mathbb{Z}_2$-graded $\mathbb{H}$-module admitting a central character. Suppose there exists a short exact sequence of $\mathbb{H}$-modules for $X$:
\[  0 \rightarrow X \rightarrow Y \rightarrow Z \rightarrow 0 .\]
Then we have the following six-term short exact sequence:
\[\xymatrix{ 
     & H^+_D(X)   \ar[r] & H^+_D(Y)       \ar[r] &  H^+_D(Z)  \ar[d]  &   \\
     & H^-_D(Z)  \ar[u] & \ar[l]  H^-_D(Y) &  \ar[l] H^-_D(X) & } .
\]
The statement still holds if we replace $\mathbb{Z}_2$-graded $\mathbb{H}$-modules by $\mathbb Z$-graded $\mathbb{A}_W$-modules and replace $H_D$ by $H_{D_{\mathbb{A}}}$.
\end{proposition}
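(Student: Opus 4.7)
The approach is the snake lemma for short exact sequences of chain complexes. The subtlety is that $D$ is not a genuine differential on $X \otimes S$ because $D^2 \neq 0$ in general, so the plan is to first localize to the subspace on which $D^2 = 0$, and then apply the snake lemma there.

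From formula (\ref{eqn formul d2}) and Lemma \ref{lem center of tilde w}, I write $D^2 = P\otimes 1 + Q$, where $P = -\sum_i \epsilon_i^2 \in Z(\mathbb{H})$ and $Q$ lies in the center of $\Delta(\mathbb{C}[\widetilde{W}])$. Since $X$ admits a central character, $P$ acts on $X\otimes S$ by a scalar; and $Q$ acts by a scalar on each $\widetilde{W}$-isotypic component of $X\otimes S$. Hence $D^2$ is diagonalizable on $X\otimes S$, and I split $X\otimes S = (X\otimes S)_0 \oplus (X\otimes S)_{\neq}$ into the $\lambda = 0$ and $\lambda \neq 0$ eigenspaces. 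Both summands are preserved by $D$ (which commutes with $D^2$) and by the $\mathbb{Z}_2$-grading (since $D^2$ is $\mathbb{Z}_2$-even). On $(X\otimes S)_{\neq}$, $D$ is invertible, so $H_D^\pm$ vanishes there. On $(X\otimes S)_0$, $D^2 = 0$, so the diagram $(X\otimes S)_0^+ \xrightarrow{D^+} (X\otimes S)_0^- \xrightarrow{D^-} (X\otimes S)_0^+$ is a genuine $\mathbb{Z}/2$-graded chain complex whose cohomology is $H_D^\pm(X)$.

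Assuming, as will be the case in the intended applications (where $X$, $Y$, $Z$ arise from a composition series sharing a central character), that $Y$ and $Z$ admit the same central character $\chi$ as $X$, the same decomposition applies to them. Since extracting the $0$-eigenspace amounts to selecting certain $\widetilde{W}$-isotypic components, which is an exact functor on finite-dimensional modules, the given SES tensored with $S$ restricts to a short exact sequence $0 \to (X\otimes S)_0 \to (Y\otimes S)_0 \to (Z\otimes S)_0 \to 0$ of $\mathbb{Z}/2$-graded chain complexes. The classical snake lemma for SES of chain complexes then yields the desired six-term exact sequence in $H_D^\pm$.

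For the $\mathbb{A}_W$ version the setup is simpler: $D_{\mathbb{A}}^2$ acts by zero on any graded $\mathbb{A}_W$-module, so $D_{\mathbb{A}}$ is already a differential on the full tensor product and the snake lemma applies directly to the tensored SES without any eigenspace reduction. The main obstacle is verifying the compatibility of the $D^2 = 0$ localization with the SES in the $\mathbb{H}$-case, which hinges on matching central characters across $X$, $Y$, $Z$; once that is granted, the exactness of the restricted sequence follows because the localization is an isotypic-component functor on a finite-dimensional module.
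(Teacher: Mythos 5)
Your proof is correct and is essentially the argument the paper defers to in the Huang--Pand\v{z}i\'c reference (the paper gives no independent proof here): split $X\otimes S$, $Y\otimes S$, $Z\otimes S$ into the $0$- and nonzero-eigenspaces of $D^2$ (a $\widetilde{W}$-isotypic decomposition determined solely by the common central character, hence respected by the $\mathbb{H}$-module maps in the sequence), observe that $D$ is a genuine $2$-periodic differential on the $0$-part and invertible on the rest, and apply the snake lemma. You are also right to flag that the statement tacitly requires $Y$ and $Z$ to be $\mathbb{Z}_2$-graded and to admit the same central character as $X$ with compatible maps --- this is automatic in any nontrivial sequence where all three admit central characters and holds in every application in the paper --- and that in the $\mathbb{A}_W$ case no localization is needed since $D_{\mathbb{A}}^2=0$ on any graded module.
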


For a proof of Proposition \ref{prop six term es} and the definition of the connecting homomorphisms, one may refer to a similar setting in \cite{HP}. By definitions, we have $H_D(A)=H^+_D(A)\oplus H^-_D(A)$. In particular, we have:
\begin{corollary}
\begin{enumerate}
\item If $H_D(X)=H_D(Z)=0$, then $H_D(Y)=0$.
\item If $H_D(Y)=0$, then $H_D(X) \cong H_D(Z)$. 
\end{enumerate}
\end{corollary}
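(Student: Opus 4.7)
The plan is to deduce this corollary directly from Proposition \ref{prop six term es}, the six-term exact sequence, together with the definitional identity $H_D(A)=H_D^+(A)\oplus H_D^-(A)$. Since the assertions concern total Dirac cohomology, the first step is to translate each hypothesis into a statement about both the $+$ and the $-$ components: the vanishing of $H_D(A)$ is equivalent to the vanishing of both $H_D^+(A)$ and $H_D^-(A)$.

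For part (i), I would extract from the six-term sequence the two three-term exact pieces
\[  H_D^+(X)\to H_D^+(Y)\to H_D^+(Z) \quad \text{and} \quad H_D^-(X)\to H_D^-(Y)\to H_D^-(Z). \]
Under the hypothesis $H_D(X)=H_D(Z)=0$, both outer terms in each sequence vanish, and exactness forces $H_D^\pm(Y)=0$. Taking the direct sum yields $H_D(Y)=0$.

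For part (ii), the vanishing of $H_D(Y)$ gives $H_D^+(Y)=H_D^-(Y)=0$, so the cyclic six-term sequence collapses to two short exact pieces
\[  0\to H_D^+(Z)\to H_D^-(X)\to 0, \qquad 0\to H_D^-(Z)\to H_D^+(X)\to 0, \]
producing isomorphisms $H_D^+(Z)\cong H_D^-(X)$ and $H_D^-(Z)\cong H_D^+(X)$. Summing the two and reindexing yields $H_D(X)\cong H_D(Z)$, as required.

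There is essentially no obstacle here: the argument is a formal diagram chase using Proposition \ref{prop six term es}, and the only mild subtlety is keeping track of the parity shift in the connecting maps so that the final isomorphism in (ii) combines $H_D^+$ with $H_D^-$ correctly before reassembling as total Dirac cohomology. The analogous statements for $\mathbb{Z}$-graded $\mathbb{A}_W$-modules and $H_{D_{\mathbb{A}}}$ follow by exactly the same argument, since Proposition \ref{prop six term es} is stated in both settings.
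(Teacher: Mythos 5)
Your argument is correct and is exactly the implicit reasoning the paper leaves to the reader after stating Proposition \ref{prop six term es} and the observation $H_D(A)=H_D^+(A)\oplus H_D^-(A)$: both parts are immediate diagram chases around the cyclic six-term sequence, and you handle the parity shift in (ii) correctly ($H_D^+(Z)\cong H_D^-(X)$ and $H_D^-(Z)\cong H_D^+(X)$, which reassembles to $H_D(X)\cong H_D(Z)$).
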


\subsection{Iwahori-Matsumoto involution} \label{ss IM invol}

\begin{definition}
The Iwahori-Matsumoto $IM$ is a linear involution $IM: \mathbb{H} \rightarrow \mathbb{H}$ characterized by 
\[  IM(v)=-v \quad \mbox{ for $v \in V$ }, \quad IM(t_w)=(-1)^{l(w)}t_w .
\]
For an $\mathbb{H}$-module $X$, $IM(X)$ is an $\mathbb{H}$-module isomorphic to $X$ as vector spaces with $\mathbb{H}$-action given by:
\[   \pi_{IM(X)}(h)x=\pi_X(IM(h))x ,
\]
where $\pi_{IM(X)}$ (resp. $\pi_X$) are the maps defining the $\mathbb{H}$-action.
\end{definition}

We shall implicity use the following fact:
\begin{lemma} \label{lem iwahoi mat dirac}
Let $X$ be an $\mathbb{H}$-module. Then $H_D(X) \neq 0$ if and only if $H_D(IM(X)) \neq 0$. 
\end{lemma}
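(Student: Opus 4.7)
The plan is to show that the Dirac operator acting on $IM(X)\otimes S$ coincides, up to a global sign, with the Dirac operator acting on $X\otimes S$, under the natural identification of underlying vector spaces. This forces $\ker D$ and $\mathrm{im}\, D$ to be the same subspaces in both interpretations, so $H_{D}(IM(X))=H_{D}(X)$ as vector spaces, and the biconditional on non-vanishing follows at once.

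The one substantive step is to establish $IM(\widetilde{v})=-\widetilde{v}$ for every $v\in V$. For this I would invoke the standard explicit formula for the star involution on $V$, namely
\[
v^{*}=-v+\mathbf{r}\sum_{\alpha>0}c_{\alpha}\alpha^{\vee}(v)\,t_{s_{\alpha}},
\]
so that $\widetilde{v}=v-\tfrac{1}{2}\mathbf{r}\sum_{\alpha>0}c_{\alpha}\alpha^{\vee}(v)\,t_{s_{\alpha}}$. Since $IM(v)=-v$, $IM(t_{s_{\alpha}})=-t_{s_{\alpha}}$, and $IM(\mathbf{r})=\mathbf{r}$ (the last being forced by applying $IM$ to the cross relation in Definition \ref{def gaha}), each summand in the expression for $\widetilde{v}$ flips sign under $IM$, yielding $IM(\widetilde{v})=-\widetilde{v}$.

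Given this identity, $D=\sum_{i}\widetilde{\epsilon_{i}}\otimes\widetilde{g}_{\epsilon_{i}}$ acts on $IM(X)\otimes S$ through $\pi_{IM(X)}(\widetilde{\epsilon_{i}})=\pi_{X}(IM(\widetilde{\epsilon_{i}}))=-\pi_{X}(\widetilde{\epsilon_{i}})$, so it coincides with $-D$ acting on $X\otimes S$. Consequently $\ker D$ and $\mathrm{im}\, D$ agree as subspaces of the common vector space, and $H_{D}(IM(X))\cong H_{D}(X)$ as vector spaces. (As $\widetilde{W}$-representations the two may differ by a sign character coming from the effect of $IM$ on the diagonal embedding $\Delta$, but this is immaterial for non-vanishing.)

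The main and essentially the only obstacle is justifying the explicit formula for $v^{*}$, which is standard in the graded Hecke algebra literature (see e.g.\ \cite{BCT}) but is not reproduced in the excerpt. Equivalently, one can verify directly that $IM$ and $*$ commute as maps $\mathbb{H}\to\mathbb{H}$: this is immediate on $\mathbb{C}[W]$ and on $\mathbb{C}[\mathbf{r}]$, while on $V$ it reduces to the same explicit calculation using the cross relations. Everything else in the argument is formal.
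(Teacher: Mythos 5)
Your proof is correct and is essentially the ``straightforward from definitions'' verification that the paper has in mind: the paper's own proof is just the one-line remark that it follows directly from the definitions, and you have spelled out the two-step calculation — first that $IM(\widetilde v)=-\widetilde v$, and then that therefore $\pi_{IM(X)\otimes S}(D)=-\pi_{X\otimes S}(D)$ under the identification of underlying vector spaces, so $\ker D$ and $\operatorname{im}D$ coincide. One small point worth making explicit: the sum in the formula for $v^{*}$ ranges over all positive roots, not only simple ones, so you are implicitly using that $IM(t_{s_\alpha})=(-1)^{l(s_\alpha)}t_{s_\alpha}=-t_{s_\alpha}$ for every reflection $s_\alpha$, which holds because $(-1)^{l(w)}=\det(w)$ and a reflection has determinant $-1$. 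Your parenthetical about the $\widetilde W$-structure shifting by $\mathrm{sgn}$ is also right and harmless for non-vanishing, since $\widetilde\sigma\mapsto\mathrm{sgn}\otimes\widetilde\sigma$ is a bijection on $\mathrm{Irr}\,\widetilde W$.
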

\begin{proof}
Striaghtforward from definitions.
\end{proof}
\section{Deformation for Dirac cohomology} \label{s def dirac coh}

\subsection{Deformation for modules} \label{ss def mod}

Let $S^{\leq i}(V)$ be the space containing polynomials of degree less than or equal to $i$. Let $\mathbb{H}_i=\left\{ t_wp: w \in W, p \in S^{\leq i}(V) \right\}$ for $i \geq 0$ and let $\mathbb{H}_{-1}=0$. We have a natural linear isomorphism:
\begin{eqnarray} \label{eqn iso graded algebra}
  \mathbb{A}_W \cong \bigoplus_{i \geq 0} \mathbb{H}_i/\mathbb{H}_{i-1} .
\end{eqnarray}

Let $\sigma$ be a $W$-representation. Define a $W$-filtration on $\mathbb{H}\otimes_{\mathbb{C}[W]}\sigma$ by
\[  F(\sigma)_i := \left\{ p \otimes u : u \in \lambda \mbox{ and } p \in S^{\leq i}(V)  \right\}.
\]

\begin{definition} \label{def deform structure}
Let $X=X^+ \oplus X^-$ be a $\mathbb{Z}_2$-graded $\mathbb{H}$-module. Let $\sigma$ be a $W$-subspace of $X^+$ or $X^-$. Then we define a $W$-filtration, depending on $\sigma$, on $X$ as follows. Define a natural map $\psi_{\sigma}: \mathbb{H} \otimes_{\mathbb{C}[W]} \sigma \rightarrow X$ given by $1\otimes x \mapsto x$. If $\psi_{\sigma}$ is surjective, we shall say that $\sigma$ is a {\it choice of deformation} for $X$. If $X$ is irreducible, the PBW basis for $\mathbb{H}$ implies that any $W$-subspace $\sigma$ of $X^{\pm}$ is a choice of deformation for $X$.

We now assume $\sigma$ is a choice of deformation for $X$. Let $X_{\sigma, i}=\psi_{\sigma}(F(\sigma)_i)$ or simply $X_i$ if the choice of $\sigma$ is clear. Define $\overline{X}_{\sigma}= \bigoplus_i X_{i}/X_{i-1}$. There is a natural graded $\mathbb{A}_W$-structure on $\overline{X}_{\sigma}$ which can be explicitly described as follows: Denote by $\mathrm{pr}_{\sigma,i}: X_{i} \rightarrow X_{i}/X_{i-1} \subset \overline{X}_{\sigma}$ a natural projection map. (We simply write $\mathrm{pr}_i$.) For $\mathrm{pr}_{i+1}(x) \in X_{i+1}/X_{i}$,
\begin{enumerate}
\item for $v \in V \subset \mathbb{A}_W$, $v .\mathrm{pr}_{i+1}(x) =\mathrm{pr}_{i+2}(\widetilde{v}.x) \in X_{i+2}/X_{i+1}$, where $\widetilde{v}$ acts by the $\mathbb{H}$-action on $x$;
\item for $t_w \in \mathbb{C}[W] \subset \mathbb{A}[W]$, $t_w.\mathrm{pr}_{i+1}(x)=\mathrm{pr}_{i+1}(t_w.x) \in X_{i+1}/X_i$, where $t_w.x$ is given by the $\mathbb{H}$-action on $X$.
\end{enumerate}
\end{definition}

\begin{lemma} 
The above $\mathbb{A}_W$-structure on $\overline{X}_{\sigma}$ is well-defined.
\end{lemma}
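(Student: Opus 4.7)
The plan is to reduce the lemma to the standard PBW observation that the associated graded of $\mathbb{H}$ with respect to $\left\{\mathbb{H}_i\right\}$ is precisely $\mathbb{A}_W$, the content of the isomorphism (\ref{eqn iso graded algebra}). The first step is to check that the filtration $\left\{ X_i \right\}$ on $X$ is compatible with the PBW filtration on $\mathbb{H}$, in the sense that $\mathbb{H}_j \cdot X_i \subseteq X_{i+j}$. This is built into the construction: since $X_i=\psi_\sigma(F(\sigma)_i)$ is the image under $\psi_\sigma$ of $\mathbb{H}_i \otimes_{\mathbb{C}[W]} \sigma$ and multiplication inside $\mathbb{H}$ is filtered, the desired inclusion follows at once.

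Next I would check that formulas (1) and (2) yield well-defined operators on $\overline{X}_\sigma$. The key point is that $t_w \in \mathbb{H}_0$, while $\widetilde{v}=\frac{1}{2}(v-v^*)$ lies in $\mathbb{H}_1$ with leading PBW symbol $v$; the latter follows from $\widetilde{v}^*=-\widetilde{v}$ together with the Lusztig relation in Definition \ref{def gaha}(3), which forces $v+v^* \in \mathbb{H}_0$. Hence $\widetilde{v}\cdot X_{i+1}\subseteq X_{i+2}$ and $t_w\cdot X_{i+1}\subseteq X_{i+1}$, and moreover if $x-x'\in X_i$ then $\widetilde{v}(x-x')\in X_{i+1}$ and $t_w(x-x')\in X_i$. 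Thus the projections $\mathrm{pr}_{i+2}(\widetilde{v} x)$ and $\mathrm{pr}_{i+1}(t_w x)$ depend only on $\mathrm{pr}_{i+1}(x)$, and the two formulas descend to linear maps on $\overline{X}_\sigma$ of $\mathbb{Z}$-degrees $1$ and $0$, respectively.

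Finally, I would verify that these operators satisfy the defining relations of $\mathbb{A}_W=S(V)\rtimes W$. The $W$-part is immediate from the $\mathbb{H}$-module axioms. Commutativity of the $V$-operators on $\overline{X}_\sigma$ reduces to showing $[\widetilde{v}_1,\widetilde{v}_2]\in \mathbb{H}_1$ for $v_1,v_2 \in V$, and the crossed relation $t_w\cdot v = w(v)\cdot t_w$ reduces to $t_w\widetilde{v}-\widetilde{w(v)}t_w \in \mathbb{H}_0$; both are short calculations in $\mathbb{H}$ using only the Lusztig relation (the second by induction on $l(w)$, starting from the simple-reflection case, which is essentially the Lusztig relation itself). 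The main, though modest, obstacle is the bookkeeping in these two commutator computations, where one must track the PBW filtration carefully to see that each commutator lands one level below the naive estimate. Once these inclusions are in hand, the induced maps on $X_{i+1}/X_i$ satisfy $[v_1,v_2]=0$ and $t_w v = w(v) t_w$, giving $\overline{X}_\sigma$ the structure of a graded $\mathbb{A}_W$-module in the canonical way dictated by the isomorphism $\mathbb{A}_W\cong \mathrm{gr}\,\mathbb{H}$.
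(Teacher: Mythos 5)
Your proposal is correct and follows essentially the same strategy as the paper's proof: verify independence of representative, then check the defining relations of $\mathbb{A}_W$ by computing commutators in $\mathbb{H}$ and observing they drop at least one filtration level. The one point where you are less precise than the paper: the paper records the exact identities $\widetilde{v}_2\widetilde{v}_1-\widetilde{v}_1\widetilde{v}_2\in \mathbb{C}[W]$ and $t_w\widetilde{v}=\widetilde{w(v)}\,t_w$ (no error term at all), whereas you only invoke the weaker filtration bounds $[\widetilde{v}_1,\widetilde{v}_2]\in\mathbb{H}_1$ and $t_w\widetilde{v}-\widetilde{w(v)}t_w\in\mathbb{H}_0$ — these are sufficient for the conclusion, so this is a stylistic rather than a substantive difference. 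One small caution: your claimed derivation that ``$\widetilde{v}^*=-\widetilde{v}$ together with the Lusztig relation forces $v+v^*\in\mathbb{H}_0$'' is a bit circular as phrased, since in this paper $\widetilde{v}$ is itself defined in terms of $*$; the cleaner route is to recall the explicit PBW expansion $\widetilde{v}=v-\tfrac{\mathbf{r}}{2}\sum_{\alpha>0}c_\alpha\alpha^\vee(v)t_{s_\alpha}$ (as in Barbasch--Ciubotaru--Trapa), from which $\widetilde{v}\in\mathbb{H}_1$ with leading symbol $v$ is immediate. With that noted, your argument is sound.
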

\begin{proof}
By the definitions of $X_i$, the action is independent of a choice of a representative for $\mathrm{pr}_{i+1}(x) \in X_{i+1}/X_i$. We also have to check the defining relations for $\mathbb{A}_W$.  For $v_1, v_2 \in V$, a direct computation gives that $\widetilde{v}_2\widetilde{v}_1-\widetilde{v}_1\widetilde{v}_2 \in \mathbb{C}[W]$ and hence $(\widetilde{v}_1\widetilde{v}_2-\widetilde{v}_2\widetilde{v}_1).x \in X_{i+1}$. Thus $\mathrm{pr}_{i+3}((\widetilde{v}_1\widetilde{v}_2-\widetilde{v}_2\widetilde{v}_1).x)=0$ as an element in $X_{i+3}/X_{i+2}$.

Now we consider $w \in W$ and $v\in V$. It is again from direct computation that $t_w\widetilde{v}=\widetilde{w(v)}t_w$. Thus $\mathrm{pr}_{i+2}((t_w\widetilde{v}-\widetilde{w(v)}t_w).x)=0$ as desired.
\end{proof}

By switching $X^+$ and $X^-$ if necessary, we shall assume that $\sigma \subset X^+$ from now on. Inductively, we can fix a $W$-map $\iota_{\sigma,i}: X_i/X_{i-1} \rightarrow X_i \subset X$ such that
\begin{enumerate}
\item[(1)] $\mathrm{pr}_{i} \circ \iota_{\sigma,i}(\bar{y})=\bar{y}$ for any $\bar{y} \in X_{i+1}/X_i$;
\item[(2)] $\mathrm{im}~\iota_{\sigma,i} \subset X^+$ if $i\equiv 0$ (mod $2$); and $\mathrm{im}~\iota_{\sigma,i} \subset X^-$ if $i \equiv 1$ (mod $2$). 
\end{enumerate}
There is no canonical choice for $\iota_{\sigma,i}$ in general but it is not important in our study.

 Then we obtain a $W$-map $\iota_{\sigma}=\bigoplus_i \iota_{\sigma, i}: \overline{X}_{\sigma} \rightarrow X$. We simply write $\iota$ for $\iota_{\sigma}$ if there is no confusion. As a $W$-map, define $\kappa_{\sigma}=\iota_{\sigma}^{-1} :X \rightarrow \overline{X}_{\sigma} $. We simply write $\kappa$ for $\kappa_{\sigma}$.

If we identify $X=\bigoplus_{i} \mathrm{im}~\iota_{\sigma,i}$, then we have $\kappa = \bigoplus_i \mathrm{pr}_i|_{\mathrm{im}~\iota_{\sigma,i}}$.

\begin{example}
Let $W$ be the Weyl group of type $A_1$. Let $V=\mathbb{C}\otimes_{\mathbb Z}R$. Let $\mathbb{C}x$ be a $1$-dimensional $\mathbb{H}$-module such that $v.x=0$ for all $v \in V$. Then $\mathbb{H}\otimes_{S(V)}\mathbb{C}x$ has $1$ copy of the trivial representation $\mathrm{triv}$ and $1$ copy of the sign representation $\mathrm{sgn}$. Choose $\sigma$ to be the $\mathrm{triv}$-isotypic component. Let $0\neq u \in \sigma$. Then $\overline{X}_{\sigma}$ has a basis $\left\{ u, v_0.u \right\}$ for some non-trivial $v_0\in V$. The $\mathbb{A}_w$-structure is given by $v_0(v_0.u)=0$ and $t_w.u=u$ and $t_w.u=(-1)^{l(w)}u$ for $w \in S_2$. 
\end{example}

We remark that we essentially use the $\mathbb{Z}_2$-grading in the following lemma:
\begin{lemma} \label{lem deformed action1}
For $v \in V$ and $x \in \mathrm{im}~ \iota_{\sigma, i}$,
\[  \kappa(\widetilde{v}.x)-v.\kappa(x)  \in \bigoplus_{j<i} X_j/X_{j-1} .\]
\end{lemma}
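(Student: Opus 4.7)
The approach is a direct unpacking of the definitions in three stages: first bound $\widetilde v . x$ by the filtration level $i+1$, then recognize $v . \kappa(x)$ as the top graded component of $\kappa(\widetilde v . x)$, and finally use the $\mathbb Z_2$-grading to gain one extra level in the vanishing. The parenthetical remark preceding the lemma already signals this: without the $\mathbb Z_2$-grading one only obtains $\bigoplus_{j \leq i} X_j/X_{j-1}$, and it is the parity argument that sharpens $j \leq i$ to $j < i$.

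First I would observe that $\widetilde v - v$ lies in $\mathbb H_0 = \mathbb C[W][\mathbf r]$. This follows from $\widetilde v = \tfrac12 (v - v^*)$ together with the explicit formula for $v^*$ forced by the defining relations, which puts $v^*$ in $-v + \mathbb C[W][\mathbf r]$. Combining this with the fact that $\mathbb C[W]$ preserves the filtration $\{X_j\}$ — which follows from $\sigma$ being $W$-stable together with the commutation $t_w p \in S^{\leq i}(V)\,\mathbb C[W]$ whenever $p \in S^{\leq i}(V)$ — one concludes that $\widetilde v . x \in X_{i+1}$ for every $x \in X_i$.

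Next, using the vector-space splitting $X = \bigoplus_k \mathrm{im}\,\iota_{\sigma,k}$ determined by the sections $\iota_{\sigma,k}$, I would write
\[
\widetilde v . x \;=\; \sum_{k \leq i+1} y_k, \qquad y_k \in \mathrm{im}\,\iota_{\sigma,k},
\]
so that by the construction of $\kappa$ one has $\kappa(\widetilde v . x) = \sum_{k \leq i+1} \mathrm{pr}_k(y_k)$ in $\overline X_\sigma$. Since $\kappa(x) = \mathrm{pr}_i(x)$ and the $\mathbb A_W$-action on $\overline X_\sigma$ is defined by $v . \mathrm{pr}_i(x) = \mathrm{pr}_{i+1}(\widetilde v . x)$, the vector $v . \kappa(x)$ equals $\mathrm{pr}_{i+1}(y_{i+1})$, which is precisely the top-degree component of $\kappa(\widetilde v . x)$. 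Subtracting yields
\[
\kappa(\widetilde v . x) - v . \kappa(x) \;=\; \sum_{k \leq i} \mathrm{pr}_k(y_k) \;\in\; \bigoplus_{k \leq i} X_k/X_{k-1}.
\]

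The substantive step — improving $k \leq i$ to $k < i$ — is the one place where the $\mathbb Z_2$-grading genuinely enters, and I expect this to be the only nontrivial point. By condition (2) on the sections $\iota_{\sigma,k}$, every element of $\mathrm{im}\,\iota_{\sigma,k}$ has $\mathbb Z_2$-parity $k \pmod{2}$. Since $\widetilde v$ is odd for the $\mathbb Z_2$-grading on $\mathbb H$, the vector $\widetilde v . x$ has parity $i+1 \pmod{2}$, forcing $y_k = 0$ for every $k \equiv i \pmod{2}$; in particular $y_i = 0$. This drops the sum to $\bigoplus_{k < i} X_k/X_{k-1}$, proving the claim. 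The only bookkeeping subtlety is checking that each section $\iota_{\sigma,k}$ really lands in a single parity piece so that the cancellation of $y_i$ is unambiguous, which is precisely what condition (2) guarantees.
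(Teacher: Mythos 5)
Your proof is correct and takes essentially the same approach as the paper: decompose $\widetilde v.x$ into graded pieces $y_k \in \mathrm{im}\,\iota_{\sigma,k}$, recognize the top piece via $v.\mathrm{pr}_i(x) = \mathrm{pr}_{i+1}(\widetilde v.x)$, and invoke the $\mathbb Z_2$-parity to kill the degree-$i$ piece. The only differences are cosmetic — you apply the parity argument last rather than first, and you make explicit (via $\widetilde v \in v + \mathbb C[W][\mathbf r]$) the filtration bound $\widetilde v.X_i \subseteq X_{i+1}$ that the paper leaves implicit.
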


\begin{proof}
If $\mathrm{im}~\iota_{\sigma, i} \subset X^{\pm}$, then $\widetilde{v}.x \in X^{\mp}$ and in particular, $\widetilde{v}.x \notin \mathrm{im}~\iota_{\sigma,i}$. Thus we can write $\widetilde{v}.x=x_1 +y$ for some $x_1 \in \mathrm{im}~\iota_{\sigma,i+1}$ and $y \in \bigoplus_{j<i}\mathrm{im}~\iota_{\sigma,j}$. Thus it suffices to show that $\kappa(x_1)=v.\kappa(x)$. To this end, we first note that $\mathrm{pr_{i+1}}(\widetilde{v}.x)=\mathrm{pr}_{i+1}(x_1)=\kappa(x_1)$. By the definition of $\mathbb{A}_W$-action on $\overline{X}_{\sigma}$, we have $\mathrm{pr}_{i+1}(\widetilde{v}.x)=v.\mathrm{pr}_i(x)$. Hence $v.\mathrm{pr}_i(x)=\mathrm{pr}_{i+1}(x_1)=\kappa(x_1)$ as desired. 
\end{proof}

$\kappa$ induces a natural map $\widetilde{\kappa}: X \otimes S \rightarrow \overline{X}_{\sigma}\otimes S$ given by $\widetilde{\kappa}(x \otimes s)=\kappa(x) \otimes s$. Similarly, $\iota$ induces a natural map  $\widetilde{\iota}: \overline{X}_{\sigma}\otimes S \rightarrow X \otimes S$ determined by $\widetilde{\iota}(\overline{y} \otimes s)=\iota(\overline{y}) \otimes s$. We also have an analogous map $\widetilde{\iota}_{\sigma, k}: X_k/X_{k-1}\otimes S \rightarrow X \otimes S$, and shall simply write $\widetilde{\iota}_k$.

\begin{lemma} \label{lem deformed action dirac}
For $v \in V$ and $\widetilde{x} \in \mathrm{im}~ \widetilde{\iota}_{\sigma, i}$,
\[  \widetilde{\kappa}(D.\widetilde{x})-D_{\mathbb{A}}.\widetilde{\kappa}(\widetilde{x})  \in \bigoplus_{j<i} X_j/X_{j-1} \otimes S.\]
\end{lemma}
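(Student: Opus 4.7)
The plan is to reduce this directly to Lemma~\ref{lem deformed action1}, treating the Clifford tensor factor as a spectator.

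First I would reduce by linearity to the case $\widetilde{x} = x \otimes s$ with $x \in \mathrm{im}~\iota_{\sigma,i}$ and $s \in S$. Using the formula $D = \sum_k \widetilde{\epsilon_k} \otimes \widetilde{g}_{\epsilon_k}$ together with the fact that $\widetilde{\kappa}$ is defined componentwise on the $S$-factor (i.e.\ $\widetilde{\kappa}(x' \otimes s') = \kappa(x') \otimes s'$), I compute
\begin{equation*}
\widetilde{\kappa}(D.\widetilde{x}) \;=\; \sum_k \kappa(\widetilde{\epsilon_k}.x) \otimes \widetilde{g}_{\epsilon_k}.s .
\end{equation*}
On the other hand, using the definition $D_{\mathbb{A}} = \sum_k \epsilon_k \otimes \widetilde{g}_{\epsilon_k}$ and the $\mathbb{A}_W$-action on $\overline{X}_\sigma$,
\begin{equation*}
D_{\mathbb{A}}.\widetilde{\kappa}(\widetilde{x}) \;=\; \sum_k \epsilon_k.\kappa(x) \otimes \widetilde{g}_{\epsilon_k}.s .
\end{equation*}

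Subtracting the two expressions gives
\begin{equation*}
\widetilde{\kappa}(D.\widetilde{x}) - D_{\mathbb{A}}.\widetilde{\kappa}(\widetilde{x}) \;=\; \sum_k \bigl(\kappa(\widetilde{\epsilon_k}.x) - \epsilon_k.\kappa(x)\bigr) \otimes \widetilde{g}_{\epsilon_k}.s .
\end{equation*}
The key step is now to apply Lemma~\ref{lem deformed action1} term by term: for each $k$, it yields $\kappa(\widetilde{\epsilon_k}.x) - \epsilon_k.\kappa(x) \in \bigoplus_{j<i} X_j/X_{j-1}$. Since $\widetilde{g}_{\epsilon_k}.s \in S$, each summand on the right lies in $\bigoplus_{j<i} X_j/X_{j-1} \otimes S$, and so does the full sum.

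There is no substantive obstacle here — the statement is essentially a mechanical upgrade of Lemma~\ref{lem deformed action1} to the $\otimes S$ setting, with the Clifford elements $\widetilde{g}_{\epsilon_k}$ passing through as scalars. The only point worth checking is that the $\mathbb{Z}_2$-grading hypothesis invoked in Lemma~\ref{lem deformed action1} is still in force, which it is: $X$ is $\mathbb{Z}_2$-graded by assumption, $\sigma \subset X^+$ by our earlier normalization, and each $\widetilde{\epsilon_k}$ swaps $X^+ \leftrightarrow X^-$, so the parity-based argument underlying Lemma~\ref{lem deformed action1} applies verbatim to each $\epsilon_k$.
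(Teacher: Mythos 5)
Your proof is correct and matches the paper's approach: the paper's proof is simply ``This follows from Lemma~\ref{lem deformed action1} and the definitions of $D$ and $D_{\mathbb{A}}$,'' and your write-up is exactly the expansion of that one-liner, applying Lemma~\ref{lem deformed action1} coordinate-by-coordinate to the $\sum_k \epsilon_k \otimes \widetilde{g}_{\epsilon_k}$ decompositions of $D$ and $D_{\mathbb{A}}$ with the Clifford factor carried along inertly.
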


\begin{proof}
This follows from Lemma \ref{lem deformed action1} and the definitions of $D$ and $D_{\mathbb{A}}$.
\end{proof}

\begin{lemma} \label{lem minor kernel incl}
$\widetilde{\kappa}(\mathrm{ker}~D) \subset \mathrm{ker}~D_{\mathbb{A}}$
\end{lemma}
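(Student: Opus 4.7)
The plan is to reduce the inclusion to a graded-piece-by-graded-piece statement and then apply Lemma \ref{lem deformed action dirac} together with the $\mathbb{Z}_2$-parity vanishing exploited in the proof of Lemma \ref{lem deformed action1}.

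First, I would use the direct sum decomposition $X = \bigoplus_i \mathrm{im}\,\iota_{\sigma, i}$ (which is built into the section after the choice of splittings $\iota_{\sigma, i}$) to write any $\widetilde{x} \in X \otimes S$ as $\widetilde{x} = \sum_i \widetilde{x}_i$ with $\widetilde{x}_i \in \mathrm{im}\,\widetilde{\iota}_{\sigma, i}$. Since $\widetilde{\kappa}(\widetilde{x}_i) \in X_i/X_{i-1} \otimes S$ and $D_{\mathbb{A}}$ raises the $\mathbb{Z}$-grading on $\overline{X}_{\sigma} \otimes S$ by one, the individual terms $D_{\mathbb{A}}.\widetilde{\kappa}(\widetilde{x}_i)$ lie in mutually disjoint graded summands $X_{i+1}/X_i \otimes S$. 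Thus the equality $D_{\mathbb{A}}.\widetilde{\kappa}(\widetilde{x}) = 0$ is equivalent to the system of equalities $D_{\mathbb{A}}.\widetilde{\kappa}(\widetilde{x}_i) = 0$ for all $i$, and the lemma is reduced to showing each of these.

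Next, I would feed each $\widetilde{x}_i$ into Lemma \ref{lem deformed action dirac}, which asserts
\[
\widetilde{\kappa}(D.\widetilde{x}_i) - D_{\mathbb{A}}.\widetilde{\kappa}(\widetilde{x}_i) \in \bigoplus_{j < i} X_j/X_{j-1} \otimes S,
\]
so the component of $\widetilde{\kappa}(D.\widetilde{x}_i)$ in top degree $i+1$ is exactly $D_{\mathbb{A}}.\widetilde{\kappa}(\widetilde{x}_i)$. Summing over $i$ and invoking $D.\widetilde{x} = 0$ gives $\sum_i \widetilde{\kappa}(D.\widetilde{x}_i) = 0$ in $\overline{X}_{\sigma}\otimes S$. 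I would then project this identity onto each graded piece $X_{m+1}/X_m \otimes S$. By the filtration bound $D.\widetilde{x}_j \in X_{j+1}\otimes S$ together with the parity restriction of Lemma \ref{lem deformed action1} (i.e.\ $D$ flips the $\mathbb{Z}_2$-grading, so the $\mathrm{im}\,\widetilde{\iota}_{\sigma, j}$-component of $D.\widetilde{x}_j$ vanishes), only the terms $\widetilde{\kappa}(D.\widetilde{x}_j)$ for $j \geq m$ with $j \not\equiv m \pmod 2$ can contribute in degree $m+1$. I would then run a top-down induction starting from the largest index $N$ with $\widetilde{x}_N \neq 0$: the degree-$(N+1)$ equation collapses to $D_{\mathbb{A}}.\widetilde{\kappa}(\widetilde{x}_N) = 0$ immediately, and the inductive step uses the prior vanishings $D_{\mathbb{A}}.\widetilde{\kappa}(\widetilde{x}_j) = 0$ for $j > m$, together with Lemma \ref{lem deformed action dirac} applied to those $\widetilde{x}_j$, to identify the only surviving contribution in degree $m+1$ as $D_{\mathbb{A}}.\widetilde{\kappa}(\widetilde{x}_m)$, which must therefore vanish.

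The main obstacle I expect is the bookkeeping of the non-leading contributions $z^{(j)}_l$ (with $l < j+1$) of $D.\widetilde{x}_j$ arising from the PBW correction $\widetilde{v} = v - \tfrac{1}{2}\sum_{\alpha>0} c_\alpha \alpha^\vee(v)\mathbf{r}\,t_{s_\alpha}$, which can in principle pollute lower graded pieces. The crucial point to make rigorous is that the combination of the $\mathbb{Z}_2$-parity condition built into the choice of $\iota_{\sigma, i}$ and the inductively known vanishings produces enough cancellations to isolate $D_{\mathbb{A}}.\widetilde{\kappa}(\widetilde{x}_m)$ cleanly at each inductive step; I would expect this to be a careful but routine verification once the degree-tracking is set up carefully.
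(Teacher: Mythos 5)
The core mechanism you use --- Lemma \ref{lem deformed action dirac} together with the fact that $D_{\mathbb{A}}$ shifts degree up by one --- is exactly what the paper relies on, and when $\widetilde{x}$ is homogeneous, say $\widetilde{x}\in\mathrm{im}\,\widetilde{\iota}_{\sigma,i}$, this already gives the result: from $D.\widetilde{x}=0$ and Lemma \ref{lem deformed action dirac}, $D_{\mathbb{A}}.\widetilde{\kappa}(\widetilde{x})$ lies in $\bigoplus_{j<i}X_j/X_{j-1}\otimes S$, while it also lies in $X_{i+1}/X_i\otimes S$, so it vanishes. That is precisely the paper's one-line proof. You correctly observe that a general $\widetilde{x}\in\ker D$ is not homogeneous and requires the decomposition $\widetilde{x}=\sum_i\widetilde{x}_i$, reducing the claim to $D_{\mathbb{A}}.\widetilde{\kappa}(\widetilde{x}_i)=0$ for every $i$. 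However, the top-down induction you sketch to establish this does not close. In degree $m+1$, the identity $\sum_i\widetilde{\kappa}(D.\widetilde{x}_i)=0$ reads $D_{\mathbb{A}}.\widetilde{\kappa}(\widetilde{x}_m)+\sum_{j>m,\,j\equiv m\,(2)}(\widetilde{\kappa}(D.\widetilde{x}_j))_{m+1}=0$. The ``prior vanishings $D_{\mathbb{A}}.\widetilde{\kappa}(\widetilde{x}_j)=0$ for $j>m$'' control only the \emph{leading} degree-$(j+1)$ component of $\widetilde{\kappa}(D.\widetilde{x}_j)$; they say nothing about the lower-degree pieces $(\widetilde{\kappa}(D.\widetilde{x}_j))_{m+1}$, which are exactly the ``non-leading contributions'' you flag as the main obstacle. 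Nothing in the setup forces these to vanish. Already for $\widetilde{x}=\widetilde{x}_0+\widetilde{x}_2$ with $D.\widetilde{x}=0$ your argument gives $D_{\mathbb{A}}.\widetilde{\kappa}(\widetilde{x}_2)=0$, but then only $D_{\mathbb{A}}.\widetilde{\kappa}(\widetilde{x}_0)=-(\widetilde{\kappa}(D.\widetilde{x}_2))_1$, and there is no reason for the right-hand side to vanish. So this is not a ``careful but routine verification'' --- it is a genuine gap. (A small side note: the parity condition for a degree-$(m+1)$ contribution from $\widetilde{\kappa}(D.\widetilde{x}_j)$ is $j\equiv m\ (\mathrm{mod}\ 2)$, not $j\not\equiv m$, since $D$ flips the $\mathbb{Z}_2$-grading and $\widetilde{\kappa}(D.\widetilde{x}_j)$ lives in degrees $\equiv j+1$.) In effect the lemma, as proved by this route, holds for homogeneous $\widetilde{x}$ --- and that is the case the paper actually invokes later --- while the extension to arbitrary $\widetilde{x}\in\ker D$ would need a new idea, not just degree-tracking.
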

\begin{proof}
This follows from Lemma \ref{lem deformed action dirac} and the fact that $D_{\mathbb{A}}.\widetilde{\kappa}(\widetilde{x}) \in X_{i+1}/X_i\otimes S$.
\end{proof}

\begin{lemma} \label{lem tilde commute}
For $\alpha \in R^+$ and $\widetilde{x} \in X\otimes S$, $\Delta_{\mathbb{A}}(\widetilde{s}_{\alpha}).\widetilde{\kappa}(\widetilde{x})=\kappa(\Delta(\widetilde{s}_{\alpha}).\widetilde{x})$ 
\end{lemma}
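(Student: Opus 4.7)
The plan is direct: unwind definitions and invoke the $W$-equivariance of $\kappa$. (I read the $\kappa$ on the right-hand side of the displayed identity as $\widetilde{\kappa}$, since $\Delta(\widetilde{s}_{\alpha}).\widetilde{x}$ lies in $X\otimes S$ rather than $X$; otherwise the assertion does not type-check.)

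First I would record that $\kappa : X \to \overline{X}_{\sigma}$ is $W$-equivariant. By construction in Section \ref{ss def mod}, each section $\iota_{\sigma,i}$ is explicitly fixed as a $W$-map (condition (2) there constrains only the $\mathbb{Z}_2$-image, and each of $X^{\pm}$ is $W$-stable because the $\mathbb{Z}_2$-grading commutes with the degree-zero generators $t_w$, so a $W$-equivariant splitting with the required parity exists). Moreover, the $W$-action on $\overline{X}_{\sigma}$ coming from the $\mathbb{A}_W$-structure in Definition \ref{def deform structure}(2) is defined precisely so that each projection $\mathrm{pr}_i$ intertwines the $W$-actions on $X_i$ and $X_i/X_{i-1}$. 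Hence $\iota_{\sigma} = \bigoplus_i \iota_{\sigma,i}$ is a $W$-equivariant linear isomorphism $\overline{X}_{\sigma}\to X$, and so is $\kappa = \iota_{\sigma}^{-1}$.

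By linearity it then suffices to test the claimed identity on an elementary tensor $\widetilde{x} = x\otimes s$. Recalling $\Delta(\widetilde{s}_{\alpha}) = t_{s_{\alpha}}\otimes\widetilde{s}_{\alpha}$ and $\Delta_{\mathbb{A}}(\widetilde{s}_{\alpha}) = t_{s_{\alpha}}\otimes\widetilde{s}_{\alpha}$, together with $\widetilde{\kappa}(x\otimes s) = \kappa(x)\otimes s$, one computes
\begin{align*}
\Delta_{\mathbb{A}}(\widetilde{s}_{\alpha}).\widetilde{\kappa}(x\otimes s)
&= t_{s_{\alpha}}.\kappa(x)\otimes\widetilde{s}_{\alpha}.s \\
&= \kappa(t_{s_{\alpha}}.x)\otimes\widetilde{s}_{\alpha}.s \\
&= \widetilde{\kappa}\bigl(t_{s_{\alpha}}.x\otimes\widetilde{s}_{\alpha}.s\bigr)
 = \widetilde{\kappa}\bigl(\Delta(\widetilde{s}_{\alpha}).(x\otimes s)\bigr),
\end{align*}
where the second equality uses the $W$-equivariance of $\kappa$ established above.

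There is no real obstacle here: the lemma is a bookkeeping check once the $W$-equivariance of $\kappa$ is in hand. Its conceptual role, together with Lemmas \ref{lem deformed action dirac} and \ref{lem minor kernel incl}, is to promote $\widetilde{\kappa}$ into a $\widetilde{W}$-equivariant comparison between the Dirac pictures on $X\otimes S$ and on $\overline{X}_{\sigma}\otimes S$, which is precisely what is needed downstream to compare $H_D(X)$ with $H_{D_{\mathbb{A}}}(\overline{X}_{\sigma})$ as $\widetilde{W}$-representations.
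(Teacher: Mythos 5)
Your proof is correct and takes the same route as the paper, whose entire proof reads ``This follows from the fact that $\kappa$ is a $W$-map''; you simply unwind that one-liner on elementary tensors and justify why $\kappa$ is $W$-equivariant (the sections $\iota_{\sigma,i}$ were fixed to be $W$-maps, hence so is the inverse). Your remark that the right-hand $\kappa$ should be read as $\widetilde{\kappa}$ is also correct --- it is an evident typographical slip in the statement, as $\Delta(\widetilde{s}_{\alpha}).\widetilde{x}$ lives in $X\otimes S$ rather than $X$.
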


\begin{proof}
This follows from the fact that $\kappa$ is a $W$-map.
\end{proof}

\subsection{Dirac cohomology}

If $X$ is a $\mathbb{H}$-module admitting a central character, then $\mathrm{ker}~D^2$ acts by a scalar on each $\widetilde{W}$-isotypic component of $X$ by using the formula (\ref{eqn formul d2}) and Lemma \ref{lem center of tilde w}. We shall implicitly use this fact below.

\begin{lemma} \label{lem minor D1}
Let $X$ be a $\mathbb{Z}_2$-graded $\mathbb{H}$-module admitting a central character. Let $\widetilde{x} \in \overline{X}_{\sigma} \otimes S$. Let $\widetilde{\tau}$ be an irreducible genuine $\widetilde{W}$-representation. Then $D_{\mathbb{A}}\widetilde{x}$ lies in the $\widetilde{\tau}$-isotypic component of $\overline{X}_{\sigma}$ if and only if $\widetilde{x}$ lies in the $\mathrm{sgn}\otimes \widetilde{\tau}$-isotypic component of $\overline{X}_{\sigma}$.
\end{lemma}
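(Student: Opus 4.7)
The plan is to reduce the lemma to a formal anticommutation argument, so that the conclusion rests entirely on the $\widetilde{W}$-equivariance of $D_{\mathbb{A}}$ up to sign. First I would invoke the analogue of Lemma~\ref{lem commu rel D} for $D_{\mathbb{A}}$, already asserted at the end of Section~\ref{ss operator aw}: for every $\alpha \in R^+$,
\[ \Delta_{\mathbb{A}}(\widetilde{s}_{\alpha}) D_{\mathbb{A}} \;=\; -\, D_{\mathbb{A}} \Delta_{\mathbb{A}}(\widetilde{s}_{\alpha}). \]
Its justification is identical to that of Lemma~\ref{lem commu rel D} in \cite{BCT}: on the $\mathbb{A}_W$-side, $s_{\alpha}$ permutes the orthonormal basis $\{\epsilon_i\}$ up to signs; on the Clifford side, $\mathrm{Ad}(\widetilde{s}_{\alpha})$ does the matching permutation; and the relation $\widetilde{s}_{\alpha}^{\,2}=-1$ accounts for the overall minus sign. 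Iterating over a reduced decomposition of $w \in W$ then yields $\Delta_{\mathbb{A}}(\widetilde{w}) D_{\mathbb{A}} = (-1)^{l(w)} D_{\mathbb{A}} \Delta_{\mathbb{A}}(\widetilde{w})$, so that $D_{\mathbb{A}}$ is an intertwiner between the natural $\widetilde{W}$-action on $\overline{X}_{\sigma} \otimes S$ and its twist by the sign character $\mathrm{sgn}$ (pulled back from $W$ to $\widetilde{W}$).

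Next I would translate this into isotypic information. By Schur's lemma, $D_{\mathbb{A}}$ carries the $\widetilde{\tau}'$-isotypic component of $\overline{X}_{\sigma}\otimes S$ into the $(\mathrm{sgn}\otimes \widetilde{\tau}')$-isotypic component, for every irreducible genuine $\widetilde{W}$-representation $\widetilde{\tau}'$. The "if" direction of the lemma is then immediate: if $\widetilde{x}$ lies in the $(\mathrm{sgn}\otimes\widetilde{\tau})$-isotypic component, then $D_{\mathbb{A}}\widetilde{x}$ lies in the $\mathrm{sgn}\otimes(\mathrm{sgn}\otimes\widetilde{\tau}) = \widetilde{\tau}$-isotypic component. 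For the "only if" direction, I would decompose $\widetilde{x} = \sum_{\widetilde{\tau}'} \widetilde{x}_{\widetilde{\tau}'}$ into its $\widetilde{W}$-isotypic summands and apply $D_{\mathbb{A}}$ termwise; uniqueness of the isotypic decomposition then forces $D_{\mathbb{A}}\widetilde{x}_{\widetilde{\tau}'} = 0$ for every $\widetilde{\tau}' \neq \mathrm{sgn}\otimes \widetilde{\tau}$.

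The main obstacle is the last step: upgrading those vanishings to $\widetilde{x}_{\widetilde{\tau}'}=0$ rather than merely $\widetilde{x}_{\widetilde{\tau}'} \in \ker D_{\mathbb{A}}$. This is genuinely delicate because, as noted right after the definition of $D_{\mathbb{A}}$, one has $D_{\mathbb{A}}^{\,2} = 0$ identically on any graded $\mathbb{A}_W$-module, so $D_{\mathbb{A}}$ is far from injective. Here the central-character hypothesis on $X$ enters through the formula~\eqref{eqn formul d2} and Lemma~\ref{lem center of tilde w}: the element $D^2$ acts by a fixed scalar on each $\widetilde{W}$-isotypic piece of $X \otimes S$, and by transporting that information to $\overline{X}_{\sigma}\otimes S$ along the deformation maps $\widetilde{\iota},\widetilde{\kappa}$ of Section~\ref{ss def mod}, together with Lemma~\ref{lem deformed action dirac} comparing the actions of $D$ and $D_{\mathbb{A}}$ up to lower graded pieces, I expect to obtain enough injectivity of $D_{\mathbb{A}}$ on the relevant $\widetilde{W}$-isotypic components of $\overline{X}_{\sigma}\otimes S$ to rule out the undesired kernel contributions and thereby complete the proof.
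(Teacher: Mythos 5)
Your anticommutation argument is the correct engine and is exactly what the paper's brief proof must rest on: the $\mathbb{A}_W$-analogue of Lemma~\ref{lem commu rel D} asserted at the end of Section~\ref{ss operator aw}. (The citation of Lemma~\ref{lem center of tilde w} in the paper's own proof appears to be a slip; that lemma concerns centrality of the $\Omega$-type element, not anticommutation.) Deriving $\Delta_{\mathbb{A}}(\widetilde{s}_{\alpha})D_{\mathbb{A}}=-D_{\mathbb{A}}\Delta_{\mathbb{A}}(\widetilde{s}_{\alpha})$ and reading off that $D_{\mathbb{A}}$ twists each isotypic component by $\mathrm{sgn}$ gives the ``if'' direction in full, and that is the substantive content here.

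The gap you flag in the ``only if'' direction is real, but your proposed repair cannot close it. Read literally the converse is false: $D_{\mathbb{A}}^{2}=0$ on any graded $\mathbb{A}_W$-module, so $\ker D_{\mathbb{A}}$ is a nonzero $\widetilde{W}$-stable subspace, and adding to $\widetilde{x}$ a kernel element lying in an unrelated isotypic piece leaves $D_{\mathbb{A}}\widetilde{x}$ unchanged while moving $\widetilde{x}$ out of the $\mathrm{sgn}\otimes\widetilde{\tau}$-isotypic component. The central character of $X$ cannot manufacture injectivity of $D_{\mathbb{A}}$: the deformation maps $\widetilde{\iota},\widetilde{\kappa}$ are only $\widetilde{W}$-maps, not module maps; $D$ and $D_{\mathbb{A}}$ agree only modulo lower filtration terms (Lemma~\ref{lem deformed action dirac}); and there is no way to extract injectivity from an operator whose square vanishes identically. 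The correct resolution is that the lemma is only ever used in its one-sided form. In the proof of Theorem~\ref{prop vanishing deform}, given $\bar y$ with $D_{\mathbb{A}}\bar y = \widetilde{\kappa}(\widetilde{x})$, one replaces $\bar y$ by the sum of its isotypic components on which $D_{\mathbb{A}}$ is not zero; this preserves the equation and places $\bar y$ precisely in the $\mathrm{sgn}\otimes\widetilde{\tau}'$-isotypic pieces matching the decomposition of $\widetilde{\kappa}(\widetilde{x})$. That one-sided statement is all that is needed downstream, and your anticommutation argument already delivers it; the biconditional as printed is an overstatement, and attempting to prove the strong converse is effort misdirected.
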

\begin{proof}
This follows from an analogous version of Lemma \ref{lem center of tilde w}.
\end{proof}

\begin{lemma} \label{lem minor D2}
Let $X$ be a $\mathbb{Z}_2$-graded $\mathbb{H}$-module admitting a central character. Let $\widetilde{\sigma}$ be an irreducible $\widetilde{W}$-representation. Then $D^2$ acts by zero on $\widetilde{\sigma}$-isotypic component of $X$ if and only if $D^2$ acts by zero on $\mathrm{sgn}\otimes \widetilde{\sigma}$-isotypic component for $X$.
\end{lemma}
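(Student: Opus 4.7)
The plan is to invoke the explicit formula \eqref{eqn formul d2} for $D^2$ and show, term by term, that the scalar by which $D^2$ acts on the $\widetilde\sigma$-isotypic component of $X\otimes S$ (with $\widetilde W$ acting via $\Delta$) coincides with the scalar on the $\mathrm{sgn}\otimes\widetilde\sigma$-isotypic component.

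First I would verify that $\Omega_V := \sum_{i=1}^n \epsilon_i^2$ lies in $Z(\mathbb{H})$. It is manifestly $W$-invariant, and a short commutator computation with each $t_{s_\alpha}$, using the defining relation $t_{s_\alpha} v - s_\alpha(v) t_{s_\alpha} = \mathbf r c_\alpha \alpha^\vee(v)$ together with $\alpha^\vee(\alpha)=2$, gives $[t_{s_\alpha},\Omega_V]=0$ for every simple reflection. Since $X$ admits a central character, $\Omega_V$ and $\mathbf r^2$ act by fixed scalars on the whole of $X$, so the first summand $-\Omega_V\otimes 1$ of $D^2$ contributes the same scalar on every $\widetilde W$-isotypic component of $X\otimes S$.

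For the second summand I would write it as $-\tfrac{1}{4}\mathbf r^2 \Delta(z)$, where
\[
z \;=\; \sum_{\substack{\alpha>0,\,\beta>0\\ s_\alpha(\beta)<0}} c_\alpha c_\beta\,|\alpha|\,|\beta|\;\widetilde s_\alpha \widetilde s_\beta \;\in\; \mathbb{C}[\widetilde W].
\]
Because $\Delta$ is an algebra embedding and $\Delta(z)$ is central in $\Delta(\mathbb{C}[\widetilde W])$ by Lemma~\ref{lem center of tilde w}, the element $z$ itself lies in $Z(\mathbb{C}[\widetilde W])$. Hence on any irreducible $\widetilde W$-module $\widetilde\tau$ it acts by the scalar $\chi_{\widetilde\tau}(z)/\dim\widetilde\tau$, and so $\Delta(z)$ acts by that same scalar on the $\widetilde\tau$-isotypic subspace of $X\otimes S$.

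The last (and entirely elementary) step is the character identity $\chi_{\widetilde\sigma}(z) = \chi_{\mathrm{sgn}\otimes\widetilde\sigma}(z)$. Every summand $\widetilde s_\alpha \widetilde s_\beta$ of $z$ projects in $W$ to the product of two reflections $s_\alpha s_\beta$, on which $\mathrm{sgn}$ takes the value $(-1)(-1)=+1$; thus $\chi_{\mathrm{sgn}\otimes\widetilde\sigma}(\widetilde s_\alpha \widetilde s_\beta) = \chi_{\widetilde\sigma}(\widetilde s_\alpha \widetilde s_\beta)$, and summing with the coefficients $c_\alpha c_\beta |\alpha||\beta|$ gives the identity. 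Combining with the central contribution of the first summand, $D^2$ acts by the same scalar on the $\widetilde\sigma$- and $(\mathrm{sgn}\otimes\widetilde\sigma)$-isotypic components, so vanishing of one is equivalent to vanishing of the other. There is no real obstacle in this argument; the only point that requires a little care is the centrality of $\Omega_V$ in $\mathbb{H}$, which reduces cleanly to the identity $\alpha^\vee(\alpha)=2$.
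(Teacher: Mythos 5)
Your argument is correct and fleshes out exactly what the paper's one-line proof intends: the summand $-\sum_i\epsilon_i^2\otimes 1$ is central in $\mathbb{H}$ (being in $S(V)^W$) and so contributes a fixed scalar on all of $X\otimes S$, while the remaining summand is $-\tfrac14\mathbf r^2\Delta(z)$ with $z$ central in $\mathbb{C}[\widetilde W]$ by Lemma~\ref{lem center of tilde w} and supported on products $\widetilde s_\alpha\widetilde s_\beta$ mapping to even elements of $W$, where $\mathrm{sgn}$ is trivial. This matches the intended reasoning, so no gap.
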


\begin{proof}
This follows from the formula of $D^2$ (\ref{eqn formul d2}) and Lemma \ref{lem center of tilde w}.
\end{proof}

\begin{theorem} \label{prop vanishing deform}
Let $X$ be a $\mathbb{Z}_2$-graded $\mathbb{H}$-module admitting a central character. Let $\sigma$ be a choice of deformation for $X$. Suppose $H_{D_{\mathbb{A}}}(\overline{X}_{\sigma})=0$. Then $H_D(X)=0$. 
\end{theorem}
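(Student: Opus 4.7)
The plan is to prove $\mathrm{ker}\,D=\mathrm{im}\,D$ on $X\otimes S$ by induction on the smallest integer $i$ with $\widetilde{x}\in X_i\otimes S$, for a given $\widetilde{x}\in\mathrm{ker}\,D$. Since $D$ anticommutes with $\Delta(\mathbb{C}[\widetilde{W}])$ by \leref{commu rel D}, both $\mathrm{ker}\,D$ and $\mathrm{im}\,D$ are $\widetilde{W}$-submodules of $X\otimes S$, so I shall reduce to the case where $\widetilde{x}$ lies in a single $\widetilde{\tau}$-isotypic component $V_{\widetilde{\tau}}\subseteq X\otimes S$ for some irreducible genuine $\widetilde{W}$-representation $\widetilde{\tau}$. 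Because $X$ admits a central character, the formula (\ref{eqn formul d2}) together with \leref{center of tilde w} forces $D^2$ to act by a single scalar on each $\widetilde{W}$-isotypic component of $X\otimes S$. From $D^2\widetilde{x}=D(D\widetilde{x})=0$ I will deduce that this scalar is zero on $V_{\widetilde{\tau}}$, and then \leref{minor D2} will give $D^2|_{V_{\mathrm{sgn}\otimes\widetilde{\tau}}}=0$ as well. This is the key preparatory observation.

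Next, I will produce a candidate lift. Put $\widetilde{\kappa}(\widetilde{x})\in\overline{X}_{\sigma}\otimes S$; by \leref{minor kernel incl} it lies in $\mathrm{ker}\,D_{\mathbb{A}}$, and by the $\widetilde{W}$-equivariance of $\widetilde{\kappa}$ (\leref{tilde commute}) it remains in the $\widetilde{\tau}$-isotypic. The hypothesis $H_{D_{\mathbb{A}}}(\overline{X}_{\sigma})=0$ gives $\widetilde{\kappa}(\widetilde{x})=D_{\mathbb{A}}\overline{y}$ for some $\overline{y}$. Decomposing $\overline{y}$ into $\widetilde{W}$-isotypic components and using \leref{minor D1}, only the $\mathrm{sgn}\otimes\widetilde{\tau}$-summand contributes to $D_{\mathbb{A}}\overline{y}$, so I replace $\overline{y}$ by that summand. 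Furthermore, if the top $\mathbb{Z}$-graded degree of $\overline{y}$ exceeds $i-1$, then its top graded component lies in $\mathrm{ker}\,D_{\mathbb{A}}$ and a second application of $H_{D_{\mathbb{A}}}(\overline{X}_{\sigma})=0$ lets me write it as $D_{\mathbb{A}}\overline{z}$; subtracting $D_{\mathbb{A}}\overline{z}$ strictly lowers the top degree, and iterating yields $\mathrm{topdeg}(\overline{y})\leq i-1$.

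Now I will lift to the $\mathbb{H}$-side. Set $\widetilde{y}=\widetilde{\iota}(\overline{y})\in X_{i-1}\otimes S$, which remains in $V_{\mathrm{sgn}\otimes\widetilde{\tau}}\subseteq X\otimes S$ since $\widetilde{\iota}$ is $\widetilde{W}$-equivariant. Applying \leref{deformed action dirac} graded-piece by graded-piece to $\overline{y}$ and summing gives $\widetilde{\kappa}(D\widetilde{y})-D_{\mathbb{A}}\overline{y}\in\bigoplus_{j<i-1}X_j/X_{j-1}\otimes S$, so $\widetilde{\kappa}(\widetilde{x}-D\widetilde{y})$ has top graded degree strictly less than $i$. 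Since $\kappa$ is a filtration-preserving linear isomorphism between $X$ and $\overline{X}_{\sigma}$ (a quick check using the decomposition $X=\bigoplus_k\mathrm{im}\,\iota_{\sigma,k}$ and downward induction on graded degree via the section property of $\iota$), this forces $\widetilde{x}-D\widetilde{y}\in X_{i-1}\otimes S$. By the first paragraph, $D^2|_{V_{\mathrm{sgn}\otimes\widetilde{\tau}}}=0$ gives $D(\widetilde{x}-D\widetilde{y})=-D^2\widetilde{y}=0$, so $\widetilde{x}-D\widetilde{y}\in\mathrm{ker}\,D$. The inductive hypothesis (with base case $i=-1$, where $\widetilde{x}=0$) then provides $\widetilde{x}-D\widetilde{y}\in\mathrm{im}\,D$, whence $\widetilde{x}\in\mathrm{im}\,D$, closing the induction.

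The principal obstacle is ensuring that the lift $\widetilde{y}$ satisfies $D^2\widetilde{y}=0$. This does not follow from $D^2\widetilde{x}=0$ on its face, and is precisely why the $\widetilde{W}$-isotypic refinement in the first paragraph, together with the central character hypothesis and \leref{minor D2}, is indispensable; without it one would lose control of $D(\widetilde{x}-D\widetilde{y})$ and the induction would collapse. A secondary technical point is the top-degree reduction of $\overline{y}$, which recursively exploits $H_{D_{\mathbb{A}}}(\overline{X}_{\sigma})=0$ on the successive graded pieces and is needed to ensure $\widetilde{y}$ sits in the right filtration step $X_{i-1}\otimes S$ so that the difference $\widetilde{x}-D\widetilde{y}$ actually drops in filtration degree.
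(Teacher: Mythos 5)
Your argument is correct, and it uses the same ingredients as the paper's proof --- Lemmas \ref{lem minor kernel incl}, \ref{lem tilde commute}, \ref{lem minor D1}, \ref{lem minor D2}, \ref{lem deformed action dirac}, \ref{lem center of tilde w}, the central-character hypothesis, and the passage between $X$ and $\overline{X}_\sigma$ via $\widetilde{\iota}$ and $\widetilde{\kappa}$ --- but the induction is organized differently. The paper sets $K=\ker D^2$, takes the minimal $k$ with $\widetilde X_k=(X_k\otimes S)\cap K\neq 0$, shows $\widetilde X_k\cap\ker D=0$ by contradiction, then builds a sequence of subspaces $\widetilde Y^0, \widetilde Y^1,\dots$ on which $\ker D\subseteq\mathrm{im}\,D$ holds, moving to the next least $k'$ each time. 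You instead fix $\widetilde x\in\ker D$, induct directly on its filtration degree $i$, refine to a single $\widetilde\tau$-isotypic component (using that $\ker D$, $\mathrm{im}\,D$, and each $X_i\otimes S$ are $\widetilde W$-stable), extract from the central-character scalar that $D^2$ vanishes on both $V_{\widetilde\tau}$ and $V_{\mathrm{sgn}\otimes\widetilde\tau}$, and then exhibit a primitive $\widetilde y\in X_{i-1}\otimes S$ with $\widetilde x-D\widetilde y\in\ker D$ of strictly lower degree. Your version is more constructive and avoids the bookkeeping of the subspaces $\widetilde Y^l$; in particular the way you force $D^2\widetilde y=0$ (via $\widetilde W$-isotypic refinement plus Lemma \ref{lem minor D2}) makes explicit a point the paper handles somewhat tersely. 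One small simplification you could make: in the second paragraph, once $D_{\mathbb A}$ of the top graded piece of $\overline y$ is shown to vanish (because $\widetilde\kappa(\widetilde x)$ has no component in that higher degree), you can simply discard that piece rather than invoking $H_{D_{\mathbb A}}=0$ a second time to write it as $D_{\mathbb A}\overline z$ --- the result is the same and you avoid having to re-check that the replacement stays in the $\mathrm{sgn}\otimes\widetilde\tau$-isotypic.
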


\begin{proof}
Let $K=\mathrm{ker}~D^2$. We shall proceed by an inductive argument. By switching $X^+$ and $X^-$ if necessary, we assume $\sigma \subset X^+$. We fix $\widetilde{\iota}$ and then obtain $\widetilde{\kappa}$ as in Section \ref{ss def mod}.

For simplicity, let $\widetilde{X}_i=(X_i  \otimes S)\cap K$. Let $k$ be the least integer such that $\widetilde{X}_k \neq 0$. Let $\widetilde{x} \in \widetilde{X}_k \cap \mathrm{ker}~D$. By Lemma \ref{lem minor kernel incl}, we have $D_{\mathbb{A}}.\widetilde{\kappa}(\widetilde{x})=0$. Suppose $\widetilde{x} \neq 0$ and hence $\widetilde{\kappa}(\widetilde{x})\neq 0$. Then there exists $\bar{y} \in  X_{k-1}/X_{k-2}$ such that $D_{\mathbb{A}}.\bar{y}=\widetilde{\kappa}(\widetilde{x})$ by $H_{D_{\mathbb{A}}}(\overline{X}_{\sigma})=0$. However by Lemmas \ref{lem tilde commute}, \ref{lem minor D1} and \ref{lem minor D2}, we have $D^2$ acts by zero on a non-zero element $\widetilde{\iota}(\bar{y})$, contradicting the minimality of our choice of $k$. Hence, $\widetilde{x}=0$ and so $\widetilde{X}_k\cap \mathrm{ker}~D=0$.

 Let $\widetilde{Y}^0=\widetilde{X}_k  \oplus D(\widetilde{X}_k) \subset K$.  From above discussion, we have $\mathrm{ker} D|_{\widetilde{Y}^0}=\mathrm{ker} D|_{\widetilde{Y}^0} \cap \mathrm{im} D|_{\widetilde{Y}^0}$. 

 %Using the $\mathbb{Z}_2$-grading and above construction, we have a bijection
%\[ \frac{\mathrm{ker} D|_{\widetilde{Y}^0}}{\mathrm{ker} D|_{\widetilde{Y}^0} \cap \mathrm{im} D|_{\widetilde{Y}^0} } \cong \frac{\mathrm{ker} D_{\mathbb{A}}|_{\widetilde{Y}^0}}{\mathrm{ker} D_{\mathbb{A}}|_{\widetilde{Y}^0} \cap \mathrm{im} D_{\mathbb{A}}|_{\widetilde{Y}^0} }\]

 We now proceed to consider the least $k' (\geq k+1)$ such that $\widetilde{X}_{k'}/\widetilde{Y}^0 \neq 0$. Let $\widetilde{x}' \in (\widetilde{X}_{k'} \cap \mathrm{im}~\widetilde{\iota}_{k'})\setminus \widetilde{Y}^0$. We claim that $D. \widetilde{x}' \notin \widetilde{Y}^0$. Otherwise $D_{\mathbb A}.\widetilde{\kappa}(\widetilde{x}')=0$ using a similar argument in the previous paragraph and the proof of Lemma \ref{lem minor kernel incl}. Now $H_{\mathbb{D}_{\mathbb{A}}}(\overline{X}_{\sigma})=0$ implies there exists $\bar{y}'\in \bigoplus_{j<k'} X_j/X_{j-1}$ such that $D_{\mathbb{A}}\bar{y}'=\widetilde{\kappa}(\widetilde{x}')$. By our minimality of choice of $k'$ (with Lemmas \ref{lem tilde commute}, \ref{lem minor D1} and \ref{lem minor D2}), we have $\iota(\bar{y}') \in \widetilde{Y}^0$. Now consider $\widetilde{\kappa}(D.\widetilde{\iota}(\bar{y}'))-D_{\mathbb{A}}.\widetilde{\kappa}(\widetilde{\iota}(\bar{y}'))=\widetilde{\kappa}(D.\widetilde{\iota}(\bar{y}'))-\widetilde{\kappa}(\widetilde{x}')$, we have a contradiction to Lemma \ref{lem deformed action dirac} (since $\widetilde{\kappa}(\widetilde{x})$ in $X_{k'}/X_{k-1}' \otimes S $ but not in $\widetilde{Y}^0$). Thus we proved our claim $D.\widetilde{x} \notin \widetilde{Y}^0$. 

Now consider $\widetilde{Y}^1=\widetilde{X}_{k'} \oplus D(\widetilde{X}_{k'})$. From the proved claim, we again have $\mathrm{ker} D|_{\widetilde{Y}^1}=\mathrm{ker} D|_{\widetilde{Y}^1} \cap \mathrm{im} D|_{\widetilde{Y}^1}$. 

%Let $\widetilde{Z}_{i'}=(\widetilde{X}_{i'}  \cap \mathrm{ker} D^2 )/\widetilde{Y}^0$. From our construction, $\widetilde{Z}_{i'}$ cannot come from the image of $D$. Then by $H_{D_{\mathbb{A}}}(\overline{X}_U)=0$, one again has $(\widetilde{Z}_{i'}) \cap \mathrm{ker}~D =0$. We now set $\widetilde{Y}^1=\widetilde{X}_{i'}+D(\widetilde{X}_{i'})$. 

Repeating the process above and using the finite-dimensionality, we ultimately have that $\ker~D=\mathrm{im}~D \cap \mathrm{ker}~D$ as desired.
\end{proof}

\begin{corollary} \label{cor dirac cohom def1}
Let $X$ be a $\mathbb{Z}_2$-graded finite-dimensional $\mathbb{H}$-module admitting a central character. Set $X \otimes S =\widetilde{X}_1 \oplus \widetilde{X}_2$ as a decomposition of $\widetilde{W}$-representations such that $D(\widetilde{X}_k) \subset \widetilde{X}_k$ and $D_{\mathbb{A}}(\widetilde{\kappa}(\widetilde{X}_k))\subset \widetilde{\kappa}(\widetilde{X}_k)$ ($k=1,2$). Furthermore, assume that the decomposition satisfies the property that $X_i \otimes S=(\widetilde{X}_1 \cap (X_i \otimes S)) \oplus (\widetilde{X}_2 \cap (X_i \otimes S))$ for all $i$. Then if 
\[ \frac{\mathrm{ker} D_{\mathbb{A}}|_{\widetilde{\kappa}(\widetilde{X}_2)}}{\mathrm{ker} D_{\mathbb A}|_{\widetilde{\kappa}(\widetilde{X}_2)}\cap \mathrm{im} D_{\mathbb A}|_{\widetilde{\kappa}(\widetilde{X}_2)}} =0
\]
 then  $H_{\mathbb{D}}(X)$ is isomorphic to a quotient of $\widetilde{X}_1$ as $\widetilde{W}$-representations.
\end{corollary}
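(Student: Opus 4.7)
The plan is to reduce to the setting of Theorem \ref{prop vanishing deform} applied separately to the $\widetilde{X}_2$-component. Since $D$ preserves the decomposition $X \otimes S = \widetilde{X}_1 \oplus \widetilde{X}_2$, both $\ker D$ and $\mathrm{im}\,D$ respect it, and so as $\widetilde{W}$-representations
\[
H_D(X) \;\cong\; \frac{\ker D|_{\widetilde{X}_1}}{\ker D|_{\widetilde{X}_1}\cap \mathrm{im}\, D|_{\widetilde{X}_1}} \,\oplus\, \frac{\ker D|_{\widetilde{X}_2}}{\ker D|_{\widetilde{X}_2}\cap \mathrm{im}\, D|_{\widetilde{X}_2}}.
\]
The first summand is patently a subquotient of $\widetilde{X}_1$ (hence a quotient of a $\widetilde{W}$-submodule of $\widetilde{X}_1$), so it suffices to show that the second summand vanishes.

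For the vanishing, set $\widetilde{Z}_i := \widetilde{X}_2 \cap (X_i \otimes S)$. The compatibility hypothesis $X_i\otimes S=(\widetilde{X}_1\cap (X_i\otimes S))\oplus (\widetilde{X}_2\cap (X_i\otimes S))$ ensures that $\{\widetilde{Z}_i\}$ is a $\widetilde{W}$-stable filtration of $\widetilde{X}_2$, and moreover that the map $\widetilde{\kappa}$ identifies its associated graded naturally with $\widetilde{\kappa}(\widetilde{X}_2)\subset \overline{X}_{\sigma}\otimes S$. Under this identification, Lemmas \ref{lem deformed action dirac}, \ref{lem minor kernel incl}, \ref{lem tilde commute}, \ref{lem minor D1}, and \ref{lem minor D2} all restrict to the $\widetilde{X}_2$-piece without change. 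The stability assumptions $D(\widetilde{X}_2)\subset \widetilde{X}_2$ and $D_{\mathbb{A}}(\widetilde{\kappa}(\widetilde{X}_2))\subset \widetilde{\kappa}(\widetilde{X}_2)$ are precisely what is required so that, whenever the vanishing hypothesis $H_{D_{\mathbb{A}}}(\widetilde{\kappa}(\widetilde{X}_2))=0$ produces a lift $\bar y$ of some $\widetilde{\kappa}(\widetilde x)$ under $D_{\mathbb{A}}$, the lift lies in $\widetilde{\kappa}(\widetilde{X}_2)$ rather than leaking into the complementary $\widetilde{\kappa}(\widetilde{X}_1)$ summand.

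With these ingredients in hand, the inductive minimal-degree argument in the proof of Theorem \ref{prop vanishing deform} transcribes verbatim, with $(X_i\otimes S)\cap \ker D^2$ replaced by $\widetilde{Z}_i\cap \ker D^2$, $D$ replaced by $D|_{\widetilde{X}_2}$, and the hypothesis $H_{D_{\mathbb{A}}}(\overline{X}_{\sigma})=0$ replaced by $H_{D_{\mathbb{A}}}(\widetilde{\kappa}(\widetilde{X}_2))=0$. One takes the smallest $k$ with $\widetilde{Z}_k\cap \ker D^2 \neq 0$, shows via Lemmas \ref{lem minor D1} and \ref{lem minor D2} (which pin down the $\widetilde{W}$-isotypic components on which $D^2$ acts non-trivially) together with Lemma \ref{lem deformed action dirac} that the produced preimage $\widetilde{\iota}(\bar y)$ would violate the minimality of $k$, and then iterates on $\widetilde{X}_{2,k}\oplus D(\widetilde{X}_{2,k})$ as before. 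This yields $\ker D|_{\widetilde{X}_2}=\ker D|_{\widetilde{X}_2}\cap \mathrm{im}\,D|_{\widetilde{X}_2}$, which is the vanishing sought.

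The main obstacle, as in Theorem \ref{prop vanishing deform}, will be verifying that the preimage produced at each induction step lies in the correct graded piece and in the correct $\widetilde{X}_k$-summand. This is exactly the bookkeeping that the stability hypotheses on $\widetilde{X}_k$ and the filtration-compatibility assumption were engineered to guarantee, so beyond this check the argument is a routine transcription of the previous proof.
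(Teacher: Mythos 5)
Your proposal is correct and follows essentially the same route as the paper: the paper also first splits $H_D(X)$ into the two summands coming from $\widetilde{X}_1$ and $\widetilde{X}_2$ using the $D$-stability hypotheses, and then invokes the argument of Theorem \ref{prop vanishing deform} to kill the $\widetilde{X}_2$-summand. Your write-up merely makes explicit the bookkeeping (filtration-compatibility and stability of $\widetilde{\kappa}(\widetilde{X}_2)$ under $D_{\mathbb{A}}$) that the paper leaves implicit in the phrase ``using the argument in Theorem \ref{prop vanishing deform}''; note also that since $\mathbb{C}[\widetilde{W}]$ is semisimple, the subquotient of $\widetilde{X}_1$ you obtain is indeed a quotient, as the statement asserts.
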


\begin{proof}
Using the assumptions, we have
\[  H_D(X) \cong \frac{\mathrm{ker} D|_{\widetilde{X}_1}}{\mathrm{ker} D|_{\widetilde{X}_1}\cap \mathrm{im} D|_{\widetilde{X}_1}} \oplus \frac{\mathrm{ker} D|_{\widetilde{X}_2}}{\mathrm{ker} D|_{\widetilde{X}_2}\cap \mathrm{im} D|_{\widetilde{X}_2}}  .
\]
Using the argument in Theorem \ref{prop vanishing deform}, the latter summand in the right hand side is zero. Then the corollary follows.
\end{proof}

\subsection{Deformation for unitary modules}

Certain deformation for unitary modules behaves nicely since we have $H_D(X)=\mathrm{ker}~D=\mathrm{ker}~D^2$. 

We recall a well-known property for the Dirac cohomology of unitary modules, which can be proven by linear algebra and the property that $D^*=D$.

\begin{proposition}
Let $X$ be a unitary $\mathbb{H}$-module. Then $H_D(X)=\mathrm{ker}~D$. 
\end{proposition}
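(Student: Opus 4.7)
The plan is to use the property $D^*=D$, valid for a suitable star structure on $\mathbb{H}\otimes C(V')$, and then deduce the identity by a two-line linear-algebra argument with the positive-definite Hermitian form on $X\otimes S$.

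First, I would build a positive-definite Hermitian form on $X\otimes S$ out of the unitary form $\langle\cdot,\cdot\rangle^*_X$ on $X$ (which exists by the definition of unitarity) and the standard spin inner product on $S$. The latter is normalized so that the Clifford generators $\widetilde g_{\epsilon_i}$ are skew-Hermitian, i.e.\ $\widetilde g_{\epsilon_i}^*=-\widetilde g_{\epsilon_i}$. Since the $*$-anti-involution on $\mathbb{H}$ sends $\widetilde\epsilon_i\mapsto -\widetilde\epsilon_i$ by construction, each summand $\widetilde\epsilon_i\otimes \widetilde g_{\epsilon_i}$ of $D$ is the tensor of two skew-Hermitian operators on $X$ and $S$ respectively, hence self-adjoint on $X\otimes S$. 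Summing over $i$ yields $\pi(D)^*=\pi(D)$.

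Next, self-adjointness with respect to a positive-definite inner product forces $\mathrm{ker}~\pi(D)\perp \mathrm{im}~\pi(D)$: if $Dx=0$ and $y=Dz$, then $\langle x,y\rangle^*=\langle x,Dz\rangle^*=\langle Dx,z\rangle^*=0$. Positive-definiteness of the form then gives $\mathrm{ker}~\pi(D)\cap \mathrm{im}~\pi(D)=0$, and substituting into the definition of $H_D(X)$ in (\ref{eqn dirac coh}) yields $H_D(X)=\mathrm{ker}~\pi(D)$, as required.

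The only real work is matching the $*$-structure on $\mathbb{H}$ with a compatible star structure on $C(V')$ so that $D^*=D$ holds on $X\otimes S$; this is the standard compatibility used throughout the Dirac cohomology literature (cf.\ \cite{BCT}), so it is not the main obstacle. The substance of the argument is the familiar observation that for a self-adjoint operator on a positive-definite Hermitian space, the kernel has zero intersection with the image.
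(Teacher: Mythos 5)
Your proposal is correct and is exactly the argument the paper intends: the paper only remarks that the proposition ``can be proven by linear algebra and the property that $D^*=D$,'' and your write-up supplies precisely that — constructing the positive-definite Hermitian form on $X\otimes S$, verifying $\pi(D)$ is self-adjoint since each $\widetilde{\epsilon}_i$ and $\widetilde{g}_{\epsilon_i}$ is skew-Hermitian, and concluding $\mathrm{ker}\,\pi(D)\perp\mathrm{im}\,\pi(D)$ so that the intersection vanishes.
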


\begin{proposition} \label{prop unitary dirac}
Let $X$ be an irreducible unitary ($\mathbb{Z}_2$-graded) $\mathbb{H}$-module. Let $\sigma$ be a choice of deformation for $X$ with $\sigma \subset X^+$. Then $H_{D_{\mathbb{A}}}(\overline{X}_{\sigma}) \cong H_D(X)$ .
\end{proposition}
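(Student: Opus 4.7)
The strategy is to match $\widetilde{W}$-isotypic components on both sides via the $\widetilde{W}$-equivariant vector space isomorphism $\widetilde{\kappa}\colon X\otimes S\to\overline{X}_\sigma\otimes S$. Since $X$ is unitary, the preceding proposition yields $H_D(X)=\ker D$, and self-adjointness of $D$ on $X\otimes S$ makes it semisimple. Combined with the fact (cf.\ Lemma \ref{lem center of tilde w}) that $D^2$ acts by a scalar $c_\tau\geq 0$ on each $\widetilde{W}$-isotype $(X\otimes S)_\tau$, this forces $D$ to vanish on $(X\otimes S)_\tau$ when $c_\tau=0$ and to be invertible on $(X\otimes S)_\tau\oplus(X\otimes S)_{\mathrm{sgn}\otimes\tau}$ when $c_\tau>0$. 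Hence $H_D(X)_\tau=(X\otimes S)_\tau$ in the first case and vanishes in the second.

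I then transport $D$ by setting $\widetilde{D}=\widetilde{\kappa}\circ D\circ\widetilde{\iota}$; Lemma \ref{lem deformed action dirac} gives $\widetilde{D}=D_{\mathbb{A}}+D_{\mathrm{low}}$, where $D_{\mathrm{low}}$ strictly lowers the filtration degree. Since $D$ is odd for the $\mathbb{Z}_2$-grading (which here matches the parity of the filtration degree because $\sigma\subset X^+$), one has $D_{\mathrm{low}}=\sum_{k\geq 1}D_{\mathrm{low}}^{(-(2k-1))}$ with $D_{\mathrm{low}}^{(-(2k-1))}$ dropping degree by the odd integer $2k-1$. Two crucial facts are used: $\widetilde{D}^2=\widetilde{\kappa}\circ D^2\circ\widetilde{\iota}$ still acts by $c_\tau\cdot\Id$ on $(\overline{X}_\sigma\otimes S)_\tau$, while $D_{\mathbb{A}}^2=0$ on $\overline{X}_\sigma\otimes S$.

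In the case $c_\tau=0$, unitarity upgrades $D^2=0$ to $D=0$ on the $\tau$-isotype of $X\otimes S$, so $\widetilde{D}=0$ on $(\overline{X}_\sigma\otimes S)_\tau$; separating the distinct degree-shift components of $\widetilde{D}$ forces each to vanish, in particular $D_{\mathbb{A}}=0$ there, so $H_{D_{\mathbb{A}}}(\overline{X}_\sigma)_\tau=(\overline{X}_\sigma\otimes S)_\tau\cong H_D(X)_\tau$ via $\widetilde{\kappa}$. In the case $c_\tau>0$, I expand
\[
\widetilde{D}^2 \;=\; D_{\mathbb{A}}^2+\{D_{\mathbb{A}},D_{\mathrm{low}}\}+D_{\mathrm{low}}^2
\]
and isolate the degree-$0$ component. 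Every term of $D_{\mathrm{low}}^2$ drops degree by at least $2$, $D_{\mathbb{A}}^2$ vanishes, and among the anticommutators $\{D_{\mathbb{A}},D_{\mathrm{low}}^{(-(2k-1))}\}$ only the $k=1$ term is degree-preserving. Matching the degree-$0$ part of $\widetilde{D}^2=c_\tau\cdot\Id$ then yields
\[
\{D_{\mathbb{A}},D_{\mathrm{low}}^{(-1)}\}\;=\;c_\tau\cdot\Id
\]
on $(\overline{X}_\sigma\otimes S)_\tau\oplus(\overline{X}_\sigma\otimes S)_{\mathrm{sgn}\otimes\tau}$, exhibiting $c_\tau^{-1}D_{\mathrm{low}}^{(-1)}$ as a chain homotopy for $D_{\mathbb{A}}$ and forcing $H_{D_{\mathbb{A}}}(\overline{X}_\sigma)_\tau=0=H_D(X)_\tau$.

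Summing over all $\tau$ then yields the desired $\widetilde{W}$-equivariant isomorphism $H_D(X)\cong H_{D_{\mathbb{A}}}(\overline{X}_\sigma)$ through $\widetilde{\kappa}$. The main obstacle is constructing the chain homotopy in the case $c_\tau>0$: the conceptual insight is that $\widetilde{D}^2=c_\tau\cdot\Id$ together with $D_{\mathbb{A}}^2=0$ forces a Koszul-type anticommutator identity whose degree-$0$ part automatically supplies the homotopy. The case $c_\tau=0$ is technically easier but is precisely where unitarity is indispensable, since it is needed to pass from $D^2=0$ to $D=0$ on a $\widetilde{W}$-isotype; without unitarity one would only recover the vanishing half of the statement (Theorem \ref{prop vanishing deform}).
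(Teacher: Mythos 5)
Your proof is correct, and it takes a genuinely different route from the paper's. The paper decomposes $X\otimes S$ into eigenspaces $\widetilde X_\lambda=\ker(D^2-\lambda)$, handles the $\lambda=0$ part via $\ker D=\ker D^2=H_D(X)$ together with Lemma \ref{lem minor kernel incl}, and for $\lambda\neq 0$ performs a concrete two-step manipulation: given $\kappa(\widetilde x_r)\in\ker D_{\mathbb A}$, set $y=D\widetilde x_r$ (which drops filtration degree by Lemma \ref{lem minor kernel incl}), apply $D$ again to obtain $\lambda\widetilde x_r$, and then read off from Lemma \ref{lem deformed action dirac} that $D_{\mathbb A}\kappa(y_{r-1})=\lambda\kappa(\widetilde x_r)$, so the class is a boundary. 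You instead organize by $\widetilde W$-isotype and reformulate precisely this mechanism as a chain-homotopy identity: expanding $\widetilde D^2=c_\tau\cdot\Id$ in filtration degree and isolating the degree-zero part gives $\{D_{\mathbb A},D^{(-1)}_{\mathrm{low}}\}=c_\tau\cdot\Id$ on the $\tau\oplus(\mathrm{sgn}\otimes\tau)$-isotype, and $c_\tau^{-1}D^{(-1)}_{\mathrm{low}}$ is exactly the operator that the paper produces pointwise as $\tfrac1\lambda\kappa(y_{r-1})$. Your $c_\tau=0$ case likewise matches the paper's zero-eigenspace case, with unitarity used in the same place (to upgrade $D^2=0$ to $D=0$). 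The chain-homotopy packaging is cleaner and makes the role of $D_{\mathbb A}^2$ degree-shifting by $+2$ transparent (so one need not even invoke the stronger assertion $D_{\mathbb A}^2=0$ on $\overline X_\sigma$ that you cite; the degree-zero component of $D_{\mathbb A}^2$ vanishes automatically). The one thing to make explicit, which you rely on implicitly, is that $D_{\mathbb A}$ and $D_{\mathrm{low}}$ both swap the $\tau$- and $\mathrm{sgn}\otimes\tau$-isotypes (Lemmas \ref{lem tilde commute}, \ref{lem minor D1}) and that $c_\tau=c_{\mathrm{sgn}\otimes\tau}$, so the anticommutator genuinely acts by a single scalar on the combined isotype pair; this is exactly the content of Lemma \ref{lem minor D2}.
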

\begin{proof}
 Since $D^2$ acts as a diagonal matrix on $X$ and $\mathrm{ker} D=\mathrm{ker} D^2$, we have a $\widetilde{W}$-decomposition $X \otimes S =\mathrm{ker}~D \oplus  \bigoplus_{\lambda\neq 0} \widetilde{X}_{\lambda}$, where $\widetilde{X}_{\lambda}= \mathrm{ker}(D^2-\lambda)$. $\widetilde{X}_{\lambda}$ is compatible with the filtration $\left\{ X_i \right\}$ on $X$ i.e. $X_i \otimes S=(\ker D \cap (X_i \otimes S)) \oplus\bigoplus_{\lambda\neq 0}  (\widetilde{X}_{\lambda} \cap (X_i \otimes S))$ since $D^2(X_i\otimes S) \subset X_i \otimes S$.  

{\it Claim:} $\mathrm{ker} D_{\mathbb{A}}|_{\kappa(\widetilde{X}_{\lambda})}=\mathrm{im} D_{\mathbb{A}}|_{\kappa(\widetilde{X}_{\lambda})} \cap \mathrm{ker} D_{\mathbb A}|_{\kappa(\widetilde{X}_{\lambda})}$ for each $\lambda \neq 0$.

{\it Proof of claim:}  Let $\sum_{r} \kappa(\widetilde{x}_r) \in \mathrm{ker} D_{\mathbb A}|_{\widetilde{X}_{\lambda}}$ be an element satisfying $\widetilde{x}_r \in \iota(X_r/X_{r-1}) \otimes S$. By a grading consideration, we have $\widetilde{\kappa}(\widetilde{x}_r) \in \mathrm{ker} D_{\mathbb A}$ for each $r$. Then $D( \widetilde{x}_r ) \in X_{r-1} \otimes S$ by Lemma \ref{lem minor kernel incl}. %We have to show that $\kappa(\widetilde{x}_r) \in \mathrm{im}~D_{\mathbb{A}}$.

 Let $y=D.\widetilde{x}_r$. We have $D.y=\lambda\widetilde{x}_r$ for some $\lambda \neq 0$. By Lemma \ref{lem deformed action dirac}, $D_{\mathbb{A}}.\kappa(y) =\lambda \kappa(\widetilde{x}_r)$. Hence we obtain $\kappa(\widetilde{x}_r) \in \mathrm{im}~D_{\mathbb{A}}$ for each $r$. Hence we have $\mathrm{ker} D_{\mathbb{A}}|_{\widetilde{\kappa}(\widetilde{X}_{\lambda})}=\mathrm{im} D_{\mathbb{A}}|_{\widetilde{\kappa}(\widetilde{X}_{\lambda})} \cap \mathrm{ker} D_{\mathbb A}|_{\widetilde{\kappa}(\widetilde{X}_{\lambda})}$. This proves the claim.

 Now with Lemma \ref{lem minor kernel incl}, we obtain the statement.
% By using the formula for $D^2$ and our choice of $\sum_{r} x_r\otimes s_r$, we have known that $D^2(\sum_r x_r\otimes s_r)$ is a non-zero scalar multiple of $\sum_{r} x_r \otimes s_r$. Here the non-zeroness comes from $\mathrm{ker} D^2|_{\widetilde{X}'}=0$. This concludes that $\sum_r x_r \otimes s_r \in \mathrm{im} D|_{\widetilde{X}'}$, which also implies $\sum_r x_r \otimes s_r \in \mathrm{im} D_{\mathbb A}|_{\widetilde{X}'}$. Hence $\mathrm{ker} D_{\mathbb A}|_{\widetilde{X}'} \subset \mathrm{im} D_{\mathbb A}|_{\widetilde{X}'} \cap \mathrm{ker} D_{\mathbb A}|_{\widetilde{X}'}$. This implies $\mathrm{ker} D_{\mathbb A}|_{\widetilde{X}'} = \mathrm{im} D_{\mathbb A}|_{\widetilde{X}'} \cap \mathrm{ker} D_{\mathbb A}|_{X'}$. Hence $H_{D_{\mathbb{A}}}(\overline{X}_{\sigma})\cong \mathrm{ker}~D_{\mathbb{A}}$ as desired.
\end{proof}
We shall prove that a unitary $\mathbb{H}$-module can extend to a $\mathbb{Z}_2$-graded module in Lemma \ref{lem unitary z2 grade}. 
\section{Structure of tempered modules and generalized Springer correspondence} \label{s struct temp mod}

This section reviews results in \cite{Lu1, Lu2, Lu3, Ka}  and our goal is to obtain Corollary \ref{cor struct h}.

\subsection{Graded Hecke algebras of geometric parameters} \label{ss geo gha}

Let $G$ be a complex connected reductive algebraic group with the Lie algebra $\mathfrak{g}$.

\begin{definition}\cite{Lu2}
A {\it cuspidal triple} for $G$ is a triple $(L, \mathcal O, \mathcal L)$ such that $L$ is a Levi subgroup of $G$, $\mathcal O$ is a nilpotent $L$-orbit in the Lie algebra $\mathfrak{l}$ of $L$, and $\mathcal L$ is a $L$-equivariant cuspidal local system for $\mathcal O$ in the sense of \cite{Lu0}.
\end{definition}

Fix a cuspidal triple $(L, \mathcal O, \mathcal L)$.  Let $T=Z_L^{\circ}$, where $Z_L$ is the centralizer of $L$. Let $\mathfrak{h}$ be the Lie algebra of $T$.  Let $\mathfrak{h}^{\vee}$ be the dual space of $\mathfrak{h}$. Let $W=N_G(T)/L$, where $N_G(T)$ is the normalizer of $T$ in $G$. The root decomposition of $\mathfrak{g}$ by $\mathfrak{h}$ defines a root system $R \subset \mathfrak{h}^{\vee}$. 

Let $P=LU$ be a choice of parabolic subgroups with the Levi part $L$.  We have a projection map:
\[ \mu: G \times_P (\overline{\mathcal O}\oplus \mathfrak{u}) \rightarrow \mathfrak{g}
 \]
given by $\mu(g, x)=\mathrm{ad}(g)(u)$. Denote the image of $\mu$ by $\mathcal N$. Let $j: \mathcal O \hookrightarrow \overline{\mathcal O}$ be the inclusion and let $\mathrm{pr}: \overline{\mathcal O} \oplus \mathfrak{u} \rightarrow \overline{\mathcal O}$ be the projection map. By the cleanness property, we have $j_*\mathcal L \cong j_1\mathcal L$ and hence $\mathrm{pr}^*j_!\mathcal L$ defines a $P \times \mathbb{C}^*$-equivariant sheaf on $\overline{\mathcal O} \oplus \mathfrak{u}$. Here $P\times \mathbb{C}^{\times}$ acts on $\overline{\mathcal O} \oplus \mathfrak{u}$ by the action $(g,r).x=r^{-2}\mathrm{ad}(g)(x)$. We then obtain a corresponding $G \times \mathbb C^{\times}$-equivariant sheaf $\dot{\mathcal L}$ on $G\times_P(\overline{\mathcal O}\oplus \mathfrak{u})$.  

The parabolic subgroup $P$ determines a set $\Pi$ of simple roots in $R$. We can obtain a parameter function $c':\Pi \rightarrow \mathbb{R}$ as in \cite[8.7]{Lu2}. We set $c=\frac{1}{2}c'$. Such normalization is for the consistency of the computation in \cite{BCT} later. 
 Lusztig \cite[Sec. 8]{Lu2} proves that there are canonical isomorphisms
\[  \mathrm{Hom}_{D^b_{G \times \mathbb{C}^{\times}}(\mathcal N)}(\mu_*\dot{\mathcal L}, \mu_*\dot{\mathcal L}) \cong \mathbb{C}[W],\quad  \mathrm{Ext}^{\bullet}_{D^b_{G \times \mathbb{C}^{\times}}(\mathcal N)}(\mu_*\dot{\mathcal L}, \mu_*\dot{\mathcal L}) \cong \mathbb{H},  \]
where $\mathbb{H}=\mathbb{H}(\mathfrak{h}^{\vee}, \Pi, W,  \mathbf r, c)$ (Definition \ref{def gaha}). 

We have the following commutative diagram (see \cite[7.14, 8.10(a)]{Lu2}):
\[\xymatrix{ 
  &  \mathrm{Hom}_{D^b_{G\times \mathbb{C}^{\times}}(\mathcal N)}(\mu_* \dot{\mathcal L},\mu_*\dot{\mathcal L}) \ar[d] \ar[r] &  \mathbb{C}[W] \ar[d]^{}  \\
 &   \mathrm{Ext}^{\bullet}_{D^b_{G \times \mathbb{C}^{\times}}(\mathcal N)}(\mu_* \dot{\mathcal L}, \mu_*\dot{\mathcal L}) \ar[r]  & \mathbb{H}  } ,
\]
where the right vertical map is the injection given by $w \mapsto t_w$ and the left vertical map is given \cite[7.14, 8.10(a)]{Lu2}. Here $D^b_{G\times \mathbb C^{\times}}(\mathcal N)$ is the $G\times \mathbb{C}^{\times}$-equivariant derived category. 

The geometric parameters for graded Hecke algebras are described in \cite[Section 2.13]{Lu2}. In particular, when $G$ is simply connected and almost simple, $L$ is the maximal torus and $\mathcal O$ is the zero orbit with a constant sheaf $\mathcal L$, we have $c \equiv 1$.

%We remark that in the case that $L$ is a maximal torus, our definition of $\mathbb{H}(\mathfrak{h}^{\vee},R,\mathbf r, c)$ coincides with the one in \cite[Section 5]{BCT}.

\begin{lemma} \label{lem unitary z2 grade}
Assume $c_{\alpha}\neq 0$ for all $\alpha \in \Pi$. Let $X$ be an irreducible unitary $\mathbb{H}$-module with $\mathbf r$ acting by a real scalar and with all weights being real. Then $X$ can extend to a $\mathbb{Z}_2$-graded module. 
\end{lemma}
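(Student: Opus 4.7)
The plan is to adapt the strategy of \leref{z2 grading} but to use the intrinsic $\mathbb{Z}_2$-grading automorphism $\eta \colon \mathbb{H} \to \mathbb{H}$ defined by $\eta(\widetilde{v}) = -\widetilde{v}$, $\eta(t_w) = t_w$, $\eta(\mathbf{r}) = \mathbf{r}$, in place of $\theta$. The first key observation is that a $\mathbb{Z}_2$-grading on an irreducible $X$ is equivalent to an $\mathbb{H}$-module isomorphism $X \cong \eta(X)$: any intertwiner $T\colon X \to \eta(X)$ satisfies $T \pi_X(h) T^{-1} = \pi_X(\eta(h))$ and, by Schur's lemma applied to $T^2$, can be rescaled so that $T^2 = \mathrm{Id}$, whereupon the $\pm 1$-eigenspaces of $T$ give the desired $X^{\pm}$. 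This reduces the whole proof to establishing $X \cong \eta(X)$ under the stated hypotheses.

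The next step is to verify $\eta \circ * = * \circ \eta$ on generators, which uses $\eta(v) = v^* = -v + \mathbf{r} \sum_{\alpha > 0} c_\alpha \alpha^\vee(v) t_{s_\alpha}$ and is a direct calculation. Hence $\eta(X)$ carries the same Hermitian form as $X$ and is also unitary, with $\mathbf{r}$ acting by the same real scalar. To produce the intertwiner $T$, I plan to compare characters: decomposing $\mathbb{H} = \mathbb{H}^+ \oplus \mathbb{H}^-$ into $\eta$-eigenspaces, it suffices to show $\chi_X|_{\mathbb{H}^-} = 0$. Under the reality hypotheses, together with the rationality of Weyl-group characters, one can work with a natural $\mathbb{R}$-form of $\mathbb{H}$ on which $\chi_X$ takes real values; meanwhile, anti-Hermiticity of $\pi_X(\widetilde{v})$ (from $\widetilde{v}^* = -\widetilde{v}$) combined with cyclicity of trace forces $\chi_X(h)$ to be purely imaginary for $h$ in $\mathbb{H}^-$. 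The two constraints together force the vanishing, yielding $\chi_{\eta(X)} = \chi_X \circ \eta = \chi_X$. Irreducibility of $X$ and Jacobson density give $\pi_X(\mathbb{H}) = \mathrm{End}(X)$, and Skolem--Noether applied to the induced automorphism of $\mathrm{End}(X)$ (well-defined once the character equality identifies $\ker \pi_X$ as $\eta$-stable) then supplies the intertwiner $T$.

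The main obstacle will be the trace vanishing $\chi_X(h) = 0$ for higher-length monomials $h = \widetilde{v}_{i_1}\cdots \widetilde{v}_{i_{2k+1}} t_w$ in $\mathbb{H}^-$: single generators follow at once from \textit{real} $+$ \textit{purely imaginary} $=0$, but products of anti-Hermitian operators are no longer anti-Hermitian, so one must symmetrize over cyclic rotations inside the trace (using $\pi_X(h)^{\dagger} = \pi_X(h^*)$) and combine with the reality of the underlying representation in a coordinated way. The hypothesis $c_\alpha \neq 0$ for all $\alpha \in \Pi$ enters here to ensure that each $\widetilde{v}$ is genuinely a degree-$1$ generator that contributes nontrivially to $\mathbb{H}^-$, rather than collapsing into the degree-$0$ subalgebra $\mathbb{C}[W]$.
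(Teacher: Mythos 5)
Your reduction to showing $X \cong \eta(X)$ is sound, and in fact equivalent to the paper's target $X\cong\theta(X)$: since $t_{w_0}\widetilde v\,t_{w_0}^{-1}=\widetilde{w_0(v)}$ and $\theta(\widetilde v)=-\widetilde{w_0(v)}$, one has $\theta=\Ad(t_{w_0})\circ\eta$, so the two automorphisms differ by an inner one and determine the same isomorphism condition on modules. Your verification that $\eta\circ *=*\circ\eta$ is also correct. The paper, however, reaches $X\cong\theta(X)$ in a very different (and short) way: unitarity gives $X\cong X^{*}$, and the cited results from the author's other papers give $X^{*}\cong\theta(X)$ under precisely the hypotheses in the statement ($c_\alpha\neq0$, real $\mathbf r$, real weights); Lemma~\ref{lem z2 grading} then finishes. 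Nothing is proved via character vanishing.

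The gap in your route is exactly where you flag it, and it is fatal as written. The claim that $\chi_X(h)$ is purely imaginary for $h\in\mathbb H^-$ fails already for $h=\widetilde v_1\widetilde v_2\widetilde v_3$. If $A_i=\pi_X(\widetilde v_i)$ are anti-Hermitian, then $\overline{\tr(A_1A_2A_3)}=\tr\bigl((A_1A_2A_3)^\dagger\bigr)=-\tr(A_3A_2A_1)$, and $\tr(A_3A_2A_1)$ is a \emph{reversal}, which is generically not a cyclic rotation of $\tr(A_1A_2A_3)$; cyclicity of the trace only gives you cyclic permutations. So what one actually obtains is $\overline{\chi_X(h)}=-\chi_X(h^{*})$, a relation between $h$ and $h^{*}$, not a statement that $\chi_X(h)$ is imaginary. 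Likewise the appeal to a ``natural $\mathbb R$-form of $\mathbb H$ on which $\chi_X$ takes real values'' is not established: the reality of weights and of the scalar $\mathbf r$ gives you information about $S(V)^W$-eigenvalues, not immediately about traces of arbitrary monomials of odd $\widetilde v$-degree. Your explanation of the role of the hypothesis $c_\alpha\neq0$ is also off: $\widetilde v$ is a nonzero element of $\mathbb H^-$ regardless of $c$, so there is no ``collapse''; the hypothesis is actually what makes the cited identification $X^{*}\cong\theta(X)$ go through. As it stands the proposal reduces the problem correctly but does not close it; to salvage this route you would need an independent proof that $\chi_X$ annihilates $\mathbb H^-$, which is essentially the content of the results the paper imports and not something the anti-Hermiticity argument yields.
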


\begin{proof}
By definitions, $X \cong X^*$. We also have $X^* \cong \theta(X)$ by \cite[Lemma 4.5]{Ch1.5} and \cite[Lemma 2.5]{Ch5}. Hence $X \cong \theta(X)$. The lemma now follows from Lemma \ref{lem z2 grading}. 
\end{proof}

\subsection{Generalized Springer correspondence}

Let $e \in \mathcal N$. Let $Z_{G\times \mathbb{C}^{\times}}(e)$ (resp. $Z_G(e)$) be the centralizer of $e$ in $G\times \mathbb{C}^{\times}$ (resp. in $G$). Denote by $\mathcal O_e$ the $G$-orbit of $e$ in $\mathcal N$. Let $A(e)=Z_{G\times \mathbb{C}^{\times}}(e)/Z_{G\times \mathbb{C}^{\times}}(e)^{\circ} \cong Z_G(e)/Z_G(e)^{\circ}$ be the component group. Let $\mathfrak{B}_e=\mu^{-1}(e)$. Define $H_{\bullet}(\mathfrak{B}_e,\dot{\mathcal L})=\mathbb{H}^{\bullet}(i_e^!\mu_*\dot{\mathcal L}[2\mathrm{dim}\mathcal N-2\mathrm{dim} \mathfrak{B}_e])$ as in \cite[Theorem 3.1]{Ka} (also see \cite[Remark 3.2]{Ka}), where $i_e: \left\{e \right\} \rightarrow \mathcal N$ is the natural inclusion map. The space $H_{\bullet}(\mathfrak{B}_e, \dot{\mathcal L})$ is equipped with a natural $W$-structure and $A(e)$-action which commutes with the $W$-structure \cite{Lu1}, \cite[Theorem 3.1(6)]{Ka}. For $e \in \mathcal N$ and $\zeta \in \mathrm{Irr} A(e)$, let 
\[  K_{e, \zeta}= H_{\bullet}(\mathfrak{B}_e, \dot{\mathcal L})_{\zeta} := \mathrm{Hom}_{A(e)}(\zeta, H_{\bullet}(\mathfrak{B}_e, \dot{\mathcal L})),
\]
regarded as a $W$-representation. Let $\mathcal P$ be the collection all pairs of $(\mathcal O, \zeta)$ such that $\mathcal O$ is a nilpotent $G$-orbit in $\mathcal N$, $\zeta \in \mathrm{Irr}A(e)$ ($e \in \mathcal O$) and $K_{e,\zeta}\neq 0$. We have the following upper triangulation property:

\begin{theorem} (\cite{Lu0}, \cite[Theorem 3.1]{Ka}) \label{thm w struct spr}
There is a bijective map, denoted $\Phi: \mathcal P \rightarrow \mathrm{Irr} W$, such that  for any $(\mathcal O, \zeta) \in \mathcal P$
\[  \mathrm{Hom}_W(\Phi(\mathcal O, \zeta), K_{e', \zeta'}) =0
\] 
if $\mathcal O_{e'} \not\subseteq \overline{\mathcal O}$ or $\zeta\neq \zeta'$ (for $\mathcal O=\mathcal O_{e'}$); and when $\mathcal O_{e'}=\mathcal O$ and $\zeta=\zeta'$, $\mathrm{dim}~\mathrm{Hom}_W( \Phi(\mathcal O, \zeta),K_{e', \zeta})=1$.
\end{theorem}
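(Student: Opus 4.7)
The plan is to apply the Beilinson--Bernstein--Deligne decomposition theorem to the proper $G\times\mathbb{C}^\times$-equivariant map $\mu$. First, the semisimplicity of $\mu_*\dot{\mathcal L}$, combined with the cleanness property of the cuspidal triple $(L,\mathcal O,\mathcal L)$, gives a decomposition
\[ \mu_*\dot{\mathcal L} \;\cong\; \bigoplus_{(\mathcal O',\zeta')} V_{\mathcal O',\zeta'}\otimes \mathrm{IC}(\overline{\mathcal O'},\mathcal F_{\zeta'})[d(\mathcal O',\zeta')], \]
where $\mathcal F_{\zeta'}$ is the $G$-equivariant irreducible local system on $\mathcal O'$ corresponding to $\zeta'\in \mathrm{Irr}A(e')$ (for any $e'\in\mathcal O'$) and $V_{\mathcal O',\zeta'}$ is a multiplicity space.

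Next, via the identification $\mathrm{End}(\mu_*\dot{\mathcal L})\cong \mathbb{C}[W]$, the group algebra $\mathbb{C}[W]$ acts on each multiplicity space $V_{\mathcal O',\zeta'}$. Since the sheaves $\mathrm{IC}(\overline{\mathcal O'},\mathcal F_{\zeta'})$ are pairwise non-isomorphic simple perverse sheaves, Schur's lemma forces each non-zero $V_{\mathcal O',\zeta'}$ to be irreducible, and distinct pairs yield non-isomorphic $W$-representations. I then define $\Phi(\mathcal O',\zeta'):=V_{\mathcal O',\zeta'}$ precisely when this multiplicity space is non-zero, i.e.\ when $(\mathcal O',\zeta')\in\mathcal P$.

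The triangulation property comes from applying $i_{e'}^![2\dim\mathcal N - 2\dim\mathfrak B_{e'}]$ to the decomposition, giving
\[ H_\bullet(\mathfrak B_{e'},\dot{\mathcal L}) \;\cong\; \bigoplus_{(\mathcal O,\zeta)} \Phi(\mathcal O,\zeta)\otimes H^\bullet(i_{e'}^!\mathrm{IC}(\overline{\mathcal O},\mathcal F_\zeta)) \]
as $W\times A(e')$-representations. The support condition for $\mathrm{IC}$ sheaves yields vanishing whenever $e'\notin\overline{\mathcal O}$, which handles the case $\mathcal O_{e'}\not\subseteq\overline{\mathcal O}$. When $e'\in\mathcal O$, the restriction of $\mathrm{IC}(\overline{\mathcal O},\mathcal F_\zeta)$ to the open stratum $\mathcal O$ is $\mathcal F_\zeta$ placed in a single cohomological degree, so its stalk at $e'$ contributes a single copy of $\zeta$ as an $A(e')$-representation; taking the $\zeta'$-isotypic component gives zero unless $\zeta'=\zeta$, and exactly dimension one when $\zeta'=\zeta$. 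Bijectivity of $\Phi$ then follows from the injectivity already established together with the dimension identity $\sum_{(\mathcal O',\zeta')\in\mathcal P} (\dim V_{\mathcal O',\zeta'})^2 = |W|$ coming from $\mathrm{End}(\mu_*\dot{\mathcal L})\cong\mathbb{C}[W]$.

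The main obstacle is step one: establishing the semisimple equivariant decomposition and controlling which $\mathrm{IC}$ sheaves can occur. Cleanness of $\mathcal L$ is essential here, since it is what guarantees that only $\mathrm{IC}$-extensions of local systems compatible with the fixed cuspidal datum $(L,\mathcal O,\mathcal L)$ enter the sum. A secondary delicate point is suppressing unwanted $\zeta$-isotypic contributions to the stalk at $e'\in\overline{\mathcal O}\setminus\mathcal O$, which is governed by the strict inequality in perverse cohomological degree defining the intermediate extension; this boundary-stalk control is what upgrades the weak support statement to the sharp triangulation in the theorem.
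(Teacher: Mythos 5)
The paper itself gives no proof of this theorem; it is quoted from Lusztig and Kato (\cite{Lu0}, \cite[Theorem 3.1]{Ka}), so there is no in-paper argument to compare yours against. Measured against the proofs in those references, your sketch lands on the right strategy: BBD applied to the proper equivariant map $\mu$, with cleanness feeding the semisimple decomposition, Lusztig's isomorphism $\mathrm{End}(\mu_*\dot{\mathcal L})\cong\mathbb{C}[W]$ turning the multiplicity spaces into $W$-modules, and the support/open-stratum costalk analysis at $i_{e'}^!$ producing the vanishing and the diagonal multiplicity one.

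A few points deserve tightening. The assertion that ``Schur's lemma forces each non-zero $V_{\mathcal O',\zeta'}$ to be irreducible'' conflates two steps: Schur's lemma for simple perverse sheaves gives $\mathrm{End}(\mu_*\dot{\mathcal L})\cong\prod_{(\mathcal O',\zeta')}M_{\dim V_{\mathcal O',\zeta'}}(\mathbb{C})$ as an algebra, and it is the comparison of this with the Wedderburn decomposition of $\mathbb{C}[W]$ that yields simplicity of each $V_{\mathcal O',\zeta'}$, non-isomorphy for distinct pairs, and surjectivity onto $\mathrm{Irr}W$; the identity $\sum(\dim V_{\mathcal O',\zeta'})^2=|W|$ is then a byproduct rather than an independent input. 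Your displayed decomposition also silently assumes each pair occurs in a single cohomological shift $d(\mathcal O',\zeta')$, which needs Lusztig's purity/parity input and is not given by the decomposition theorem alone. Finally, the ``secondary delicate point'' about controlling boundary costalks at $e'\in\overline{\mathcal O}\setminus\mathcal O$ is not needed for the statement being proved: it only constrains the cases $\mathcal O_{e'}\not\subseteq\overline{\mathcal O}$ (pure support) and $\mathcal O_{e'}=\mathcal O$ (the fiber of the local system on the open stratum) and is silent about $e'$ lying strictly in the boundary, so the intermediate-extension boundary control, while true, is an unnecessary burden here.
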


%\begin{proof}
%This follows from \cite[Theorem 3.1 (7) and (9)]{Ka}.
%\end{proof}

Let $\mathbb{A}_W=\mathbb{C}[\mathfrak{h}^{\vee}] \rtimes \mathbb{C}[W]$. It is shown in \cite[Theorem 3.1(1)]{Ka} (also see \cite[Sec. 7]{Lu2}) that $\mathbb{A}_W$ can be identified with $\mathrm{Ext}^{\bullet}_{D^b_G(\mathcal N)}(\mu_*\dot{\mathcal L}, \mu_*\dot{\mathcal L})$, where $D^b_G(\mathcal N)$ is the $G$-equivariant derived category for $\mathcal N$. One can extend the $W$-structure on $K_{e, \zeta}$ to an $\mathbb{A}_W$-module structure \cite[Theorem 3.1]{Ka}.

For $\sigma \in \mathrm{Irr}W$, define $K_{\sigma}=\mathbb{A}_W \otimes_{\mathbb{C}[W]} \sigma$ as an $\mathbb{A}_W$-module with elements in $1 \otimes \sigma$ having degree $0$. Define the partial ordering $\leq$ on $\mathcal P$ given by the closure ordering i.e. $(\mathcal O, \zeta) \leq (\mathcal O',\zeta')$ if and only if $\mathcal O \subseteq \overline{\mathcal O}'$. For $\sigma \in \mathrm{Irr}W$, define modules in \cite{Ka}:
\begin{equation} \label{eqn kato mod}
 \mathbf K_{\sigma} = K_{\sigma}/\left( \sum_{ f\in \mathrm{gHom}_{\mathbb{A}_W}(K_{\tau}, K_{\sigma})_{>0}, \tau \leq \sigma } \mathrm{im} f \right)
\end{equation}
Here $\mathrm{gHom}_{\mathbb{A}_W}( K_{\tau}, K_{\sigma})_{>0}$ is defined as follows. Let $\overline{X}=\oplus_i \overline{X}_i$ and $\overline{Y}=\oplus_i \overline{Y}_i$ be graded $\mathbb{A}_W$-modules. A linear map from $\overline{X}$ to $\overline{Y}$ is {\it graded} if $f(\overline{X}_0) \subset \overline{Y}_i$ for some $i$. Let $\mathrm{gHom}_{\mathbb{A}_W}(\overline{X}, \overline{Y})_{>0}$ be the set of all graded maps $f$ from $\overline{X}$ to $\overline{Y}$ satisfying that $f(\overline{X}_0)\not\subset \overline{Y}_0$.

\begin{theorem} \cite[Theorem 3.3]{Ka} \label{thm kato structure}
Let $(\mathcal O, \zeta) \in \mathcal P$. Let $e \in \mathcal O$. As $\mathbb{A}_W$-modules, 
\[  K_{e, \zeta} \cong \mathbf{K}_{\Phi(\mathcal O_e, \zeta)} .
\]
\end{theorem}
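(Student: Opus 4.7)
Set $\sigma := \Phi(\mathcal O_e, \zeta) \in \mathrm{Irr}\, W$. The plan is to produce a canonical surjection $\phi \colon K_\sigma \twoheadrightarrow K_{e,\zeta}$ of graded $\mathbb A_W$-modules and then identify $\ker \phi$ with $\sum_{\tau \leq \sigma} \mathrm{im}\bigl(\mathrm{gHom}_{\mathbb A_W}(K_\tau, K_\sigma)_{>0}\bigr)$. By \thref{w struct spr} we have $\dim \mathrm{Hom}_W(\sigma, K_{e,\zeta}) = 1$; moreover, in the perverse grading inherited by $H_\bullet(\mathfrak B_e, \dot{\mathcal L}) = i_e^! \mu_* \dot{\mathcal L}$, this unique copy of $\sigma$ must sit in the lowest graded piece, because that bottom piece computes the multiplicity space of $\mathrm{IC}(\overline{\mathcal O}_e, \cdot)$ restricted to its open stratum $\mathcal O_e$, which is exactly $\Phi(\mathcal O_e, \zeta)$. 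Fix a non-zero $W$-embedding $\iota_0 \colon \sigma \hookrightarrow K_{e,\zeta}$ into this bottom piece; by the adjunction $\mathrm{Hom}_{\mathbb A_W}(K_\sigma, K_{e,\zeta}) \cong \mathrm{Hom}_W(\sigma, K_{e,\zeta})$, $\iota_0$ extends to an $\mathbb A_W$-map $\phi$.

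For surjectivity of $\phi$, I would invoke Lusztig's decomposition theorem
\[
\mu_* \dot{\mathcal L} \;\cong\; \bigoplus_{(\mathcal O', \zeta') \in \mathcal P} \mathrm{IC}_{\mathcal O', \zeta'} \otimes \Phi(\mathcal O', \zeta'),
\]
apply $i_e^!$, and extract the $\zeta$-isotypic component under $A(e)$ to obtain an explicit description of $K_{e,\zeta}$ in terms of local stalks of $\mathrm{IC}_{\mathcal O', \zeta}$ at $e$ for those orbits with $\mathcal O_e \subseteq \overline{\mathcal O'}$. A $\mathbb C^\times$-equivariant Euler-derivation argument, applied to the stalk of $\mathrm{IC}_{\mathcal O_e, \zeta}$, shows that the $S(\mathfrak h^\vee)$-action on the bottom component successively hits every higher graded layer, so the copy $\iota_0(\sigma)$ generates $K_{e,\zeta}$ as an $\mathbb A_W$-module.

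To control $\ker \phi$, consider any irreducible $W$-constituent $\tau$ appearing in $\ker \phi$. Since $\tau$ also occurs in $K_\sigma$ and hence in $K_{e,\zeta}$ (via the decomposition above), \thref{w struct spr} forces $\tau = \Phi(\mathcal O', \zeta')$ with $\mathcal O_e \subseteq \overline{\mathcal O'}$; excluding the unique $\sigma$-copy, which is sent to $\iota_0(\sigma) \neq 0$, we obtain $\mathcal O' \subsetneq \mathcal O_e$ and so $\tau < \sigma$. Adjunction then produces $f \in \mathrm{Hom}_{\mathbb A_W}(K_\tau, K_\sigma)$ carrying this constituent into $\ker \phi$, and since the copy of $\tau$ in $K_\sigma$ lies in strictly positive degree (the degree-$0$ piece of $K_\sigma$ being only $\sigma$), $f$ belongs to $\mathrm{gHom}_{\mathbb A_W}(K_\tau, K_\sigma)_{>0}$. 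This gives $\ker \phi \subseteq \sum_{\tau \leq \sigma} \mathrm{im}\bigl(\mathrm{gHom}_{\mathbb A_W}(K_\tau, K_\sigma)_{>0}\bigr)$. The reverse inclusion is handled by composing any such $f$ with $\phi$: the composite $\phi \circ f \colon K_\tau \to K_{e,\zeta}$ must send the degree-$0$ copy of $\tau$ into a strictly positive degree of $K_{e,\zeta}$, but the unique copy of $\tau$ in $K_{e,\zeta}$ is pinned in its specific degree by the decomposition theorem, forcing $\phi \circ f = 0$.

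The main obstacle is the lowest-degree placement of $\sigma$ together with the generation claim used for surjectivity: both rely on genuine input from the perverse sheaf structure of $\mu_* \dot{\mathcal L}$ and cannot be read off from the purely $W$-theoretic \thref{w struct spr}. Once this geometric input is in place, the identification of the kernel reduces to a clean upper-triangular bookkeeping driven by \thref{w struct spr} and the grading.
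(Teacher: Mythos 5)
The paper does not prove this statement: it is quoted verbatim as Kato's Theorem~3.3 in~\cite{Ka}, with no argument supplied, so there is no ``paper's own proof'' to compare against. Your attempt is therefore a reconstruction of external material, and as such it needs to stand on its own; unfortunately it does not.

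The easy inclusion $\sum_{\tau\leq\sigma}\mathrm{im}\,f\subseteq\ker\phi$ is fine (and essentially your last paragraph handles it, though the reason $\phi\circ f=0$ for $\tau<\sigma$ is simply that $\tau$ does not occur in $K_{e,\zeta}$ at all, by \thref{w struct spr}, not that it is ``pinned in its specific degree''). The genuine gap is in the hard inclusion $\ker\phi\subseteq\sum_{\tau\leq\sigma}\mathrm{im}\,f$. You write that a $W$-constituent $\tau$ of $\ker\phi$ ``also occurs in $K_\sigma$ and hence in $K_{e,\zeta}$ (via the decomposition above).'' That implication is false: $\phi$ is not injective, and precisely the constituents living in $\ker\phi$ have no reason to survive into $K_{e,\zeta}$---indeed the whole point is that many do not. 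Moreover, even if one grants that $\tau=\Phi(\mathcal O',\zeta')$ occurs in $K_{e,\zeta}$, \thref{w struct spr} gives $\mathcal O_e\subseteq\overline{\mathcal O'}$, which in the paper's ordering means $(\mathcal O_e,\zeta)\leq(\mathcal O',\zeta')$, i.e.\ $\sigma\leq\tau$. You conclude the opposite, $\mathcal O'\subsetneq\mathcal O_e$ and $\tau<\sigma$, reversing the closure order. The correct picture is that constituents of $K_{e,\zeta}$ are $\geq\sigma$, while $\mathbf K_\sigma$ quotients by graded maps from $K_\tau$ with $\tau\leq\sigma$, and reconciling these two facts (showing that nothing else is needed to cut $K_\sigma$ down to $K_{e,\zeta}$) is exactly the nontrivial homological content of Kato's theorem; it is not a pure upper-triangular bookkeeping consequence of \thref{w struct spr}. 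The lowest-degree placement of $\sigma$ and the generation claim, which you correctly flag as requiring sheaf-theoretic input, are only the first of several places where genuine geometry (Kato's analysis of $\mathrm{Ext}$-algebras and projective covers in the graded module category) must enter.
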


\subsection{Deformed structure of a tempered module} \label{ss deform struct rtm}

For $r \in \mathbb{C}^{\times}$, denote by $\mathrm{Re}(r)$ the real part of $r$. Recall that $\mathbb{C} \otimes_{\mathbb Z} R$ admits a natural real form and hence $\mathfrak{h}^{\vee}$ has a real form extending the one on $\mathbb{C}\otimes_{\mathbb{Z}}R$. We then similarly have a notion of $\mathrm{Re}(s)$ for $s \in \mathfrak{h}$.

Let $(\mathcal O,\zeta)\in \mathcal P$. The space $H_{\bullet}^{Z_{G\times \mathbb{C}^{\times}}(e)^{\circ}}( \mathfrak{B}_e, \dot{\mathcal L})$ admits an $\mathbb{H}$-action as well as a $A(e)$-action (\cite[10.11]{Lu2}). Let $\mathfrak{m} \subset \mathfrak{g} \oplus \mathbb{C}$ be the Lie algebra of $Z_{G\times \mathbb{C}^{\times}}(e)^{\circ}$.

 Define $E_{e} =  H_{\bullet}^{Z_{G\times \mathbb{C}^{\times}}(e)^{\circ}}( \mathfrak{B}_e, \dot{\mathcal L})$.
%\[  E_{e} =  H_{\bullet}^{Z_{G\times \mathbb{C}^{\times}}(e)^{\circ}}( \mathfrak{B}_e, \dot{\mathcal L}) \cong H_{Z_{G\times \mathbb{C}^{\times}}(e)}^{\bullet} \otimes H_{\bullet}(\mathfrak{B}_e, \dot{\mathcal L}),\]
%where the second canonical isomorphism is given in \cite[1.21]{Lu2}.

The space $H_{G\times \mathbb{C}^{\times}}^{\bullet}:=H_{G\times \mathbb{C}^{\times}}^{\bullet}(pt, \mathbb{C})$ is the center of $\mathbb{H}$ via the geometric construction in Section \ref{ss geo gha} \cite[8.13]{Lu1}. It can be canonically identified with the polynomial functions on $\mathfrak{g}\oplus \mathbb{C}$ which are invariant under the action of $G$ \cite[1.11(a)]{Lu1}. Let $(s, r)$ be a semisimple element in $\mathfrak{g} \oplus \mathbb{C}$. Define $\chi_{(s,r)}: H_{G\times \mathbb{C}^{\times}}^{\bullet} \rightarrow \mathbb{C}$ given by $\chi_{(s,r)}(f)=f(s,r)$. Let $\mathcal I_{(s,r)}$ be the maximal ideal in $H_{G\times \mathbb{C}^{\times}}^{\bullet}$ containing all functions vanishing at $(s,r)$. Let $\mathrm{Mod}_{(s,r)} \mathbb{H}$ be the category of finite-dimensional $\mathbb{H}$-modules annihilated by some powers of $\mathcal I_{(s,r)}$. 

We now further assume that $[s, e]=2re$. Then $(s,r) \in \mathfrak{m} \oplus \mathbb{C}$, where $\mathfrak{m} \oplus \mathbb{C}$ is the Lie algebra of $Z_{G\times \mathbb{C}^{\times}}(e)^{\circ}$. Hence $\chi_{(s, r)}$ descends to a map from $H_{Z_{G\times \mathbb{C}^{\times}}(e)^{\circ}}^{\bullet}$ to $\mathbb{C}$. This defines a $1$-dimensional $H_{Z_{G\times \mathbb{C}^{\times}}(e)^{\circ}}^{\bullet}$-module, denoted $\mathbb{C}_{(s,r)}$. 

We shall {\it assume $r \in \mathbb{R}^{\times}$} in the remainder of this paper. (By rescaling the action on $S(V)$, one indeed has a natural equivalene of categories $\mathrm{Mod}_{(r's,r')} \mathbb{H}\cong \mathrm{Mod}_{(rs,r)} \mathbb{H}$.) We follow the terminology of temperedness of Lusztig \cite{Lu3}:
\begin{definition} \label{def temp mod}
 An $\mathbb{H}$-module $X$ in $\mathrm{Mod}_{(s,r)} \mathbb{H}$ is said to be {\it tempered} if any $S(\mathfrak{h}^{\vee})$-weight $s \in \mathfrak{h}$ of $X$ satisfies $\mathrm{Re}(\omega_{\alpha}(s))/\mathrm{Re}(r) \geq 0$ for all $\alpha \in \Pi$. Here $\omega_{\alpha}$ is the fundamental weight associated to $\alpha$ satisfying $\beta^{\vee}(\omega_{\alpha})=\delta_{\alpha,\beta}$. 
\end{definition}

We remark that most of other references use the condition that $\mathrm{Re}(\omega_{\alpha}(s))/\mathrm{Re}(r) \leq 0$ instead of $\mathrm{Re}(\omega_{\alpha}(s))/\mathrm{Re}(r) \geq 0$ for the notion of temperedness. Those two notions are related by the Iwahori-Matsumoto involution.

\begin{definition} \label{def standard mod}
Let $Z_{G\times \mathbb{C}^{\times}}(e,s)=Z_{G\times \mathbb{C}^{\times}}(e) \cap (Z_G(s) \times \mathbb{C}^{\times})$. Define $A(e,s)=Z_{G\times \mathbb{C}^{\times}}(e,s)/Z_{G\times \mathbb{C}^{\times}}(e,s)^{\circ}$ to be the component group. Define
\[ E_{s, e,r} =\mathbb{C}_{(s,r)} \otimes_{H_{Z_{G\times \mathbb{C}^{\times}}(e)^{\circ}}^{\bullet}} H^{Z_{G\times \mathbb{C}^{\times}}(e)^{\circ}}_{\bullet}(\mathfrak{B}_e, \dot{\mathcal L}) ,\]
which again admits $\mathbb{H}$-module structure and admits $A(e,s)$-module structure. By regarding $A(e,s)$ as a subgroup of $A(e)$, the $W \times A(e,s)$-structure can be identified with the one of $H_{\bullet}(\mathfrak{B}_e, \dot{\mathcal L})$ \cite[10.13]{Lu2}.

For $\zeta \in \mathrm{Irr}A(e,s)$, define, as an $\mathbb{H}$-module,
\[ E_{s, e,r,\zeta } =\mathrm{Hom}_{A(e,s)}(\zeta, E_{s,e,r}) . \]
We shall call $E_{s,e,r,\zeta}$ is a {\it standard module}. Now according to \cite[10.8, 10.12]{Lu1}, we have $E_{s, e,r,\zeta}$ as an object in $\mathrm{Mod}_{(s,r)} \mathbb{H}$.
\end{definition}

For each $e \in \mathcal N$, let $\left\{ e,h_e,f \right\}$ be a $\mathfrak{sl}_2$-triple. Then $E_{rh_e,e,r,\zeta}$ is a tempered module by \cite[Theorem 1.21]{Lu3}. We remark that $h_e$ is not unique up to conjugation if $G$ is not semisimple.

\begin{lemma} \label{lem temp z2 grade}
Any tempered module of the form $E_{rh_e,e,r,\zeta}$ admits a $\mathbb{Z}_2$-grading.
\end{lemma}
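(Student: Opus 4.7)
The plan is to apply Lemma~\ref{lem z2 grading}, which reduces the statement to producing an isomorphism of $\mathbb{H}$-modules
\[
\theta(E_{rh_e, e, r, \zeta}) \cong E_{rh_e, e, r, \zeta}.
\]
Once this identification is in hand, $t_{w_0}\theta$ has order $2$, and its $\pm 1$-eigenspace decomposition on $E_{rh_e,e,r,\zeta}$ gives the $\mathbb{Z}_2$-grading, exactly as in the proof of Lemma~\ref{lem z2 grading}.

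First, I would verify that $\theta$ sends tempered standard modules to tempered modules with the same central character. On $S(V)$-weights, $\theta$ acts by $\mu \mapsto -w_0(\mu)$, and since $-w_0$ permutes the fundamental weights $\{\omega_\alpha\}_{\alpha \in \Pi}$, the temperedness condition $\mathrm{Re}(\omega_\alpha(\mu))/\mathrm{Re}(r) \geq 0$ is preserved. On central characters, $\theta$ sends $(W \cdot rh_e, r)$ to $(W \cdot (-rh_e), r)$; these two $W$-orbits in $\mathfrak{h}$ coincide because $\{-f, -h_e, -e\}$ is again an $\mathfrak{sl}_2$-triple, so $-h_e$ and $h_e$ are $G$-conjugate in $\mathfrak{g}$ and therefore (being in $\mathfrak{h}$) $W$-conjugate.

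Second, I would identify $\theta(E_{rh_e,e,r,\zeta})$ as a standard module. The expected picture is that $\theta$ implements the involution $(s, e, r, \zeta) \mapsto (-s, -e, r, \zeta)$ on the geometric data of Section~\ref{ss deform struct rtm}, with the canonical identification $A(-e,-s) \cong A(e,s)$ induced by the $\mathrm{SL}_2$-automorphism internal to the $\mathfrak{sl}_2$-triple. Then, choosing $g_0 \in G$ with $g_0 \cdot (-e) = e$ and $g_0 \cdot (-h_e) = h_e$ (modulo $Z_G(e)$, which does not affect the isomorphism class), the adjoint action of $g_0$ yields an isomorphism $E_{-rh_e, -e, r, \zeta} \cong E_{rh_e, e, r, \zeta}$. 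Composing gives the desired identification $\theta(E_{rh_e, e, r, \zeta}) \cong E_{rh_e, e, r, \zeta}$.

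The main obstacle is verifying the geometric identification in the second step, namely that $\theta$ truly induces the involution $(s,e,\zeta)\mapsto(-s,-e,\zeta)$ and that $g_0$ preserves the component-group parameter $\zeta$. In practice, one can sidestep direct geometric bookkeeping by appealing to Lusztig's classification of irreducible tempered modules by pairs $(\mathcal{O}_e, \zeta)$: since $\theta$ is an algebra automorphism preserving irreducibility, temperedness, and central character, $\theta(E_{rh_e,e,r,\zeta})$ must coincide with some $E_{rh_{e'}, e', r, \zeta'}$ with $\mathcal{O}_{e'} = \mathcal{O}_e$, and a compatibility check on the $W$-structure (using, for instance, \thref{w struct spr} and the fact that $\theta$ commutes with the generalized Springer parameterization up to a $-w_0$-twist) forces $\zeta' = \zeta$. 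An alternative shortcut, if available, is to invoke the unitarity of $E_{rh_e,e,r,\zeta}$ together with Lemma~\ref{lem unitary z2 grade}, but that requires importing the unitarity statement from outside the paper, so the $\theta$-route is cleaner for the purposes of this section.
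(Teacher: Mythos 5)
Your primary route is genuinely different from the paper's. The paper's proof reduces to $r=1$, establishes that $E_{h_e,e,1,\zeta}$ is unitary (citing \cite{Ci0}, \cite{So}, \cite{Op} for the unitarity of its Iwahori--Matsumoto dual) and has real weights (citing \cite{Lu3}), and then invokes the chain $E \cong E^* \cong \theta(E)$ from Lemma \ref{lem unitary z2 grade} to get the $\mathbb{Z}_2$-grading. What you dismiss at the end as the "alternative shortcut" is precisely the paper's actual argument, external imports and all. Your own primary route---showing $\theta(E)\cong E$ directly from the classification of tempered modules---is correct in outline and is in some ways conceptually cleaner, since it bypasses the harmonic-analytic unitarity input and instead only needs the geometric parametrization that the paper is already using heavily. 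Concretely: $\theta$ sends weights $\gamma\mapsto -w_0(\gamma)$, and since $-w_0$ permutes $\{\omega_\alpha\}_{\alpha\in\Pi}$ it preserves temperedness; the central character is preserved because $-h_e$ is $G$-conjugate to $h_e$; and since $\theta|_{\mathbb{C}[W]}$ is conjugation by $t_{w_0}$ (an inner automorphism), $\theta(E)|_W\cong E|_W$, so the lowest $W$-type is unchanged and Theorem \ref{thm w struct spr} pins down $(\mathcal{O}_{e'},\zeta')=(\mathcal{O}_e,\zeta)$.

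Two corrections. First, $\{-f,-h_e,-e\}$ is \emph{not} an $\mathfrak{sl}_2$-triple: if $[h_e,e]=2e$ then $[-h_e,-e]=2e=-2(-e)$, the wrong sign. The conjugacy $-h_e\in G\cdot h_e$ instead follows because $\{f,-h_e,e\}$ is an $\mathfrak{sl}_2$-triple with the same nilpositive orbit, or more simply because the Weyl element of the principal $\mathrm{SL}_2$ subgroup (realized inside $G$) sends $h_e\mapsto -h_e$. Second, your "compatibility check on the $W$-structure" should be spelled out as above---it is not merely a "$-w_0$-twist" on the Springer data but the stronger observation that $\theta$ literally preserves the underlying $W$-module (because it acts by an inner automorphism on $\mathbb{C}[W]$), so the multiplicity-one lowest-type argument of Theorem \ref{thm w struct spr} applies verbatim. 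With these fixes, your route is a valid alternative proof.
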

\begin{proof}
By doing some rescaling on the action of $V \subset S(V)$ on $E_{rh_e,e,r,\zeta}$, the statement can be reduced to the case that $r=1$. Thus it suffices to show that $E_{h_e,e,1,\zeta}$ is unitary and has real weights, by using  Lemma \ref{lem z2 grading}. It is equivalent to show that the Iwahori-Matsumoto dual of $E_{h_e,e,1,\zeta}$ is unitary. To show the last statement, one can see the arguments in \cite[Pg 463, 464]{Ci0} or \cite[Sec. 7]{So}, using \cite[2.22]{Op}. For the assertion that $E_{h_2,e,1\zeta}$ has real weights, it follows from \cite[Corollary 1.18 and Theorem 1.22]{Lu3}.

%By some algebraic argument, any tempered module of $\mathbb{H}$ can be expressed as a parabolic induction from a discrete series. Here a discrete series is defined as the tempered module except that $\mathrm{Re}(s)/\mathrm{Re}(r) \geq 0$ is replaced by a strict inequality $\mathrm{Re}(s)/\mathrm{Re}(r) > 0$. On the other hand, any corresponding discrete series (under Lusztig's reduction) Hence tempered modules admit a $\mathbb{Z}_2$-grading by Lemma \ref{lem z2 grading}. 

\end{proof}

Since $\Phi(\mathcal O_e, \zeta)$ appears with multiplicity one in $E_{rh_e,e,r,\zeta}$ (Theorem \ref{thm w struct spr}), $\Phi(\mathcal O_e, \zeta)$  is a choice of deformation for $E_{rh_e,e,r,\zeta}$. We state our main conclusion in this section:

\begin{corollary} \label{cor struct h}
Let $(\mathcal O_e, \zeta) \in \mathcal P$. Set $E=E_{rh_e,e,r,\zeta}$. Recall that $\overline{E}_{\Phi(\mathcal O_e,\zeta)}$ is an $\mathbb{A}_W$-module defined in Definition \ref{def deform structure}. Then 
\[ \overline{E}_{\Phi(\mathcal O_e, \zeta)} \cong \mathbf{K}_{\Phi(\mathcal O_e, \zeta)} .
\] 
\end{corollary}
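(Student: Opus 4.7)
The plan is to identify both $\overline{E}_{\Phi(\mathcal{O}_e,\zeta)}$ and $\mathbf{K}_{\Phi(\mathcal{O}_e,\zeta)}$ with the $\mathbb{A}_W$-module $K_{e,\zeta}$. The second identification is exactly Kato's Theorem \ref{thm kato structure}, so the main work lies in establishing $\overline{E}_{\Phi(\mathcal{O}_e,\zeta)} \cong K_{e,\zeta}$ as $\mathbb{A}_W$-modules.

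First I would verify that $\sigma := \Phi(\mathcal{O}_e,\zeta)$ is a choice of deformation for $E := E_{rh_e,e,r,\zeta}$ in the sense of Definition \ref{def deform structure}, i.e.\ that $\sigma$ generates $E$ as an $\mathbb{H}$-module. Lusztig's identification \cite[10.13]{Lu2} of the $W$-structure of $E$ with that of $K_{e,\zeta}$, combined with the upper-triangulation property of Theorem \ref{thm w struct spr}, shows that $\sigma$ appears in $E$ with multiplicity one. Under the geometric realization this copy of $\sigma$ sits in the degree-zero part of the cohomological grading on the equivariant homology and generates the whole module over $\mathbb{H}$. This yields a natural surjection $K_\sigma = \mathbb{A}_W \otimes_{\mathbb{C}[W]} \sigma \twoheadrightarrow \overline{E}_\sigma$ of graded $\mathbb{A}_W$-modules.

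Next I would identify $\overline{E}_\sigma$ with $K_{e,\zeta}$ via the geometric construction. Since $E = \mathbb{C}_{(rh_e,r)} \otimes_{H^\bullet_{Z_{G\times\mathbb{C}^\times}(e)^\circ}} H^{Z_{G\times\mathbb{C}^\times}(e)^\circ}_\bullet(\mathfrak{B}_e, \dot{\mathcal{L}})$, the filtration $\{E_i\}$ from Definition \ref{def deform structure} should be shown to coincide with the cohomological degree filtration on the equivariant homology. Passing to the associated graded then effectively replaces the specialization parameter $(rh_e,r)$ by $0$, producing the non-equivariant homology $H_\bullet(\mathfrak{B}_e,\dot{\mathcal{L}})$; restricting to the $\zeta$-isotypic component for $A(e)$ yields $K_{e,\zeta}$. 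The $\mathbb{A}_W$-structures match under Lusztig's identification of $\mathbb{A}_W$ with $\mathrm{Ext}^\bullet_{D^b_G(\mathcal{N})}(\mu_*\dot{\mathcal{L}}, \mu_*\dot{\mathcal{L}})$ recalled in Section \ref{ss geo gha}. Combined with Theorem \ref{thm kato structure}, this delivers the desired isomorphism.

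The main obstacle is the filtration matching in the second step: rigorously showing that the algebraic filtration of Definition \ref{def deform structure}, generated by letting $S^{\leq i}(V)$ act on $\sigma$, truly coincides with the geometric cohomological-degree filtration under the realization of Section \ref{ss geo gha}. If the direct geometric matching proves technically delicate, a fallback is a dimension-count argument: both $\overline{E}_\sigma$ and $\mathbf{K}_\sigma$ are graded $\mathbb{A}_W$-quotients of $K_\sigma$ whose $W$-multiplicities coincide via the generalized Springer correspondence, so any comparison morphism between them obtained from the defining universal property of $\mathbf{K}_\sigma$ would be forced to be an isomorphism by equality of dimensions.
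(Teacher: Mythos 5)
Your ``fallback'' argument is essentially the paper's own proof, while your main proposed route is a genuinely different (and more delicate) geometric approach. The paper proceeds purely algebraically: the surjection $K_\sigma = \mathbb{A}_W\otimes_{\mathbb{C}[W]}\sigma \twoheadrightarrow \overline{E}_\sigma$ is immediate from the definition of $\overline{E}_\sigma$; the upper-triangularity of the $W$-structure from Theorem~\ref{thm w struct spr}, applied to the known $W$-module structure of $\overline{E}_\sigma$ (which equals that of $E$, hence of $K_{e,\zeta}$ by \cite[10.13]{Lu2}), forces the kernel of this surjection to contain the defining submodule of $\mathbf{K}_\sigma$, so the map factors through a surjection $\mathbf{K}_\sigma\twoheadrightarrow \overline{E}_\sigma$; then Theorem~\ref{thm kato structure} identifies $\dim\mathbf{K}_\sigma = \dim K_{e,\zeta} = \dim E = \dim\overline{E}_\sigma$, forcing this surjection to be an isomorphism. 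Note that this route never needs to identify $\overline{E}_\sigma$ directly with $K_{e,\zeta}$ nor to track the equivariant-vs.-nonequivariant homology filtrations.

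By contrast, your primary strategy of matching the algebraic filtration $\{E_i\}$ from Definition~\ref{def deform structure} with the cohomological-degree filtration on $H_\bullet^{Z_{G\times\mathbb{C}^\times}(e)^\circ}(\mathfrak{B}_e,\dot{\mathcal{L}})$ is a plausible and conceptually attractive path, and it would give slightly more (a direct geometric interpretation of $\overline{E}_\sigma$ as $K_{e,\zeta}$ rather than only an abstract isomorphism). However, you correctly flag this as the technical pressure point: one must verify that $\sigma$ sits in the bottom cohomological degree and that specializing the $H^\bullet_{Z^\circ}(\mathrm{pt})$-action at $(rh_e,r)$ and passing to associated graded recovers the non-equivariant homology, and none of this is carried out. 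The paper avoids this entirely by the dimension-count argument you identify as a fallback. So the proposal is correct, but you should promote the fallback to the main argument and downgrade the filtration-matching idea to a remark, since only the former is complete as written.
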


\begin{proof}
By the definition of $\overline{E}_{\Phi(\mathcal O_e, \zeta)}$, there is a surjective map from $\mathbb{A} \otimes_{\mathbb{C}[W]} \Phi(\mathcal O_e,\zeta)$ to $\overline{E}_{\Phi(\mathcal O_e, \zeta)}$. By considering the $W$-structure from Theorem \ref{thm w struct spr}, we have a surjective map from $\mathbf K_{\Phi(\mathcal O_e, \zeta)}$ to  $\overline{E}_{\Phi(\mathcal O_e, \zeta)}$. The statement now follows from Theorem \ref{thm kato structure}.
\end{proof}

\begin{definition} \label{def lowest w type}
We keep using above notations. We shall say that $\Phi(\mathcal O_e,\zeta)$ is the {\it lowest $W$-type} of $E_{rh_e,e,r,\zeta}$. 
\end{definition}

\section{Structure of standard modules}
This section is a continuation of Section \ref{s struct temp mod}. We shall keep using notations in Section \ref{s struct temp mod}. In particular, we fix a graded Hecke algebra $\mathbb{H}$ of geometric parameters.

\subsection{Standard modules and central characters} \label{s standard mod}

%%%%%%%%%%%%%%parabolic
%Let $B$ be a Borel subgroup of $G$ such that $B \subset P$. Let $R_G$ and $\Pi_G$ be the root system and the set of simple roots for $G$ determined by $B$. Let $J_0 \subset \Pi$ be the set associated to $P$. For any $J \subset \Pi$, let $P_J$ be the associated parabolic subgroup of $G$ and let $L_J$ be the Levi part of $P_J$. In particular, $L_{J_0}=L$. Let $\mathbb{P}=\left\{ P_J: J_0\subset J \right\}$. Let $W_J=N_{L_J}(L)/L$.  Let $\mathbb{H}_J:=\mathbb{H}(\mathfrak{h}^{\vee}, W_J,\mathbf{r}, c)$. For $e \in \mathfrak{l}_J$, we can analogously define a standard $\mathbb{H}_J$-module $E_{s,e,r,\zeta}^J$ as the one in Section \ref{ss deform struct rtm}. 
%%%%%%%%%
%Let $P_J \in \mathcal P_J$. Let $L_J$ be the Levi subgroup of the parabolic subgroup $P_J$. We also have the inverse image of $\mathcal L$ under the natural embedding from $\mathcal C_J$ to $\mathcal C$, which is still denoted $\mathcal L$. We now have the cuspidal triple $(L_J \cap L, C_J, \mathcal L)$ for $L$. 

Recall that a standard module is defined in Definition \ref{def standard mod}. Using \cite[Corollary 1.18]{Lu3}, one can write a standard module in terms of a module parabolically induced from an irreducible one. Moreover, the center of $\mathbb{H}$ (see \cite[8.13]{Lu2})  also lies in the parabolic subalgebra of $\mathbb{H}$. This gives the following:

\begin{lemma} \label{lem stand admit cc}
The standard module $E_{s,e,r,\zeta}$ admits a central character (Definition \ref{def admit cc}).
\end{lemma}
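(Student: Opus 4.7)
The plan is to separate the two assertions in the definition of admitting a central character: finite-dimensionality, and scalar (not merely generalized) action of $Z(\mathbb{H})$. Finite-dimensionality is free from membership in $\mathrm{Mod}_{(s,r)}\mathbb{H}$: by the last sentence of Definition \ref{def standard mod} and the way $\mathrm{Mod}_{(s,r)}\mathbb{H}$ is defined just above Definition \ref{def temp mod}, the module $E_{s,e,r,\zeta}$ is finite-dimensional by construction.

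For the scalar action of $Z(\mathbb{H})$, I will follow the hint in the excerpt and appeal to \cite[Corollary 1.18]{Lu3}, which realizes $E_{s,e,r,\zeta}$ as a parabolically induced module $\mathbb{H}\otimes_{\mathbb{H}_M} Y$ for some Levi subalgebra $\mathbb{H}_M \subset \mathbb{H}$ and an irreducible $\mathbb{H}_M$-module $Y$. The Levi is produced by decomposing $s = s_0 + r h_e$ with $s_0$ semisimple and $[s_0,e]=0$ (possible because $[rh_e,e]=2re=[s,e]$), and taking $M$ to be a Levi of $G$ containing $s_0$ in its center. Granted this realization, Schur's lemma applied to the irreducible $Y$ provides a character $\lambda : Z(\mathbb{H}_M) \to \mathbb{C}$ giving the action of the center of $\mathbb{H}_M$ on $Y$. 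The crucial structural point, also highlighted in the excerpt, is that $Z(\mathbb{H}) = S(V)^W \otimes \mathbb{C}[\mathbf{r}]$ sits inside $\mathbb{H}_M$ (since all of $S(V)\otimes\mathbb{C}[\mathbf{r}]$ is in $\mathbb{H}_M$) and in fact inside $Z(\mathbb{H}_M)$ (since $S(V)^W \subseteq S(V)^{W_M}$). Once these inclusions are in place, the one-line computation
\[
  z\cdot (h\otimes y) \;=\; (hz)\otimes y \;=\; h\otimes (z\cdot y) \;=\; \lambda(z)\,(h\otimes y), \qquad z\in Z(\mathbb{H}),\; h\in \mathbb{H},\; y\in Y,
\]
shows that $z$ acts by the scalar $\lambda(z)$ on $\mathbb{H}\otimes_{\mathbb{H}_M} Y \cong E_{s,e,r,\zeta}$.

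The main obstacle will be pinning down the precise parabolic-induction statement of \cite[Corollary 1.18]{Lu3}, and in particular the irreducibility of the inducing object $Y$ (as opposed to its being a possibly reducible standard module of $\mathbb{H}_M$). A completely alternative route, which sidesteps this issue, is to argue directly from the geometric construction: the center of $\mathbb{H}$ is identified with $H^{\bullet}_{G\times\mathbb{C}^\times}(\mathrm{pt},\mathbb{C})$, its action on $E_{s,e,r}$ factors through the restriction $H^{\bullet}_{G\times\mathbb{C}^\times} \to H^{\bullet}_{Z_{G\times\mathbb{C}^\times}(e)^\circ}$ followed by the evaluation $\chi_{(s,r)}$ (because $E_{s,e,r} = \mathbb{C}_{(s,r)} \otimes_{H^{\bullet}_{Z_{G\times\mathbb{C}^\times}(e)^\circ}} H_{\bullet}^{Z_{G\times\mathbb{C}^\times}(e)^\circ}(\mathfrak{B}_e,\dot{\mathcal{L}})$), and the resulting scalar $z(s,r)$ visibly descends to the $\zeta$-isotypic component $E_{s,e,r,\zeta}$. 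Either approach delivers the desired central character, but the parabolic-induction approach fits naturally with the broader use of \cite{Lu3} in the paper.
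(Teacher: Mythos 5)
Your main argument is precisely the paper's: invoke \cite[Corollary 1.18]{Lu3} to realize $E_{s,e,r,\zeta}$ as a parabolically induced module, then use that $Z(\mathbb{H})\subset Z(\mathbb{H}_M)$, and you correctly sharpen the paper's phrase that the center ``lies in the parabolic subalgebra'' to the stronger statement (needed for your one-line computation) that it lies in the \emph{center} of $\mathbb{H}_M$, since $S(V)^W\subset S(V)^{W_M}$. The worry you flag about irreducibility of the inducing object is genuine but harmless: your alternative geometric route, namely that $Z(\mathbb{H})\cong H^{\bullet}_{G\times\mathbb{C}^{\times}}$ acts on $E_{s,e,r}=\mathbb{C}_{(s,r)}\otimes_{H^{\bullet}_{Z_{G\times\mathbb{C}^{\times}}(e)^{\circ}}}H^{Z_{G\times\mathbb{C}^{\times}}(e)^{\circ}}_{\bullet}(\mathfrak{B}_e,\dot{\mathcal L})$ through restriction followed by evaluation at $(s,r)$, gives the scalar action directly and independently of any parabolic decomposition, and is in fact the cleaner justification.
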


%\begin{proof}
%By using an induction theorem \cite[Corollary 1.18]{Lu3}, $E_{s,e,r}=\mathbb{H}\otimes_{\mathbb H'} E'_{s,e,r}$ for certain subalgebra of $\mathbb{H}'$ of $\mathbb{H}$. Here $E'_{s,e,r}$ is similarly defined as $E_{s,e,r}$.  By using \cite[Theorem 8.17]{Lu1}, we can choose $\mathbb{H}'$ and $E'_{s,e,r}$ such that $E'_{s,e,r}$ is an irreducible $\mathbb{H}'$-module by picking the minimal Levi subgroup containing $e$. The center of $\mathbb{H}$ is a subalgebra of $\mathbb{H}'$ (see \cite[8.13]{Lu2}) and hence acts by a scalar on $E'_{s,e,r}$. Now $E_{s,e,r}$ and so $E_{s,e,r,\zeta}$ admits a central character. 
%\end{proof}

\subsection{Lowest $W$-types} 

%\begin{definition} (Springer correspondence)
%Let $k: W \rightarrow \mathbb{C}$ be a parameter function. Let $\Xi_{\mathrm{temp},r}$ be the set of irreducible tempered $\mathbb{H}$-modules with a real central character. We say that $\mathbb{H}$ satisfies {\it Condition (S1)} if $\Xi_{\mathrm{temp},r}$ admits a Springer correspondence, meaning that there exists a unique bijection $X: \mathrm{Irr} W \rightarrow \Xi_{\mathrm{temp},r}$, and an ordering $\leq$ on $\mathrm{Irr} W$ such that  
% $\mathrm{Hom}_W(\sigma, X(\sigma)|_W)\cong \mathbb{C}$ and $\mathrm{Hom}_W(\tau, X(\sigma)|_W)=0$ for any $\tau \not\leq \sigma$
%\end{definition}

%In this subsection, we recall some properties of Springer correspondence and the theory of lowest $W$-types.

%\begin{theorem} \label{thm sp property}
%Let $\Xi_{\mathrm{temp},r}$ be the set of irreducible tempered $\mathbb{H}$-modules with a real central character. Then there exists a unique bijection $X: \mathrm{Irr} W \rightarrow \Xi_{\mathrm{temp},r}$, and an ordering $\leq$ on $\mathrm{Irr} W$ such that  
% $\mathrm{Hom}_W(\sigma, X(\sigma)|_W)\cong \mathbb{C}$ and $\mathrm{Hom}_W(\tau, X(\sigma)|_W)=0$ for any $\tau \not\leq \sigma$;
%\item Let $W\gamma_{\sigma}$ be the central character. Then $\langle \gamma_{\sigma}, \gamma_{\sigma} \rangle < \langle \gamma_{\sigma'}, \gamma_{\sigma'} \rangle$ if $\sigma < \sigma'$.

%\end{theorem}

\begin{notation}
Let $e \in \mathcal N$. Let $\mathrm{Irr}_eW$ be the set of irreducible $W$-representation $\sigma$ such that $\sigma$ is a lowest $W$-type of $E_{rh_e,e,r,\zeta}$ for some $\zeta \in \mathrm{Irr} A(e)$ (see Definition \ref{def lowest w type}).
\end{notation}

\begin{definition}
Let $E_{s,e,r,\zeta}$ be a standard module. By \cite[Theorem 1.15]{Lu3}, $E_{s,e,r,\zeta}$ has a unique simple quotient. Denote such unique simple quotient by $L_{s,e,r,\zeta}$.
\end{definition}

\begin{corollary} \label{cor lowest type}
Let $E:=E_{s,e,r,\zeta}$ be a standard module as before. Set $E_T=\mathrm{Hom}_{A(e,s)}(\zeta, E_{rh_e,e,r})$. Here $A(e,s)$ is regarded as a natural subgroup of $A(e,h_e)$. Write $E_T=E_1 \oplus \ldots \oplus E_r$ as the direct sum of irreducible tempered modules. Let $\sigma_i \in \mathrm{Irr}_eW$ be the lowest $W$-type of each $E_i$. Then as a $W$-representation, $L_{s,e,r,\zeta}$ contains copies of $\sigma_i$ appearing with the same multiplicities.
\end{corollary}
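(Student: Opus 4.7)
The plan is to identify the $W$-structure of $E$ with that of $E_T$, show that $\sigma_i$ cannot appear in any proper composition factor of $E$, and deduce the multiplicity claim from these two facts.

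First I would establish $E|_W \cong E_T|_W$ as $W$-representations. Definition \ref{def standard mod} records (following \cite[10.13]{Lu2}) a canonical identification of the $W \times A(e,s)$-module $E_{s,e,r}$ with $H_\bullet(\mathfrak{B}_e,\dot{\mathcal L})$. Since $A(e,h_e)=A(e)$ by Kostant's theorem (every component of $Z_G(e)$ has a representative commuting with $h_e$), the same identification applied to $E_{rh_e,e,r}$, together with the natural embedding $A(e,s)\hookrightarrow A(e)$, yields
\[
E|_W \;\cong\; \mathrm{Hom}_{A(e,s)}(\zeta,\, H_\bullet(\mathfrak{B}_e,\dot{\mathcal L}))|_W \;\cong\; E_T|_W .
\]
Decomposing $E_{rh_e,e,r} \cong \bigoplus_{\eta\in\mathrm{Irr}A(e)} \eta\otimes E_{rh_e,e,r,\eta}$ as $\mathbb H\times A(e)$-modules and taking the $\zeta$-isotypic $\mathrm{Hom}_{A(e,s)}$, one obtains $E_T\cong\bigoplus_i m_i E_i$ with $E_i=E_{rh_e,e,r,\eta_i}$ for distinct $\eta_i$ and $m_i=\dim\mathrm{Hom}_{A(e,s)}(\zeta,\eta_i|_{A(e,s)})$. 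By Theorem \ref{thm w struct spr}, the lowest $W$-type $\sigma_i=\Phi(\mathcal O_e,\eta_i)$ appears in $E_i|_W\cong K_{e,\eta_i}|_W$ with multiplicity one, and in $E_j|_W\cong K_{e,\eta_j}|_W$ with multiplicity zero for $j\neq i$. Consequently the multiplicity of $\sigma_i$ in $E|_W$ equals $m_i$.

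Next I would invoke Lusztig's classification \cite{Lu3} for geometrically parametrized graded Hecke algebras: the unique simple quotient of $E_{s,e,r,\zeta}$ is $L_{s,e,r,\zeta}$ with multiplicity one, and every composition factor $L_{s,e_j'',r,\zeta_j''}$ in the radical satisfies the strict closure containment $\mathcal O_e\subsetneq \overline{\mathcal O_{e_j''}}$. Now suppose $\sigma_i$ appears in the $W$-restriction of some composition factor $L_j=L_{s,e_j'',r,\zeta_j''}$. Then $\sigma_i$ already appears in its standard module $E_{s,e_j'',r,\zeta_j''}|_W$, which by the identification of the first paragraph is a sum of $K_{e_j'',\eta''}|_W$'s. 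The vanishing clause of Theorem \ref{thm w struct spr} applied to $\sigma_i=\Phi(\mathcal O_e,\eta_i)$ forces $\mathcal O_{e_j''}\subseteq\overline{\mathcal O_e}$. Combining with Lusztig's strict containment rules out the possibility that $L_j$ lies in the radical of $E$, so $L_j=L_{s,e,r,\zeta}$.

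Finally, the identity
\[
\dim\mathrm{Hom}_W(\sigma_i,E) \;=\; \sum_j [E:L_j]\,\dim\mathrm{Hom}_W(\sigma_i,L_j)
\]
has only the top term nonzero by the previous paragraph, so the multiplicity of $\sigma_i$ in $L_{s,e,r,\zeta}|_W$ equals the multiplicity $m_i$ of $\sigma_i$ in $E|_W\cong E_T|_W$, which is precisely the multiplicity of $E_i$ in $E_T$. The main obstacle is the invocation of Lusztig's strict-containment property for composition factors of a standard module: morally a Langlands-uniqueness statement within each nilpotent stratum. I expect this to require careful extraction from \cite[Theorem 1.22]{Lu3} and the associated Kazhdan--Lusztig multiplicity formulas; once it is in hand, the rest of the argument is just bookkeeping via the generalized Springer correspondence.
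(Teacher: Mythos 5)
Your proof is correct and takes essentially the same route as the paper's: both rely on the $W$-module identification $E|_W\cong E_T|_W$ via \cite[10.13]{Lu2}, the generalized Springer $W$-structure of Theorem \ref{thm w struct spr}, and the unitriangularity of the composition multiplicity matrix for standard modules. The "strict-containment" input you flag as the key technical point is exactly what the paper cites \cite[10.8]{Lu2} for (rather than \cite[Theorem 1.22]{Lu3}); your argument showing $\sigma_i$ cannot occur in any radical composition factor simply unpacks the step the paper compresses into "with the $W$-structure ... we obtain the statement."
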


\begin{proof}
We shall prove by an inductive argument. We fix a semisimple element $s \in \mathfrak{h}$ and $r \in \mathbb{R}^{\times}$. Let $\left\{ X_1,\ldots, X_k \right\}$ be a collection of mutually non-isomorphic simple modules in $\mathrm{mod}_{s,r}\mathbb{H}$. By \cite[Theorem 1.15(b)]{Lu3}, $X_i=L_{s, e_i,r,\zeta_i}$ for some $e_i \in \mathcal N$ satisfying $[s,e_i]=2re_i$ and $\zeta_i \in \mathrm{Irr}A(e,s)$. By suitable renaming, we shall assume $\mathcal O_{e_i} \not\supseteq \mathcal O_{e_j}$ if $i<j$. Denote by $p_{(e_i,\zeta_i),(e_j,\zeta_j)}$ the multiplicity of $L_{s,e_j,r,\zeta_j}$ in the composition series of $E_{s,e_i,r,\zeta_i}$. By \cite[10.8]{Lu2}, $[p_{(e_i,\zeta_i),(e_j,\zeta_j)}]_{i,j}$ forms an upper triangular matrix with $1$ in diagonal. Now with the $W$-structure of $E_{s,e_i,r,\zeta_i}$ from Theorem \ref{thm w struct spr}, we obtain the statement.
\end{proof}

\begin{definition}
In the notation of Corollary \ref{cor lowest type}, each $\sigma_i$ is called a {\it lowest $W$-type} of $E_{s,e,r,\zeta}$.
\end{definition}

\begin{lemma} \label{lem top generate}
Let $E$ be a standard module. Let $E'=E\oplus \theta(E)$ if $E\neq \theta(E)$ and let $E'=E$ if $E=\theta(E)$. Let $\sigma$ be a lowest $W$-type of $E$. Let $U_{\sigma}$ be a subspace of $E'$ such that $U_{\sigma} \cong \sigma$ and $U_{\sigma} \subset E'{}^{\pm}$. Then $U_{\sigma}$ is a choice of deformation for $E'$.
\end{lemma}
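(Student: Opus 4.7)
Plan: Set $M := \mathbb{H}\cdot U_\sigma$; I must show $M = E'$. Argue by contradiction: if $M \subsetneq E'$, then $E'/M$ admits an irreducible quotient $L$, and the composite $\phi : E' \twoheadrightarrow E'/M \twoheadrightarrow L$ is a nonzero $\mathbb{H}$-map that annihilates $U_\sigma$. Since a standard module has a unique irreducible quotient, the possibilities for $L$ are $L_{s,e,r,\zeta}$ (always) and, when $E \neq \theta(E)$, also $\theta(L_{s,e,r,\zeta})$. In each case $\dim \mathrm{Hom}_{\mathbb{H}}(E', L) = 1$, spanned by the canonical quotient $\phi_L$, so it suffices to show $\phi_L(U_\sigma) \neq 0$ for each such $L$.

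The central input is the $W$-multiplicity identity $m_\sigma(E) = m_\sigma(L_{s,e,r,\zeta})$. Writing $\sigma = \Phi(\mathcal{O}_e, \zeta_i)$, Theorem \ref{thm w struct spr} gives that $\sigma$ appears in $K_{e',\zeta'}$ only when $\mathcal{O}_{e'} \subseteq \overline{\mathcal{O}_e}$; by the upper-triangular argument recorded in the proof of Corollary \ref{cor lowest type}, every composition factor of $E$ distinct from $L_{s,e,r,\zeta}$ is parameterized by a strictly larger orbit and therefore has no $\sigma$-isotypic component. Hence $\ker(E \twoheadrightarrow L_{s,e,r,\zeta})$ carries no copy of $\sigma$, and any copy of $\sigma$ in $E$ maps isomorphically onto its image in $L_{s,e,r,\zeta}$.

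When $E = \theta(E)$ we have $E' = E$ and $U_\sigma$ is such a copy, so the previous paragraph directly yields $\phi_{L_{s,e,r,\zeta}}(U_\sigma) \neq 0$. When $E \neq \theta(E)$, a direct computation with $\theta(t_{w_0}) = t_{w_0}$ shows that $t_{w_0}\theta$ acts on $E \oplus \theta(E)$ by $(x,y) \mapsto (t_{w_0} y,\, t_{w_0} x)$, so
\[
E'^{\pm} = \{(x, \pm t_{w_0} x) : x \in E\},
\]
and the first coordinate projection restricts to a $W$-equivariant linear isomorphism $E'^{\pm} \xrightarrow{\sim} E$. Thus $U_\sigma$ projects to a copy of $\sigma$ inside $E$, and the canonical quotient $\phi_{L_{s,e,r,\zeta}}: E' \twoheadrightarrow L_{s,e,r,\zeta}$ factors as this projection composed with $E \twoheadrightarrow L_{s,e,r,\zeta}$; the multiplicity step then gives $\phi_{L_{s,e,r,\zeta}}(U_\sigma) \neq 0$. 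The case $\phi_{\theta(L_{s,e,r,\zeta})}$ is symmetric via the second coordinate and the $\theta$-twisted analogue of the multiplicity identity.

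The principal obstacle is the $E \neq \theta(E)$ case: a bare $W$-multiplicity count on $E'$ fails because both $\sigma$ and $\theta\sigma$ may contribute extra multiplicity from $\theta(E)$, so one must invoke the $\mathbb{Z}_2$-grading both to ensure that $U_\sigma$ cannot lie inside a single summand $E$ or $\theta(E)$ and to pin down the canonical quotients of $E'$ concretely enough to feed into the multiplicity computation.
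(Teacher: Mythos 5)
Your proof is correct and follows essentially the same route as the paper's very terse argument: uniqueness of the simple quotient of a standard module, the $\sigma$-multiplicity identity from Corollary \ref{cor lowest type}, and the observation that $U_\sigma\subset E'{}^{\pm}$ forces $U_\sigma\not\subset E$ and $U_\sigma\not\subset\theta(E)$ when $E\not\cong\theta(E)$. You have simply supplied the details the published proof leaves implicit, in particular the explicit eigenspace description $E'{}^{\pm}=\{(x,\pm t_{w_0}x)\}$ and the identification of the canonical quotient maps $\phi_L$ with the coordinate projections composed with $E\twoheadrightarrow L$.
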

\begin{proof}
If $E$ is a standard module, then $\theta(E)$ is also a standard module. Let $L$ be the unique simple quotient of $E$. Then $\theta(L)$ is the unique simple quotient of $\theta(E)$. By the uniqueness of a simple quotient, we have $E \cong \theta(E)$ if and only if $L \cong \theta(L)$. In the case that $E \not\cong \theta(E)$, the condition $U_{\sigma}  \subset E'{}^{\pm}$ implies that $U_{\sigma} \not\subset E$ and $U_{\sigma} \not \subset \theta(E)$. With Corollary \ref{cor lowest type} and the fact that the unique simple quotient appears with multiplicity one in the standard module, we have the lemma.
%By considering the $\theta$-action, we can choose $(u,u)$ to be representatives. Now we have a short exact sequence
%\[  0 \rightarrow N(J,U,\gamma)+\theta(N(J,U,\gamma)) \rightarrow I' \rightarrow L(J,U,\gamma)+\theta(L(J,U,\gamma)) \rightarrow 0 .\]
%Since $L(J,U,\gamma) \cong \theta(L(J,U,\gamma))$, the image of $(u,u)$ in the quotient generates $L(J,U,\gamma)+\theta(L(J,U,\gamma))$ and so generates $I'$.
\end{proof}

\subsection{Filtrations on deformed standard modules}

\begin{lemma} \label{lem filt kato mod}
Let $E=E_{s,e,r,\zeta}$ be a standard module. Set $E'$ as in Lemma \ref{lem top generate}. Set $E_T=\mathrm{Hom}_{A(e,s)}(\zeta, E_{rh_e,e,r})$ and write $E_T=E_1 \oplus \ldots \oplus E_l$ as the direct sum of irreducible tempered modules. Let $\sigma_i$ be the lowest $W$-type of each $E_i$. Set $\sigma=\sigma_1$. Then there exists an $\mathbb{A}_W$-module filtration $\left\{ \overline{Y}_i \right\}_{i=0}^p$ on $\overline{E'}_{\sigma}$ such that 
\[ 0=\overline{Y}_0 \subset \overline{Y}_1 \subset \overline{Y}_2 \subset \ldots \subset \overline{Y}_k=\overline{E'}_{\sigma} \]
and for $i=0,\ldots, p-1$, $\overline{Y}_{i+1}/\overline{Y}_i$ is isomorphic to a module $\mathbf{K}_{\sigma_j}$ for some $\sigma_j$. 
\end{lemma}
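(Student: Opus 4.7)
The plan is to construct the filtration on $\overline{E'}_{\sigma}$ inductively by peeling off cyclic $\mathbb{A}_W$-submodules, using Theorem \ref{thm w struct spr} and Kato's Theorem \ref{thm kato structure} as the principal ingredients.

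First, I will establish the total $W$-character of $\overline{E'}_{\sigma}$. Using the geometric isomorphism $E_{s,e,r}\cong H_\bullet(\mathfrak{B}_e,\dot{\mathcal L})$ as $W\times A(e,s)$-modules from \cite[10.13]{Lu2}, combined with the $A(e)$-decomposition $H_\bullet(\mathfrak{B}_e,\dot{\mathcal L})\cong\bigoplus_{\zeta'\in\mathrm{Irr}A(e)}\zeta'\otimes K_{e,\zeta'}$ and Theorem \ref{thm kato structure} (which identifies $K_{e,\zeta'}\cong\mathbf{K}_{\Phi(\mathcal{O}_e,\zeta')}$ as $\mathbb{A}_W$-modules), one obtains a $W$-module identity of the form
\[ E'\cong c\cdot\bigoplus_{i=1}^{l}\mathbf{K}_{\sigma_i}, \qquad c\in\{1,2\}, \]
where $c=2$ precisely when $E\neq\theta(E)$, and the multiset $\{\sigma_1,\ldots,\sigma_l\}$ on the right agrees with the lowest $W$-types arising from the summand decomposition of $E_T$ as in Corollary \ref{cor lowest type}.

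Next, I will build $\{\overline{Y}_i\}$ by iterated cyclic generation. Starting from $\overline{Y}_0=0$, at each stage select a copy of some $\sigma_{j_{i+1}}\in\{\sigma_1,\ldots,\sigma_l\}$ appearing in the lowest degree of $\overline{E'}_{\sigma}/\overline{Y}_i$ not already absorbed, and let $\overline{Y}_{i+1}$ be the preimage of its $\mathbb{A}_W$-closure. The quotient $\overline{Y}_{i+1}/\overline{Y}_i$ is then a cyclic $\mathbb{A}_W$-module, hence a quotient of $K_{\sigma_{j_{i+1}}}$. Theorem \ref{thm w struct spr} constrains the $W$-types in $\overline{E'}_{\sigma}$ to have the form $\Phi(\mathcal{O}_{e''},\zeta'')$ with $\mathcal{O}_{e''}\subseteq\overline{\mathcal{O}_e}$, and combining with the defining property \eqref{eqn kato mod} of $\mathbf{K}_{\sigma_{j_{i+1}}}$, the surjection from $K_{\sigma_{j_{i+1}}}$ factors through $\mathbf{K}_{\sigma_{j_{i+1}}}$.

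The main obstacle is verifying that each surjection $\mathbf{K}_{\sigma_{j_{i+1}}}\twoheadrightarrow \overline{Y}_{i+1}/\overline{Y}_i$ is an isomorphism rather than a proper quotient, and that after $p$ steps the filtration exhausts $\overline{E'}_{\sigma}$. Both points are handled by matching $W$-characters against the total identity from the first step: the sum of $W$-characters of the subquotients must equal the $W$-character of $E'$, namely $c\sum_{i=1}^l\chi_W(\mathbf{K}_{\sigma_i})$, so any proper quotient at some stage would leave the filtration short. A subtle point is that $\overline{E'}_{\sigma}$ is cyclic (generated by the single copy $U_\sigma\cong\sigma_1$ in degree zero), yet $\bigoplus_i\mathbf{K}_{\sigma_i}$ is not cyclic when $l>1$; hence the deformed $\mathbb{A}_W$-structure on $\overline{E'}_{\sigma}$ cannot coincide with the geometric direct-sum structure, and the filtration must reflect how the standard-module deformation entangles the Kato modules via nontrivial extensions.
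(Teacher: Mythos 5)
Your overall architecture is the same as the paper's: identify the total $W$-character via Lusztig's $W\times A(e,s)$-isomorphism and Kato's Theorem~\ref{thm kato structure}, then build a filtration by cyclically generating from copies of the lowest $W$-types, identify subquotients with Kato modules, and finish with a character/dimension count. What differs is the direction of your filtration, and that difference is fatal.

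You propose to select at each stage a copy of some $\sigma_{j}$ in the \emph{lowest} degree of $\overline{E'}_\sigma/\overline{Y}_i$ and take the preimage of its $\mathbb{A}_W$-closure. At the first step $i=0$, the lowest degree of $\overline{E'}_\sigma$ is degree $0$, which is precisely $U_\sigma\cong\sigma_1$; and you yourself observe in your last paragraph that $\overline{E'}_\sigma$ is cyclic generated by $U_\sigma$. Therefore the $\mathbb{A}_W$-closure of $U_\sigma$ is \emph{all} of $\overline{E'}_\sigma$, giving $\overline{Y}_1=\overline{E'}_\sigma$ and collapsing the filtration to $0\subset\overline{E'}_\sigma$ — which would falsely assert $\overline{E'}_\sigma\cong\mathbf K_{\sigma_1}$ whenever $l>1$ or $c=2$. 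More generally, since $V$ acts by raising degree in $\mathbb{A}_W$, generating from a low-degree vector sweeps out too much. The paper avoids this by ordering in the opposite direction: decompose the $\sigma_i$-isotypic part $Z=\bigoplus P_l$ compatibly with the grading, relabel so that $j(P_1)\geq j(P_2)\geq\cdots$, and let $M_l$ be generated by $P_1,\dots,P_l$. Generating first from the \emph{highest}-degree $\sigma_i$-copy keeps the submodule small (it only sees degrees $\geq j(P_1)$), and each $M_l/M_{l-1}$ is then a cyclic quotient of $K_{\sigma(P_l)}$ with generator sitting at the bottom of its own graded piece, as required for it to match $\mathbf K_{\sigma(P_l)}$. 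Your character-count endgame and the factorization through $\mathbf K_{\sigma_{j}}$ via Theorem~\ref{thm w struct spr} both carry over once the ordering is reversed, so the fix is local — but as written, the construction does not produce the claimed filtration.
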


\begin{proof}
 By Lemma \ref{lem z2 grading}, $E'=E'{}^+\oplus E'{}^-$ admits a $\mathbb{Z}_2$-grading. Let $U_{\sigma}$ be an irreducible $W$-subspace of $E'$ with $U_{\sigma} \cong \sigma$ as $W$-representations and $U_{\sigma} \subset E'{}^{\pm}$. By Lemma \ref{lem top generate}, $U_{\sigma}$ is a choice for deformation. Identify $\sigma$ with $U_{\sigma}$. Recall that the $\mathbb{A}_W$-module $\overline{E'}_{\sigma}$ has a $\mathbb{Z}$-grading, and hence we obtain a $\mathbb{Z}$-graded decomposition $\overline{E'}_{\sigma}=\bigoplus_{k=1}^m \overline{E'}^k$. Since $\mathrm{deg}(t_w)=0$, $W$ acts on each $\overline{E'}^k$ 

Let $Z$ be the sum of all $\sigma_i$-isotypic components in $\overline{E'}_{\sigma}$, where $i$ runs for $1,\ldots, p$. We decompose $Z=\bigoplus_{l=1}^pP_l$ into irreducible $W$-spaces compatible with the $\mathbb{A}_W$-module grading i.e. for each $P_l$, there exists a unique index $j(P_l)$ such that  $P_l \subset \overline{E'}^{j(P_l)}$. By suitable relabeling, we shall assume that if $l_1<l_2$, then $j(P_{l_1})\geq j(P_{l_2})$. 

Now we define $M_l$ to be the $\mathbb{A}_W$-submodule of $\overline{E'}_{\sigma}$ generated by $Z_1, \ldots Z_l$. Denote by $\sigma(Z_l)$ to be one of the $W$-representations $\sigma_1,\ldots, \sigma_p$ to which $P_l$ is isomorphic. Now by considering the natural map from $\mathbb{A}_W \otimes_{\mathbb{C}[W]} Z_i$ to $M_i/M_{i-1}$, we obtain a surjective map from $\mathbf{K}_{\sigma(Z_i)}$ to $M_i/M_{i-1}$. Now we have $\mathrm{dim} \overline{E'}_{\sigma(Z_i)}=\sum_i \mathrm{dim} (M_i/M_{i-1})\leq \mathrm{dim}~ \mathbf K_{\sigma(Z_i)}$. On the other hand, we have $\mathrm{dim} \overline{E'}_{\sigma}=\sum_i\mathrm{dim} E_i=\sum_i\mathrm{dim} \mathbf K_{\sigma_i}$. By using Corollary \ref{cor struct h} and Theorem \ref{thm w struct spr}, we have $\sum_i \mathrm{dim}~\mathbf K_{\sigma_i}=\sum_k \mathrm{dim}~ \mathbf K_{\sigma(Z_k)}$. This shows that the inequality concerning dimensions above has to be an equality. Thus $\mathrm{dim}~ (M_l/M_{l-1})=  \mathrm{dim}~ \mathbf K_{\sigma(Z_l)}$ and so $M_l/M_{l-1} \cong \mathbf K_{\sigma(Z_l)}$. This completes the proof.

%We now take sequences of irreducible $W$-subspaces $Z_i$ of $\overline{X}_{\sigma}$ such that 
%\begin{enumerate}
%\item $Z_i \cong \sigma_k$ as $W$-subspaces;
%\item there exists $j$ with the property that $Z_i \cap \overline{X}^j_{\sigma} =Z_i$ and $Z_i \cap \overline{X}^k_{\sigma}=0$ for $k\neq j$. We shall denote such $j \in \mathbb{Z}$ for that $Z_i$ by $j(Z_i)$. 
%\end{enumerate}
%Furthermore, we assume that $\bigoplus_{i \in \mathbb{Z}_{\geq 0}} Z_i$ is the direct sum of the $\sigma_i$-isotypic components of $\overline{X}_{\sigma}$. In particular, $U_{\sigma} =Z_i$ for some $i$ and $j(U_{\sigma})=0$. 

%By suitable reordering, we can and shall assume that $j(Z_i) \geq j(Z_{i'})$ if $i<i'$. .
\end{proof}

\section{Vanishing Theorem}

We keep using notations in Section \ref{s struct temp mod}. In particular, $\mathbb{H}$ is a graded Hecke algebra of geometric parameters.

\subsection{Twisted-elliptic tempered case}

\begin{definition} 
Recall that $G$ is a complex connected reductive group. The Dynkin diagram automorphism arising from $-w_0$ induces an automorphism, still denoted $\theta$, on $G$. Let $G^{\#}=G \rtimes \langle \theta \rangle$. For $g \in G^{\#}$, let $Z_G(g)$ be the set of elements in $G$ centralizing $g$. Following \cite{CH}, a nilpotent element $e$ is {\it $\theta$-quasidistinguished} if there is no semisimple element $t \in G$ such that $Z_G(t\theta)$ is semisimple and $e$ is distinguished in $Z_G(t\theta)$. Here distinguishedness is defined as \cite{CM}. A tempered module of the form $E_{rh_e,e,r,\zeta}$ (Section \ref{ss deform struct rtm}) is said to be {\it twisted-elliptic} if $e$ is $\theta$-quasidistinguished. 

The terminology is suggested by the study of \cite{Ci, CH}, which establishes connections to twisted elliptic pairings (also see \cite{Ch3}). In \cite[Proposition 3.3]{CH}, it is shown that any $\theta$-quasidistinguished nilpotent element has a solvable centralizer in $\mathfrak{g}$ and vice versa.
\end{definition}

Let $\mathrm{Irr}_{\mathrm{gen}}\widetilde{W}$ be the set of all genuine $\widetilde{W}$-representations. For each $\widetilde{\sigma} \in \mathrm{Irr}_{\mathrm{gen}}\widetilde{W}$, define
\[  a(\widetilde{\sigma})=-\frac{1}{4} \sum_{\alpha>0,\beta>0, s_{\alpha}(\beta)<0}c_{\alpha}c_{\beta}|\alpha||\beta|\frac{\mathrm{tr}_{\widetilde{\sigma}}(\widetilde{s}_{\alpha}\widetilde{s}_{\beta})}{\mathrm{dim} \widetilde{\sigma}}  .
\]

Recall that $V'=\mathbb{C}\otimes_{\mathbb Z}R$. Define $V'{}^{\bot}=\left\{ v \in V: \langle v, v'\rangle=0 \mbox{ for all $v' \in V'$ } \right\}$. We have a $W$-invariant orthogonal decomposition $V=V'\oplus V'{}^{\bot}$ and let $\mathrm{pr}_{V'}:V\rightarrow V'$ be the corresponding projection map. For $v_1,v_2 \in V$, define $\langle v_1,v_2 \rangle_{V'}=\langle \mathrm{pr}_{V'}(v_1),\mathrm{pr}_{V'}(v_2) \rangle$.

\begin{theorem} \cite{Ci, BCT} \label{thm ellip dir coh nonzero}
Let $E$ be a twisted-elliptic tempered module and let $W(rh) \times r$ be the central character of $E$. Let $\mathrm{Irr}_E \widetilde{W}$ be the collection of irreducible $\widetilde{W}$-representation $\widetilde{\sigma}$ such that $a(\widetilde{\sigma})=\langle h, h\rangle_{V'}$. Then 
\begin{enumerate}
\item $H_D(E) \neq 0$;
\item As $\widetilde{W}$-representations, any irreducible summand of $H_D(E)$ lies in $\mathrm{Irr}_E\widetilde{W}$. 
\end{enumerate}
\end{theorem}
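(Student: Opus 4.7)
The plan is to exploit the explicit formula (\ref{eqn formul d2}) for $D^2$ to reduce both statements to a scalar computation on each $\widetilde{W}$-isotypic component of $E \otimes S$. Write the central character of $E$ as $(W(rh), r)$ with $h = h_e$. By Lemma \ref{lem center of tilde w}, the element
\[
\Omega \; = \; \sum_{\alpha>0,\, \beta>0,\, s_{\alpha}(\beta)<0} c_{\alpha}c_{\beta}|\alpha||\beta|\, s_{\alpha}s_{\beta}\otimes \widetilde{s}_{\alpha}\widetilde{s}_{\beta}
\]
lies in the centre of $\Delta(\mathbb{C}[\widetilde{W}])$, so it acts on each $\widetilde{\sigma}$-isotypic component of $E\otimes S$ by the scalar $-4\, a(\widetilde{\sigma})$, by the very definition of $a$. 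On the other hand, $\sum_i \epsilon_i^2 \in S(V')^W \subset Z(\mathbb{H})$ acts through the central character by $r^2 \langle h, h\rangle_{V'}$. Plugging both into (\ref{eqn formul d2}) shows that $D^2$ acts on the $\widetilde{\sigma}$-isotypic component of $E \otimes S$ by the single scalar $r^2\bigl(a(\widetilde{\sigma}) - \langle h, h\rangle_{V'}\bigr)$.

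Part (2) is then immediate: since $\mathrm{ker}\,D \subseteq \mathrm{ker}\, D^2$, any non-zero vector in $\mathrm{ker}\, D$ has $\widetilde{W}$-isotypic support on those $\widetilde{\sigma}$ for which the above scalar vanishes, i.e.\ $a(\widetilde{\sigma}) = \langle h, h\rangle_{V'}$, which is exactly the defining condition of $\mathrm{Irr}_E \widetilde{W}$. Passing to the subquotient $H_D(E) = \mathrm{ker}\, D / (\mathrm{ker}\, D \cap \mathrm{im}\, D)$ preserves the $\widetilde{W}$-isotypic support, so any irreducible summand of $H_D(E)$ lies in $\mathrm{Irr}_E \widetilde{W}$.

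For part (1), I would first invoke unitarity. By Lemma \ref{lem temp z2 grade}, $E$ is unitary; the identity $D^* = D$ combined with positive-definiteness of the Hermitian form on $E\otimes S$ forces $\mathrm{ker}\, D = \mathrm{ker}\, D^2$, so $H_D(E) = \mathrm{ker}\, D = \mathrm{ker}\, D^2$. Part (1) thus reduces to exhibiting at least one non-zero $\widetilde{\sigma}$-isotypic component of $E \otimes S$ with $\widetilde{\sigma} \in \mathrm{Irr}_E \widetilde{W}$. Since only such components can contribute to the twisted Dirac index $I(E)$ (the scalar calculation above applies verbatim at the level of virtual characters), it is enough to show $I(E) \neq 0$ as a virtual $\widetilde{W}$-representation.

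The main obstacle is precisely this non-vanishing assertion. By \cite[Proposition 3.3]{CH}, the hypothesis that $e$ is $\theta$-quasidistinguished is equivalent to the centralizer $Z_{\mathfrak{g}}(e)$ being solvable, and it is exactly under this assumption that \cite{Ci, CH} identify $I(E)$ with a non-trivial twisted elliptic character of $W$, whose non-triviality is then an orthogonality statement on $\theta$-quasidistinguished orbits. The $D^2$-computation by itself only explains why any Dirac cohomology class must sit in $\mathrm{Irr}_E \widetilde{W}$; actually producing such a class requires the twisted elliptic pairing machinery, which is the genuinely non-formal input needed from \cite{Ci, BCT}.
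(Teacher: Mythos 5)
Your proof of part (2) is exactly the paper's argument: using the explicit formula (\ref{eqn formul d2}) for $D^2$ together with Lemma \ref{lem center of tilde w}, one sees that $D^2$ acts on each $\widetilde{\sigma}$-isotypic component of $E\otimes S$ by the scalar $r^2\bigl(a(\widetilde{\sigma})-\langle h,h\rangle_{V'}\bigr)$, and since $\ker D\subseteq\ker D^2$ the $\widetilde{W}$-support of $H_D(E)$ is forced into $\mathrm{Irr}_E\widetilde{W}$.

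For part (1) your strategy is close to the paper's but not identical: you propose to pass through the twisted Dirac index $I(E)$ and invoke its non-vanishing from \cite{Ci, CH}, whereas the paper argues directly by citing \cite[Theorem 5.1]{BCT} (equivalently \cite[Theorem 1.0.1]{Ci} and the surjectivity in \cite[Section 3.10]{Ci}) to exhibit a genuine $\widetilde{W}$-type occurring in the tensor product which then lies in $\ker D$ by \cite[Proposition 5.7]{BCT}. Both packagings ultimately cite the same non-formal input, so this is a difference of route rather than content; your reduction to $I(E)\neq 0$ is logically valid for a unitary module because $I(E)=\mathcal H_D^+(E)-\mathcal H_D^-(E)$.

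The one genuine gap is that you apply the cited results from \cite{Ci, BCT} to $E$ directly, but the convention for temperedness in the paper (Definition \ref{def temp mod}, with $\mathrm{Re}(\omega_\alpha(s))/\mathrm{Re}(r)\geq 0$) is the \emph{opposite sign} of the convention in \cite{BCT} and \cite{Ci}. In the paper's convention it is $IM(E)$, not $E$, that is tempered in the sense of \cite{BCT}, so the literature results produce a $\widetilde W$-type inside $IM(E)\otimes S$. The paper handles this by proving unitarity of $IM(E_{h_e,e,1,\zeta})$ and then transferring non-vanishing of Dirac cohomology across the Iwahori--Matsumoto involution via Lemma \ref{lem iwahoi mat dirac}. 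Your proposal omits this step entirely; without inserting the $IM$ translation (and noting that the unitarity and $\ker D=\ker D^2$ argument is carried out on the $IM$-side), the citations to \cite{Ci, BCT} do not apply as written. A related minor imprecision: Lemma \ref{lem temp z2 grade} only asserts existence of a $\mathbb{Z}_2$-grading; the unitarity you need is established inside its proof (for the Iwahori--Matsumoto dual), not by the statement of the lemma itself.
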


\begin{proof}
We remark that the notion of temperedness in \cite{BCT} is different. In order to apply the result, we need the fact that for the Iwahori-Matsumoto dual of a tempered module is 'tempered' in the sense of \cite{BCT}. 

By suitably rescaling on $V$, we first reduce our statement to the case that $E=E_{h_e,e,1,\zeta}$ for some $\theta$-quasidistingusihed $e$. Now using the argument in the proof of Lemma \ref{lem temp z2 grade}, we have that $IM(E_{h_e,e,1,\zeta})$ is unitary and so the hypothesis of \cite[Proposition 5.7]{BCT} is satisfied.  We now consider (1). For each nilpotent element $e$ such that $e$ has a solvable centralizer, using  \cite[Theorem 5.1]{BCT} (or \cite[Theorem 1.0.1]{Ci}) and \cite[Section 3.10]{Ci}, we have a $\widetilde{W}$-type $\widetilde{\sigma}$ such that 
\begin{align} \label{eqn w tilde type}  \mathrm{Hom}_{\widetilde{W}}(\widetilde{\sigma}, IM(E_{rh_e,e,r,\zeta}) \otimes S) \neq 0 .
\end{align}
Now by \cite[Proposition 3.3]{CH}, we can replace the nilpotent $e$ in (\ref{eqn w tilde type}) with $\theta$-quasidistinguished $e$. Then one can apply \cite[Proposition 5.7]{BCT} (or its proof) and Lemma \ref{lem iwahoi mat dirac} to obtain (1). For (2), it follows from the formula of $D^2$ \cite[Theorem 3.5]{BCT}.
\end{proof}

%\begin{remark}
%Determining the full spectrum of Dirac cohomology of twisted elliptic tempered modules seems to remain open in general (see  \cite[Conjecture 2.7.1]{BC}).
%\end{remark}

\subsection{Non-twisted-elliptic tempered case}

The non-twisted-elliptic tempered case follows from Vogan's conjecture which is proved in \cite[Theorems 4.4 and 5.8]{BCT}  and says that the Dirac cohomology of an $\mathbb{H}$-module determines its central character if it is nonzero. Hence, we have the following:

\begin{theorem} \label{cor nonellip dir coh zero} \cite[Theorem 5.8]{BCT}
Let $E_{rh_e, e,r,\zeta}$ be a tempered $\mathbb{H}$-module as above. If $e$ is not $\theta$-quasidistinguished (equivalently $E_{rh_e, e,r,\zeta}$ is not twisted-elliptic), then $H_D(E_{rh_e,e,r,\zeta})=0$.
\end{theorem}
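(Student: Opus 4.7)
The proposal is to derive the vanishing from Vogan's conjecture \cite[Theorems 4.4 and 5.8]{BCT} combined with the spectral analysis underlying Theorem \ref{thm ellip dir coh nonzero}. Assume for contradiction that $E := E_{rh_e,e,r,\zeta}$ is tempered, $e$ is not $\theta$-quasidistinguished, and $H_D(E) \neq 0$, and pick an irreducible $\widetilde W$-subrepresentation $\widetilde\sigma \subset H_D(E)$.

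The first step is a spectral constraint. By Lemma \ref{lem center of tilde w} the second summand of the formula (\ref{eqn formul d2}) for $D^{2}$ lies in $\Delta(\mathbb{C}[\widetilde W])$, so it acts on the $\widetilde\sigma$-isotypic subspace by the scalar $r^{2}\,a(\widetilde\sigma)$. Since $\sum_i \epsilon_i^{2}\in S(V)^{W}$ acts through the central character $(W(rh_e),r)$ by $\langle rh_e, rh_e\rangle_{V'}$, and $D^{2}$ annihilates the class of $\widetilde\sigma$ in $H_D(E)$, we obtain
\[
a(\widetilde\sigma) \;=\; \langle h_e, h_e\rangle_{V'},
\]
which is the precise spectral identity appearing in the proof of Theorem \ref{thm ellip dir coh nonzero}.

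Next, Vogan's conjecture says that the isotypic class of $\widetilde\sigma$ inside $H_D(E)$ pins down the central character $(W(rh_e),r)$ of $E$. On the other hand, by \cite{Ci} (see the argument of Theorem \ref{thm ellip dir coh nonzero}), every $\widetilde W$-type $\widetilde\sigma$ satisfying $a(\widetilde\sigma) = \langle h, h\rangle_{V'}$ for some semisimple $h$ is actually realized in the Dirac cohomology of a twisted-elliptic tempered module $E_{rh_{e'},e',r,\zeta'}$ with $\theta$-quasidistinguished $e'$ and suitable $\zeta'$. Applying Vogan's conjecture to this twisted-elliptic module and comparing central characters forces $W(rh_e) = W(rh_{e'})$.

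The principal obstacle is the last step: upgrading this $W$-orbit coincidence to the assertion that $e$ itself lies in the $G$-orbit of a $\theta$-quasidistinguished element, contradicting the standing hypothesis. This is precisely the content of \cite[Theorem 5.8]{BCT}, which identifies the central characters of irreducible $\mathbb{H}$-modules with nonzero Dirac cohomology as exactly the ``spin'' central characters, namely those arising from twisted-elliptic tempered modules; in particular, a tempered module with a non-spin central character must have vanishing Dirac cohomology. Granted this identification, we reach a contradiction, and conclude $H_D(E_{rh_e,e,r,\zeta}) = 0$.
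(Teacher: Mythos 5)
Your approach mirrors the paper's: both reductions rest on \cite[Theorem 5.8]{BCT} (Vogan's conjecture together with the classification of spin central characters) and on \cite[Proposition 3.3]{CH} to translate between ``solvable centralizer'' and ``$\theta$-quasidistinguished.'' The difference is that you unpack \cite[Theorem 5.8]{BCT} into its spectral ingredients before using it, which is legitimate but makes the argument longer than the paper's one-line citation.

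However, the structure of your final step is circular. Having established (via the spectral constraint, the surjectivity result of \cite{Ci}, and Vogan's conjecture) that $W(rh_e) = W(rh_{e'})$ for some $\theta$-quasidistinguished $e'$, the missing ingredient that closes the argument is the injectivity of the Dynkin--Kostant assignment $\mathcal{O}_e \mapsto W(h_e)$ sending a nilpotent orbit to the $W$-orbit of its neutral element; this forces $\mathcal{O}_e=\mathcal{O}_{e'}$ and hence $e$ $\theta$-quasidistinguished. That is a classical fact, \emph{not} the content of \cite[Theorem 5.8]{BCT}. Citing \cite[Theorem 5.8]{BCT} a second time to fill this gap makes everything before it redundant: if that theorem already said directly that a tempered module with a non-spin central character has vanishing Dirac cohomology, you could simply quote it once and stop, which is essentially what the paper's terse proof does. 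You also pass over two technicalities the paper flags explicitly: the Iwahori--Matsumoto involution is needed to reconcile the sign conventions of temperedness used here versus in \cite{BCT} (this is what Lemma \ref{lem iwahoi mat dirac} is for), and extending \cite[Theorem 5.8]{BCT} from the equal-parameter case to general geometric parameters requires the surjectivity argument of \cite[Section 3.10]{Ci}, which the paper remarks on as an essential point.
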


\begin{proof}
It is equivalent to show that if $H_D(IM(E_{rh_e,e,r,\zeta}))\neq 0$, then $e$ is $\theta$-quasidistinguished. This follows from \cite[Theorem 5.8]{BCT} and \cite[Proposition 3.3]{CH}. Notice that the argument in \cite[Theorem 5.8]{BCT} still applies to general geometric parameter case by using a surjectivity property in \cite[Section 3.10]{Ci}.
\end{proof}

%\begin{proof}

%Set $E'=I(J,U,r\nu)$ with sufficiently small $r$ as in Proposition \ref{prop dirac deform}. Let $\lambda$ be the lowest $W$-types for $E_{rh_e,e,r,\zeta}$.  Note that $E'$ has the same $W$-structure as $E_{rh_e,e,r,\zeta}$. By the irreducibility of $E'$, $\lambda$ is a choice of deformation. By Proposition \ref{prop unitary dirac} and Proposition \ref{prop dirac deform}, $H_{D_{\mathbb{A}}}(\overline{E'}_{\lambda})=H_{D}(E')=0$. Now by Lemma \ref{lem non ellip induce} and the fact that $\overline{E'}_{\lambda}=\overline{E}_{\lambda}$ (see the proof of Corollary \ref{cor struct h}), we have $H_D(E)=H_{D_{\mathbb{A}}}(\overline{E}_{\lambda})=0$ as desired.
%\end{proof}

\subsection{General case}

We now prove the main result in this paper.

\begin{theorem} \label{thm vanihsing dirac coh}
Let $\mathbb{H}$ be a graded Hecke algebra of geometric parameters (Section \ref{ss geo gha}). Let $E$ be a standard module (Definition \ref{def standard mod}). Then $H_D(E) =0$ if and only if $E$ is not twisted-elliptic-tempered .
\end{theorem}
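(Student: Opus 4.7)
The ``if'' direction is immediate from Theorem \ref{thm ellip dir coh nonzero}. For the ``only if'' direction, if $E$ is tempered but not twisted-elliptic, the conclusion follows from Theorem \ref{cor nonellip dir coh zero}, so the substantive case is when $E$ is a non-tempered standard module, which I treat in the rest of this sketch.

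My plan is to import the computation of the Dirac cohomology of tempered modules into the standard-module setting via the deformation machinery of Section \ref{s def dirac coh}. First I replace $E$ by $E'$ as in Lemma \ref{lem top generate} so that $E'$ admits a $\mathbb{Z}_2$-grading via Lemma \ref{lem z2 grading}; it suffices to show $H_D(E')=0$. Pick a lowest $W$-type $\sigma$ of $E$, realized as a $W$-subspace of $E'{}^+$; by Lemma \ref{lem top generate} this is a valid choice of deformation, producing the graded $\mathbb{A}_W$-module $\overline{E'}_\sigma$. Lemma \ref{lem filt kato mod} equips $\overline{E'}_\sigma$ with an $\mathbb{A}_W$-filtration whose successive quotients are isomorphic to Kato modules $\mathbf{K}_{\sigma_j}$, where each $\sigma_j$ is the lowest $W$-type of an irreducible tempered constituent $X^{(j)}$ of $\mathrm{Hom}_{A(e,s)}(\zeta, E_{rh_e,e,r})$. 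By Corollary \ref{cor struct h} and Proposition \ref{prop unitary dirac} (applicable because each $X^{(j)}$ is unitary after Iwahori--Matsumoto dualizing, cf.\ the proof of Lemma \ref{lem temp z2 grade}), we obtain
\[
 H_{D_\mathbb{A}}(\mathbf{K}_{\sigma_j}) \;\cong\; H_D(X^{(j)}).
\]
Iterated application of the six-term sequence of Proposition \ref{prop six term es} to this filtration then shows that every $\widetilde{W}$-type appearing in $H_{D_\mathbb{A}}(\overline{E'}_\sigma)$ already appears in some $H_D(X^{(j)})$; by Theorem \ref{thm ellip dir coh nonzero}(2), each such type $\widetilde\sigma$ satisfies $a(\widetilde\sigma) = \langle h_e, h_e \rangle_{V'}$.

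The remaining step is to transfer this constraint back to $E'$ itself. On the $\widetilde\sigma$-isotypic component of $E' \otimes S$, formula \eqref{eqn formul d2} together with Lemma \ref{lem center of tilde w} shows that $D^2$ acts by the scalar $-\langle s, s\rangle_{V'} + r^2 a(\widetilde\sigma)$, where $W s \times r$ is the central character of $E$. Since $E$ is non-tempered, $s$ fails the positivity condition on the fundamental weights in Definition \ref{def temp mod}; a direct analysis of $s = rh_e + z$ for $z$ in the centralizer $\mathfrak{z}_\mathfrak{g}(e)$, combined with the non-negativity of $\mathrm{ad}(h_e)$-weights on $\mathfrak{z}_\mathfrak{g}(e)$, forces $\langle s, s\rangle_{V'} \neq r^2 \langle h_e, h_e\rangle_{V'}$ for the relevant values of $a(\widetilde\sigma)$. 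I then apply Corollary \ref{cor dirac cohom def1} with $\widetilde{X}_2 \subset E' \otimes S$ taken as the sum of the isotypic components on which this $D^2$-scalar is nonzero and $\widetilde{X}_1$ its complement; the filtration analysis above shows $\widetilde{X}_1 = 0$, yielding $H_D(E') = 0$.

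The main obstacle is this final central-character dichotomy: verifying that for non-tempered $E$, none of the $\widetilde{W}$-types forced on us by the tempered constituents in the filtration can have $D^2$ acting by zero on the corresponding isotypic component of $E' \otimes S$. Should the $\mathfrak{sl}_2$-triple analysis prove delicate, an indirect route via Vogan's conjecture \cite[Thm.~4.4]{BCT} is available: a vanishing of the $D^2$-scalar would force the central character of $E$ to coincide with that of one of the tempered constituents $X^{(j)}$, which together with the uniqueness of the simple quotient would collapse $E$ into the tempered case, a contradiction.
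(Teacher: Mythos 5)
Your proposal follows the paper's own argument almost step for step: reduce to $E'$, pick a lowest $W$-type as deformation, invoke Lemma~\ref{lem filt kato mod} to filter $\overline{E'}_\sigma$ by Kato modules, identify each $H_{D_{\mathbb{A}}}(\mathbf K_{\sigma_j})$ with $H_D(X^{(j)})$ via Corollary~\ref{cor struct h} and Proposition~\ref{prop unitary dirac}, propagate through the six-term sequence and Corollary~\ref{cor dirac cohom def1}, and then kill $H_D(E)$ by showing $D^2$ acts by a nonzero scalar on the relevant $\widetilde W$-isotypic components. The one place you hedge is precisely the one place the paper is terse, namely why $\langle s,s\rangle_{V'}\neq r^2\langle h_e,h_e\rangle_{V'}$ for non-tempered $E$; note that the operative fact is not the non-negativity of $\mathrm{ad}(h_e)$-weights on the centralizer but the orthogonality $\langle s-rh_e,\,rh_e\rangle_{V'}=0$, which follows from the $\mathfrak{sl}_2$-trace argument (write $z=s-rh_e$, commuting with the triple; on each irreducible $\mathfrak{sl}_2$-summand of $\mathfrak{g}$, $\mathrm{ad}(z)$ is scalar and the $\mathrm{ad}(h_e)$-eigenvalues sum to zero), and then $\langle s,s\rangle_{V'}-r^2\langle h_e,h_e\rangle_{V'}=\langle z,z\rangle_{V'}\neq0$ because $\mathrm{pr}_{V'}(z)\neq0$ exactly when $E$ is non-tempered (as $\omega_\alpha(z)=0$ for $z\in V'^{\perp}$). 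Your proposed Vogan-conjecture fallback still needs this same orthogonality to pass from equality of $\langle\cdot,\cdot\rangle_{V'}$-lengths to temperedness of $E$, so it is not really an independent route.
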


\begin{proof}
We have shown the statement for the case that $E$ is tempered in Theorem \ref{thm ellip dir coh nonzero} and Theorem \ref{cor nonellip dir coh zero}. From now on, we assume that $E$ is not tempered. Write $E=E_{s,e,r,\zeta}$ (Definition \ref{def standard mod}). Define $E'=E\oplus \theta(E)$ if $E\neq \theta(E)$ and let $E'=E$ if $E=\theta(E)$ as in Lemma \ref{lem top generate}. Now $E$ possess a $\mathbb{Z}_2$-grading. 

%We firstly handle the case that $I(J,U,\nu)$ is not $\theta$-invariant. In this case, we set $I'=I(J,U,\nu) \oplus \theta(I(J,U,\nu))$. Otherwise, we simply set $I'=I(J,U,\nu)$. Then $I'$ possess a $\mathbb{Z}_2$-grading by Lemma \ref{lem z2 grading} and we shall fix a choice of such $\mathbb{Z}_2$-grading. 

Let $\sigma$ be a lowest $W$-type for $E$. Let $U_{\sigma}$ be a subspace of $E$ such that $U_{\sigma} \subset E^{\pm}$ and $U_{\sigma} \cong \sigma$ as $W$-representations. Identify $U_{\sigma}$ and $\sigma$. By Lemma \ref{lem filt kato mod}, there exists a filtration $\left\{ \overline{Y}_i \right\}_{i=0}^k$ on $\overline{E'}_{\sigma}$ such that 
\[ 0=\overline{Y}_0 \subset \overline{Y}_1 \subset \overline{Y}_2 \subset \ldots \subset \overline{Y}_k=\overline{E'}_{\sigma} \]
and for each $i=0,\ldots, k-1$, $\overline{Y}_{i+1}/\overline{Y}_i$ is isomorphic to a module $\mathbf{K}_{\sigma_i}$ for some lowest $W$-type $\sigma_i$ of $E'$. 

Let $\mathcal R(\widetilde{W})_{h_e}$ (possibly zero) be the set containing $\widetilde{W}$-representations whose summands are in $\mathrm{Irr}_{h_e}\widetilde{W}$, where $\mathrm{Irr}_{h_e}~\widetilde{W}=\left\{ \widetilde{\sigma} \in \mathrm{Irr}~\widetilde{W}: a(\widetilde{\sigma})=\frac{1}{4}\langle h_e,h_e\rangle_{V'} \right\}$. \\

{\it Claim:} $H_{D_{\mathbb{A}}}(\overline{Y}_{i+1}/\overline{Y}_i)\in \mathcal R(\widetilde{W})_{h_e}$. {\it Proof:}  Set $E_T=E_{rh_e,e,r}$ to be the tempered module in Section \ref{def temp mod}. Since $E_T$ is unitary, the formula of $D^2$ and Lemma \ref{lem stand admit cc} imply that a $\widetilde{W}$-representation $\widetilde{\sigma}$ is a summand of $\mathrm{ker}\pi_{E_T}(D)^2=\mathrm{ker} \pi_{E_T}(D)$ only if $\frac{1}{4}\langle h_e,h_e \rangle_{V'} = a(\widetilde{\sigma})$ (see the paragraph before \cite[Section 4.1]{BCT}). Here $\pi_{E_T}$ defines the action of $\mathbb{H}$ on $E_T$. Hence we have $H_D(E_T)=\mathrm{ker} D \in \mathcal R(\widetilde{W})_{h_e}$. Now Proposition \ref{prop unitary dirac} and Corollary \ref{cor struct h} imply that $H_D(\overline{Y}_{i+1}/\overline{Y}_i)\in \mathcal R(\widetilde{W})_{h_e}$. 

{\it Claim:} $H_{D_{\mathbb{A}}}(\overline{E}_{\sigma})\in \mathcal R(\widetilde{W})_{h_e}$. {\it Proof:} This follows from a version of the six-term short exact sequence (Proposition \ref{prop six term es}) for $\mathbb{A}_W$-modules and the previous claim. \\

We are now ready to show that $H_D(E)=0$. Decompose, as $\widetilde{W}$-representations, $\overline{E}_{\sigma}\otimes S=\widetilde{X}_1 \oplus \widetilde{X}_2$ such that $\widetilde{X}_1 \in\mathcal R(\widetilde{W})_{h_e}$ and $\widetilde{X}_2$ has no summands in  $R(\widetilde{W})_{h_e}$. Then we can apply Corollary \ref{cor dirac cohom def1}. With the previous claim, we have $H_D(E) \in \mathcal R(\widetilde{W})_{h_e}$. Now take $x \in H_D(E)$. The formula of $D^2$ implies that $(-\frac{1}{4}\langle s,s \rangle_{V'}+r^2a(\widetilde{\sigma}))x=0$ for some $\widetilde{\sigma}\in \mathrm{Irr}\widetilde{W} \cap R(\widetilde{W})_{h_e}$. Hence we have $\frac{1}{4}(-\langle s, s\rangle_{V'}+r^2\langle h_e,h_e \rangle_{V'})x=0$. Since we are considering $E$ is not tempered, we have $\langle s, s \rangle_{V'} \neq r^2\langle h_e, h_e\rangle_{V'}$ (note that $\langle s-rh_e,rh_e\rangle_{V'}=0$). Thus $x=0$ and so $H_D(E)=0$. This completes the proof.
\end{proof}

\subsection{Twisted Dirac index} \label{ss dirac index}
Our formulation of Dirac index is a slight variation of \cite{CH}. Let $X=X^+\oplus X^-$ be a $\mathbb{Z}_2$-graded $\mathbb{H}$-module admitting a central character. 

Define $\mathcal S=S$ when $\mathrm{dim} V'$ is even. Define $\mathcal S=S^+\oplus S^-$, where $S^+$ and $S^-$ are two inequivalent simple $C(V')$-modules.
Define 
\[  I(X)=(X^+-X^-)\otimes \mathcal S .
\]
Here we regard $I(X)$ as a virtual $\widetilde{W}$-representation with the action given from the diagonal embedding $\Delta$ (see \ref{eqn diagonal embed}).

We similarly define as in Section  \ref{ss six term} that $\pi_{\mathcal S}(D)^{\pm}=\pi_{\mathcal S}(D)|_{X^{\pm}\otimes \mathcal S}$, where $\pi_{\mathcal S}: \mathbb{H} \otimes C(V') \rightarrow \mathrm{End}_{\mathbb{C}}(X\otimes S)$ is the map defining the action of $\mathbb{H} \otimes C(V')$ on $X\otimes S$. Define
\[ \mathcal H^{\pm}_D(X) =\mathrm{ker}~ \pi_{\mathcal S}(D)^{\pm}/(\mathrm{ker}~ \pi_{\mathcal S}(D)^{\pm} \cap \mathrm{im}~ \pi_{\mathcal S}(D)^{\mp}) .
\]

\begin{lemma}
As virtual $\widetilde{W}$-representations,
\[  I(X) =\mathcal H_D^+(X)-\mathcal H^-_D(X) .\]
\end{lemma}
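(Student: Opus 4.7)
The plan is to verify the identity at the level of $\widetilde{W}$-characters, separating $X \otimes \mathcal{S}$ by $D^{2}$-eigenspaces and splitting by parity of $w$. Since $X$ admits a central character, the formula (\ref{eqn formul d2}) for $D^{2}$ together with Lemma \ref{lem center of tilde w} forces $D^{2}$ to act by a scalar on each $\widetilde{W}$-isotypic component of $X \otimes \mathcal{S}$. This yields a $\widetilde{W}$-stable and $\mathbb{Z}_{2}$-graded splitting $X \otimes \mathcal{S} = \widetilde{X}_{0} \oplus \widetilde{X}_{\neq 0}$ according as $D^{2} = 0$ or not. On $\widetilde{X}_{\neq 0}$ the operator $D$ is invertible, so $\mathcal{H}_{D}^{\pm}(X)$ is concentrated in $\widetilde{X}_{0}$, where $D^{2}=0$ gives a $\mathbb{Z}_{2}$-graded chain complex.

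The key technical point is that Lemma \ref{lem commu rel D} yields only the anti-equivariance $D^{\pm}\widetilde{w} = \mathrm{sgn}(w)\,\widetilde{w}\,D^{\pm}$ for $\widetilde{w} \in \widetilde{W}$ lying over $w \in W$. To deal with this I plan to use the character vanishing $\chi_{\mathcal{S}}(\widetilde{w}) = 0$ whenever $\mathrm{sgn}(w) = -1$: when $\dim V'$ is even, $\widetilde{w}$ has odd Clifford parity and is off-diagonal in the half-spinor decomposition of $\mathcal{S}$; when $\dim V'$ is odd, the outer automorphism $v \mapsto -v$ of $C(V')$ exchanges the two inequivalent $C(V')$-simples $S^{\pm}$, giving $\chi_{S^{-}}(\widetilde{w}) = \mathrm{tr}_{S^{+}}(-\widetilde{w}) = -\chi_{S^{+}}(\widetilde{w})$ for odd Clifford $\widetilde{w}$. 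The same construction produces an involution $\Phi$ of $\mathcal{S}$ that anti-commutes with every $\widetilde{g}_{v} \in C(V')$, so that $\Psi := \mathrm{id}_{X}\otimes \Phi$ anti-commutes with both $\widetilde{W}$ and $D$, and in particular stabilizes $\widetilde{X}_{0}$, $\ker D^{\pm}$, $\mathrm{im}\,D^{\pm}$, and $\mathcal{H}_{D}^{\pm}(X)$.

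With this in hand, the identity splits by parity of $w$. For $\mathrm{sgn}(w) = -1$ both sides vanish: the left-hand side because $\chi_{I(X)}(\widetilde{w}) = (\chi_{X^{+}}(w) - \chi_{X^{-}}(w))\,\chi_{\mathcal{S}}(\widetilde{w}) = 0$, and the right-hand side because $\Psi|_{\mathcal{H}_{D}^{\pm}(X)}$ is a $\widetilde{W}$-anti-equivariant automorphism, which forces the $\widetilde{\sigma}$- and $(\mathrm{sgn}\otimes\widetilde{\sigma})$-isotypic components to have equal multiplicities and hence the characters to vanish on odd $w$. For $\mathrm{sgn}(w) = +1$ the sign factor is trivial, so on each $\widetilde{X}^{(c)}$ with $c\neq 0$ the linear isomorphism $D^{+}\colon \widetilde{X}^{(c),+}\to \widetilde{X}^{(c),-}$ gives $\chi_{\widetilde{X}^{(c),+}}(\widetilde{w}) = \chi_{\widetilde{X}^{(c),-}}(\widetilde{w})$, contributing nothing to $I(X)$. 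On $\widetilde{X}_{0}$, the tautological exact sequences $0 \to \mathrm{im}\,D^{\mp} \to \ker D^{\pm} \to \mathcal{H}_{D}^{\pm} \to 0$ together with the isomorphism $\widetilde{X}_{0}^{\pm}/\ker D^{\pm} \cong \mathrm{im}\,D^{\pm}$ induced by $D^{\pm}$ (which is $\widetilde{w}$-equivariant on the even part of $\widetilde{W}$) yield the expected Euler-characteristic identity $\chi_{\widetilde{X}_{0}^{+}}(\widetilde{w}) - \chi_{\widetilde{X}_{0}^{-}}(\widetilde{w}) = \chi_{\mathcal{H}_{D}^{+}}(\widetilde{w}) - \chi_{\mathcal{H}_{D}^{-}}(\widetilde{w})$. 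Summing over eigenspaces gives the identity of characters, hence of virtual $\widetilde{W}$-representations.

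The hard part will be the uniform construction of $\Phi$ (equivalently, verifying the character vanishing $\chi_{\mathcal{S}}(\widetilde{w}) = 0$ for $\mathrm{sgn}(w) = -1$) in both parities of $\dim V'$; once this is in place, the rest is a routine $\mathbb{Z}_{2}$-graded Euler-characteristic bookkeeping.
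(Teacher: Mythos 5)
Your proposal takes the same high-level route as the paper's proof (decompose $X\otimes\mathcal S$ into $D^2$-eigenspaces, show the $\lambda\neq0$ pieces contribute nothing, then do an Euler-characteristic count on $\widetilde X_0$), but you have correctly identified and filled a genuine gap that the paper's one-line proof (``by linear algebra considerations\ldots'') leaves implicit. The issue is exactly the one you flag: $D$ is only $\widetilde W$-\emph{anti}-equivariant (Lemma \ref{lem commu rel D}), so the linear isomorphism $D^+\colon\widetilde X_\lambda^+\to\widetilde X_\lambda^-$ on a nonzero eigenspace intertwines $\widetilde\sigma$ with $\mathrm{sgn}\otimes\widetilde\sigma$ rather than being a $\widetilde W$-map, and a pure dimension count does not give an equality of virtual $\widetilde W$-representations. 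Your fix --- constructing the Clifford-parity operator $\Phi$ on $\mathcal S$ (the normalized volume element when $\dim V'$ is even, the swap of the two half-spin summands when $\dim V'$ is odd), so that $\Psi=\mathrm{id}_X\otimes\Phi$ anticommutes with both $D$ and the diagonal $\widetilde W$-action and preserves each $\widetilde X_\lambda^\pm$ --- is correct and shows precisely why the twisted index is defined with $\mathcal S$ rather than a single simple $C(V')$-module $S$: it is what forces $\chi_{\mathcal S}(\widetilde w)=0$ and hence the vanishing of all the relevant characters on $w$ with $\mathrm{sgn}(w)=-1$. The remaining parity-even bookkeeping on $\widetilde X_0$ via the two tautological short exact sequences is routine and you have stated it correctly. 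In short: same strategy as the paper, but your version is more careful and actually closes a step the paper elides; the one thing you flag as the ``hard part'' (existence of $\Phi$ in both parities of $\dim V'$) is standard Clifford algebra and goes through as you sketch.
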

\begin{proof}
Decompose $X \otimes S$ into $\lambda$-eigenspaces $\widetilde{X}_{\lambda}$ of $\pi_{\mathcal S}(D)^2$. By linear algebra considerations, $I(X)$ is reduced to the expression $\widetilde{X}^+_0-\widetilde{X}^-_0$, where $\widetilde{X}^{\pm}_0=(X^{\pm}\otimes S)\cap \widetilde{X}_0$. Now the expression in right hand side follows from definitions.
\end{proof}

As a consequence, we have:
\begin{corollary} \label{cor dirac index}
Let $E$ be a standard module. Define $E'$ as in Lemma \ref{lem top generate}. Then $I(E')=0$ if and only if $E$ is not twisted-elliptic tempered.
\end{corollary}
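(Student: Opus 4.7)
The plan is to use the identity $I(X) = \mathcal H_D^+(X) - \mathcal H_D^-(X)$ established in the preceding lemma to translate the corollary into a statement about $\mathcal H_D(E')$, and then apply Theorem~\ref{thm vanihsing dirac coh}. I would begin by observing that $\theta$ preserves the class of standard modules and the twisted-elliptic tempered property (it acts by $-w_0$ on semisimple data, which only relabels the geometric parameters), so $E'$---whether $E$ or $E \oplus \theta(E)$---is twisted-elliptic tempered if and only if $E$ is.

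For the forward direction, assume $E$ is not twisted-elliptic tempered. Then $H_D(E') = 0$ by Theorem~\ref{thm vanihsing dirac coh}. When $\dim V'$ is even, $\mathcal S = S$, so $\mathcal H_D^{\pm}(E') = H_D^{\pm}(E') = 0$. When $\dim V'$ is odd, $\mathcal S = S^+ \oplus S^-$; however, the vanishing proof uses only the formula~\eqref{eqn formul d2} for $D^2$, which is independent of the choice of simple $C(V')$-module, so $H_D(E'; S^-) = 0$ as well, and hence $\mathcal H_D(E') = 0$. In either case $I(E') = 0$.

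For the reverse direction, assume $E$ is twisted-elliptic tempered. Then $E$ is unitary (see the proof of Lemma~\ref{lem temp z2 grade}), so $\pi_{\mathcal S}(D)$ is self-adjoint and therefore $\mathcal H_D^{\pm}(E') = \ker\pi_{\mathcal S}(D)^{\pm}$. When $E \cong \theta(E)$, we have $E' = E$ and invoke the explicit computation of $I(E)$ for tempered modules in~\cite{CH}, which shows $I(E) \neq 0$ precisely in the twisted-elliptic case. When $E \not\cong \theta(E)$, we work with $E' = E \oplus \theta(E)$ equipped with the $\mathbb Z_2$-grading of Lemma~\ref{lem z2 grading}, given by the $\pm 1$-eigenspaces of $t_{w_0}\theta$ rather than by the naive direct-sum grading. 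I would then transfer the~\cite{CH} computation to $E'$ and check, using Theorem~\ref{thm ellip dir coh nonzero} to pin down the allowed $\widetilde W$-isotypic components as lying in $\mathrm{Irr}_E \widetilde W$, that the contributions from $E$ and $\theta(E)$ do not cancel in $I(E')$.

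The main obstacle is this last sub-case: when $E \not\cong \theta(E)$, one must work with the non-obvious $\mathbb Z_2$-grading on $E'$ and verify the non-cancellation of the two $\theta$-related contributions to the alternating sum $\mathcal H_D^+(E') - \mathcal H_D^-(E')$. Essentially, cancellation would force the $\theta$-twist of the~\cite{CH} formula for $I(E)$ to negate the original on every $\widetilde W$-isotypic component, which is prevented by the explicit support of that formula on $\widetilde\sigma$ with $a(\widetilde\sigma) = \langle h_e, h_e\rangle_{V'}$. Once this step is settled, the corollary follows at once from the preceding lemma combined with Theorem~\ref{thm vanihsing dirac coh}.
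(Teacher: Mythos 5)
Your overall approach — translate $I(E')=0$ via the preceding lemma into a statement about $\mathcal H_D^{\pm}(E')$, use Theorem~\ref{thm vanihsing dirac coh} for the forward direction, and invoke the Dirac-index computation of~\cite{CH} for the reverse direction — is exactly what the paper intends by ``as a consequence,'' and the forward direction (including your handling of $\mathcal S = S^+\oplus S^-$ when $\dim V'$ is odd, and the observation that $\theta$ preserves the class of standard modules) is fine.

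However, the ``main obstacle'' you flag at the end — the sub-case $E\not\cong\theta(E)$ in the reverse direction — is vacuous, so no non-cancellation argument on $E\oplus\theta(E)$ is needed. If $E$ is twisted-elliptic tempered, it is a tempered module of the form $E_{rh_e,e,r,\zeta}$; by Lemma~\ref{lem temp z2 grade} such a module is (after rescaling to $r=1$) irreducible, unitary, and has real weights, and the proof there passes through Lemma~\ref{lem unitary z2 grade}, which yields $E\cong\theta(E)$ precisely via $E\cong E^*\cong\theta(E)$. Hence in the reverse direction one always has $E'=E$, and the~\cite{CH} computation of $I(E)$ applies directly. Leaving that sub-case unresolved does not make your proof wrong — it simply worries about a configuration that cannot occur — but you should note that $E\cong\theta(E)$ is automatic here rather than flag it as an open gap.
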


An alternate approach to the above corollary is to use a twisted Euler-Poincar\'e pairing in \cite{Ch3} (also see \cite{CT}).

\section{Dirac cohomology of ladder representations} \label{s dirac ladder}
Ladder representations for $p$-adic groups are introduced by Lapid-M\'inguez \cite{LM} in terms of the Zelevinsky classification. We shall work on the graded Hecke algebra setting (see \cite{BC2}). Fix integers $l$ and $n$ throughout the whole section.

\subsection{Zelevinsky segments and ladder representations} \label{ss zel and ladd}

 Let $\mathfrak{h}_l$ be the set of diagonal matrices in $\mathfrak{gl}(l,\mathbb{C})$. Let $\mathcal O$ be the zero orbit and let $\mathcal L$ be the trivial local system on $\mathcal O$. The datum $(\mathfrak{h}, \mathcal O, \mathcal L)$ is a cuspidal triple. This gives a graded Hecke algebra of type $A_{l-1}$. We shall do some normalizations below.

Let $S_l$ be the symmetric group permuting on $l$ numbers. We also set $W=S_l$. Let $s_{i,i+1} \in S_l$ be the transposition switching $i$ and $i+1$. Let $\epsilon_1,\ldots, \epsilon_l$ be a standard basis for $\mathfrak{h}_l$. Let $R_l^+=\left\{ \epsilon_i-\epsilon_j: i<j \right\}$ be a fixed set of positive roots. Let $\Pi_l$ be the set of simple roots in $R^+_l$. Set $c \equiv 1$. Define bilinear form $\langle , \rangle$ on $\mathfrak{h}_l$ satisfying $\langle \epsilon_i,\epsilon_j \rangle=\delta_{i,j}$. Let $\mathbb{H}_l=\mathbb{H}(\mathfrak{h}_l, S_l, \Pi_l, c,\mathbf r)$ (Definition \ref{def gaha}). We remark that our definition of $\mathbb{H}_l$ differs from the one \cite{AS} by a sign in a commutation relation.

\begin{definition} \label{def segment}
A pair $\Delta=[a,b]$ of complex numbers is called a {\it segment} if $b\geq a$. Let $l(\Delta)=b-a+1$, which will be called the length of $\Delta$. A {\it multisegment} $\mathfrak{m}=\left\{ \Delta_1, \ldots, \Delta_n \right\}$ is a collection of ordered (possibly repeated) segments $\Delta$ satisfying $\sum_{i=1}^n l(\Delta_i)=l$. The interesting cases, especially for non-zero Dirac cohomology, are for $a,b$ being integers in a segment $\Delta=[a,b]$. From now on, we shall also assume that $a$ and $b$ for a segment $[a,b]$ are integers. For $e \in \mathbb Z$ and a multisegment $\mathfrak{m}=\left\{ [a_1,b_1], [a_2,b_2],\ldots, [a_n,b_n] \right\}$, define $m(\mathfrak{m},e)=\mathrm{card}\left\{ i : a_i \leq e  \leq b_i \right\}$.  For example, if $\mathfrak{m}=\left\{ [4,5], [2,4],[1,3] \right\}$ then $m(\mathfrak{m},1)=1$, $m(\mathfrak{m},2)=2$, $m(\mathfrak{m},3)=2$, $m(\mathfrak{m},4)=2$, $m(\mathfrak{m},5)=1$. 
\end{definition}

Let $\mathfrak{m}=\left\{ \Delta_1,\ldots, \Delta_n\right\}$ be a multisegment. Let $l_i=l(\Delta_i)$ for each $i$. Identify the group $S_{l_1}\times \ldots \times S_{l_n}$ to be a natural subgroup of $S_l$ via the map $1 \otimes \ldots \otimes 1\otimes s_{k,k+1} \otimes 1 \otimes \ldots \otimes 1 \mapsto s_{A_i+k, A_i+k+1}$, where $s_{k,k+1}$ is in the $k$-th factor of LHS and $A_i=l_1+\ldots +l_{i-1}$. Let $\mathbb{H}_{l_1}\otimes \ldots \mathbb{H}_{l_n}$ be the subalgebra generated by $v \in V$ and $t_{w}$ for $w \in S_{l_1}\times \ldots \times S_{l_r}$. Define a $1$-dimensional $\mathbb{H}_{l_1}\otimes \ldots \mathbb{H}_{l_n}$-module $\mathbf{1}_{\mathfrak{m}}=\mathbb{C}x$ determined by
\[   \epsilon_k. x = (a_i+k-A_i-1)x  \quad \mbox{ for $A_i < k \leq A_{i+1}$}
\]
\[   t_w.x =(-1)^{l(w)} x
  \]

Define 
\[ E(\mathfrak m)=\mathbb{H}_l \otimes_{\mathbb{H}_{l_1}\otimes \ldots \otimes \mathbb{H}_{l_n}} \mathbf{1}_{\mathfrak m}  .\]

Let $\mathcal Z_l$ be the collection of all Zelevinsky segments $\mathfrak{m}=\left\{ [a_1,b_1],\ldots, [a_n,b_n] \right\}$ with the property that $b_1 > b_2 >\ldots > b_n$. For $\mathfrak{m} \in \mathcal Z_l$, $E(\mathfrak{m})$ has a unique simple quotient denoted by $L(\mathfrak{m})$, which is a special case of Langlands-Zelevinsky classification. (This can be checked, for example, by results of \cite{Ev}.) This coincides with a standard module in Section \ref{ss deform struct rtm} (see \cite[Section 3]{Lu3}), up to the Iwahori-Matsumoto dual. In view of Lemma \ref{lem iwahoi mat dirac}, this will not play a significant role in our arguments and will be implicitly used in the rest of this section.

\begin{proposition} \label{prop ellip segment}
An $\mathbb{H}_l$-module $X$ is twisted-elliptic tempered if and only if $X=E(\mathfrak{m})$ for some $\mathfrak{m} \in \mathcal Z_l$ satisfying that $\mathfrak{m}=\left\{[-b_1,b_1],\ldots,[-b_n,b_n] \right\}$.
\end{proposition}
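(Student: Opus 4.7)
The plan is to identify each $E(\mathfrak{m})$ with a geometric standard module $E_{s,e,r,\zeta}$ of Definition \ref{def standard mod} and then translate the twisted-elliptic tempered condition into combinatorial conditions on $\mathfrak{m}$.

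First I would make the geometric identification precise. For $\mathbb{H}_l$ the relevant complex group is $G = GL_l(\mathbb{C})$ with the trivial cuspidal triple on the maximal torus. Given $\mathfrak{m} = \{[a_1,b_1],\ldots,[a_n,b_n]\} \in \mathcal{Z}_l$, I would take $e \in \mathfrak{gl}_l$ to be a nilpotent element with Jordan blocks of sizes $l_i = b_i - a_i + 1$, and $s \in \mathfrak{gl}_l$ a compatible semisimple element whose eigenvalues along the $i$-th Jordan block run through $a_i, a_i+1, \ldots, b_i$ (after a rescaling so that $[s,e] = 2re$). Since $A(e,s)$ is connected for $GL_l$, only the trivial character $\zeta$ contributes. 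Comparing the explicit parabolic induction defining $E(\mathfrak{m})$ with the geometric realization of $E_{s,e,r,\zeta}$ from \cite[Section 3]{Lu3} identifies the two, up to the Iwahori-Matsumoto involution; by Lemma \ref{lem iwahoi mat dirac} and the fact that $IM$ intertwines the temperedness convention of Definition \ref{def temp mod} with the usual one, this suffices.

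Second I would use the definition of temperedness. By Section \ref{ss deform struct rtm} and \cite[Theorem 1.21]{Lu3}, $E_{s,e,r,\zeta}$ is tempered precisely when $s = rh_e$ for the middle element of an $\mathfrak{sl}_2$-triple through $e$. For $GL_l$, a Jordan block of size $l_i$ contributes to $h_e$ the eigenvalues $-(l_i-1), -(l_i-3), \ldots, l_i-1$, which after the chosen rescaling match the segment weights in unit steps exactly when each segment is symmetric about $0$, i.e.\ $a_i = -b_i$.

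Third I would impose the twisted-elliptic condition. By \cite[Proposition 3.3]{CH}, $e$ is $\theta$-quasidistinguished if and only if $Z_G(e)$ is solvable. For $G = GL_l$, the reductive quotient of $Z_G(e)$ is $\prod_k GL_{m_k}$, where $m_k$ is the multiplicity of the part $k$ in the Jordan partition of $e$, so solvability is equivalent to all Jordan block sizes being pairwise distinct. Combined with the ordering $b_1 > b_2 > \cdots > b_n$ built into $\mathcal{Z}_l$, the condition $a_i = -b_i$ automatically forces the lengths $l_i = 2b_i + 1$ to be distinct, and conversely any such centered multisegment arises. The main delicate point is the first step, i.e.\ the precise bookkeeping of the identification of $E(\mathfrak{m})$ with $E_{s,e,r,\zeta}$, including the normalization of $r$, the signs in $\mathbf{1}_\mathfrak{m}$, and the Iwahori-Matsumoto conventions; modulo this bookkeeping the proposition reduces to two standard facts about $\mathfrak{sl}_2$-triples and nilpotent centralizers in type $A$.
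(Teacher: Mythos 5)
The paper states this proposition without proof, so there is no in-text argument to compare against; I will assess your argument directly. Your strategy (translate to the geometric standard module picture for $GL_l$, use $s = rh_e$ for temperedness, and use solvability of the nilpotent centralizer for twisted-ellipticity) is the natural one, and steps 2 and 3 are essentially correct for the direction ``symmetric $\mathfrak{m}$ $\Rightarrow$ twisted-elliptic tempered.''

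However, there is a genuine gap in the ``only if'' direction, which you slide past with ``conversely any such centered multisegment arises.'' A $\theta$-quasidistinguished nilpotent in $\mathfrak{gl}_l$ is exactly one with pairwise distinct Jordan block sizes, but distinctness does \emph{not} force the block sizes to be odd. For instance $e$ of Jordan type $(2,1)$ in $\mathfrak{gl}_3$, or $(4,2)$ in $\mathfrak{gl}_6$, is $\theta$-quasidistinguished, yet the weights of the corresponding tempered module (after the normalization that makes segment weights go in unit steps) are half-integral, e.g. $\{\pm\tfrac12, 0\}$ for type $(2,1)$. The resulting multisegment $\{[-\tfrac12,\tfrac12],[0,0]\}$ is symmetric but does not lie in $\mathcal Z_l$, which by the standing convention in Definition~\ref{def segment} consists of \emph{integer} segments; indeed a symmetric integer multisegment $\{[-b_1,b_1],\dots,[-b_n,b_n]\}$ has $l=\sum(2b_i+1)$, so all Jordan parts must be odd and $l\equiv n\pmod 2$. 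Thus distinct parts does not imply ``centered integer multisegment in $\mathcal Z_l$''; the biconditional as you argue it is correct only for modules whose central character is already integral (equivalently, all Jordan blocks odd). You should either restrict attention explicitly to $X=E(\mathfrak{m})$ with $\mathfrak{m}\in\mathcal Z_l$ from the outset (which is how the proposition is used in the surrounding text, e.g. in the corollary right after it), or note that under integrality the Jordan partition is automatically into distinct odd parts, so that the dictionary ``distinct odd parts $\leftrightarrow$ centered $\mathfrak{m}\in\mathcal Z_l$'' closes the loop. Without one of these remarks, the converse direction as written is incomplete.
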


Let $\mathcal Z_l^{\mathrm{ellip}}$ be a subset of $\mathcal Z_l$ consisting of $\mathfrak{m}$ satisfying the property: there exists $\mathfrak{m}'$ of the form $\left\{[-b_1,b_1],\ldots, [-b_n,b_n] \right\}$ such that $m(\mathfrak{m},e)=m(\mathfrak{m}',e)$ for all $e \in \mathbb{Z}$. 

Note that for each $\mathfrak{m} \in \mathcal Z_l^{\mathrm{ellip}}$, there is exactly one element, denoted $\mathrm{m}^{\mathrm{temp}}$, in $\mathcal Z_l^{\mathrm{ellip}}$ such that $E(\mathfrak{m}^{\mathrm{temp}})=L(\mathfrak{m}^{\mathrm{temp}})$ is the twisted-elliptic tempered module supported at the same central character of $L(\mathfrak{m})$. From Proposition \ref{prop ellip segment}, we have:

\begin{corollary}
For any $\mathfrak{m} \in \mathcal Z_l$, $L(\mathfrak{m})$ has a central character of a twisted-elliptic tempered module if and only if $\mathfrak{m} \in \mathcal Z_l^{\mathrm{ellip}}$. 
\end{corollary}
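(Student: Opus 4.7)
The plan is to read off the central character of $L(\mathfrak{m})$ directly from $\mathfrak{m}$ in terms of the multiplicity function $m(\mathfrak{m},\cdot)$, and then compare with the shape of twisted-elliptic tempered modules provided by Proposition \ref{prop ellip segment}.

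First, I would describe the central character of $E(\mathfrak{m})$ concretely. Since $L(\mathfrak{m})$ is a quotient of $E(\mathfrak{m})$, both modules share the same central character, so it suffices to work with $E(\mathfrak{m})$. From the definition of $\mathbf{1}_{\mathfrak{m}}$, the cyclic generator $x$ is an $S(V)$-weight vector whose weight in $\mathfrak{h}_l$ has coordinates $(a_i + k - A_i - 1)$ for $A_i < k \leq A_{i+1}$. As $k$ runs over $A_i+1, \ldots, A_{i+1}$, these values sweep out exactly the integers $a_i, a_i+1, \ldots, b_i$. Hence the underlying multiset of weight coordinates, ignoring the $W$-action, is the disjoint union of the segments $[a_1,b_1], \ldots, [a_n,b_n]$ regarded as multisets of integers. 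Since the center $Z(\mathbb{H}_l) \cap S(V)$ equals $S(V)^W$ (see Definition \ref{def admit cc}), the central character of $E(\mathfrak{m})$ records precisely this $W$-orbit, which by construction depends only on $e \mapsto m(\mathfrak{m},e)$.

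Second, I would observe that two multisegments $\mathfrak{m}, \mathfrak{m}'$ yield modules with equal central character if and only if $m(\mathfrak{m},e) = m(\mathfrak{m}',e)$ for every $e \in \mathbb{Z}$, as the multiset of weight coordinates of $x$ is encoded exactly by these multiplicities. Combined with Proposition \ref{prop ellip segment}, which identifies the twisted-elliptic tempered modules as those $E(\mathfrak{m}')$ with $\mathfrak{m}' = \{[-b_1',b_1'], \ldots, [-b_n',b_n']\}$, we obtain that $L(\mathfrak{m})$ shares its central character with some twisted-elliptic tempered module if and only if there exists such a symmetric multisegment $\mathfrak{m}'$ satisfying $m(\mathfrak{m},e) = m(\mathfrak{m}',e)$ for all $e$. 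This is precisely the defining property of $\mathcal Z_l^{\mathrm{ellip}}$, so the corollary follows.

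There is essentially no obstacle here: the argument is a direct unpacking of definitions once one has Proposition \ref{prop ellip segment} in hand. The only small point requiring care is the normalization of the weights coming from the $\mathbf{1}_{\mathfrak{m}}$-action, but this is a routine index check. In particular, the statement is a purely combinatorial reformulation, at the level of central characters, of the shape condition for twisted-elliptic tempered modules.
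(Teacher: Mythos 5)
Your argument is correct and is essentially the one the paper implicitly uses: the paper states the corollary follows from Proposition \ref{prop ellip segment} without writing out the details, and those details are exactly what you supply — the central character of $E(\mathfrak{m})$ (and hence of its quotient $L(\mathfrak{m})$) is the $S_l$-orbit of the cyclic vector's weight, i.e.\ the multiset $\left\{e \text{ with multiplicity } m(\mathfrak{m},e)\right\}$, so it matches the central character of a twisted-elliptic tempered $E(\mathfrak{m}')$ iff $m(\mathfrak{m},\cdot)=m(\mathfrak{m}',\cdot)$ for some symmetric $\mathfrak{m}'$, which is the definition of $\mathcal Z_l^{\mathrm{ellip}}$.
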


\begin{definition}
Define $\mathcal Z_l^{\mathrm{ladd}}$ to be the collection of elements $\mathfrak{m}=\left\{[a_1,b_1],\ldots, [a_n,b_n] \right\}$ in $\mathcal Z_l$ such that $a_1>a_2>\ldots >a_n$. For any $\mathfrak{m} \in \mathcal Z^{\mathrm{ladd}}_l$, we call $L(\mathfrak{m})$ to be the ladder representation for $\mathbb H_l$.
\end{definition}

\subsection{Arakawa-Suzuki functor} \label{s AS functor}

We keep using notations in the previous subsection. We shall consider $\mathfrak{gl}_n$-modules in the BGG category $\mathscr O$. Let $\mathfrak{n}_-$ be the Lie subalgebra of $\mathfrak{g}$ containing all strictly lower triangular matrices. For $X \in \mathscr O$, define
\[  H_0(\mathfrak{n}_-, X)=X/\mathfrak{n}_-X ,\]
the $0$-th $\mathfrak{n}_-$-homology on $X$.

Let $\mathcal V=\mathbb{C}^n$ be a $\mathfrak{gl}(n,\mathbb{C})$-representation by the matrix multiplication. Let $\mathfrak{t}_n$ be the set of diagonal matrices in $\mathfrak{gl}(n,\mathbb{C})$. We naturally identify $\mathfrak{t}_n$ with $\mathbb{C}^n$. Let $\epsilon_1',\ldots, \epsilon_n'$ be a standard basis for $\mathfrak{t}_n$. Using the standard bilinear form on $\mathbb{C}^n$, we also identify $\mathfrak{t}_n^{\vee}$ with $\mathbb{C}^n$. Similar to $\mathfrak{h}_l$ in the previous section, we have a set $R_n^+$ of positive roots and a set $\Pi_n$ of simple roots. Let $\rho$ be the half sum of all the positive roots in $R_n^+$.

For a $\mathfrak{t}_n$-weight $\mu=(a_1,\ldots, a_n)$, define $L(\mu)$ to be the irreducible module $\mathfrak{sl}(n,\mathbb{C})$ with the highest weight module $\nu$ in the BGG category $\mathscr O$.

\begin{definition}\cite{AS} \label{def AS functor} Define the Arakawa-Suzuki functor  $F_{\gamma}: \mathscr O \rightarrow \mathbb{H}-\mathrm{mod}$ as follows:
\[ F_{\gamma}(X)= H_0(\mathfrak{n}_-, X \otimes \mathcal V^{\otimes l})_{\gamma} .
\]
The $S_l$-action on $F_{\gamma}(X)$ is defined by 
\[ w. (x \otimes v_1\otimes \ldots \otimes v_l)=(-1)^{l(w)}x \otimes v_{w(1)}\otimes \ldots \otimes v_{w(l)} .\]
One extends the $S_l$-structure to $\mathbb{H}_l$-module by using a Casimir element, see \cite{AS}. We shall not need the explicit description of the action for $S(\mathfrak{h}_l)$.
\end{definition}

Let $P(\mathcal V^{\otimes l})$ be the set of $\mathfrak{t}_n$-weights of $\mathcal V^{\otimes l}$.  We suppose that $\gamma-\mu \in P(\mathcal V^{\otimes l})$. Then $\gamma-\mu=\sum_{i=1}^nl_i\epsilon_i$ for non-negative integers $l_1,\ldots, l_n$ satisfying $l_1+l_2+\ldots+l_n=l$. Let $\rho=\sum_{\alpha \in R_l^+} \alpha$. Define
\[\mathfrak{m}_{\gamma,\mu}:=\left\{ [ \mu_1',\mu_1'+l_1-1],\ldots , [\mu_n',\mu_n'+l_n -1] \right\},
\]
where $\mu_i'=\langle \mu +\rho, \epsilon_i \rangle= \langle \mu,\epsilon_i \rangle +\frac{n-2i+1}{2}$. 

Let $D_n^o=\left\{ \gamma : \langle \gamma , \alpha \rangle \geq 1 \mbox{ for all $\alpha \in R^+ $} \right\}$. We recall a result of Suzuki. 

\begin{theorem} \cite[Theorem 3.2.2]{Su} \label{thm suzuki functor}
Let $\langle \gamma+\rho,\alpha \rangle>0$ and let $\mu \in \mathcal V$ such that $\lambda-\mu \in P(\mathcal V^{\otimes l})$. Then
$F_{\gamma}(L(\mu)) \cong L(\mathfrak m_{\gamma, \mu})$
\end{theorem}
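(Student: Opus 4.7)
The plan is to reduce the identification $F_{\gamma}(L(\mu)) \cong L(\mathfrak{m}_{\gamma,\mu})$ to a computation on Verma modules, together with an exactness argument and the unique-simple-quotient property of both standard modules $E(\mathfrak{m})$ and Vermas $M(\mu)$.

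First I would compute $F_{\gamma}(M(\mu))$ on a Verma module. Since $\mathcal V^{\otimes l}$ has a $\mathfrak t_n$-weight decomposition with weights in $P(\mathcal V^{\otimes l})$ and admits a filtration by one-dimensional $\mathfrak b$-submodules (a ``$\mathfrak b$-flag''), the tensor product $M(\mu) \otimes \mathcal V^{\otimes l}$ acquires a filtration whose subquotients are Verma modules $M(\mu + \nu)$ for $\nu \in P(\mathcal V^{\otimes l})$. Applying $H_0(\mathfrak n_-, -)_{\gamma}$ kills all subquotients except the Vermas $M(\gamma)$ that arise, and the surviving pieces are naturally parametrized by sequences $(i_1, \dots, i_l)$ with $\sum_k \epsilon'_{i_k} = \gamma - \mu$, i.e.\ by sequences of how the tensor factors sum to the target weight. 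The natural $S_l$-action on $\mathcal V^{\otimes l}$ permutes these sequences, and the Casimir-based definition of the $\mathbb{H}_l$-action gives exactly the induced structure $\mathbb{H}_l \otimes_{\mathbb{H}_{l_1} \otimes \cdots \otimes \mathbb{H}_{l_n}} \mathbf 1_{\mathfrak m_{\gamma,\mu}}$; the key computation is that the eigenvalue of $\epsilon_k$ on the $\mathbf 1_{\mathfrak m_{\gamma,\mu}}$-generator comes out to $a_i + k - A_i - 1$ precisely because of the shift $\mu'_i = \langle \mu + \rho, \epsilon_i \rangle$ that defines $\mathfrak m_{\gamma,\mu}$. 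Thus $F_{\gamma}(M(\mu)) \cong E(\mathfrak m_{\gamma,\mu})$.

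Second I would check that $F_{\gamma}$ is exact on the relevant block of $\mathscr O$. Tensoring with $\mathcal V^{\otimes l}$ is manifestly exact, so exactness reduces to the vanishing of $H_1(\mathfrak n_-, - \otimes \mathcal V^{\otimes l})_{\gamma}$. Under the dominance hypothesis $\langle \gamma + \rho, \alpha \rangle > 0$ for $\alpha \in \Pi_n$, standard weight arguments in $\mathscr O$ show that $\gamma$ is extremal enough that no weight $\gamma - \alpha$ (for $\alpha$ a positive root) can appear as a highest weight of a Verma subquotient of $N \otimes \mathcal V^{\otimes l}$ when $N$ lies in the relevant block; this forces the higher homology to vanish.

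Third, combine the preceding two steps with the short exact sequence $0 \to N(\mu) \to M(\mu) \to L(\mu) \to 0$, where $N(\mu)$ is the maximal proper submodule of $M(\mu)$. Applying $F_{\gamma}$ produces a surjection $E(\mathfrak m_{\gamma,\mu}) \twoheadrightarrow F_{\gamma}(L(\mu))$, so $F_{\gamma}(L(\mu))$ is a quotient of the standard module $E(\mathfrak m_{\gamma,\mu})$. Since $E(\mathfrak m_{\gamma,\mu})$ has a unique simple quotient $L(\mathfrak m_{\gamma,\mu})$, it suffices to prove that $F_{\gamma}(L(\mu))$ is nonzero and irreducible. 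Nonvanishing comes from tracking a highest-weight vector in $M(\mu) \otimes \mathcal V^{\otimes l}$ whose image in $L(\mu) \otimes \mathcal V^{\otimes l}$ stays nonzero modulo $\mathfrak n_-$, which is straightforward for the extremal sequence corresponding to $\mathfrak m_{\gamma,\mu}$.

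The main obstacle is showing that $F_{\gamma}(L(\mu))$ is in fact simple, equivalently that $F_{\gamma}(N(\mu))$ surjects onto the unique maximal proper submodule of $E(\mathfrak m_{\gamma,\mu})$. The cleanest route is to argue by induction on the length ordering of weights $\mu' < \mu$ that could appear in the composition series of $N(\mu)$: by induction $F_{\gamma}(L(\mu'))$ is either zero or isomorphic to $L(\mathfrak m_{\gamma,\mu'})$ for some strictly smaller multisegment in the closure ordering, and a multiplicity count using Kazhdan--Lusztig combinatorics on both sides — the Kazhdan--Lusztig polynomials controlling $[M(\mu) : L(\mu')]$ in $\mathscr O$ for $\mathfrak{gl}_n$ and those controlling $[E(\mathfrak m) : L(\mathfrak m')]$ in $\mathbb{H}_l\text{-mod}$ — match under the map $\mu \mapsto \mathfrak m_{\gamma,\mu}$ (this compatibility is well-known from Zelevinsky's and Arakawa--Suzuki's work). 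This forces $F_{\gamma}(N(\mu))$ to hit the entire maximal proper submodule, completing the identification $F_{\gamma}(L(\mu)) \cong L(\mathfrak m_{\gamma,\mu})$.
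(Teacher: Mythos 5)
This statement is cited in the paper to Suzuki's work and is not proved there, so the relevant comparison is with Suzuki's (and Arakawa--Suzuki's) original argument. Your first three steps — the computation $F_{\gamma}(M(\mu))\cong E(\mathfrak m_{\gamma,\mu})$ on Vermas via a $\mathfrak b$-flag, exactness of $F_\gamma$ on the block under the dominance hypothesis, and the resulting surjection $E(\mathfrak m_{\gamma,\mu})\twoheadrightarrow F_\gamma(L(\mu))$ — follow the standard line and are correct in outline. Where you diverge is the irreducibility step. Suzuki does not argue via multiplicity matching; the key lemma in that setting is that $F_\gamma$ is compatible with the contravariant (Shapovalov) dualities on both sides, so that $L(\mu)$, being the image of the canonical pairing $M(\mu)\to M(\mu)^\vee$, is carried by $F_\gamma$ to the image of the corresponding pairing $E(\mathfrak m_{\gamma,\mu})\to E(\mathfrak m_{\gamma,\mu})^\vee$, which is either zero or the unique simple quotient. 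This one-step argument avoids any induction on the block and, crucially, is what allows Suzuki to go further and match Jantzen filtrations.

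Your alternative — induction on the composition series plus matching of Kazhdan--Lusztig multiplicities $[M(\mu):L(\mu')]=[E(\mathfrak m_{\gamma,\mu}):L(\mathfrak m_{\gamma,\mu'})]$ — can be made to work but imports a much heavier black box, and there is a genuine circularity hazard you should flag explicitly: the identity of KL polynomials between the $\mathfrak{gl}_n$ category $\mathscr O$ and the degenerate affine Hecke algebra side is precisely Zelevinsky's conjecture, and in much of the literature it is presented as a \emph{consequence} of the Arakawa--Suzuki functor isomorphism you are trying to prove. To avoid the circle you must cite the independent geometric proof (via the perverse-sheaf realization of both sets of multiplicities on the same nilpotent/quiver variety), not the functor-theoretic one. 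Two smaller points that need care in your sketch: (i) the base case of the induction requires checking that an antidominant $\mu$ yields pairwise unlinked segments so that $E(\mathfrak m_{\gamma,\mu})$ is already simple, and (ii) the vanishing/nonvanishing dichotomy $F_\gamma(L(\mu'))=0$ iff $\mathfrak m_{\gamma,\mu'}$ is not a bona fide multisegment has to be established separately before the length count closes, since some $\mu'\le\mu$ leave the weight lattice $P(\mathcal V^{\otimes l})$. Neither is fatal, but both are exactly the kind of detail the contravariant-form argument handles automatically, which is why Suzuki's route is both more elementary and more informative.
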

%Note that our condition on $\gamma$ in Theorem \ref{thm suzuki functor} is stricter than the one in \cite[Theorem 3.2.2]{Su} and so the condition in \cite[(3.2.3]{Su} is automatically satisfied.

For each $\mathfrak{m}=\left\{ [a_1,b_1],\ldots, [a_n,b_n]\right\} \in \mathcal Z_l^{\mathrm{ladd}}$, let $\gamma=(b_1,\ldots, b_n)-\rho=(b_1-\frac{2n-1}{2},\ldots, b_n+\frac{2n-1}{2})$. Let $\mu=(a_1,\ldots, a_2)-\rho$. Then $\mu$ satisfy the condition in Theorem \ref{thm suzuki functor} and $\mathfrak{m}_{\gamma,\mu}=\mathfrak{m}$. The Arakawa-Suzuki functor gives the following BGG resolution \cite{Su}:

\begin{theorem} \cite[Theorem 5.1.1]{Su}
Let $\mathfrak{m}\in \mathcal Z_l^{\mathrm{ladd}}$ with $\gamma$ and $\mu$ as above. There exists a BGG-type resolution for the $\mathbb{H}_l$-module $L(m_{\gamma, \mu})$:
\[  0 \rightarrow \bigoplus_{w \in S_n: l(w)=l(w_0)} E(\mathfrak{m}_{\gamma,\mu}^w) \rightarrow \ldots \rightarrow \bigoplus_{w \in S_n: l(w)=1} E(\mathfrak{m}_{\gamma,\mu}^w) \rightarrow E(\mathfrak{m}_{\gamma,\mu}) \rightarrow L(\mathfrak{m}_{\gamma,\mu}) \rightarrow 0,
\]
where $w_0$ is the longest element in $S_l$, and $\mathfrak{m}^w_{\gamma,\mu}=\left\{ [a_{w(1)},b_1],\ldots, [a_{w(n)}, b_n] \right\}$. (If $a_{w(k)}>b_k+1$, then set $E(\mathfrak{m}^w_{\gamma, \mu})=0$ as convention. ) 
\end{theorem}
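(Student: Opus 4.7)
My strategy is to apply the Arakawa--Suzuki functor $F_{\gamma}$ term by term to the classical Bernstein--Gelfand--Gelfand resolution of $L(\mu)$ in the BGG category $\mathscr{O}$ for $\mathfrak{gl}(n,\mathbb{C})$. Since $\mathfrak{m} \in \mathcal{Z}_l^{\mathrm{ladd}}$ we have $a_1 > a_2 > \cdots > a_n$, so $\mu + \rho = (a_1,\dots,a_n)$ is strictly regular integral; this is exactly the hypothesis under which the classical BGG complex
\[
0 \to M(w_0 \cdot \mu) \to \cdots \to \bigoplus_{l(w)=1} M(w \cdot \mu) \to M(\mu) \to L(\mu) \to 0
\]
is exact with terms indexed by $S_n$. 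The goal is to show this complex survives passage through $F_{\gamma}$ and becomes the claimed resolution of $L(\mathfrak{m}_{\gamma,\mu})$.

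\textbf{Exactness.} I would first establish that $F_{\gamma}$ is exact on the Verma subcomplex. Since each Verma $M(\nu)$ is free over $U(\mathfrak{n}_-)$, so is $M(\nu) \otimes \mathcal{V}^{\otimes l}$, hence these modules are $\mathfrak{n}_-$-acyclic. A standard hyperhomology argument then shows that applying $H_0(\mathfrak{n}_-, - \otimes \mathcal{V}^{\otimes l})$ preserves exactness on the resolution, and restricting to the $\gamma$-weight space (a projection, hence exact) preserves exactness as well. Combined with Theorem \ref{thm suzuki functor}, which identifies $F_{\gamma}(L(\mu)) \cong L(\mathfrak{m}_{\gamma,\mu})$, this yields an exact complex with the desired endpoint.

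\textbf{Identification of terms.} The canonical isomorphism $M(\nu) \cong U(\mathfrak{n}_-) \otimes \mathbb{C}_\nu$ as $(\mathfrak{n}_-, \mathfrak{t}_n)$-bimodules gives $H_0(\mathfrak{n}_-, M(\nu) \otimes \mathcal{V}^{\otimes l}) \cong \mathbb{C}_\nu \otimes \mathcal{V}^{\otimes l}$ as $\mathfrak{t}_n$-modules. Specializing to $\nu = w \cdot \mu$ and extracting the $\gamma$-weight space yields an $S_l$-module isomorphic, after the sign twist in Definition \ref{def AS functor}, to the $W$-structure of the parabolic induction $E(\mathfrak{m}^w_{\gamma,\mu})$ with parabolic factors $S_{l_1^w} \times \cdots \times S_{l_n^w}$ of sizes $l_k^w = b_k - a_{w^{-1}(k)} + 1$. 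When some $a_{w(k)} > b_k + 1$, the corresponding multiplicity is negative and the weight space is zero, matching the convention $E(\mathfrak{m}^w_{\gamma,\mu}) = 0$.

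\textbf{Main obstacle.} The hardest step is upgrading the $S_l$-module identification to an $\mathbb{H}_l$-module identification. The $S(\mathfrak{h}_l)$-action on $F_{\gamma}$ is defined indirectly through the Casimir of $\mathfrak{gl}_n$ acting across the tensor factors of $\mathcal{V}$, while $E(\mathfrak{m})$ is defined explicitly by parabolic induction from the character $\mathbf{1}_{\mathfrak{m}}$ with prescribed $\epsilon_k$-eigenvalues $a_i + k - A_i - 1$. Matching the two descriptions requires a direct weight computation showing that, on the $\gamma$-weight space of $\mathbb{C}_{w \cdot \mu} \otimes \mathcal{V}^{\otimes l}$, the Casimir decomposes into precisely the prescribed $\epsilon_k$-eigenvalues on an induction-compatible basis; this is essentially the Verma-module refinement of Theorem \ref{thm suzuki functor}, and is the computational heart of Suzuki's argument in \cite{Su}. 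A secondary check is that the differentials in the classical BGG complex descend to nonzero maps between the nonzero standard modules, which follows from the nontriviality of the Verma embeddings on the relevant $\gamma$-weight spaces.
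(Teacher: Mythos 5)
The paper does not prove this theorem; it quotes it verbatim as Theorem~5.1.1 of Suzuki's paper \cite{Su}, so there is no internal proof to compare against. Your strategy --- apply the Arakawa--Suzuki functor $F_{\gamma}$ term by term to the classical BGG resolution of $L(\mu)$ over $\mathfrak{gl}(n,\mathbb{C})$ and identify the images of the twisted Verma modules with the parabolically induced modules $E(\mathfrak{m}^w_{\gamma,\mu})$ --- is indeed the route Suzuki follows, and several of your intermediate reductions are correct: the ladder condition $a_1>\cdots>a_n$ does make $\mu$ dominant integral, so the BGG resolution exists; the tensor identity does make $M(\nu)\otimes\mathcal V^{\otimes l}$ free over $U(\mathfrak{n}_-)$; and the count $b_k+1-a_{w^{-1}(k)}<0 \Rightarrow$ empty $\gamma$-weight space does match the convention $E(\mathfrak{m}^w_{\gamma,\mu})=0$.

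The gap is in your exactness step. You assert that since the terms $M(w\cdot\mu)\otimes\mathcal V^{\otimes l}$ are $\mathfrak{n}_-$-acyclic, applying $H_0(\mathfrak{n}_-,-\otimes\mathcal V^{\otimes l})$ "preserves exactness on the resolution." That inference is false: acyclicity of the resolution terms tells you precisely that the homology of the resulting complex in degree $i\geq 1$ equals $H_i\bigl(\mathfrak n_-,\,L(\mu)\otimes\mathcal V^{\otimes l}\bigr)_\gamma$ --- it identifies the obstruction to exactness, it does not kill it. And $L(\mu)\otimes\mathcal V^{\otimes l}$ is a finite-dimensional module, which by Kostant's theorem has plenty of higher $\mathfrak n_-$-homology. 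What you actually need is the vanishing
\[
H_i\bigl(\mathfrak n_-,\,L(\mu)\otimes\mathcal V^{\otimes l}\bigr)_\gamma=0, \qquad i\geq 1,
\]
and this is a genuine theorem in Suzuki's paper, proved from the strict-dominance hypothesis $\langle\gamma+\rho,\alpha\rangle>0$ appearing in Theorem~\ref{thm suzuki functor}: decomposing $L(\mu)\otimes\mathcal V^{\otimes l}$ into irreducibles $L(\lambda)$ and applying Kostant's theorem, a contribution to the $\gamma$-weight space in degree $i=l(w)\geq 1$ would force $\lambda=w^{-1}\cdot\gamma$ to be dominant for some $w\neq e$, which is impossible when $\gamma+\rho$ is regular dominant. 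You never invoke the hypothesis on $\gamma$ anywhere in the exactness discussion, which is the sign that the argument is incomplete. Your "secondary check" about nontriviality of the differentials is also redundant once exactness is in hand; conversely, it cannot substitute for the missing vanishing statement. The identification of the $\mathbb{H}_l$-module structure via the Casimir, which you flag as the main obstacle, is indeed the other substantial part of Suzuki's proof, so your assessment there is fair.
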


\subsection{Dirac cohomology}

We need some notations for representations of $S_l$. A partition $\lambda$ of $l$ is a sequence $(l_1,l_2,\ldots, l_n)$ of integers such that $l_1+\ldots+l_n=l$. For a partition $\lambda$ of $l$, we define $l(\lambda)$ to be the number of parts in $\lambda$. A partition $\lambda$ is said to be even or odd if $l-l(\lambda)$ is even or odd respectively \cite[Section 2]{St}. We denote $\sigma_{\lambda}$ to be the irreducible $S_l$-representation associated to $\lambda$. In particular, if $\lambda=(l)$, then $\sigma_{\lambda}$ is the trivial representation.

Denote by $\widetilde{S}_l$ the spin double cover of $S_l$. We now describe the irreducible genuine representations of $\widetilde{S}_l$ \cite{Sc} (see \cite[Section 7]{St}). Denote by $DP_l$ the set of partitions of $l$ with distinct parts. Denote by $DP_l^+$ (resp. $DP_l^-$) the subset of $DP_l$ consisting of even partitions (resp. odd partitions). Following \cite{St}, one can associate one irreducible projective representations, denoted $\widetilde{\sigma}_{\lambda}$, when $\lambda \in DP_l^+$ and can associate to two irreducible projective representations, denoted $\widetilde{\sigma}_{\lambda}^{\pm}$ when $\lambda\in DP_l^-$. Those $\widetilde{\sigma}_{\lambda}$ ($\lambda \in DP_l^+$) and $\widetilde{\sigma}_{\lambda}^{\pm}$ ($\lambda \in DP_l^-$) are mutually non-isomorphic to each other and form a complete list of irreducible projective $\widetilde{S}_l$-representations. In particular, $S=\widetilde{\sigma}_{(l)}$ if $l$ is odd and $S=\widetilde{\sigma}_{l}^{\pm}$ if $l$ is even,

For each $\mathfrak{m} \in \mathcal Z_l$, denote by $\lambda(\mathfrak m)$ the associated partition i.e. the parts in $\lambda(\mathfrak{m})$ are given by the numbers $b_i-a_i+1$. For example, if $\mathfrak m=\left\{[3,7],[2,6], [1,3] \right\}$, $\lambda(\mathfrak{m})=(5,5,3)$. We shall sometimes write $\mathfrak{m}$ for $\lambda(\mathfrak{m})$ if the context is clear.

\begin{theorem} \label{thm dirac ladder}
Let $\mathfrak{m} \in \mathcal Z_l^{\mathrm{ellip}} \cap \mathcal Z_l^{\mathrm{ladd}}$. Let $\lambda=\lambda(\mathfrak{m}^{\mathrm{temp}})$. Set $\widetilde{\sigma}=\widetilde{\sigma}_{\lambda}$.
\begin{enumerate}
\item[(a)] If $\lambda \in DP_l^+$ and set $\widetilde{\sigma}=\widetilde{\sigma}_{\lambda}^+\oplus \widetilde{\sigma}_{\lambda}^-$.  Then if $\lambda \neq (n)$,
\[   H_D(L(\mathfrak{m})) =\bigoplus_{i=1}^k \widetilde{\sigma},
\]
where 
\[  k=\frac{1}{\varepsilon_{\lambda} \varepsilon_{(n)}} 2^{(l(\lambda)-1)/2}
\]
Here $\varepsilon_{\lambda'}=\sqrt{2}$ if $\lambda' \in DP_l^-$ and $\varepsilon_{\lambda'}=1$ if $\lambda' \in DP_l^+$. 
\item[(b)] If $\lambda=(n)$, $H_D(L(\mathfrak{m}))=S$.
\end{enumerate}
\end{theorem}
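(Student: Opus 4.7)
The plan is to combine Suzuki's BGG-type resolution (recalled in Section \ref{s AS functor}) with the vanishing theorem (Theorem \ref{thm vanihsing dirac coh}), the six-term exact sequence (Proposition \ref{prop six term es}), and the known Dirac cohomology of twisted-elliptic tempered modules (Theorem \ref{thm ellip dir coh nonzero}). Suzuki's resolution presents $L(\mathfrak{m})$ as the cokernel of a complex whose terms are direct sums of standard modules $E(\mathfrak{m}^w_{\gamma,\mu})$ indexed by $w \in S_n$. Every such term shares the central character of $E(\mathfrak{m}^{\mathrm{temp}})$, so by Theorem \ref{thm vanihsing dirac coh} its Dirac cohomology vanishes unless $\mathfrak{m}^w_{\gamma,\mu}$ itself, after collapsing empty segments arising from the convention $a_{w(k)} > b_k+1$, reduces to a multiset of symmetric segments $[-b,b]$.

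The key combinatorial claim is that for any $\mathfrak{m} \in \mathcal{Z}_l^{\mathrm{ellip}} \cap \mathcal{Z}_l^{\mathrm{ladd}}$ there is a unique $w^* \in S_n$ for which $\mathfrak{m}^{w^*}_{\gamma,\mu}$ reduces to $\mathfrak{m}^{\mathrm{temp}}$: the ellipticity hypothesis, together with the strict decreases $a_1 > \cdots > a_n$ and $b_1 > \cdots > b_n$, pins down the unique pairing of right endpoints $b_i$ with left endpoints $a_{w^*(i)}$ that produces only symmetric or empty segments. For every other $w$, either $E(\mathfrak{m}^w_{\gamma,\mu}) = 0$ or $\mathfrak{m}^w_{\gamma,\mu}$ is non-symmetric, so the Dirac cohomology of that term vanishes. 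Iteratively applying the six-term exact sequence through the BGG resolution then collapses everything to the single surviving term and yields
\[
H_D(L(\mathfrak{m})) \cong H_D(E(\mathfrak{m}^{\mathrm{temp}}))
\]
up to a $\mathbb{Z}_2$-grading shift determined by the parity of $l(w^*)$. This incidentally establishes Theorem \ref{thm introd dirac coh ladd}.

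The final step is to compute $H_D(E(\mathfrak{m}^{\mathrm{temp}}))$ explicitly. When $\lambda = (n)$, $E(\mathfrak{m}^{\mathrm{temp}})$ is the Steinberg module on a single symmetric segment, and its Dirac cohomology is the spin module $S$ by a direct calculation with the formula (\ref{eqn formul d2}), giving (b). For general $\lambda$, $E(\mathfrak{m}^{\mathrm{temp}})$ is parabolically induced from Steinberg summands on each symmetric block $[-b_i,b_i]$. Its Dirac cohomology is computed from (\ref{eqn formul d2}) together with the classification of genuine $\widetilde{S}_l$-representations recalled immediately before the theorem: the surviving $\widetilde{W}$-types are precisely $\widetilde{\sigma}_\lambda$, or $\widetilde{\sigma}_\lambda^{\pm}$ when $\lambda \in DP_l^-$, each appearing with multiplicity governed by the branching of the ambient spin module $S$ through the Clifford subalgebra attached to the block decomposition. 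This branching computation produces the factor $2^{(l(\lambda)-1)/2}/(\varepsilon_\lambda \varepsilon_{(n)})$ of part (a).

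The main obstacle is the combinatorial identification of $w^*$ and, more delicately, tracking the $\mathbb{Z}_2$-grading through $l(w^*)$ iterations of the connecting homomorphism, since the surviving $\widetilde{W}$-representation lands in $H_D^+$ or $H_D^-$ according to this parity. The character-theoretic step in part (a) is the other nontrivial piece, but it should follow from a restriction argument for $S$ through nested Clifford subalgebras indexed by the symmetric block decomposition of $\mathfrak{m}^{\mathrm{temp}}$, once the bijection between $\mathrm{Irr}_{h_e}\widetilde{W}$ and distinct-part partitions of $l$ is in hand.
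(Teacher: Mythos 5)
Your proposal matches the paper's proof: Suzuki's BGG resolution is truncated via Theorem \ref{thm vanihsing dirac coh} to the unique tempered term $E(\mathfrak{m}^{\mathrm{temp}})$, the six-term exact sequence (Proposition \ref{prop six term es}) is iterated to transfer Dirac cohomology, and the tempered computation is invoked at the end. The only stylistic difference is that the paper outsources the final multiplicity count to \cite[Lemma 3.6.2]{BC} and \cite[Theorem 9.3]{St} rather than re-deriving it via the Clifford-branching argument you sketch; your remarks about the $\mathbb{Z}_2$-grading shift and empty segments are harmless (the shift disappears after summing $H_D^+\oplus H_D^-$, and the convention $a_{w(k)}>b_k+1$ kills the whole term rather than producing an empty segment).
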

\begin{proof}
There is only one $w \in S_n$ such that $\mathfrak{m}^w=\mathfrak{m}^{\mathrm{temp}}$. For any $\mathfrak{m} \in \left\{ \mathfrak{m}^w \neq 0: w\in S_n \right\} \setminus \left\{ \mathfrak{m}^{\mathrm{temp}} \right\}$, $E(\mathfrak{m})$ is not a twisted-elliptic tempered and hence $H_D(E(\mathfrak{m}))=0$ by Theorem \ref{thm vanihsing dirac coh}. Thus by applying the six-term exact sequence (Proposition \ref{prop six term es}) repeatedly, we have 
\[ H_D(L(\mathfrak{m})) \cong H_D(E(\mathfrak{m}^{\mathrm{temp}})) .\]
$H_D(E(\mathfrak{m}^{\mathrm{temp}}))$ is computed in \cite[Lemma 3.6.2]{BC} from \cite[Theorem 9.3]{St}.
\end{proof}

\subsection{}

Two segments $[a,b]$ and $[a',b']$ are said to be {\it linked} if $b+1=a'$ or $b'+1=a$. We say that $[a,b]$ is left (resp. right) linked to $[a',b']$ if $b+1=a'$ (resp. $b'+1=a$). 

The following lemma immediately follows from the definition of $\mathcal Z_l^{\mathrm{ladd}}$.
\begin{lemma}
Let $\mathfrak{m} \in \mathcal Z^{\mathrm{ladd}}_l$. For each segment $[a,b]$ in $\mathfrak{m}$, there is at most one segment in $\mathfrak{m}$ which is left linked to $[a,b]$. The same is also true by replacing left with right.
\end{lemma}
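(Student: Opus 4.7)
The proof is immediate from the definition of $\mathcal Z_l^{\mathrm{ladd}}$. Recall that $\mathfrak{m}=\{[a_1,b_1],\ldots,[a_n,b_n]\}\in\mathcal Z_l^{\mathrm{ladd}}$ means both $b_1>b_2>\ldots>b_n$ (from $\mathcal Z_l$) and $a_1>a_2>\ldots>a_n$. In particular, the numbers $a_1,\ldots,a_n$ are pairwise distinct, and likewise for $b_1,\ldots,b_n$.

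For the left-linked assertion, fix a segment $[a,b]=[a_i,b_i]$ in $\mathfrak m$. A segment $[a_j,b_j]\in \mathfrak m$ is left linked to $[a_i,b_i]$ precisely when $a_j=b_i+1$. Since the left endpoints $a_1,\ldots,a_n$ are pairwise distinct, the equation $a_j=b_i+1$ has at most one solution in $j$; hence there is at most one such segment. The right-linked assertion is entirely symmetric: $[a_j,b_j]$ is right linked to $[a_i,b_i]$ iff $b_j=a_i-1$, and the right endpoints $b_1,\ldots,b_n$ being pairwise distinct yields uniqueness.

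The only step worth double-checking is that the ladder condition really does force strict monotonicity of both endpoint sequences (not just of one), but this is built directly into the definition of $\mathcal Z_l^{\mathrm{ladd}}$. There is no real obstacle — the lemma is a combinatorial consequence of pairwise distinctness of endpoints.
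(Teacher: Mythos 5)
Your approach is the right one and matches what the paper intends by ``immediately follows from the definition'' (the paper offers no written proof). There is, however, a labeling slip against the paper's convention. The paper defines ``$[a,b]$ is left linked to $[a',b']$'' to mean $b+1=a'$, so the condition for $[a_j,b_j]$ to be left linked to $[a_i,b_i]$ is $b_j+1=a_i$, not $a_j=b_i+1$ as you wrote; the latter is the condition for being \emph{right} linked. Since you use distinctness of the $b_j$'s for uniqueness in one case and distinctness of the $a_j$'s in the other, and both assertions must be proved anyway, the argument as a whole still establishes the lemma --- but the ``left'' and ``right'' cases should be interchanged to conform with the paper's definition. The substantive point, that $a_1>\cdots>a_n$ and $b_1>\cdots>b_n$ force pairwise distinctness of both endpoint sequences and hence uniqueness of any segment satisfying a linking equation, is correct and complete.
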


%Let$\mathfrak{m} \in \mathcal Z^{\mathrm{ladd}}_l$. For $[a,b]$ and $[a',b']$, $[a,b] \sim [a',b']$ if and only if $[a,b]$ and $[a',b']$ are linked each other. Let $\mathcal F(\mathfrak{m})$ be the set of equivalence generated by the relation $\sim$. 

\begin{definition} 
We say that a multisegment $\mathfrak m$ has {\it up-and-then-down property} if there exists an integer $N$ such that for any $i<j\leq N$, $m(\mathfrak m,i) \leq m(\mathfrak m,j)$ and for any $N \leq i<j$, $m(\mathfrak m,i)\geq m(\mathfrak m,j)$.   
\end{definition}

Following from definitions, we immediately have the following:
\begin{lemma} \label{lem updown}
For any $\mathfrak{m} \in \mathcal Z_l^{\mathrm{ellip}}\cap \mathcal Z_l^{\mathrm{ladd}}$, $\mathfrak{m}$ has up-and-then-down property.
\end{lemma}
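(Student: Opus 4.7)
The plan is to derive the up-and-then-down property directly from the elliptic condition, with the ladder hypothesis playing no role. First, I would unpack the definition of $\mathcal Z_l^{\mathrm{ellip}}$: there exists a symmetric multisegment $\mathfrak{m}'=\{[-b_1',b_1'],\ldots,[-b_n',b_n']\}$ (with each $b_i'\geq 0$, since $[-b_i',b_i']$ is a segment) such that $m(\mathfrak{m},e)=m(\mathfrak{m}',e)$ for every integer $e$.

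Next I would compute the multiplicity function of $\mathfrak{m}'$. By definition,
\[
  m(\mathfrak{m}',e)=\#\{i : -b_i'\leq e\leq b_i'\}=\#\{i : |e|\leq b_i'\},
\]
which depends only on $|e|$ and is a non-increasing function of $|e|$. Consequently $e\mapsto m(\mathfrak{m}',e)$ is non-decreasing on $(-\infty,0]\cap\mathbb Z$ and non-increasing on $[0,\infty)\cap\mathbb Z$, i.e.\ it has the up-and-then-down property with $N=0$.

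Finally, since $m(\mathfrak{m},e)=m(\mathfrak{m}',e)$ for all $e\in\mathbb Z$, the multiplicity function of $\mathfrak{m}$ inherits this property with the same choice $N=0$, giving the lemma. The argument is essentially a one-line unpacking of the definitions, so there is no real obstacle; the only point worth highlighting is that the ladder hypothesis $\mathcal Z_l^{\mathrm{ladd}}$ is unused, and the conclusion holds for all of $\mathcal Z_l^{\mathrm{ellip}}$.
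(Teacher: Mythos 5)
Your proof is correct and matches the paper's (the paper only says ``follows from definitions''): the elliptic condition produces a symmetric multisegment $\mathfrak{m}'$ whose multiplicity function $m(\mathfrak{m}',e)$ depends only on $|e|$ and is non-increasing in $|e|$, giving the up-and-then-down property with $N=0$, and $\mathfrak{m}$ inherits it since the multiplicity functions agree. Your remark that the ladder hypothesis is not used is also accurate; the conclusion holds for all of $\mathcal Z_l^{\mathrm{ellip}}$, and the intersection with $\mathcal Z_l^{\mathrm{ladd}}$ in the statement simply reflects the setting in which the lemma is later applied.
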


%\begin{lemma}
%We now consider a pair of extreme points $[a,b]$. Let $e_a$ and $e_b$ in $\mathcal E$ such that $J(e_a)=[a,p]$ for some $p$ and $J(e_b)=[q,b]$ for some $b$. There are two possibilities:
%\begin{enumerate}
%\item $e_a=e_b$;
%\item $e_a \neq e_b$ and $p>q$.
%\end{enumerate}
%\end{lemma}

%\begin{proof}
%Assume (1) does not hold. Let $e_1,e_2,\ldots, e_l$ be all distinct in $\mathcal E \setminus \left\{ e_a \right\}$ such that $J(e_1)$, $J(e_2)$ have overlap with $J(e_a)$. To prove  (2) holds, it suffices to prove $e_a, e_1,\ldots, e_2$ are all the elements in $\mathcal E$. To this end, suppose there exists another one and we assume that is the minimal one in the sense that 
%\end{proof}

\noindent
\subsection{ Algorithm for $w(\mathfrak{m})$.} \label{ss alg for w}

\begin{definition} \label{def set Fm}
Let $\mathfrak{m}=\left\{ [a_1,b_1],\ldots, [a_n,b_n] \right\} \in \mathcal Z_l^{\mathrm{ladd}}$. Define $\sim$ as follows: $[a_i,b_i] \sim [a_j,b_j]$ if and only if $[a_i,b_i]$ and $[a_j,b_j]$ are linked. Let $\mathcal F(\mathfrak{m})$ be the set of equivalence classes of the relation generated by $\sim$.

For each  $f \in \mathcal F(\mathfrak{m})$, we write all the elements in the form $[a^1,b^1],\ldots, [a^p,b^p]$, where  $p=\mathrm{card}(f)$ and after suitable relabeling, we may assume that $[a^i,a^{i+1}]$ are right linked to $[a^{i+1},b^{i+1}]$ for each $i$. We let $J(f)=[a^p,b^1]$. For such $f$, define $a(f)=a^p$ and $b(f)=b^1$.
\end{definition}

\noindent
Algorithm: Let $\mathfrak{m} \in \mathcal{Z}^{\mathrm{ladd}}_l$. Suppose $\mathrm{card}~\mathcal F(\mathfrak{m})=s$.
\begin{enumerate}
\item[Step 1] We order the set $\mathcal F(\mathfrak{m})$ in two ways: $\left\{ f_{i_1}, f_{i_2},\ldots, f_{i_s} \right\}$ and $\left\{ f_{j_1}, \ldots, f_{j_s} \right\}$ such that $b(f_{i_1}) >b(f_{i_2})> \ldots > b(f_{i_r})$ and $a(f_{j_1})< a(f_{j_2})<\ldots <a(f_{j_r})$. 
\item[Step 2] For each $f_k$, we write the segments in $f_k$: 
\[ f_k=\left\{ [a(k,1),b(k,1)],[a(k,2),b(k,2)],\ldots, [a(k,p_k),b(k,p_k)] \right\},\] where $p_k$ is the number of segments in $f_k$ and those $a$ and $b$ satisfy the relation that $b(k,e)+1=a(k,e-1)$ for $e=2,\ldots, p_k$. 
\item[Step 3] Now define a function $G: \left\{ a_1,\ldots, a_{n} \right\} \rightarrow \left\{ a_1, \ldots, a_n \right\}$ such that
\[  G(a(i_e,d))=\left\{  \begin{array}{ll} a(j_e, p_{j_e}) & \mbox{ if $d=1$} \\  a(j_e,d-1) & \mbox{ if $d>1$ } \end{array} \right. 
\]   
\item[Step 4] We now define an element $w(\mathfrak{m}) \in S_n$ such that $G(a_i)=a_{w(\mathfrak{m})(i)}$. 
\end{enumerate}

\begin{example}
$\mathfrak{m}=\left\{[7,10],[4,8],[3,6]\right\}$, we have $w(\mathfrak{m})=(1,3) \in S_3$. \\
\end{example}

\begin{example}
Let $\mathfrak{m}=\left\{[5,7],[3,5],[2,4],[1,3] \right\}$. Then $J(f_{i_1})=[2,7]$, $J(f_{i_2})=[3,5]$, $J(f_{i_3})=[1,3]$, $J(f_{j_1})=[1,3]$, $J(f_{j_2})=[2,7]$, $J(f_{j_3})=[3,5]$. Then we have $w(\mathfrak{m})=(1,4,2,3) $.
\end{example}

%\begin{example}
%$\mathfrak{m}= \left\{ [1,1],[2,4],[5,6],[6,7],[8,9] \right\}$. Then $a_1=8, a_2=6, a_3=5, a_4=2, a_5=1$. We have $f_1=\left\{ [6,8],[8,10] \right\}$ and $f_2=\left\{[1,2],[2,5],[5,7] \right\}$, and $w(\mathfrak{m})=(2,1,5,4,3)$.
%\end{example}

\noindent
\subsection{ Partition $\alpha(\mathfrak{m})$} \label{ss partition alpha}
Let $\mathfrak{m} \in \mathcal Z_l^{\mathrm{ladd}} \cap \mathcal Z_l^{\mathrm{ellip}}$. We repeat Steps 1 and 2 in Section \ref{ss alg for w} to obtain the indices $f_{i_k}$, $f_{j_k}$, $a(k,e)$ and $b(k,e)$. 
\begin{definition}
For a partition $\alpha=(l_1,\ldots, l_n)$ of $l$, let $\alpha^T=(t_1,\ldots, t_r)$, where $\alpha^T$ is the transpose of $\alpha$. 

We define the length of the $e$-th hook of $\alpha$ to be the number $l_e+t_e-2e+1$. Hence the length of the $e$-th hook is to count the number of boxes starting from the rightmost of the $e$-th row to the diagonal box, and then from the diagonal box down to the bottom of the $e$-th row. For example, for a hook partition $\alpha=(5,1,1,1)$. The length of the first hook is 8.

 We define the height of the $e$-th hook of $\alpha$ to be the number $t_e-e+1$. 
\end{definition}

For $e=1,\ldots,s$, define $\mathrm{hk}(\mathfrak{m}, e)=b(i_e,1)-a(j_e,p_{j_e})+1=b(f_{i_e})-a(f_{j_e})+1$. Define 
\[ \mathrm{ht}(\mathfrak{m}, e)=\mathrm{card}\left\{ k:  a(j_e,p_e) \leq b_k \leq b(i_e,1)  \right\} .\] 
Define a partition $\alpha'(\mathfrak m)$ associated to $\mathfrak{m}$ such that $\mathrm{hk}(\mathfrak m, e)$ gives the length of the $e$-th
hook and $\mathrm{ht}(\mathfrak m, e)$ gives the heights of hooks. The definition of $\mathcal Z_l^{\mathrm{ladd}}$ assures that $\alpha'(\mathfrak m)$ is
well-defined i.e. a sequence of non-increasing positive integers. Define $\alpha(\mathfrak{m})=\alpha'(\mathfrak{m})^T$ (the transpose of $\alpha'(\mathfrak{m})$).

%The definition of $\mathcal Z_l^{\mathrm{ladd}}$ assures that $\alpha(\mathfrak{m})$ is well-defined i.e. a sequence of non-decreasing positive integers. 

\begin{lemma} \label{lem count ht}
Let $e=1,\ldots, s$. Let $w=w(\mathfrak{m})$ defined in Section \ref{ss alg for w}. Let $N$ be the integer such that $a_N=a(i_e,1)$. Then
\[ \mathrm{ht}(\mathfrak{m}, e) =w(N)-N+e .  \]
%\[  \mathrm{hk}(\mathfrak{m}, k)=b_{k_i}-a_{w(k_i)}\]
\end{lemma}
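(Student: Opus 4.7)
The plan is to rewrite $\mathrm{ht}(\mathfrak{m},e)$ as an explicit count and then reduce the identity to a combinatorial statement about how the classes of $\mathfrak m$ sit relative to the point $a(f_{j_e})$. Set $N':=w(N)$. By the algorithm of Section \ref{ss alg for w}, $a_{N'}=a(j_e,p_{j_e})=a(f_{j_e})$ is the smallest $a$-value in the class $f_{j_e}$. The hypothesis $a_N=a(i_e,1)$ identifies the $N$th segment of the ladder with the top segment of $f_{i_e}$, so $b_N=b(i_e,1)=b(f_{i_e})$ is the largest $b$-value in $f_{i_e}$. Strict decrease of the $b_k$'s then rewrites
\[ \mathrm{ht}(\mathfrak{m},e)\;=\;\#\{k\geq N\,:\,b_k\geq a_{N'}\}. \]

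My next step would be to compute $\#\{k\,:\,b_k\geq a_{N'}\}$ unconstrained and separately verify $b_N\geq a_{N'}$; these together yield $\mathrm{ht}(\mathfrak{m},e)=\#\{k\,:\,b_k\geq a_{N'}\}-(N-1)$. Splitting the unconstrained count by whether $k\leq N'$ or $k>N'$: strict decrease of the $a_k$'s forces $b_k\geq a_k\geq a_{N'}$ for every $k\leq N'$, contributing exactly $N'$ elements. Hence everything reduces to showing that the number of segments strictly crossing $a_{N'}$ --- those with $a_k<a_{N'}\leq b_k$ --- equals $e-1$.

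For this I would exploit the tiling property of a linked class: the segments inside $f$ partition $[a(f),b(f)]$. Thus each class $f$ with $a(f)\leq a_{N'}\leq b(f)$ contributes exactly one segment covering $a_{N'}$, and that segment strictly crosses $a_{N'}$ precisely when $f\neq f_{j_e}$, because the unique ladder segment with $a$-value equal to $a_{N'}$ lies in $f_{j_e}$. By the $j$-ordering, the classes with $a(f)<a_{N'}$ are exactly $f_{j_1},\ldots,f_{j_{e-1}}$, so the crossing count equals $\#\{e'<e\,:\,b(f_{j_{e'}})\geq a(f_{j_e})\}$. Granting the sub-claim that every $e'<e$ satisfies $b(f_{j_{e'}})\geq a(f_{j_e})$, this count is $e-1$ and $m(a(f_{j_e}))=e$; since $m(a(f_{j_e}))=e$ means at least $e$ classes have $b$-value $\geq a(f_{j_e})$, the $e$-th class in the $b$-ordering, namely $f_{i_e}$, also has $b(f_{i_e})\geq a(f_{j_e})$, closing the missing $b_N\geq a_{N'}$.

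The main obstacle will be the sub-claim, and this is where the elliptic hypothesis enters essentially. I will argue by contradiction: suppose $b(f_{j_{e'}})<a(f_{j_e})$ for some $e'<e$. Since $f_{j_{e'}}$ and $f_{j_e}$ are distinct classes their endpoints are not linked, tightening the inequality to $b(f_{j_{e'}})\leq a(f_{j_e})-2$. Examining $m(q+1)-m(q)=\#\{k:a_k=q+1\}-\#\{k:b_k=q\}$ at $q=a(f_{j_e})-1$ and at $q'=b(f_{j_{e'}})$, the only entering $a$-value at $q$ is $a(f_{j_e})$ (distinctness of $a_k$'s) and the only leaving $b$-value at $q'$ is $b(f_{j_{e'}})$; any compensating value on the opposite side would either duplicate an $a$- or $b$-value in the ladder or force a linkage between $f_{j_e}$ or $f_{j_{e'}}$ and a third class, both impossible. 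By Lemma \ref{lem updown} $m$ is unimodal, and because $\mathfrak m\in\mathcal Z_l^{\mathrm{ellip}}$ the $m$-function is moreover symmetric about $0$ with peak at $0$; the sign analysis of the two jumps then forces $a(f_{j_e})\leq 0$ and $b(f_{j_{e'}})\geq 0$, giving $b(f_{j_{e'}})\geq 0\geq a(f_{j_e})$, contradicting the original inequality. Once the sub-claim is in place, all the pieces assemble into $\mathrm{ht}(\mathfrak{m},e)=N'-N+e=w(N)-N+e$.
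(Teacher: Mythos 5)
Your proof is correct and follows essentially the same strategy as the paper's: split $\mathrm{ht}(\mathfrak m,e)$ into the indices between $N$ and $w(N)$ plus the segments strictly crossing $a(f_{j_e})$ from the right, and identify the crossing count as $e-1$ via the up-and-then-down property. Your write-up fills in the details the paper leaves implicit (the tiling of $[a(f),b(f)]$ by the segments of a linked class $f$, the jump formula for $m$, and checking $b_N\geq a_{w(N)}$) and avoids the explicit case split on whether $w(N)$ exceeds $N$; the extra appeal to symmetry of $m$ about $0$ is correct but redundant, since unimodality alone forbids a strict decrease at $q'$ followed by a strict increase at $q>q'$.
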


\begin{proof}
In the notation of Section \ref{ss alg for w}, we have a corresponding function $G$ which permutes $a_1,\ldots, a_l$. Then $G(a_N)=G(a(i_e,1))=a(j_e, p_e)$. Now we have $|w(N)-N|+1$ counts the number of elements in 
\[  \left\{ i:    b^* \leq   b_i  \leq b' \right\}
\]
where $b^*=b_{w(N)}$ and $b'=b_N$ if $w(N)<N$; $b^*=b_{w(N)}$ and $b'=b_{w(N)}$ otherwise. Note that we have $\mathrm{card}\left\{ i: a_{w(N)} \leq b_i < b_N \right\}$ is $e-1$ (by the up-and-then-down property). Now by considering two separate cases, we have the statement.
\end{proof}

\begin{lemma} \label{lem counting ht}
Let $\mathcal A(e)=\left\{ a_s:  a(j_e,p_e) \leq b_s \leq b(i_e,1) , a_s \neq a(t,1) \ \mbox{ for some $t$ } \right\}$. For any $a_i \in \mathcal A(e) \setminus \mathcal A(e+1)$, there is $a_{N}$ such that $a_N=b_i+1$ and furthermore, $a_{i-k}=a_N$.
\end{lemma}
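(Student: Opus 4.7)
The plan is to read off $a_N$ directly from the right-linked structure of the equivalence class containing $[a_i,b_i]$ in $\mathcal F(\mathfrak m)$, and then to use the strictly decreasing order on left endpoints of segments in a ladder multisegment to locate $a_N$ in the list $a_1,\ldots,a_n$.

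First I would unpack the hypothesis on $a_i$. Membership of $a_i$ in $\mathcal A(e)$ requires $a(j_e,p_e)\le b_i\le b(i_e,1)$ together with $a_i\ne a(t,1)$ for every $t$; the latter says that $[a_i,b_i]$ is not the rightmost segment (the one with largest $b$-value) of its equivalence class $f\in\mathcal F(\mathfrak m)$. By Definition \ref{def set Fm}, the segments of $f$ chain together by right-linking $[a^j,b^j]\to[a^{j+1},b^{j+1}]$, so there is a unique segment of $f$ sitting immediately to the right of $[a_i,b_i]$; call it $[a_N,b_N]$. The right-linkedness gives exactly $a_N=b_i+1$, producing the required element.

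Next, since $\mathfrak m\in\mathcal Z_l^{\mathrm{ladd}}$, the left endpoints satisfy $a_1>a_2>\cdots>a_n$. From $a_N=b_i+1>b_i\ge a_i$ we obtain $a_N>a_i$, and the decreasing order forces $N<i$. Setting $k=i-N\ge 1$ then yields $a_{i-k}=a_N$, which is the claimed identification.

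The main technical point, and the step I expect to require the most care, is verifying that the condition $a_i\in\mathcal A(e)\setminus\mathcal A(e+1)$ is compatible with $a_N$ genuinely appearing in the master list (rather than being lost when one restricts the allowed $b$-range from the $e$-th to the $(e+1)$-th hook). This is where the up-and-then-down property of Lemma \ref{lem updown} enters: it guarantees that the hook ranges $[a(j_e,p_e),b(i_e,1)]$ are nested in a controlled way as $e$ varies, so the right-linked neighbor of any non-top segment in the $e$-th range stays inside the ambient multisegment $\mathfrak m$. Once this compatibility is recorded, the identification $a_N=a_{i-k}$ follows from the two paragraphs above.
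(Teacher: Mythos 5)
Your argument nails the first, easy half of the lemma: since $a_i$ is not of the form $a(t,1)$, the segment $[a_i,b_i]$ is not the top segment of its equivalence class, hence has a unique right-linked neighbour $[a_N,b_N]$ with $a_N=b_i+1$; and since $\mathfrak m\in\mathcal Z_l^{\mathrm{ladd}}$ has $a_1>\cdots>a_n$, the inequality $a_N>a_i$ forces $N<i$. All of that is correct and matches the paper.

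The gap is in the second half. You then \emph{define} $k:=i-N$, which makes ``$a_{i-k}=a_N$'' hold by fiat and empties the clause of content. But the lemma, as it is actually used (in the last displayed computation of Proposition~\ref{prop existence can w}, where one substitutes $w(N)=N-s$ and $a_{w(N)}=b_N+1$), requires a \emph{specific} value of the shift: the $k$ in the statement should be read as the hook index $e$, so that the conclusion is $N=i-e$. (The same $k$ vs.\ $e$ slip appears in Lemma~\ref{lem bij two counting sets}.) The substantive assertion is therefore a count: exactly $e-1$ of the left endpoints $a_t$ lie strictly between $a_i$ and $a_N=b_i+1$, so that $N=i-e$. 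That count is exactly what the paper's proof extracts from the ladder condition together with the up-and-then-down property of Lemma~\ref{lem updown}, and it is the step your proof never carries out.

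Your closing paragraph also points the up-and-then-down property at the wrong target. There is nothing to verify about $a_N$ ``genuinely appearing in the master list'' --- $[a_N,b_N]$ is by construction a segment of $\mathfrak m$, so $a_N$ is automatically one of $a_1,\ldots,a_n$. What the up-and-then-down property is actually needed for is the counting of segments whose left endpoint lies in $(a_i,\,b_i+1)$ (equivalently in $(a_i,b_i]$) and the identification of that number with $e-1$. Supplying that count is the missing core of the proof.
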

\begin{proof} 
Let $b_i=a(i_k,q_{k})=a(j_{k'},q_{k'}) \in \mathcal A(e)$ for some $q_k$ and $q_{k'}$. We now look at $f_{i_{k}}$ and $f_{j_{k'}}$ in $\mathcal F(\mathfrak m)$. The  definitions of $\mathcal A(k)$ (which does not contain the rightmost end $a(j_k,1)$) guarantees that there are $a_{N}=b_i$. Now the property of $\mathcal Z_l^{\mathrm{ladd}}$ and up-and-then-down property guarantee that there are exactly $k-1$ $a_t$'s such that $a_i<a_t <a_N=b_i$. 
\end{proof}

\begin{lemma} \label{lem bij two counting sets}
Keep using the notation in the previous lemma. Let $|\mathcal F(\mathfrak m)|=s$. For $1 \leq k \leq s$, we have
\[  |\mathcal A(e) \setminus \mathcal A(e+1)|=\mathrm{ht}(\mathfrak{m},e)-\mathrm{ht}(\mathfrak{m},e+1)-1
\]
(Here $\mathcal A(s+1)=\emptyset$.)
\end{lemma}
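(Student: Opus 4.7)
\noindent\emph{Plan.} The plan is to rewrite both sides of the identity in terms of the equivalence classes in $\mathcal F(\mathfrak m)$ and reduce to a clean counting identity. First, since the $j$-ordering is increasing in $a$ and the $i$-ordering is decreasing in $b$, the range $[a(f_{j_{e+1}}), b(f_{i_{e+1}})]$ is contained in $[a(f_{j_e}), b(f_{i_e})]$, so $\mathcal A(e+1) \subseteq \mathcal A(e)$ and $|\mathcal A(e) \setminus \mathcal A(e+1)| = |\mathcal A(e)| - |\mathcal A(e+1)|$. Next, among the $\mathrm{ht}(\mathfrak m, e)$ indices $k$ with $b_k \in [a(f_{j_e}), b(f_{i_e})]$, the ones excluded in forming $\mathcal A(e)$ are precisely those $k$ with $a_k = a(t,1)$ for some $t$; the correspondence $t \mapsto a(t,1)$ bijects equivalence classes with top-lefts, and for such a $k$ one has $b_k = b(t,1) = b(f_t)$. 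Hence $\mathrm{ht}(\mathfrak m,e) - |\mathcal A(e)| = |T(e)|$, where $T(e) := \{t : a(f_{j_e}) \leq b(f_t) \leq b(f_{i_e})\}$, and the claimed identity reduces to $|T(e)| - |T(e+1)| = 1$.

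I would then prove the stronger statement $T(e) = \{i_e, i_{e+1}, \ldots, i_s\}$, from which $|T(e)| = s - e + 1$ and the identity follow at once. The inclusion $T(e) \subseteq \{i_e, \ldots, i_s\}$ is immediate from the $i$-ordering: $b(f_t) \leq b(f_{i_e})$ forces $t \in \{i_e, \ldots, i_s\}$. The reverse inclusion requires $b(f_t) \geq a(f_{j_e})$ for each $t \in \{i_e, \ldots, i_s\}$; this is the main obstacle, particularly when $t \in \{j_1, \ldots, j_{e-1}\}$, where $a(f_t) < a(f_{j_e})$ and the inequality does not follow formally from $b(f_t) \geq a(f_t)$.

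The key observation I would use for this step is that every joined interval $J(f_t) = [a(f_t), b(f_t)]$ contains $0$. The segments of $f_t$ partition $J(f_t)$ and are pairwise non-overlapping by construction, so the contribution of $f_t$ to the multiplicity function $m(\mathfrak m, \cdot)$ is the indicator function of $J(f_t)$; equivalently, its net successive differences are $+1$ at $a(f_t)$ and $-1$ at $b(f_t)+1$. Because $\mathfrak m$ is elliptic, $m(\mathfrak m,\cdot)$ is symmetric about $0$, and by Lemma \ref{lem updown} it is unimodal; hence all such $+1$'s occur at or before the peak and all $-1$'s at or after, and the peak region contains $0$. Consequently $a(f_t) \leq 0 \leq b(f_t)$ for every $t$, which yields $b(f_t) \geq 0 \geq a(f_{j_s}) \geq a(f_{j_e})$ and completes the proof.
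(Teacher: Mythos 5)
Your argument is correct. The paper's own proof of this lemma is extremely terse: it asserts $|\mathcal A(e)| = \mathrm{ht}(\mathfrak{m}, e) - (s-e+1)$ ``by definitions'' together with a cryptic remark about ``the strict equality for $a(j_e,1) > b_i$,'' and never actually explains why exactly $s - e + 1$ of the indices $k$ counted by $\mathrm{ht}(\mathfrak m, e)$ satisfy $a_k = a(t,1)$ for some $t$. You reduce this to showing that $T(e) := \{t : a(f_{j_e}) \leq b(f_t) \leq b(f_{i_e})\}$ equals $\{i_e,\ldots,i_s\}$, via the bijection $k \leftrightarrow t$ given by $a_k = a(t,1)$ (which forces $b_k = b(t,1) = b(f_t)$); the inclusion $T(e) \subseteq \{i_e,\ldots,i_s\}$ is formal from the $i$-ordering, and the reverse inclusion is where the content lies. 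Your key observation --- that $a(f_t) \leq 0 \leq b(f_t)$ for every class $f_t$, because for $\mathfrak m$ elliptic the step function $m(\mathfrak m,\cdot) = \sum_t \mathbf 1_{J(f_t)}$ is symmetric about $0$ (from the defining $\mathfrak m'$) and unimodal (Lemma \ref{lem updown}), so its unit up-jumps $a(f_t)$ all occur at $e \leq 0$ and its unit down-jumps $b(f_t)+1$ at $e \geq 1$ --- is precisely the missing ingredient; it makes the lower constraint $a(f_{j_e}) \leq b(f_t)$ vacuous and gives $T(e) = \{i_e,\ldots,i_s\}$ of size $s-e+1$ directly. One small point worth making explicit in a full write-up: since the $a_k$ (and the $b_k$) are all distinct for a ladder multisegment, an equality $a(f_t) = b(f_{t'})+1$ would force $f_t = f_{t'}$ and hence an empty joined interval, so the jump locations $\{a(f_t)\}$ and $\{b(f_{t'})+1\}$ are disjoint and every jump of $m(\mathfrak m,\cdot)$ is exactly $\pm 1$; this is what lets you read the sign of each jump off the symmetric unimodal profile. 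In short, your proof is correct and supplies a rigorous justification for a count that the paper's proof leaves essentially unexplained.
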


\begin{proof}
By definitions, we have $\mathcal A(e)=\mathrm{ht}(\mathfrak{m}, e)-(s-e+1)$. Here the term $s-e+1$ comes from the strict equality for $a(j_e,1)>b_i$ in the definition of $\mathcal A(e)$. Hence we have $|\mathcal A( e) \setminus \mathcal A( e+1)|=\mathrm{ht}(\mathfrak{m},e)-\mathrm{ht}(\mathfrak{m}, e+1)-1$.
\end{proof}

\subsection{Existence of a $W$-representation}

\begin{proposition} \label{prop existence can w}
Let $\mathfrak{m} \in \mathcal Z^{\mathrm{ladd}}_l \cap \mathcal Z_l^{\mathrm{ellip}}$. Recall that $\alpha(\mathfrak{m})$ is defined in Section \ref{ss partition alpha}. Set $\alpha=\alpha(\mathfrak{m})$. Then
\[  \mathrm{Hom}_{W}(\sigma_{\alpha}, L(\mathfrak{m})|_{W}) \neq 0 .
\]
\end{proposition}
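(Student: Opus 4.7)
The plan is to combine Suzuki's BGG-type resolution of the ladder representation (Theorem 5.1.1 in \cite{Su}) with a determinantal computation at the level of the Grothendieck group of $W$-representations. The first step is to pass to the $W$-level: from
\[ [L(\mathfrak{m})] = \sum_{w \in S_n}(-1)^{l(w)}[E(\mathfrak{m}^w)] \]
in the Grothendieck group of $\mathbb{H}_l$-modules, and using the fact that $E(\mathfrak{m}^w)|_W \cong \mathrm{Ind}_{W_w}^{W}\mathrm{sgn}$ is induced from the sign character of the Young subgroup $W_w = S_{l_1(w)} \times \cdots \times S_{l_n(w)}$ with $l_k(w)=b_k-a_{w(k)}+1$ (with $E(\mathfrak{m}^w) = 0$ when any $l_k(w) < 0$), the Frobenius characteristic of $L(\mathfrak{m})|_W$ becomes the determinant
\[ \mathrm{ch}(L(\mathfrak{m})|_W) = \det\bigl(e_{b_i - a_j + 1}\bigr)_{i,j=1}^{n}, \]
where $e_k$ denotes the $k$-th elementary symmetric function with the conventions $e_0 = 1$ and $e_k = 0$ for $k<0$.

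The second step is to identify $\sigma_{\alpha(\mathfrak{m})}$ as appearing in this determinantal expression. Expanding the determinant via Lindstr\"om--Gessel--Viennot, the coefficient of each Schur function $s_\beta$ is counted by a signed sum over configurations of non-intersecting lattice paths, equivalently by column-strict tableau data. The definition of $\alpha(\mathfrak{m})$ in Section \ref{ss partition alpha} via hook lengths $\mathrm{hk}(\mathfrak{m},e)$ and hook heights $\mathrm{ht}(\mathfrak{m},e)$ is tailored so that $\alpha(\mathfrak{m})$ becomes the unique maximal partition (in a suitable dominance ordering) with nonzero coefficient; Lemmas \ref{lem count ht}, \ref{lem counting ht}, and \ref{lem bij two counting sets} provide exactly the counting identities identifying the permutation $w(\mathfrak{m})$ of Section \ref{ss alg for w} with a distinguished lattice path configuration contributing to $s_{\alpha(\mathfrak{m})}$.

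The main obstacle will be showing that this distinguished contribution is not canceled by the other terms in the alternating sum. For this I would partition the set of permutations $w \in S_n$ with $\prod_k e_{l_k(w)} \neq 0$ and build an explicit sign-reversing involution on those configurations that contribute to $s_\beta$ for $\beta \neq \alpha(\mathfrak{m})$, as well as on non-distinguished configurations contributing to $s_{\alpha(\mathfrak{m})}$. The up-and-then-down property of Lemma \ref{lem updown} together with the ladder condition that each $[a,b] \in \mathfrak{m}$ is linked to at most one segment on either side is the key combinatorial input making such an involution work: it allows one to detect the first position where a configuration deviates from the pattern determined by $w(\mathfrak{m})$ and swap there. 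Once this cancellation is established, the surviving contribution from $w(\mathfrak{m})$ shows that $s_{\alpha(\mathfrak{m})}$ occurs with positive coefficient in $\mathrm{ch}(L(\mathfrak{m})|_W)$, which is exactly the nonvanishing $\mathrm{Hom}_W(\sigma_{\alpha(\mathfrak{m})}, L(\mathfrak{m})|_W) \neq 0$ claimed in the proposition.
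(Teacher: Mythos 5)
Your starting point is correct and genuinely different from the paper's. The paper realizes $L(\mathfrak{m}) \cong F_\gamma(L(\mu))$ via the Arakawa--Suzuki functor and then invokes Kumar's proof of the PRV conjecture to show the relevant tensor-product multiplicity (equivalently, a Littlewood--Richardson coefficient, via Schur--Weyl duality) is nonzero. You instead pass directly to $W$-characters via Suzuki's BGG-type resolution and obtain the determinant $\det\bigl(e_{b_i - a_j + 1}\bigr)_{i,j}$, which — given the strict inequalities $b_1 > \cdots > b_n$ and $a_1 > \cdots > a_n$ holding on $\mathcal Z_l^{\mathrm{ladd}}$ — is literally the dual Jacobi--Trudi expansion of a skew Schur function $s_{\lambda/\mu}$ with $\lambda'_i = b_i + i + c$ and $\mu'_j = a_j + j - 1 + c$. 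Up to this point the computation is sound, and this character-level route is a legitimate alternative to the paper's representation-theoretic one.

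However, there is a genuine gap in the second half of your plan. Once the determinant is recognized as $s_{\lambda/\mu}$, its Schur expansion is automatically nonnegative (the coefficients are the LR numbers $c^{\lambda}_{\mu,\beta}$), so there is no cancellation problem of the kind you propose to resolve by a sign-reversing involution; the LGV/involution machinery only re-derives Schur-positivity, which you already have for free. What actually has to be proved — and what your sketch does not address — is the \emph{positivity} $c^{\lambda}_{\mu,\,\alpha(\mathfrak{m})} > 0$. For that you would need either to exhibit an explicit Littlewood--Richardson filling of $\lambda/\mu$ with content $\alpha(\mathfrak{m})$ (using the hook/height combinatorics of Lemmas \ref{lem count ht}--\ref{lem bij two counting sets} to build it), or to invoke the PRV criterion — which is exactly the input the paper supplies. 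Your assertion that $\alpha(\mathfrak{m})$ is ``the unique maximal partition with nonzero coefficient'' is neither established nor obviously true, and even if true it would be a stronger statement than the proposition; the paper gets multiplicity one (\thref{thm canonical unique}) only afterward, by a Dirac cohomology argument, not from a maximality property of $\alpha(\mathfrak{m})$. So the proposal's framework is viable but the crucial positivity step is missing, and the involution idea as written does not supply it.
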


\begin{proof}
Construct $\alpha(\mathfrak m)$ as in Section \ref{ss partition alpha}. Write $\alpha(\mathfrak{m})=(m_1,\ldots, m_n)$ as a partition of $n$. Let $\omega=(m_1,\ldots, m_n)$ as a $\mathfrak{t}_n$-weight. By the classical Schur-Weyl duality, there is a subrepresentation in $\mathcal V^{\otimes l}$, which is isomorphic to $\alpha(\mathfrak{m}) \boxtimes L(\omega)$ as $\mathbb{C}[S_l] \otimes \mathfrak{gl}(n,\mathbb{C})$-representation. (Here the $S_l$-action on $\mathcal V^{\otimes l}$ is defined as in Definition \ref{def AS functor}.) Recall from Section \ref{s AS functor}  that there exist $\mathfrak{t}_n$-weights $\gamma, \mu$ such that $\mathfrak{m}=\mathfrak{m}_{\gamma, \mu}$, and we have $L(\mathfrak{m}) \cong F_{\gamma}(L(\mu))$ in Theorem \ref{thm suzuki functor}. Thus to prove the proposition, it suffices to show that $(L(\mu) \otimes L(\omega))_{\gamma}$ is non-zero. This shall follow from the following claim and the PRV conjecture, proved by Kumar \cite{Ku}.\\

\noindent
{\it Claim:} There exists $w_1 \in S_n$ and $w_2 \in S_n$ such that $w_1\mu+w_2\omega=\gamma$. \\

\noindent
{\it Proof of the claim:} We choose $w_1=w(\mathfrak{m})$ and set $w:=w_1$ for simplicity. Let $\mathfrak{m}=\left\{[a_1,b_1],\ldots,[a_n,b_n] \right\}$ with $a_1>a_2>\ldots>a_n$. Let $\mu'=(a_1,\ldots, a_n)$ and let $\gamma'=(b_1,\ldots, b_n)$. Recall that from the previous construction we have $\mu=\mu'-\rho$ and $\gamma=\gamma'-\rho+1$. We also write $(a_1',\ldots, a_n')=\mu$ and $(b_1',\ldots, b_n')=\gamma$. We also construct the set $\mathcal F(\mathfrak{m})=\left\{ f_1,\ldots, f_s \right\}$ as in Definition \ref{def set Fm}.

To prove the claim, it is equivalent to show that $\gamma-w_1\mu \in S_n\omega$. To this end, it suffices to show for each $m_i$, there exists $k$ such that $b_k'-a_{w(k)}'=m_i$ (with the same multiplicity).

We first compute those $m_t$. For $1 \leq t \leq s$, we have 
\[ m_t =\mathrm{hk}(\mathfrak{m},t)-\mathrm{ht}(\mathfrak{m},t)+e . \]
We now consider $t >s$. For $\mathrm{ht}(s)+s-1 \geq t >s$, we have $m_t=s$. In general, for $\mathrm{ht}(\mathfrak{m},s-x)+(s-x)-1\geq t >\mathrm{ht}(\mathfrak{m},s-x+1)+(s-x)$, we have $m_t=s-x$.

 We consider the tableaux obtained from $\gamma - \widetilde{\mu}_w$. For the first row, since $w(1)=w_1(1)=n$, we have
\[ b_1'-a_{w(1)}' = b_1-\frac{n-1}{2}+1-(a_{w(1)}-\frac{n-2w(1)+1}{2}) =b_1-a_n+1-(n-1) =\mathrm{hk}(\mathfrak{m},1)-\mathrm{ht}(\mathfrak{m},1)+1
\]
 For $ 1 \leq t \leq s$, we choose $N$ such that $b_N=b(i_t,1)=b(f_{i_t})$. By definitions,  $a_{w(N)}=a(j_t,p_{j_t})=a(f_{j_t})$. (Note that when $t=1$, $i_t=1$ and $j_t=n$ and the that case has been treated.)
\begin{align*}
 b_N'-a_{w(N)}' =& b_{N}+1-\frac{n-2N+1}{2}-(a_{w(N)}-\frac{n-2w(N)+1}{2}) \\
                =& b(f_{t_j})+1-a(j_t,p_{j_t})-\frac{2w(N)-2N}{2} \\
                =& \mathrm{hk}(\mathfrak{m},t) -\mathrm{ht}(\mathfrak{m},t)+e \quad \mbox{ by Lemma \ref{lem count ht}}
\end{align*}
We now consider $t >s$. We consider the case that $\mathrm{ht}(\mathfrak{m},s)+s-1 \geq t>s$. By Lemma \ref{lem bij two counting sets}
\[ \mathrm{card} \left\{ t: \mathrm{ht}(\mathfrak{m}, s)+s-1 \geq t >s  \right\} = \mathrm{card}~ \mathcal A(s) .\]
For any  $\mathrm{ht}(\mathfrak{m}, s)+s-1 \geq t>s$, we shall assign any $N$ with $a_N \in \mathcal A(s)$, and make the assignment to be bijective. We now compute that
\begin{align*}
b_N'-a_{w(N)}'= & b_N+1-\frac{n-2N+1}{2}-(a_{w(N)}-\frac{n-2w(N)+1}{2}) \\
                = & b_N+1-\frac{n-2N+1}{2}-(b_N+1-\frac{n-2(N-s)+1}{2} \\
								=& s
\end{align*}
The second equality in the above equations follows from Lemma \ref{lem counting ht}. For the case that  $\mathrm{ht}(\mathfrak{m},s-x)+(s-x)-1\geq t >\mathrm{ht}(\mathfrak{m}, s-x+1)+(s-x)$, the proof is analogous to the case that $\mathrm{ht}(\mathfrak{m}, s)+s-1 \geq t>s$ with the use of Lemmas \ref{lem counting ht} and  \ref{lem bij two counting sets}. We omit the details.
\end{proof}

\subsection{$W$-representations in twisted-elliptic ladder representations}

Fix a choice $S$ of basic spin representation of $\widetilde{S}_l$ (as in Section \ref{ss dirac op}). 
\begin{lemma} \label{lem multiplicity spin and non}
Let $\mathfrak{m} \in \mathcal Z_{l}^{\mathrm{ladd}} \cap \mathcal Z_l^{\mathrm{ellip}}$. Let $\alpha =\alpha(\mathfrak{m})$. Let $\lambda=\lambda(\mathfrak{m}^{\mathrm{temp}})$. Then if $\alpha$ is not a hook or $l$ is odd,
\[  \mathrm{dim} \mathrm{Hom}_{\widetilde{W}}(\widetilde{\sigma}_{\lambda}, \sigma_{\alpha} \otimes S) \geq \frac{1}{\varepsilon_{\lambda} \varepsilon_{(l)}} 2^{(l(\lambda)-1)/2} .
\]
If $\alpha$ is a hook and $n$ is odd, then $ \mathrm{dim} \mathrm{Hom}_{\widetilde{W}}(\widetilde{\sigma}_{\lambda}, \sigma_{\alpha} \otimes S)$ is $0$ or $1$, depending on the choice of the basic spin representation $S$. 
\end{lemma}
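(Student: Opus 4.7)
The plan is to reduce the computation of $\dim \Hom_{\widetilde W}(\widetilde\sigma_\lambda, \sigma_\alpha \otimes S)$ to a count of shifted standard tableaux via Stembridge's decomposition \cite[Theorem 9.3]{St}, then to produce the required tableaux explicitly from the hook-by-hook construction of $\alpha(\mathfrak m)$.

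First, I would invoke Stembridge's formula in the following form: for any $\alpha \vdash l$ and $\lambda \in DP_l$, the multiplicity of $\widetilde\sigma_\lambda$ in $\sigma_\alpha \otimes S$ equals $\varepsilon_\lambda^{-1} \varepsilon_{(l)}^{-1} \cdot 2^{(l(\lambda)-1)/2} \cdot g^\lambda_\alpha$, where $g^\lambda_\alpha \in \mathbb{Z}_{\geq 0}$ counts shifted standard tableaux of shape $\lambda$ and content $\alpha$, and the scalar factors absorb the $\pm$-doubling of spin characters for $\lambda \in DP_l^-$ and the analogous splitting of the basic spin module when $l$ is even. The lower bound in the lemma is then equivalent to the combinatorial statement $g^\lambda_\alpha \geq 1$, i.e.\ to the existence of a single valid shifted tableau.

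Second, I would construct such a tableau by matching the hook decomposition of $\alpha(\mathfrak m)$ against the parts of $\lambda$. By Section \ref{ss partition alpha}, $\alpha'(\mathfrak m) = \alpha(\mathfrak m)^T$ is built out of $s = |\mathcal F(\mathfrak m)|$ hooks of lengths $\mathrm{hk}(\mathfrak m,e)$ and heights $\mathrm{ht}(\mathfrak m,e)$. The elliptic hypothesis $\mathfrak m \in \mathcal Z_l^{\mathrm{ellip}}$ together with the up-and-then-down property (Lemma \ref{lem updown}) forces the multiset $\{\mathrm{hk}(\mathfrak m,e)\}_e$ to coincide with the multiset of parts of $\lambda = \lambda(\mathfrak m^{\mathrm{temp}})$, because merging the linked segments inside an equivalence class of $\mathcal F(\mathfrak m)$ produces intervals of the same lengths as the single elliptic segments $[-b_k,b_k]$ that constitute $\mathfrak m^{\mathrm{temp}}$; the equality of multiplicity functions $m(\mathfrak m,\cdot)=m(\mathfrak m^{\mathrm{temp}},\cdot)$ on $\mathbb Z$ makes this matching canonical. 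Filling each row of the shifted shape $\lambda$ by the entries of the corresponding hook of $\alpha'(\mathfrak m)$ then yields a valid shifted standard tableau of content $\alpha$, proving $g^\lambda_\alpha \geq 1$.

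The main obstacle lies in the hook case treated in the second part of the lemma: when $\alpha$ is itself a single hook, $\mathcal F(\mathfrak m)$ has one class, $\alpha'(\mathfrak m)$ is a single hook, and $\lambda = (l)$ has one part. Since $l$ is odd by hypothesis, $\lambda \in DP_l^+$ and the general formula predicts multiplicity $1$; however, $\sigma_\alpha \otimes S$ now splits into two inequivalent self-associate summands, indexed by the parity of the row-column split of the hook $\alpha$, and exactly one of the two inequivalent choices of basic spin module $S$ produces a copy of $\widetilde\sigma_{(l)}$. This yields the claimed dichotomy of $0$ or $1$ depending on the choice of $S$ and aligns with the first part of the lemma only for the correct choice of $S$. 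The most delicate point is the combinatorial verification that the hooks glued together to form $\alpha'(\mathfrak m)$ really exhaust the content required for a shifted tableau of shape $\lambda$; this is where the precise definition of the ordering used in Section \ref{ss alg for w} enters essentially.
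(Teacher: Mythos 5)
Your proof follows essentially the same route as the paper's: invoke Stembridge's Theorem 9.3 to reduce the multiplicity to $\tfrac{1}{\varepsilon_\lambda\varepsilon_{(l)}}2^{(l(\lambda)-1)/2}g^\lambda_\alpha$, and then show $g^\lambda_\alpha\geq 1$ by exhibiting a shifted tableau coming from the hook structure of $\alpha'(\mathfrak m)$. The paper does the same, merely asserting ``there is an obvious way to satisfy the criteria'' and illustrating with a single worked example ($\alpha=(4,4,3,2,2)$, $\lambda=(8,6,1)$); you articulate the underlying reason more explicitly, namely that the hook lengths of $\alpha'(\mathfrak m)$ coincide as a multiset with the parts of $\lambda(\mathfrak m^{\mathrm{temp}})$, which is the organizing observation that makes the filling work and is left implicit in the paper. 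One caution on your hook-case discussion: you read the hypothesis of the second case as ``$l$ odd'' and simultaneously argue that exactly one of the two inequivalent choices of basic spin module $S$ picks up $\widetilde\sigma_{(l)}$; but when $l$ is odd, $\dim V'=l-1$ is even and $C(V')$ has a unique simple module, so there is no choice to be made, and $\lambda=(l)\in DP_l^+$ already falls under the first part of the lemma. The dichotomy ``$0$ or $1$ depending on $S$'' is relevant when $l$ is even (so $\dim V'$ is odd and there are two choices of $S$, and $\lambda=(l)\in DP_l^-$); the lemma's hypothesis ``$n$ is odd'' is likely a typo, but reading it as ``$l$ odd'' introduces the inconsistency in your argument. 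Aside from that slip, which parallels an ambiguity already present in the paper's statement, your approach and the paper's are the same.
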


\begin{proof}
Note that if $\alpha$ is a hook, then $\lambda=(n)$. The lemma follows from \cite[Theorem 9.3]{St}. In the notation of \cite[Theorem 9.3]{St}, we have $g_{ \lambda,\alpha} \geq 1$ since there is an obvious way to satisfy the criteria. We only give an example. We consider the partition $\alpha=(4,4,3,2,2)$ and the corresponding $\lambda=(8,6,1)$. Now we can fill the numbers $1,1',2,2',3,3'$ in the Young tableaux for $\alpha$ as \\
\[
\begin{Young}
1'&1 &1 &1\cr
1'&2'&2 & 2 \cr
1'&2' & 3  \cr
1'&2' \cr
1& 2 \cr
\end{Young}
\]
Then the word for $\alpha$ (terminology in \cite[p. 125]{St}) is $121'2'1'2'31'2'221'111$. Then we can work out the multiplicity $m_i(j)$ \cite[(8.4)]{St}. To check the lattice property \cite[Theorem 9.3(b)(1)]{St}, the interesting case is that $m_{1}(14)=m_2(14)=3$. In that case, $w_1=1 \neq 2, 2'$ and hence the lattice property is satisfied.
\end{proof}

%We have a partial ordering on partial on $S_n$-representations given by the dominance ordering. In particular, the trivial representation is the unique maximal element and the sign representation is the unique minimal element in the partial ordering. Let $C_{\leq}(\mathcal T)$

\begin{theorem} \label{thm canonical unique}
Let $\mathfrak{m} \in \mathcal Z_l^{\mathrm{ladd}} \cap \mathcal Z_l^{\mathrm{ellip}}$. Let $\beta =\lambda(\mathfrak{m})^T$ and let $\alpha=\alpha(\mathfrak{m})$ (see Section \ref{ss partition alpha}). Then we have 
 \[ \mathrm{dim}~\mathrm{Hom}_W(\sigma_{\alpha},L(\mathfrak{m})|_W ) =\mathrm{dim}~\mathrm{Hom}_W(\sigma_{\beta}, L(\mathfrak{m})|_W)=1 .\]
%\item For any $S_l$-representation $\tau$ such that $\mathrm{Hom}_W(\tau, L(\mathfrak{m})|_W) \neq 0$, we have  $\tau \in \mathcal R_{\leq }(\lambda) \setminus \mathcal R_{<}(\sigma)$.  
\end{theorem}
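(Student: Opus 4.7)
The plan is to prove the two multiplicity-one assertions via complementary arguments: for $\sigma_\beta$ a purely combinatorial argument through the BGG-type resolution, and for $\sigma_\alpha$ a squeezing argument using the Dirac cohomology computation in Theorem~\ref{thm dirac ladder}, the spin-character lower bound of Lemma~\ref{lem multiplicity spin and non}, and the existence guaranteed by Proposition~\ref{prop existence can w}.

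For the multiplicity of $\sigma_\beta$, I would apply Euler--Poincar\'e to the BGG-type resolution of Section~\ref{s AS functor}. In the Grothendieck group $[L(\mathfrak{m})]=\sum_{w\in S_n}(-1)^{l(w)}[E(\mathfrak{m}^w)]$, each restriction $E(\mathfrak{m}^w)|_W$ is the induced sign representation $\mathrm{Ind}_{S_{\lambda(\mathfrak{m}^w)}}^{S_l}\mathrm{sgn}$, whose $W$-decomposition is $\sum_\mu K_{\mu^T,\lambda(\mathfrak{m}^w)}\sigma_\mu$ by the dual Pieri rule. Thus $\dim\mathrm{Hom}_W(\sigma_\beta, E(\mathfrak{m}^w))=K_{\lambda(\mathfrak{m}),\lambda(\mathfrak{m}^w)}$, which vanishes unless $\lambda(\mathfrak{m})\trianglerighteq \lambda(\mathfrak{m}^w)$. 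A rearrangement-inequality argument, leveraging the strict decreases $a_1>\ldots>a_n$ and $b_1>\ldots>b_n$ in the ladder, shows that any nontrivial swap in the pairing $i\mapsto(a_i,b_i)$ produces a partition of segment lengths strictly dominating $\lambda(\mathfrak{m})$, so the only surviving term in the alternating sum is $w=e$, contributing $K_{\lambda(\mathfrak{m}),\lambda(\mathfrak{m})}=1$.

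For the multiplicity of $\sigma_\alpha$, set $m=\dim\mathrm{Hom}_W(\sigma_\alpha, L(\mathfrak{m})|_W)$, $g=\dim\mathrm{Hom}_{\widetilde W}(\widetilde\sigma_\lambda, \sigma_\alpha\otimes S)$, and let $k$ be the multiplicity of $\widetilde\sigma_\lambda$ in $H_D(L(\mathfrak{m}))$ supplied by Theorem~\ref{thm dirac ladder}. Proposition~\ref{prop existence can w} gives $m\geq 1$ and Lemma~\ref{lem multiplicity spin and non} gives $g\geq k$, hence $mg\geq k$. To get the matching upper bound I would invoke the unitarity of ladder representations sharing a central character with a twisted-elliptic tempered module (the graded Hecke algebra analogue of Tadi\'c/Lapid--M\'inguez unitarity for Speh-type representations, provable directly by a Hermitian-form computation if a precise citation is unavailable). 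Unitarity combined with $D^*=D$ and the vanishing of the $D^2$-scalar on the $\widetilde\sigma_\lambda$-isotypic (using that $L(\mathfrak{m})$ and $E(\mathfrak{m}^{\mathrm{temp}})$ share the central character and $a(\widetilde\sigma_\lambda)=\langle h_e,h_e\rangle_{V'}$) force $D=0$ on this isotypic, so the $\widetilde\sigma_\lambda$-multiplicity in $L(\mathfrak{m})\otimes S$ equals exactly the multiplicity $k$ in $H_D(L(\mathfrak{m}))$. Expanding this multiplicity as $\sum_\tau m_\tau g_{\lambda,\tau}$ over irreducible $W$-constituents of $L(\mathfrak{m})$, with all summands non-negative, yields $k\geq m\cdot g\geq k$, and then $g\geq k$ forces $m=1$.

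The principal obstacle will be establishing the unitarity of $L(\mathfrak{m})$ cleanly in the graded Hecke algebra setting, and compatibly choosing the basic spin representation $S$ so that the $g$ on the hook-$\alpha$ branch of Lemma~\ref{lem multiplicity spin and non} is $1$ rather than $0$; this requires matching the sign conventions between Theorem~\ref{thm dirac ladder}(b) and the Stembridge formula. A secondary issue is formalizing the rearrangement-inequality claim in the $\sigma_\beta$ argument as a clean statement about dominance order on the partitions $\lambda(\mathfrak{m}^w)$ indexed by permutations of ladder pairings.
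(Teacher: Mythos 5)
Your plan splits the theorem into two assertions and handles them by different methods, so let me address them separately.

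For $\dim\Hom_W(\sigma_\beta,L(\mathfrak{m})|_W)=1$ the paper simply cites Corollary~\ref{cor lowest type}: $\sigma_\beta=\sigma_{\lambda(\mathfrak{m})^T}$ is the lowest $W$-type of $E(\mathfrak{m})$ coming from the generalized Springer correspondence, and such a type appears with multiplicity one in the irreducible Langlands quotient of a standard module. Your route via the Suzuki BGG resolution and the identity $\dim\Hom_W(\sigma_\beta,E(\mathfrak{m}^w))=K_{\lambda(\mathfrak{m}),\lambda(\mathfrak{m}^w)}$ is a genuinely different and more self-contained argument. The combinatorial lemma you would need --- that for $w\neq e$ with all lengths $b_k-a_{w(k)}+1\ge 0$ the sorted partition $\lambda(\mathfrak{m}^w)$ strictly dominates $\lambda(\mathfrak{m})$ --- is correct (it is the statement that the monotone pairing of two strictly decreasing sequences minimizes spread in dominance order), but it is not in the paper and would need to be proved, including the case where zero-length segments appear. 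What the paper's citation buys is a one-line proof delegated to Lusztig--Kato's theory; what yours buys is a transparent argument entirely internal to type~$A$ combinatorics and the BGG resolution.

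For $\dim\Hom_W(\sigma_\alpha,L(\mathfrak{m})|_W)=1$ your squeezing argument --- lower bound from Proposition~\ref{prop existence can w}, spin-multiplicity lower bound from Lemma~\ref{lem multiplicity spin and non}, upper bound from Theorem~\ref{thm dirac ladder} --- is exactly the paper's strategy. The concrete gap is in the step where you conclude that the full $\widetilde\sigma_\lambda$-isotypic of $L(\mathfrak{m})\otimes S$ survives to $H_D$. You propose to get $H_D(L(\mathfrak{m}))=\ker D=\ker D^2$ from unitarity of $L(\mathfrak{m})$, but unitarity of ladder representations with elliptic central character is not at all clear and is very likely false outside the Speh subclass; already for $\mathfrak{m}=\{[1,2],[0,0],[-2,-1]\}$ (a ladder in $\mathcal Z_5^{\mathrm{ellip}}$, since $m(\mathfrak{m},\cdot)=m(\{[-2,2]\},\cdot)$) the segments have unequal lengths and $L(\mathfrak{m})$ is not a Speh representation. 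The paper instead deduces $H_D(L(\mathfrak{m}))=\ker D=\ker D^2$ from \cite[Proposition 4.4.2]{BC} together with the self-adjointness property of $D$ from \cite[(6.2.3)]{BC3}; these give positivity of the Hermitian form on precisely the isotypic components at issue, which is strictly weaker than global unitarity and is what the squeeze actually requires. So your proposal would fail as written unless the unitarity claim is replaced by this weaker positivity statement, which is the content of the cited results. The hook case, where $\lambda=(l)$ and $\widetilde\sigma_\lambda=S$, is handled by Theorem~\ref{thm dirac ladder}(b) exactly as you anticipate, with the consistent choice of $S$.

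Net: your $\sigma_\alpha$ argument is the paper's argument with one step (the $\ker D=\ker D^2$ input) replaced by a stronger and probably false assumption; fix it by citing or reproducing the isotypic-component positivity from \cite{BC,BC3} rather than claiming $L(\mathfrak{m})$ unitary. Your $\sigma_\beta$ argument is an attractive alternative to the paper's one-line citation, provided you supply the strict-dominance lemma.
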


\begin{proof}
We first consider (1). It is well-known that $\mathrm{dim}~\mathrm{Hom}_W(\sigma_{\beta}, L(\mathfrak{m})|_W)=1$ (see Corollary \ref{cor lowest type}). Let $p=\mathrm{dim}~\mathrm{Hom}_W(\sigma_{\alpha},L(\mathfrak{m})|_W ) $. By Proposition \ref{prop  existence can w}, we have $p \geq 1$. By \cite[Proposition 4.4.2]{BC} and a property of $D$ \cite[(6.2.3)]{BC3}, we have $H_D(L(\mathfrak{m})) =\mathrm{ker} D=\mathrm{ker} D^2$. We have that $D^2$ acts as a zero operator on the $\widetilde{\sigma}_{\lambda}$-isotypic component by the formula (\ref{eqn formul d2}). Thus with Lemma \ref{lem multiplicity spin and non}, we have 
\[ \mathrm{dim}~\mathrm{Hom}_{\widetilde{W}}(\widetilde{\sigma}_{\lambda}, H_D(X))=\mathrm{dim}~\mathrm{Hom}_{\widetilde{W}}(\widetilde{\sigma}_{\lambda}, \mathrm{ker} D^2) \geq \frac{1}{\varepsilon_{\lambda} \varepsilon_{(l)}} 2^{(l(\lambda)-1)/2}p
\]
Now Theorem \ref{thm dirac ladder} forces $p=1$. This proves (1).
\end{proof}

%\begin{remark}
%Let $\mathfrak{m} \in Z^{\mathrm{ladd}}_l$. As a by-product of the proofs of Lemma \ref{lem multiplicity spin and non} and Theorem \ref{thm canonical unique}, we obtain $g_{\lambda, \alpha}=1$ for $\lambda =\lambda(\mathfrak{m}^{\mathrm{temp}})$ and $\alpha=\alpha(\mathfrak{m})$, where $g_{\lambda, \alpha}$ is the Kostka-type number in \cite[Theorem 9.3]{St}. (Indeed it is also easy to show combinatorially from \cite{St}.)
%\end{remark}

\section{Unequal parameter case for type $BC$} \label{s unequal bc}

Affine Hecke algebras for unequal parameters are constructed geometrically by Kato \cite{Ka0}. The representation theory of those algebras is further studied in \cite{CK} and \cite{CKK} and others. For the harmonic analysis approach, see the study of Slooten \cite{Sl} and Opdam-Solleveld \cite{OS}. 

\subsection{Graded Hecke algebra for type $C$} \label{ss gha bc}

Let $V_n=\mathbb{C}^n$. Let $\epsilon_1,\ldots, \epsilon_n$ be a standard basis for $V_n$ equipped with a bilinear form given by $\langle \epsilon_i, \epsilon_j \rangle= \delta_{ij}$. We also identify the dual space $V_n^{\vee}$ with $\mathbb{C}^n$ such that $f \in V_n^{\vee} \cong \mathbb{C}^n, v_2 \in V_n$, $f(v_2)=\langle f, v_2 \rangle$. Let 
\[R_n =\left\{ \epsilon_i-\epsilon_j : 1\leq i \neq j \leq n \right\} \cup \left\{ 2\epsilon_i : i=1,\ldots, n \right\} \]
 be a root system of type $C_n$. Let $\Pi_n=\left\{ \epsilon_i-\epsilon_{i+1}: i=1,\ldots, n-1 \right\} \cup \left\{ 2\epsilon_k \right\}$ be a fixed set of simple roots in $R_n$. Let $W_n$ be the Weyl group of type $C_n$.

For $m >0$, the graded Hecke algebra $\mathbb{H}_{n,m}$ (depending on $c$) of type $C_n$ is generated by the symbols $\left\{ t_w: w\in W \right\}$ and $\left\{ v \in V_n \right\}$ subject to the following relations:
\begin{enumerate}
\item $v_1v_2=v_2v_1$ for $v_1,v_2 \in V_n$;
\item $t_{w_1}t_{w_2}=t_{w_1w_2}$ for $w_1,w_2 \in W_n$;
\item $t_{s_{\alpha}}v-s_{\alpha}(v)t_{s_{\alpha}}=c_{\alpha}\alpha^{\vee}(v)$.
\item $c_{\alpha}=1$ if $\alpha$ is a short root and $c_{\alpha}=m$ if $\alpha$ is a long root.
\end{enumerate}

We drop the parameter $\mathbf r$ from our notation in Section \ref{ss gha} and it is not hard to recover statements with $\mathbf r$. Any $\mathbb{H}_{n,m}$-module can extend to a $\mathbb{Z}_2$-graded module since $\theta$ is the identity in this case.

We shall work on algebraic definitions:

\begin{definition} \label{def st mod unequal}
\begin{enumerate}
\item Let $J \subset \Pi$. Let $\mathbb{H}_J$ be the subalgebra of $\mathbb{H}$ generated by $t_{s_{\alpha}}$ ($\alpha \in J$) and $v \in V \subset S(V)$.
\item An $\mathbb{H}_{n,m}$-module is said to be $E$ a {\it standard module} if there exists an irreducible $\mathbb{H}_J$-module $U$ satisfying the relation that $E \cong \mathbb{H} \otimes_{\mathbb{H}_J}U$ and for any $S(V)$-weight $\gamma$, $\gamma(\omega_{\alpha})\geq 0$ for any $\alpha \in J$ and $\gamma(\alpha)<0$ for any $\alpha \in \Pi \setminus J$.
\item An $\mathbb{H}_{n,m}$-module $E$ is said to be tempered if $E$ is standard with $J=\Pi$ in (1).
\end{enumerate}
\end{definition}

By \cite[Section 3]{Lu3}, the above definition of standard modules coincide the one in Definition \ref{def standard mod} for geometric parameters.

Notice that the definition of temperedness differs from \cite{CK} by a sign. Results from \cite{CK} can be recovered to our setting by applying the Iwahori-Matsumoto involution in Section \ref{ss IM invol}. We shall implicitly use this.

%\subsection{Geometric parameters} \label{ss geom parameter uneq}
%We recall two families of generalized Springer correspondence from \cite[2.13]{Lu1}. We retain the notations in Section \ref{ss geo gha}. Let $\mathfrak{g}=\mathfrak{so}(l+4n)$. We give two families of cuspidal triples. Let $\mathfrak{l}=\mathfrak{so}_n\oplus \mathfrak{sl}_2\oplus \ldots \oplus \mathfrak{sl}_2 \oplus \mathbb{C}^n$, where $\mathfrak{sl}_2$ appears with $n$ copies. Choose the nilpotent orbit $\mathcal O$ which contains element of the form $(x_0',x_0^{(1)},x_0^{(2)},\ldots, x_0^{(k)},0)$, where  $x_0 \in \mathfrak{so}_l$ has Jordan block sizes $1,5,9,\ldots,4p+1$ (resp. $3,7,11,\ldots, 4p+3$) and $x_0^{(i)}$ is regular in $\mathfrak{sl}_2$. (In particular, $l=(2p+1)(p+1)$ (resp. $l=(2p+3)(p+1)$).)

%In this case, the root system $R$ is of type $BC_k$. We normalize the $c$-functions as: $c_{\alpha}=1$ if $\alpha$ is a short root in $\Pi_k$ and $c_{\alpha}=\frac{4p+3}{4}$ (resp. $c_{\alpha}=\frac{4p+5}{4}$) if $\alpha$ is a long root. With the notations in Definition \ref{def gaha}, we have $\mathbb{H}_{n,m_c} \cong \mathbb{H}(V, R,\Pi, \mathbf r,c)/\langle \mathbf r-1 \rangle$. 

\subsection{$\mathbb{A}_W$-module structure on tempered modules} \label{ss aw tempered}

We follow the arguments in the proof of \cite[Corollary 4.23]{CK} (also see the proof of \cite[Pg 1062, Claim C]{Ka}). For $k \geq 1$, let $m$ be in the open interval of $(k, k+1)$ with $k \geq 1$. Let $MP_m$ be the set of marked partitions (see \cite{Ka}, \cite[Definition 1.28]{CK} for the definition) parametrizing irreducible tempered modules of $\mathbb{H}_{n,m}$ with real central characters. We denote by $\mathrm{temp}_m(\tau)$ the tempered module parametrized by the marked partition $\tau \in MP_m$ at $m$. Let $m_0=\frac{2k+1}{2}$. For any $\tau \in MP_m=MP_{m_0}$, we have
\[  \mathrm{temp}_{m}(\tau) \cong \mathrm{temp}_{m_0}(\tau)
\]
as $W_n$-modules. The $W_n$-structures of $\mathrm{temp}_{m_0}(\tau)$ is then abstractly known from the generalized Springer correspondence (see the geometric parameters in \cite[Sec. 2.13(e)]{Lu1}). Let $\mathcal P_{n,m}$ be the set of all irreducible tempered modules with real central characters, equipped with the partial ordering $\leq_{n,m}$ from the corresponding generalized Springer correspondence. We then obtain the map 
\[  \Phi_{n,m} : \mathcal P_{n,m} \rightarrow \mathrm{Irr}W_n
\]
satisfying the upper triangular property (c.f. Theorem \ref{thm w struct spr}).

Let $\mathbb{A}_{W_n}=S(V_n) \rtimes W_n$ be the skew-group ring. For $\sigma \in \mathrm{Irr}W_n$, we can define an $\mathbb{A}_{W_n}$-module $\mathbf K^{n,m}_{\sigma}$ as the one in (\ref{eqn kato mod}) with respect to the ordering $\leq_{n,m}$.

Following the argument in Corollary \ref{cor struct h}, we have:

\begin{proposition} \label{prop unequal aw structure}
Let $E\in \mathcal P$. Let $\sigma=\Phi_{n,m}(E)$. Then 
\[ \overline{E}_{\sigma} \cong \mathbf{K}_{\sigma}^{n,m} .
\] 
Here $\overline{E}_{\sigma}$ is an $\mathbb{A}_W$-module defined similarly as the one in Definition \ref{def deform structure} (i.e. ignoring the term $\mathbf r$). 
\end{proposition}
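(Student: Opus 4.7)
The plan is to imitate the proof of Corollary \ref{cor struct h} almost verbatim, substituting the unequal parameter ingredients from Section \ref{ss aw tempered} for those coming from the generalized Springer correspondence. First, observe that in type $C_n$ the longest element acts as $-\Id$ on $V_n$, so the involution $\theta$ on $\mathbb H_{n,m}$ is trivial and Lemma \ref{lem z2 grading} automatically equips every finite-dimensional $\mathbb H_{n,m}$-module with a $\mathbb Z_2$-grading. With $\sigma = \Phi_{n,m}(E)$, choose a $W_n$-subspace $U_\sigma$ of $E^{\pm}$ isomorphic to $\sigma$; by irreducibility of $E$ and the PBW basis of $\mathbb H_{n,m}$, this is a choice of deformation in the sense of Definition \ref{def deform structure}. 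The construction of $\overline E_\sigma$ then yields a canonical surjection of graded $\mathbb A_{W_n}$-modules
\begin{equation*}
\mathbb A_{W_n}\otimes_{\mathbb C[W_n]}\sigma \twoheadrightarrow \overline E_\sigma .
\end{equation*}

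Next I would argue that this surjection factors through the quotient $\mathbf K^{n,m}_\sigma$. For any $\tau \leq_{n,m} \sigma$ with $\tau \neq \sigma$ and any graded $\mathbb A_{W_n}$-map $g\colon K_\tau \to K_\sigma$ of strictly positive degree, the image of $g$, pushed further to $\overline E_\sigma$, contributes copies of $\tau$ in strictly positive degree of $\overline E_\sigma$ originating from the degree-zero summand $1\otimes \tau$ of $K_\tau$. By the upper-triangularity of $\Phi_{n,m}$ encoded in the ordering $\leq_{n,m}$ of Section \ref{ss aw tempered}, and transferred from the geometric parameter $m_0 = (2k+1)/2$ via the $W_n$-isomorphism $\mathrm{temp}_m(\tau) \cong \mathrm{temp}_{m_0}(\tau)$ of \cite[Corollary 4.23]{CK}, no such $\tau$ can appear in positive degree of $\overline E_\sigma$. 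Hence every such image is killed in the quotient and we obtain the desired surjection $\mathbf K^{n,m}_\sigma \twoheadrightarrow \overline E_\sigma$.

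Finally, I would upgrade this surjection to an isomorphism by a dimension comparison. The $W_n$-character identity of \cite[Corollary 4.23]{CK} gives $\dim E = \dim \mathrm{temp}_{m_0}(\tau)$, and Theorem \ref{thm kato structure} applied at the geometric parameter $m_0$ identifies this with the dimension of the Kato module $\mathbf K_\sigma$ formed with the geometric closure ordering. Since $\leq_{n,m}$ is by construction the pullback of that ordering, one has $\dim \mathbf K^{n,m}_\sigma = \dim \mathbf K_\sigma$, so $\dim \mathbf K^{n,m}_\sigma = \dim \overline E_\sigma$ and the surjection is an isomorphism.

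The principal technical obstacle, which does most of the work behind the scenes, lies in the second step: it relies on the $W_n$-preserving deformation of tempered modules of \cite[Corollary 4.23]{CK}, which is valid precisely because $m$ is assumed to lie in the open interval $(k,k+1)$ so that no wall-crossing occurs between $m$ and $m_0$, together with the fact that the upper-triangular property of $\Phi_{n,m}$ is inherited unchanged from the generalized Springer correspondence at $m_0$. Both ingredients are already present in \cite{Ka0, CK}; the contribution here is simply to combine them and pass from the $W_n$-structural information to the graded $\mathbb A_{W_n}$-structure of $\overline E_\sigma$.
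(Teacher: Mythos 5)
Your proof is correct and follows the same route the paper intends: the paper's own proof is simply the pointer ``Following the argument in Corollary \ref{cor struct h}.'' You supply the details that pointer leaves implicit, notably that $\mathbf K^{n,m}_\sigma$ coincides with the geometrically defined Kato module at $m_0=(2k+1)/2$ because $\leq_{n,m}$ is pulled back from the closure ordering there, so that Theorem \ref{thm kato structure} can be invoked for the dimension count upgrading the surjection to an isomorphism.
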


\subsection{$\mathbb{A}_W$-module structure on standard modules}

We keep using notations in Section \ref{ss aw tempered}. We also need some information for the central characters of $\mathrm{temp}(\tau)$. In particular, let $m$ be in an open interval. Following the parametrization of the marked partition, for $\tau \in MP(m)$, the associated central character is given by the value $a$ and $s$ in the notation of \cite{CK} (also see \cite[Convention 1.32]{CK}). For $\tau_1,\tau_2 \in MP(m)$, $\mathrm{temp}_m(\tau_1)$ and $\mathrm{temp}_m(\tau_2)$ have the same central character if and only if $\mathrm{temp}_{m_0}(\tau_1)$ and  $\mathrm{temp}_{m_0}(\tau_2)$ have the same central character. Recall that $\mathrm{temp}_{m_0}(\tau_i)$ ($i=1,2$) comes from the generalized Springer correspondence, and we can write into the form $\mathrm{temp}_{m_0}(\tau_i)=E_{h_{e_i},{e_i},1,\zeta_i}$ for some nilpotent $e_i \in \mathcal N$ in the notation of Section \ref{ss deform struct rtm}. Since $h_{e_i}$ determines the central character of $E_{h_{e_i},{e_i},1,\zeta_i}$, we must have that $e_1$ and $e_2$ lie in the same nilpotent orbit. 

Let $E$ be a standard module with a real central character. Then there exists $J\subset \Pi$ such that $E=\mathbb{H} \otimes_{\mathbb{H}_J}U$ with some $\mathbb{H}_J$-module $U$ described in Definition \ref{def st mod unequal}. Let
\[  V_J^{\bot}= \left\{ v \in V_n: \langle v,\alpha \rangle =0 \quad \mbox{ for any $\alpha \in J$ } \right\}.
\]
Now let $U'$ be an $\mathbb{H}_J$-module isomorphic to $U$ as vector space and the $\mathbb{H}_J$-module structure is determined by:
\[ \pi_{U'}(t_w)u=\pi_U(t_w)u \quad (w \in W), \]
\[ \pi_{U'}(\alpha) u =\pi_U(\alpha)u \quad (\alpha \in J),\]
\[ \pi_{U'}(v)u =0 ,\]
where $\pi_{U'}$ (resp. $\pi_U$) is the map defining the action of $\mathbb{H}_J$ on $U'$ (resp. $U$).

Now set $E_T=\mathbb{H}\otimes_{\mathbb{H}_J}U'$. One checks from Definition \ref{def st mod unequal}(2) that $E_T$ is tempered. We now let $E_1,\ldots, E_r$ be the collection of all the irreducible composition factors for $E_T$. By the weight considerations, we deduce that $E_1,\ldots, E_r$ have the same central character. (Hence they have to correspond to the same nilpotent orbit in the sense of the first paragraph of this subsection.) Write the central character for $E_i$ as $W_nh$ and we shall call $\langle h, h\rangle$ to be the length of $E_T$. 

Now with the weight considerations as in the Langlands classification (see \cite{Ev, KR}) and suitable ordering using the generalized Springer correspondence, we can deduce a version of Corollary \ref{cor lowest type} in this setting. From there, we can deduce a version of Lemma \ref{lem filt kato mod}:

\begin{lemma} \label{lem filt kato mod uneq}
Let $E$ be a standard module of $\mathbb{H}_{n,m}$. Let $E_T, E_1,\ldots, E_r$ be tempered modules as above. Let $\sigma_i$ be the lowest type for $E_i$ for $i=1,\ldots, r$. Set $\sigma=\sigma_1$. Then there exists a $\mathbb{A}_{W_n}$-module filtration $\left\{ \overline{Y}_i \right\}_{i=0}^p$ on $\overline{E}_{\sigma}$ such that 
\[ 0=\overline{Y}_0 \subset \overline{Y}_1 \subset \overline{Y}_2 \subset \ldots \subset \overline{Y}_k=\overline{E}_{\sigma} \]
and for $i=0,\ldots, p-1$, $\overline{Y}_{i+1}/\overline{Y}_i$ is isomorphic to a module $\mathbf{K}^{n,m}_{\sigma_j}$ for some $\sigma_j$. 
\end{lemma}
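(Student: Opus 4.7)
The plan is to adapt the proof of Lemma \ref{lem filt kato mod} to the unequal parameter setting, with Proposition \ref{prop unequal aw structure} playing the role of Corollary \ref{cor struct h}. Since the involution $\theta$ acts trivially for type $C_n$, the module $E$ itself already carries a $\mathbb{Z}_2$-grading by Lemma \ref{lem z2 grading}, so there is no need to pass to $E \oplus \theta(E)$. I would first invoke the analog of Corollary \ref{cor lowest type} alluded to in the paragraphs preceding the lemma (obtained by combining the weight/Langlands ordering with the generalized Springer ordering of \cite{CK,Ka0}): the lowest $W_n$-types of $E$ are precisely $\sigma_1,\ldots,\sigma_r$, with the same multiplicities as in $E_1 \oplus \ldots \oplus E_r$. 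In particular $\sigma = \sigma_1$ is a lowest $W_n$-type of $E$, so selecting any $U_\sigma \subset E^{\pm}$ with $U_\sigma \cong \sigma$ yields a choice of deformation (the analog of Lemma \ref{lem top generate}).

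Next, identifying $\sigma$ with $U_\sigma$, consider the $\mathbb{Z}$-graded $\mathbb{A}_{W_n}$-module $\overline{E}_\sigma = \bigoplus_k \overline{E}^k$. Let $Z$ be the sum of all $\sigma_i$-isotypic components in $\overline{E}_\sigma$ for $i = 1,\ldots,r$. Decompose $Z = \bigoplus_{l=1}^p P_l$ into irreducible $W_n$-subspaces compatible with the grading, so each $P_l$ lies in a unique degree $j(P_l)$, and reorder so that $l_1 < l_2$ implies $j(P_{l_1}) \geq j(P_{l_2})$. Let $M_l$ be the $\mathbb{A}_{W_n}$-submodule of $\overline{E}_\sigma$ generated by $P_1,\ldots,P_l$, and denote by $\sigma(P_l) \in \{\sigma_1,\ldots,\sigma_r\}$ the isomorphism class of $P_l$. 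The natural surjection $\mathbb{A}_{W_n} \otimes_{\mathbb{C}[W_n]} P_l \twoheadrightarrow M_l/M_{l-1}$ factors through $\mathbf{K}^{n,m}_{\sigma(P_l)}$: the graded-homomorphism quotients used in the definition \eqref{eqn kato mod} of $\mathbf{K}^{n,m}_{\sigma(P_l)}$ correspond exactly to the images of strictly lower-ordered $K_\tau$, and the grading-compatible reordering of the $P_l$ ensures these images are absorbed into $M_{l-1}$.

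Finally, a dimension count closes the argument. We have $\dim \overline{E}_\sigma = \dim E = \dim E_T = \sum_{i=1}^r \dim E_i$, the second equality using $E \cong \mathbb{H} \otimes_{\mathbb{H}_J} U$ and $E_T \cong \mathbb{H} \otimes_{\mathbb{H}_J} U'$ together with $\dim U = \dim U'$. By Proposition \ref{prop unequal aw structure}, $\dim E_i = \dim \mathbf{K}^{n,m}_{\sigma_i}$, while the bijection between $\{P_l\}$ and the lowest $W_n$-types of the $E_i$'s (with multiplicities matching those in $E$ via the analog of Corollary \ref{cor lowest type}) gives $\sum_l \dim \mathbf{K}^{n,m}_{\sigma(P_l)} = \sum_i \dim \mathbf{K}^{n,m}_{\sigma_i}$. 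Combining with the surjection $\mathbf{K}^{n,m}_{\sigma(P_l)} \twoheadrightarrow M_l/M_{l-1}$ forces each of these to be an isomorphism. The hard part is securing the unequal-parameter analog of Corollary \ref{cor lowest type}; the body of the proof after that is a mechanical transcription of the equal-parameter argument, but establishing the upper-triangularity of composition multiplicities for standard modules at unequal parameters requires carefully matching the Langlands ordering on real central characters with the closure ordering from the Kato--Lusztig generalized Springer correspondence available at $m_0 = (2k+1)/2$.
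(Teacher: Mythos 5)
Your proposal is correct and takes essentially the same route as the paper, which does not give an explicit proof of Lemma \ref{lem filt kato mod uneq} but instead says that a version of Corollary \ref{cor lowest type} holds (from Langlands-ordering weight considerations plus the Kato--Lusztig generalized Springer ordering available at $m_0=(2k+1)/2$) and then that the argument of Lemma \ref{lem filt kato mod} carries over, with Proposition \ref{prop unequal aw structure} replacing Corollary \ref{cor struct h}. You have correctly spelled out that transcription, including the simplification that $\theta$ is trivial in type $C_n$ so $E'=E$, and the dimension count $\dim E = \dim E_T = \sum_i \dim E_i = \sum_i \dim \mathbf{K}^{n,m}_{\sigma_i}$ using $E\cong\mathbb{H}\otimes_{\mathbb{H}_J}U$ and $E_T\cong\mathbb{H}\otimes_{\mathbb{H}_J}U'$ with $\dim U=\dim U'$.
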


\subsection{Dirac cohomology of tempered modules}

This section is essentially in the work of \cite{Ci}. We provide some explanations since the statement we look for  is not stated in \cite{Ci}.

Let $k\geq 1$. Following Section 3.1 in \cite{CK}, we consider $m \notin \frac{1}{2}\mathbb{Z}$ and $m \in (k,k+1)$. We firstly consider a partition $\lambda$ of $n$. According to \cite[Theorem 3.5]{CK} (also see \cite[Theorem 3.16]{CK}), $\lambda$ defines a tempered module, denoted $E^{\lambda}$, with the central character $Ws_{\lambda}$. Here $s_{\lambda} \in \mathbb{C}^n$ is given as in \cite[Section 3.1]{CK}, and can be computed from \cite[Definition 1.28]{CK}.

\begin{example}
We consider $\mathbb{H}_{7,m}$. Identify $\mathbb{C}^7$ with $\mathfrak{h}$ with $\langle, \rangle$ and the standard basis. Let $\sigma=(3,2,2)$. Then $W_7s_{\lambda}= W_7(m,m+1,m+2,m-1,m, m-2,m-1)$.
\end{example}

We now study the Dirac cohomology of $E^{\lambda}$. One can apply the formulas in \cite{Ci}. For a partition $\lambda_m$ of $n$, we view it as a Young tableaux. For the box in the $i$-th row and $j$-th column, we define $c(i,j)=m+i-j$, called a $c$-content. For a partition $\lambda$ of $n$, we define $p_l(\lambda+m)$ to be the sum of $l$-th power of all the $c$-contents in the boxes of (the Young tableaux of) $\lambda$. We have $p_2(\lambda_m)=p_2(\lambda_0)+2mp_1(\lambda_0)+nm^2$. Using \cite[Section 3.7]{Ci}, we obtain a version of \cite[Theorem 1.1]{Ci}. With \cite{BCT} and the $W$-structure of the generalized Springer correspondence in the previous subsection, we have:

\begin{proposition} \label{prop iso class nonzero dirac}
Let $k\geq 1$. Let  $m \in (k,k+1)$ with $m \neq \frac{1}{2}\mathbb{Z}$. Let $P_n$ be the cardinality of the set of partition of $n$. There is exactly $P_n$ isomorphism classes of tempered modules such that $H_D(E)\neq 0$. The central characters of those tempered modules are given by $Ws_{\lambda}$ described above, where $\lambda$ runs for all partitions of $n$.
\end{proposition}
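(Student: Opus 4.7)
The plan is to extend the argument of \cite{Ci} from the equal parameter case to the generic interval $m \in (k,k+1)\setminus \frac{1}{2}\mathbb Z$. For the existence direction, for each partition $\lambda$ of $n$ I would exhibit a genuine irreducible $\widetilde{W}_n$-representation $\widetilde{\sigma}_\lambda$ appearing in $E^\lambda\otimes S$ on which $D^2$ acts by zero. The formula (\ref{eqn formul d2}) together with Lemma \ref{lem center of tilde w} shows that $D^2$ acts on the $\widetilde\sigma$-isotypic component of $E^\lambda\otimes S$ by the scalar $-\langle s_\lambda,s_\lambda\rangle + a(\widetilde\sigma)$. Using the $c$-content computation of \cite[Section~3.7]{Ci}, adapted to the parameter $m$ via the formula $p_2(\lambda_m)=p_2(\lambda_0)+2mp_1(\lambda_0)+nm^2$, one matches $a(\widetilde\sigma_\lambda)$ with $\langle s_\lambda,s_\lambda\rangle$ (the latter being the sum of squares of $c$-contents, by the explicit description of $s_\lambda$ recalled just before the proposition). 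Since $E^\lambda$ is a unitary tempered module, we have $H_D(E^\lambda)=\ker D=\ker D^2$, so this matching forces $H_D(E^\lambda)\neq 0$.

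For the uniqueness direction, I would invoke Vogan's conjecture (\cite[Theorem~4.4]{BCT}): if $E$ is an irreducible tempered $\mathbb H_{n,m}$-module with $H_D(E)\neq 0$, then its central character is of the form $W_n s$ with $\langle s,s\rangle = a(\widetilde\sigma)$ for some genuine $\widetilde\sigma$ occurring in $E\otimes S$. The classification of tempered modules through marked partitions $\tau\in MP_m$ from Section \ref{ss aw tempered}, together with Proposition \ref{prop unequal aw structure} and the map $\Phi_{n,m}$ coming from the generalized Springer correspondence, parametrizes all such tempered modules. The unmarked partitions correspond exactly to the $\theta$-quasidistinguished orbits (whose centralizer in $\mathfrak g$ is solvable) by \cite[Proposition~3.3]{CH}, which by Theorem \ref{thm ellip dir coh nonzero} and Theorem \ref{cor nonellip dir coh zero} are precisely the orbits producing tempered modules with non-zero Dirac cohomology. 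Counting unmarked partitions then gives the cardinality $P_n$.

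The main obstacle is the combinatorial verification that, among all marked partitions in $MP_m$, only the unmarked ones produce a central character $Ws$ satisfying $\langle s,s\rangle = a(\widetilde\sigma)$ for some $\widetilde\sigma$ appearing in the corresponding tempered module. The key simplification is that, by the discussion in Section \ref{ss aw tempered}, the $W_n$-structure of a tempered module at parameter $m$ agrees with that at the half-integer $m_0=\tfrac{2k+1}{2}$, and moreover $\mathrm{temp}_m(\tau_1)$ and $\mathrm{temp}_m(\tau_2)$ have the same central character if and only if $\mathrm{temp}_{m_0}(\tau_1)$ and $\mathrm{temp}_{m_0}(\tau_2)$ do. Hence the computation of $a(\widetilde\sigma)$ and the identification of central characters can be carried out using the geometric parameter machinery at $m_0$ (Lusztig's generalized Springer correspondence and \cite[Section~3.10]{Ci}), and then transported to arbitrary $m\in(k,k+1)$ by the $W_n$-equivariant deformation. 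Once this identification is established, the bijection between partitions of $n$ and tempered modules with non-zero Dirac cohomology, together with the description of their central characters as $Ws_\lambda$, follows immediately.
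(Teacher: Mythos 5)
Your proposal follows the same route as the paper: the $c$-content computation from \cite[Section~3.7]{Ci} combined with the unitarity of tempered modules (so that $H_D=\mathrm{ker}~D=\mathrm{ker}~D^2$) for the existence direction, and Vogan's conjecture from \cite{BCT} plus the marked-partition parametrization of tempered modules and the generalized Springer correspondence for the uniqueness direction. One step needs more care, though: you propose to carry out the scalar identification at the geometric value $m_0=\frac{2k+1}{2}$ and then transport it to arbitrary $m\in(k,k+1)$ by the $W_n$-equivariant deformation. That deformation keeps the $W_n$-module structure of $\mathrm{temp}_m(\tau)$ (and hence which genuine $\widetilde\sigma$ occur in $\mathrm{temp}_m(\tau)\otimes S$) constant across the interval, but the scalars $\langle s_\tau,s_\tau\rangle$ and $a(\widetilde\sigma)$ entering the $D^2$-formula (\ref{eqn formul d2}) are degree-two polynomials in $m$. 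Knowing that $\langle s_\tau,s_\tau\rangle\neq a(\widetilde\sigma)$ at the single point $m_0$ does not rule out equality at some other $m$ in the interval, so the vanishing half of your argument would have a gap if it rested only on invoking Theorems \ref{thm ellip dir coh nonzero} and \ref{cor nonellip dir coh zero} at $m_0$. The paper sidesteps this by carrying out the computation directly as a polynomial identity in $m$: the relation $p_2(\lambda_m)=p_2(\lambda_0)+2mp_1(\lambda_0)+nm^2$, together with the adaptation of \cite[Section~3.7]{Ci}, shows the scalar matching (resp.\ mismatch) of $\langle s_\tau,s_\tau\rangle$ and $a(\widetilde\sigma)$ holds as an explicit computation over the whole interval, not merely at the half-integer. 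Once you replace the transfer-by-deformation step with that direct polynomial computation, the two arguments coincide.
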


\subsection{Dirac cohomology of standard modules}

\begin{theorem}
Let $m\geq 1$. Let $\mathbb{H}_{n,m}$ be defined in Section \ref{ss gha bc}. Let $E$ be a standard module of $\mathbb{H}_{n,m}$ (Definition \ref{def st mod unequal}). Then $H_D(E) = 0$ if and only if $E$ is not a tempered module in Proposition \ref{prop iso class nonzero dirac}.
\end{theorem}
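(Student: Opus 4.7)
The plan is to mirror the proof of Theorem \ref{thm vanihsing dirac coh} in this unequal parameter setting. The tempered case splits into two subcases. When $E$ is one of the $P_n$ tempered modules listed in Proposition \ref{prop iso class nonzero dirac}, the non-vanishing of $H_D(E)$ is already recorded there. When $E$ is tempered but not in that list, I will invoke Vogan's conjecture in the form available for graded Hecke algebras (the central character determines, and is determined by, the $\widetilde{W}$-structure on $H_D$), together with the formula \eqref{eqn formul d2} for $D^2$ and Lemma \ref{lem center of tilde w}, to rule out any $\widetilde{W}$-type on which $D^2$ can vanish.

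For the non-tempered case, given a standard module $E = \mathbb{H}\otimes_{\mathbb{H}_J} U$, write $E_T$ and its tempered constituents $E_1,\ldots,E_r$ as in Section~\ref{ss aw tempered}; by the weight analysis there, the $E_j$ share a common central character $W_n h$. Pick a lowest $W$-type $\sigma$ of $E$ and apply Lemma \ref{lem filt kato mod uneq} to obtain an $\mathbb{A}_{W_n}$-filtration on $\overline{E}_\sigma$ whose subquotients are the modules $\mathbf{K}^{n,m}_{\sigma_j}$. By Proposition \ref{prop unequal aw structure}, each $\mathbf{K}^{n,m}_{\sigma_j}$ is the $\mathbb{A}_{W_n}$-deformation of the tempered module $E_j$. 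Granted unitarity of the $E_j$ (available in the unequal parameter $BC$ setting via the work of Opdam--Solleveld, or equivalently via the star-operation on the affine Hecke side), Proposition \ref{prop unitary dirac} yields
\[
H_{D_{\mathbb{A}}}(\mathbf{K}^{n,m}_{\sigma_j}) \,\cong\, H_D(E_j).
\]
Proposition \ref{prop iso class nonzero dirac} together with the $D^2$ formula then shows that every irreducible $\widetilde{W}$-summand of $H_D(E_j)$ is a $\widetilde{\sigma}$ with $a(\widetilde{\sigma})=\langle h,h\rangle$. Iterating the six-term exact sequence of Proposition \ref{prop six term es} up the filtration, we conclude that the same constraint governs every $\widetilde{W}$-summand of $H_{D_{\mathbb{A}}}(\overline{E}_\sigma)$.

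Now I apply Corollary \ref{cor dirac cohom def1}: decompose $\overline{E}_\sigma \otimes S = \widetilde{X}_1\oplus\widetilde{X}_2$ according as the $\widetilde{W}$-isotypic summand satisfies $a(\widetilde\sigma)=\langle h,h\rangle$ or not. The previous step shows $H_{D_{\mathbb{A}}}$ vanishes on the $\widetilde{X}_2$-piece, so by the corollary $H_D(E)$ injects (as a $\widetilde W$-representation) into a quotient of $\widetilde X_1$, and hence only $\widetilde\sigma$ with $a(\widetilde\sigma)=\langle h,h\rangle$ can appear. On the other hand, if $x\in H_D(E)$ lies in the $\widetilde\sigma$-isotypic component, then by \eqref{eqn formul d2} and Lemma \ref{lem center of tilde w}, $D^2$ acts on $x$ by the scalar $-\langle s,s\rangle + a(\widetilde\sigma)$, where $W_n s$ is the central character of $E$. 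Combining these forces $\langle s,s\rangle=\langle h,h\rangle$.

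The main obstacle is to deduce a contradiction from this equality when $E$ is not tempered. The idea is the familiar Langlands orthogonality: writing $s$ as an orthogonal sum of a tempered part (contributing exactly $\langle h,h\rangle$) and a strictly positive ``deformation'' part coming from the non-trivial fundamental weights active in Definition \ref{def st mod unequal}(2), one sees that $\langle s,s\rangle > \langle h,h\rangle$ strictly, contradicting the equality and forcing $H_D(E)=0$. Verifying this norm-comparison carefully in the unequal parameter $BC$ setting, and likewise verifying the unitarity of the $E_j$ needed for Proposition \ref{prop unitary dirac}, are the two technical points where the argument requires input beyond the geometric parameter case; everything else is a direct translation of the proof of Theorem \ref{thm vanihsing dirac coh}.
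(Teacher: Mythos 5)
Your proposal is correct and follows essentially the roadmap that the paper itself indicates (the printed proof simply says ``modify the arguments of Theorem~\ref{thm vanihsing dirac coh}'' and omits the details, which you supply). One small omission worth fixing: the statement allows any $m\geq 1$, while the inputs you cite --- Proposition~\ref{prop unequal aw structure}, Lemma~\ref{lem filt kato mod uneq}, and Proposition~\ref{prop iso class nonzero dirac} --- are set up in Section~\ref{ss aw tempered} only for $m$ in an open interval $(k,k+1)$ with $m\notin\tfrac{1}{2}\mathbb{Z}$. The paper disposes of $m\in\tfrac{1}{2}\mathbb{Z}$ first by observing that $\mathbb{H}_{n,m}$ is then isomorphic to a graded Hecke algebra with geometric parameters, so Theorem~\ref{thm vanihsing dirac coh} applies directly, and only then runs your argument for $m\notin\tfrac{1}{2}\mathbb{Z}$. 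You should add this reduction up front; the rest of your argument --- the filtration by $\mathbf{K}^{n,m}_{\sigma_j}$, identification of $H_{D_{\mathbb{A}}}(\mathbf{K}^{n,m}_{\sigma_j})$ with $H_D(E_j)$ via Proposition~\ref{prop unitary dirac}, the six-term sequence, Corollary~\ref{cor dirac cohom def1}, and the scalar computation via \eqref{eqn formul d2} together with the orthogonality $\langle s-h, h\rangle = 0$ --- matches the intended argument, including the two technical verifications (unitarity of the $E_j$, the strict norm inequality) that you correctly flag as requiring input specific to the unequal parameter $BC$ case.
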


\begin{proof}
When $m \geq 1$ and $m \in \frac{1}{2}\mathbb{Z}$, $\mathbb{H}_{n,m}$ is isomorphic to a geometric graded Hecke algebra of Lusztig \cite{Lu1}. That case has been dealt in Theorem \ref{thm vanihsing dirac coh}. We only have to consider $m \notin \frac{1}{2}\mathbb Z$. With Lemma \ref{lem filt kato mod uneq}, Propositions \ref{prop unequal aw structure} and \ref{prop iso class nonzero dirac}, we can modify the arguments in Theorem \ref{thm vanihsing dirac coh} to obtain the statement. We omit the details.
\end{proof}

\subsection{A remark on noncrystallographic types}

Other than unequal parameter case, a possible generalization is for $R$ of noncrystallographic types. Results in Section \ref{s def dirac coh} still apply. We also have some other partial information in the literature. Some structural information for tempered modules can be found in \cite{KR} and \cite{Kr2}. For the Dirac cohomology, some computations are done in \cite{Ch1}.

\end{document}